\newtheorem{thm}[subsubsection]{Theorem}
\newtheorem{lem}[subsubsection]{Lemma}
\newtheorem{prop}[subsubsection]{Proposition}
\newtheorem{cor}[subsubsection]{Corollary}
\theoremstyle{remark}
\newtheorem{rem}[subsubsection]{Remark}
\theoremstyle{definition}
\newtheorem{defn}[subsubsection]{Definition}
\newtheorem{eg}[subsubsection]{Example}
\nc\C{\eC}
\nc\eGr{\eG r}
\nc\IC{\mathrm{IC}}
\nop\Roots{Roots}
\nop\Ch{Ch}
\nop\CT{CT}
\nop\Eis{Eis}
\nop\Sat{Sat}
\nop\Ran{Ran}
\nop\coBar{CoBar}
\nop\MC{MC}
\nop\HN{HN}
\nop\Asymp{Asymp}
\nop\Act{Act}
\nop\VinBun{VinBun}
\nop\Hecke{Hecke}
\nop\HNSupp{HNSupp}
\nop\comp{comp}
\nop\conv{conv}
\nop\rank{rank}
\nc\pos{\mathrm{pos}}
\nc\Iw{\mathrm{Iw}}
\nc\ur{\mathrm{ur}}
\nc\enh{\mathrm{enh}}
\nc\adj{\mathrm{adj}}
\nc\disj{\mathrm{disj}}
\nc\naive{\mathrm{naive}}
\nc\strict{\mathrm{strict}}
\nc\ssc{\mathrm{ssc}}
\nc\true{\mathrm{true}}
\nc\ab{\mathrm{ab}}
\nc\abst{\mathrm{ab,st}}
\nop\Sph{\mathsf{Sph}}
\nc\psc{{ps\text{-}c}}
\nc\oo[1]{\overset\circ{#1}}
\DeclareMathOperator*{\tbt}{\tilde \boxtimes}
\DeclareMathOperator*{\cotimes}{\square}
\numberwithin{equation}{section}
\title[On an invariant bilinear form via asymptotics]{On an invariant bilinear form on the space of automorphic forms via asymptotics}
\author{Jonathan Wang}
\begin{document} 

\begin{abstract}
%
%
This article concerns the study of a new invariant bilinear form $\eB$ 
on the space of automorphic forms of a split reductive group $G$ over a function field. 
We define $\eB$ using the asymptotics maps from \cite{BK, SV}, which involve the geometry of the wonderful compactification of $G$. 
We show that $\eB$ is naturally related to miraculous duality in the geometric Langlands program through the functions--sheaves dictionary. 
In the proof,  we highlight the connection between the classical non-Archimedean Gindikin--Karpelevich formula and certain factorization algebras acting on geometric Eisenstein series. 
We then give another definition of $\eB$ using the constant term operator and the inverse of the standard intertwining operator. 
The form $\eB$ defines an invertible operator $L$ from the space of compactly supported automorphic forms to a new space of ``pseudo-compactly'' supported automorphic forms. 
We give a formula for $L^{-1}$ in terms of pseudo-Eisenstein series and constant term operators which suggests that $L^{-1}$ is an analog of the Aubert--Zelevinsky involution.
\end{abstract}

\maketitle

\tableofcontents

\section{Introduction} 

\subsection{The goal of this paper} In \cite{DW}, an invariant
symmetric bilinear form $\eB$ is defined on the space of 
automorphic forms for $\on{SL}(2)$ over any global field. 
The goal of this paper is to generalize the definition of $\eB$ and the corresponding theory to any split reductive group $G$ over a function field. 
The study of $\eB$ is motivated by works \cite{DG:CG, G:miraculous} on the geometric Langlands program. There is also a significant connection between $\eB$ and the theory of Eisenstein series, as evidenced by \cite{DW}.

\medskip

Let $G$ be a split reductive group over $\bbF_q$.
Let $X$ be a geometrically connected smooth projective curve over a finite field $\bbF_q$, and let $F$ be the field of rational functions on $X$. 
Let $\bbA$ denote the adele ring of $F$. 

For any place $v$ of $F$, the completion of $F$ with respect to $v$ 
will be denoted $F_v$. Let $\fo_v$ denote the ring of integers of $F_v$, with 
residue field $\bbF_{q_v}$. We denote the standard maximal compact subgroup of $G(F_v)$
by $K_v$. Set $K := \prod_v K_v$; this is a maximal compact subgroup of $G(\bbA)$.

We fix a field $E$ of characteristic $0$. Unless specified otherwise, all functions will take values in $E$. 

\label{sss:A_c}
Let $\eA$ denote the space of $K$-finite $C^\infty$ functions on $G(\bbA)/G(F)$.
Let $\eA_c \subset \eA$ denote the subspace of functions with compact support.

\medskip

In this paper we define and study a $G(\bbA)$-invariant
symmetric bilinear form $\eB$ on $\eA_c$. 
(The definition of $\eB$ is given in \S\ref{ss:defB}.) 
Fix a Haar measure on $G(\bbA)$. 
The form $\eB$ is defined as an alternating sum of invariant bilinear
forms $\eB_P$ on $\eA_c$, where the sum ranges over the conjugacy classes
of parabolic subgroups of $G$. When $P=G$, the form $\eB_G$ is the
naive pairing 
\[ \eB_\naive(f_1,f_2) = \int_{G(\bbA)/G(F)} f_1(x) f_2(x) dx, \quad\quad 
f_1,f_2 \in \eA_c.\] 
The definition of $\eB_P$ was suggested by Y.~Sakellaridis in a private communication, and it uses the \emph{local} asymptotics maps constructed in \cite{BK,SV} using the geometry of the De Concini--Procesi wonderful compactification of $G$. 
The asymptotics map is defined in the more general setting of harmonic analysis on spherical varieties in \cite{SV}. In \cite{BK}, the asymptotics map is used to give a geometric proof of Bernstein's theorem on second adjointness. 
It is also shown (\cite[Theorem 7.6]{BK}) that the asymptotics map is inverse to the standard (long) intertwining operator in the classical representation theory of $\fp$-adic groups. 
Using this relationship, one sees that the computation of the asymptotics of the characteristic function of $K_v$ (cf.~\cite[\S 6]{Sak}) goes back to the classical non-Archimedean Gindikin--Karpelevich formula due to \cite{L:Euler, MacDonald}. 

\medskip

In order to study the form $\eB$, we consider certain subspaces $\C_{P,\pm}$ 
of the space of smooth $K$-finite functions on $G(\bbA)/M(F)U(\bbA)$, where $P = MU$
is a standard parabolic subgroup with Levi subgroup $M$ and unipotent radical $U$.
The spaces $\C_{P,\pm}$ may be of independent interest as they are defined with 
respect to the same rational cones and support conditions as in the definition of Arthur's truncation operator (cf.~the definition of $\what \tau_P$ in \cite[\S 6]{Arthur}).
In Proposition~\ref{prop:I^-1}, we prove that the standard intertwining operator 
extends to an isomorphism $R_P : \C_{P^-,+} \to \C_{P,-}$.

\begin{rem}
We only consider the function field case in this paper, but the reader may check that the definition of $\eB$ on $K$-invariant automorphic forms extends to the number field case using the Archimedean Gindikin--Karpelevich formula. 
We hope to define $\eB$ on the whole space $\eA_c$ for an arbitrary global field $F$ by better understanding the local Archimedean intertwining operator in the future.
\end{rem}

\subsection{Motivation from geometric Langlands} 
Let us explain the motivation for the existence of $\eB$ from the geometric Langlands program. Here we assume that the field $E$ equals $\wbar\bbQ_\ell$ for a prime $\ell$ coprime to the characteristic of $F$.

\subsubsection{A remarkable $\ell$-adic complex on $\Bun_G \xt \Bun_G$}
Let $\Bun_G$ denote the stack of $G$-bundles on $X$. 
Let $\Delta : \Bun_G \to \Bun_G \xt \Bun_G$ be the diagonal morphism. 
We have the $\ell$-adic complex $\Delta_*(\wbar\bbQ_\ell)$ on $\Bun_G \xt \Bun_G$. 

This complex is the $\ell$-adic analog of the complex of D-modules $\Delta_! \omega_{\Bun_G}$, which plays a crucial role in the theory of \emph{miraculous duality} on $\Bun_G$, which was developed in \cite[\S 4.5]{DG:CG} and \cite{G:miraculous}. Assume for the moment that $X$ is over a ground field $k$ of characteristic $0$. Then miraculous duality gives an equivalence between the DG category of (complexes of) D-modules on $\Bun_G$ and its Lurie dual. 
Very roughly, the equivalence is defined as the functor 
\[ \on{Ps-Id}_{\Bun_G,!} : \on{D-mod}(\Bun_G)_{\on{co}} \to \on{D-mod}(\Bun_G) \]
given by the kernel $\Delta_!\omega_{\Bun_G}$ (whereas the identity functor is given by the kernel $\Delta_*\omega_{\Bun_G}$). The fact that this functor is an equivalence is a highly nontrivial theorem \cite[Theorem 0.2.4]{G:miraculous}. 

\subsubsection{The function $b$} 
Given $G$-bundles $\eF^1_G, \eF^2_G \in \Bun_G(\bbF_q)$, let 
$b(\eF^1_G, \eF^2_G)$ denote the trace of the geometric Frobenius 
acting on the $*$-stalk of the complex $\Delta_*(\wbar\bbQ_\ell)$ over the point
$(\eF^1_G, \eF^2_G) \in (\Bun_G \xt \Bun_G)(\wbar\bbF_q)$.
Using results of \cite{Schieder:gen}, we deduce a formula 
for $b$ in terms of the asymptotics maps (see Theorem~\ref{thm:b}). 

\subsubsection{Relation between $\eB$ and $b$} 
The quotient $K \bs G(\bbA) / G(F)$ identifies with $\abs{\Bun_G(\bbF_q)}$, the set of isomorphism classes of $G$-bundles on $X$. So the function $b$ can be considered as a function on $(G(\bbA)/G(F)) \xt (G(\bbA)/G(F))$. The following theorem is one of our main results. The proof is given in \S\ref{ss:geom}.

\begin{thm}  \label{thm:B=b}
Let $E = \wbar \bbQ_\ell$ for $\ell$ coprime to the characteristic of $F$.
Normalize the Haar measure on $G(\bbA)$ so that $K$ has measure $1$. 
Then for any $f_1, f_2 \in \eA_c^K$, one has 
\begin{equation}
\eB(f_1,f_2) = \int_{(G \xt G)(\bbA) / (G \xt G)(F)} b(g_1,g_2) f_1(g_1) f_2(g_2) dg_1 dg_2. 
\end{equation}
\end{thm}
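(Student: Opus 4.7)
The plan is to reduce the statement to a term-by-term identity, matching the alternating-sum definition of $\eB$ against an analogous alternating-sum formula for $b$. Write $\eB = \sum_P \epsilon_P \eB_P$ as the sum indexed by conjugacy classes of parabolics $P$ used to define $\eB$ in \S\ref{ss:defB}, with appropriate signs $\epsilon_P$. Theorem~\ref{thm:b}, which follows from Schieder's analysis of the wonderful/Vinberg compactification, should provide a parallel decomposition $b = \sum_P \epsilon_P b_P$ on $\Bun_G \xt \Bun_G$, whose $P$-term is expressed through the local asymptotics maps. After unwinding the identification $K \bs G(\bbA)/G(F) = |\Bun_G(\bbF_q)|$ and using that $K$ has measure one, it suffices to prove, for each $P$, the equality
\[ \eB_P(f_1, f_2) = \int_{(G \xt G)(\bbA)/(G \xt G)(F)} b_P(g_1,g_2)\, f_1(g_1) f_2(g_2) \, dg_1 \, dg_2 \]
for all $f_1, f_2 \in \eA_c^K$.

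For $P = G$ both sides collapse to the naive pairing $\eB_\naive$, so the substantive case is $P \subsetneq G$. On the right, I would push the integral forward along the map $(G \xt G)(\bbA)/(G\xt G)(F) \to \Bun_G \xt \Bun_G$, converting it into a sum over pairs of isomorphism classes weighted by automorphism orders. On the left, I would unwind $\eB_P$ from its definition in terms of the local asymptotics maps $\Asymp_{P,v}$ and the constant term. Because $f_1$ and $f_2$ are $K$-invariant, the asymptotics are determined at each place by their action on the characteristic function of $K_v$, which is the classical non-Archimedean Gindikin--Karpelevich formula highlighted in the introduction. Consequently $\eB_P(f_1,f_2)$ factors as an integral over $M(\bbA)/M(F)$ of a product of local unramified factors paired against the constant terms $\CT_P(f_i)$.

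The functions--sheaves dictionary provides the bridge between the two sides: the characteristic function of $K_v$ corresponds to the constant sheaf on the relevant stratum of the compactification at $v$, and the local asymptotics map corresponds, under sheaf-to-function, to the $*$-restriction to the boundary stratum indexed by $P$. Combining these local matchings at every place with Schieder's presentation of $b_P$ as a Frobenius trace on the $!$-pushforward from the $P$-stratum of the compactified diagonal, the right-hand integral collapses, place-by-place, to the factorization of $\eB_P$ computed in the previous paragraph. Summing over $P$ with the signs $\epsilon_P$ then yields the theorem.

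The main obstacle is the local matching just invoked: one must check that the Frobenius trace of the $!$-restriction of $\Delta_*\wbar\bbQ_\ell$ to the $P$-stratum, when evaluated at an unramified pair, coincides with the kernel produced by $\Asymp_{P,v}$ at each place $v$, and that this matching is coherent with the appearance of $\CT_P$ and of the standard intertwining operator that govern $\eB_P$ via Proposition~\ref{prop:I^-1}. Once this compatibility is in place, the remaining Fubini-type interchange of summation and integration is justified by the compact support of $f_1$ and $f_2$, and the proof is complete.
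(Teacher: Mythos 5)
Your high-level skeleton is the same as the paper's: decompose $\eB = \sum_P (-1)^{\dim Z(M)} \eB_P$ by its definition, decompose $b = \sum_P (-1)^{\dim Z(M)} b_P$ by Theorem~\ref{thm:b}, and match term by term. But you substantially misallocate where the work lies, and that obscures the argument.

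Once you accept Theorem~\ref{thm:b} as a black box (as you do), the entire sheaf-theoretic content --- the functions--sheaves dictionary, the stratified restrictions of $\Delta_*\wbar\bbQ_\ell$, the Gindikin--Karpelevich matching via Proposition~\ref{prop:geom-nu} and Theorem~\ref{thm:i^*j_*} --- has already been spent. What remains is only the term-by-term identity $\eB_P(f_1,f_2) = \int_{(G\xt G)(\bbA)/(G\xt G)(F)} b_P(g_1,g_2) f_1(g_1) f_2(g_2)\,dg_1\,dg_2$, and this is an \emph{elementary} unwinding of the definition in \S\ref{sss:defB_P}: for left $K$-invariant lifts $\tilde f_1,\tilde f_2$ one has $\tilde f_i = \delta_K * \tilde f_i$, so $\tilde\eB_P(\tilde f_1,\tilde f_2) = \sum_{x\in\bbX_P(F)} \int \tilde f_1(g_1)\tilde f_2(g_2)\,\xi_P((g_1,g_2)x)\,dg_1\,dg_2$ becomes $\int f_1 f_2\, b_P$ with $b_P(g_1,g_2) = \sum_{x} \Asymp_P(\delta_K)((g_1,g_2)x)$, exactly equation \eqref{e:BbP}. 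No Gindikin--Karpelevich formula, no intertwining operators, no constant terms, no passage through the factorization of $\eB_P$ over $M(\bbA)/M(F)$. The formula you quote with $\CT_P$ and local unramified factors is the \emph{alternative} characterization of $\eB_P$ in Proposition~\ref{prop:alt}, which is not needed here; invoking it is a detour, not a proof step. Likewise the ``main obstacle'' you flag --- checking that the Frobenius trace of the boundary restriction of $\Delta_*\wbar\bbQ_\ell$ matches the kernel of $\Asymp_{P,v}$ --- is precisely Theorem~\ref{thm:i^*j_*}, and it is already consumed in the proof of Theorem~\ref{thm:b}; re-proving it inside the argument for Theorem~\ref{thm:B=b} is circular.

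Two smaller points. First, the relevant sheaf operations are $*$, not $!$: the complex is $\Delta_*\wbar\bbQ_\ell$, the compactified version is $\jmath_*\wbar\bbQ_\ell$ on $\wbar\Bun_G$, and the boundary comparison is $\iota_P^*(\jmath_*\wbar\bbQ_\ell)$. Second, the step that does need a non-trivial observation is converting $b_P(g_1,g_2) = \sum_{x\in\bbX_P(F)} \Asymp_P(\delta_K)((g_1,g_2)x)$ into the geometric form $\sum_\beta \prod_v \Asymp_{P,v}(\delta_{K_v})(\beta_v)$ appearing in Theorem~\ref{thm:b}: this uses that $\Asymp_{P,v}(\delta_{K_v})$ is supported on $\wbar\bbX_P(\fo_v)$ (Remark~\ref{rem:AsympK}), so the only contributing rational points are those that extend to regular sections $\beta : X \to (\wbar\bbX_P)_{\eF^1_G,\eF^2_G}$. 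That support argument, not the Satake/Gindikin--Karpelevich computation, is the bridge between the adelic sum and the sum over sections.
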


By non-degeneracy of the naive pairing $\eB_\naive$, defining the bilinear form $\eB$ is
equivalent to defining an operator $L : \eA_c \to \eA$ such that 
\[ \eB(f_1,f_2) = \eB_\naive(Lf_1, f_2), \quad\quad f_1,f_2\in \eA_c. \]
Theorem~\ref{thm:B=b} implies that the miraculous duality functor $\on{Ps-Id}_{\Bun_G,!}$ 
is a D-module analog of the operator $q^{-\dim \Bun_G} L$ via the functions--sheaves 
dictionary.

\subsection{Analog of the Aubert--Zelevinsky involution}
In \S\ref{ss:A_psc}, we define a subspace $\eA_\psc \subset \eA$ of ``pseudo-compactly'' supported functions using the constant term operators and the spaces $\C_{P,+}$. 
We prove that the operator $L$ above sends $\eA_c$ to $\eA_\psc$, and the operator $L:\eA_c \to \eA_\psc$ 
is an isomorphism (Theorem~\ref{thm:L^-1}). 
The invertibility of $L$ may be considered as a function-theoretic analog of 
the main result (Theorem 0.2.4) of \cite{G:miraculous}. 

Moreover, we give an explicit formula 
\[ L^{-1} f = \sum_P (-1)^{\dim Z(M)} (\Eis_P \circ \CT_P)(f), \quad\quad f \in \eA_\psc, \] 
for the inverse, where $\Eis_P, \CT_P$ denote respectively, the (pseudo-)Eisenstein operator and constant term operator. 
By considering $\Eis_P,\CT_P$ as global analogs of the parabolic induction functor and Jacquet functor, respectively, in the theory of smooth representations of a $\fp$-adic group, one can view 
the formula for $L^{-1}$ as an analog of the formula for the Aubert--Zelevinsky involution
on the Grothendieck group of smooth representations of finite length (see Remark~\ref{rem:Aubert}). 
This involution was first defined
and studied for $G = \on{GL}(n)$ by Zelevsinky \cite{Zelevsinky} and later for general reductive groups by Aubert \cite{Aubert}. On Iwahori fixed vectors, it also corresponds to the Iwahori--Matsumoto involution (cf.~\cite{KatoS}). 
There is an analogous involution for representations of a finite Chevalley group, often called the Alvis--Curtis involution, which was studied earlier in \cite{Alvis,Curtis}. 

The Aubert--Zelevinsky involution can be studied at the level of complexes. 
Such complexes were considered in \cite{DL} for representations of a finite Chevalley group.
For every smooth representation $M$ one can form a complex 
\[ 0 \to M \to \bigoplus_P i^G_P r^G_P (M) \to \dotsb \to i^G_B r^G_B (M) \to 0 \]
where $i^G_P, r^G_P$ denote, respectively, the parabolic induction and Jacquet functors, and the sum in the $i$-th term runs over standard parabolic subgroups of corank $i$ in $G$. 
We call this complex the Deligne--Lusztig complex associated to $M$ and denote it by 
$\on{DL}(M)$. 
Aubert showed that for an irreducible module $M$, the complex $\on{DL}(M)$ has
cohomology in only one degree, which implies that the Aubert--Zelevsinky involution sends irreducible modules to irreducible modules (up to a sign). 
A new proof of this result was recently given in \cite{BBK:duality} using asymptotics maps and the geometry of the wonderful compactification of $G$.

\subsection{Structure of the paper}
\subsubsection{General remark} In the main body of the article we work with \emph{classical} functions on $G(F_v)$ and $G(\bbA)/G(F)$. 
These are, however, heavily motivated by geometric definitions and results appearing in the \emph{geometric} Langlands program. We review the relevant geometry in Appendices \ref{s:appendixHecke}--\ref{sect:DLV}. 

\subsubsection{The main body of the paper}

In Section \ref{s:local}, we study the asymptotics map and its relation to the intertwining operator over a local non-Archimedean field. In order to elucidate the support conditions of various functions, we give a combinatorial description of the bounded subsets of the boundary degenerations of $G$.
\medskip

In Section \ref{s:Kinvariants}, we compute the asymptotics of the characteristic function of $K_v$ by reducing to the non-Archimedean Gindikin--Karpelevich formula using intertwining operators on $K_v$-invariants. To do so, we extend the classical Satake isomorphism to an isomorphism between certain completed Hecke algebras. 
\medskip

In Section \ref{s:B}, we define the bilinear form $\eB$. 
After giving a geometric interpretation of the restriction of $\eB$
to $\eA_c^K$, we prove Theorem~\ref{thm:B=b}. 

\medskip

The definition of $\eB$ we give differs slightly from the definition given in \cite{DW} for $G = \on{SL}(2)$. In our definition, we use local asymptotics (which is essentially equivalent to local inverse intertwining operators) and then apply a local-to-global procedure. 

\medskip

In Section \ref{s:global}, we provide an alternate definition of $\eB$, which directly generalizes the one in \cite{DW}. 
For a parabolic subgroup $P$ with Levi factor $M$, we define subspaces $\C_{P,\pm}$ of the space of $K$-finite $C^\infty$ functions on $G(\bbA)/M(F)U(\bbA)$.
The definitions are such that the constant term operator $\CT_P$ (whose definition we recall) sends $\eA_c$ to $\C_{P,-}$. The intertwining operator $R_P$ (which is of local nature) is defined as a map $\C_{P^-,+} \to \C_{P,-}$, and we show that $R_P$ is an isomorphism. Let $\brac{\, ,}$ denote the natural 
pairing between functions in $\C_{P^-}$ (when convergent). We prove the following in \S\ref{ss:altdef}: 
\begin{thm} \label{thm:global}
For any $f_1,f_2 \in \eA_c$, one has 
\begin{equation}
\eB(f_1,f_2) = \sum_P (-1)^{\dim Z(M)} \brac{ R_P^{-1} \CT_P(f_1), \CT_{P^-}(f_2) },
\end{equation}
where the sum ranges over conjugacy classes of parabolic subgroups of $G$. 
\end{thm}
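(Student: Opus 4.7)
The plan is to unpack the local definition of the forms $\eB_P$ from \S\ref{ss:defB} and to identify the resulting global expression with the right-hand side of the theorem. Recall that $\eB = \sum_P (-1)^{\dim Z(M)} \eB_P$, where each $\eB_P$ is built out of the local asymptotics maps of \cite{BK, SV} at every place of $F$: on a factorizable test function one applies the local asymptotics map at each $v$, obtaining a function on the boundary degeneration of $G$ at $v$, and then pairs the result against the analogous construction for $P^-$ using the natural pairing on the Levi. The $P = G$ summand reduces immediately to $\eB_\naive$, so the substance of the proof concerns proper parabolics.

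The central local input is \cite[Theorem 7.6]{BK}, which identifies the local asymptotics map with the inverse of the local standard long intertwining operator between induced representations of $G(F_v)$. Tensoring these local inverses over all places and descending from $U(\bbA)$-invariants on $G(\bbA)$ to functions on $G(\bbA)/M(F)U(\bbA)$ should produce the global operator $R_P^{-1}$ of Proposition~\ref{prop:I^-1}. Composing with the constant term operator $\CT_P \colon \eA_c \to \C_{P,-}$ then yields $R_P^{-1}\CT_P(f_1) \in \C_{P^-,+}$, and pairing against $\CT_{P^-}(f_2) \in \C_{P^-,-}$ via $\brac{\,,\,}$ reproduces the integral defining $\eB_P$. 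The support conditions built into the spaces $\C_{P^-,\pm}$ in \S\ref{s:global} are precisely what guarantees absolute convergence of this pairing.

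The main obstacle is the local-to-global matching in the preceding step: one must verify that the tensor product of local asymptotics maps, together with the summation over $M(F)$ implementing the descent from $M(\bbA)$ to $M(F) \bs M(\bbA)$, truly produces the composition $R_P^{-1} \circ \CT_P$ on the nose rather than merely formally, and that all intermediate sums and integrals converge where required. This is where the combinatorial description of bounded subsets of the boundary degenerations (from Section~\ref{s:local}) and the characterization of $\C_{P,\pm}$ by rational cones enter decisively, since they control the supports after applying asymptotics place by place. Once the compatibility is in hand, tracking the sign $(-1)^{\dim Z(M)}$ through and summing over conjugacy classes of standard parabolic subgroups yields the claimed formula.
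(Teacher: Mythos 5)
Your overall plan is in the right direction: once one knows Proposition~\ref{prop:alt}, i.e.\ that $\eB_P(f_1,f_2) = \brac{R_P^{-1}\CT_P(f_1),\CT_{P^-}(f_2)}$, the theorem is just the definition \eqref{e:defB} of $\eB$ as the signed sum of the $\eB_P$. So the work is entirely in proving Proposition~\ref{prop:alt}, and you have correctly identified that the local input is the relation between asymptotics and the inverse intertwining operator, and that the support cones $\C_{P^\pm,\pm}$ guarantee convergence of the pairing.

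However, you mislocate the crux. You name as the ``main obstacle'' the local-to-global matching producing $R_P^{-1}$ as a restricted tensor product of local inverse intertwining operators together with descent to $M(F)\bs M(\bbA)$. In the paper that matching is \emph{not} part of this proof at all: it is Proposition~\ref{prop:I^-1}, proved separately and simply cited here. The actual content of the proof of Proposition~\ref{prop:alt} is the unwinding of the pairing $\brac{R_P^{-1}\CT_P(f_1),\CT_{P^-}(f_2)}$ into the defining expression \eqref{e:tildeB} for $\eB_P$. Concretely: lift $f_i$ to $\tilde f_i\in C^\infty_c(G(\bbA))$, write $\CT_P(f_1)(g)=\sum_{\gamma\in G(F)/U(F)}\int_{U(\bbA)}\tilde f_1(gu\gamma^{-1})du$, insert the explicit kernel formula \eqref{e:I^-1} for $R_P^{-1}$ via $\xi_P=\Asymp_P(\delta_1)$, and then reorganize the resulting sum over $G(F)/U(F)$ and integral over $G(\bbA)/P^-(F)$ into a sum over $\bbX_P(F)$ and an integral over $G(\bbA)/G(F)$, using $G\xt G$-equivariance of $\Asymp_P$ and the description of $\bbX_P(F)$ as a quotient. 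This regrouping step is what reproduces $\sum_{x\in\bbX_P(F)}\Asymp_P(\tilde f_1^\vee * \tilde f_2)(x)=\eB_P(f_1,f_2)$, and it is absent from your proposal; saying the pairing ``reproduces the integral defining $\eB_P$'' is exactly the claim that needs to be established, not assumed.
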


\medskip

In Section \ref{s:L}, we use Theorem~\ref{thm:global} to define the operator $L :\eA_c \to \eA$
and the subspace $\eA_\psc \subset \eA$ of ``pseudo-compactly'' supported functions. 
We show that $L$ sends $\eA_c$ to $\eA_\psc$, and in Theorem~\ref{thm:L^-1} we prove that
the operator $L : \eA_c \to \eA_\psc$ is invertible. We give a formula~\eqref{e:Linverse}
for $L^{-1}$, which is in fact simpler than the formula for $L$. 
This formula may be viewed as an analog of the definition of the Aubert--Zelevinsky involution. 

\subsubsection{Appendices \ref{s:appendixHecke}--\ref{sect:DLV}}
In Appendix \ref{s:appendixHecke}, we consider the global model
for the formal arc space of a group embedding into an algebraic monoid. 
This model was also used in \cite[\S 2]{BNY}. We realize the global model
as a substack of a symmetrized version of the Hecke stack. 
We give a bound on the difference of the Harder--Narasimhan coweights of 
the two bundles corresponding to a point of the Hecke stack (Lemma~\ref{lem:heckeHN}). In this article, we are primarily interested in the stack $\eH^+_M$ attached to the monoid $\wbar M$, defined as the closure of $M$ in the affine closure of $G/U$, where $P$ is a parabolic subgroup with Levi factor $M$.
The stack $\eH^+_M$ is a graded Ran version (in the sense of \cite{Whatacts}) of the closed substack of the Hecke stack studied in \cite{BG} and \cite[\S 1.8]{BFGM}. 
The Hecke stack is a twisted product of $\Bun_M$ and 
the Beilinson--Drinfeld (factorizable) affine Grassmannian, and it is more convenient to use the latter to talk about factorization properties.
We briefly review the relevant notation and properties of the factorizable affine Grassmannian -- we use a symmetrized version that does not explicitly mention the Ran space. 

\medskip

In Appendix \ref{s:factorization}, we review the definition of the factorization algebras on the affine Grassmannian introduced in \cite{BG2, Whatacts} that act on geometric Eisenstein series. The main goal of this Appendix is to highlight the connection (via Grothendieck's functions--sheaves dictionary) between certain measures (related to unramified intertwining operators) appearing in the classical non-Archimedean Gindikin--Karpelevich formula 
and Gaitsgory's factorization algebras (see Proposition~\ref{prop:geom-nu}, Lemma~\ref{lem:f_Omega}). 
From this perspective, we point out how the main theorem of \cite{BFGM} may be interpreted as a categorical or geometric version of (Langlands' interpretation of) the Gindikin--Karpelevich formula. 

\medskip

In Appendix \ref{sect:DLV}, we study the compactification of the diagonal morphism of $\Bun_G$ using the results of \cite{Schieder:gen}. The compactification $\wbar\Bun_G$ we define is slightly different from the one found in the literature. We review the definition and relevant properties of the Drinfeld--Lafforgue--Vinberg degeneration of $\Bun_G$. In particular we highlight the connection between the geometric Bernstein asymptotics studied in \emph{loc.~cit.}~and Gaitsgory's factorization algebras to deduce that for an arbitrary parabolic subgroup, the geometric Bernstein asymptotics corresponds to the classical asymptotics of the characteristic function of $K$ via the functions--sheaves dictionary.

\subsection{Conventions}

Throughout the paper, $G$ will be a connected split reductive group over $\bbF_q$.  
Fix a split torus $T \subset G$ and a Borel $B$ containing $T$. 
Let $W$ be the Weyl group of $T$.
Let $\check \Lambda$ (resp.~$\Lambda$) denote the weight (resp.~coweight) lattice of $T$.

The monoid of dominant weights (resp., coweights) will be denoted
by $\check\Lambda^+_G$ (resp., by $\Lambda_G^+$).
The set of vertices of the Dynkin diagram of $G$ will be denoted by $\Gamma_G$; for
each $i \in \Gamma_G$ there corresponds a simple coroot $\alpha_i$
and a simple root $\check \alpha_i$. 
The set of coroots (resp.~positive coroots) will be denoted by $\Phi_G$ (resp.~$\Phi_G^+$) and the positive span of $\Phi_G^+$ inside $\Lambda$ by $\Lambda^\pos_G$.
Let $\check\Delta_G$ (resp.~$\check\Phi_G^+,\,\check\Phi_G^-,\,\check\Phi_G$) denote the simple (resp.~positive, negative, all) roots of $G$.
By $2\check\rho \in \check\Lambda$ (resp.~$2\rho \in \Lambda$) we will denote the sum of 
the positive roots (coroots) of $G$ and by $w_0$ the longest element in the Weyl group of $G$.
For $\lambda,\mu \in \Lambda$ we will write that $\lambda \ge \mu$
if $\lambda - \mu \in \Lambda^\pos_G$, and similarly for $\check\Lambda_G^\pos$.

We will only consider parabolic subgroups that contain $T$.
Let $P$ be a standard\footnote{Recall that in any conjugacy class of parabolic subgroups of $G$, there is exactly one standard parabolic subgroup.}
 parabolic subgroup, i.e., $P$ contains $B$.
Then the Levi quotient can be canonically realized as a subgroup 
$M \subset P$. We have $P = M U$ where $U$ is the unipotent radical of $P$. 
There is a unique parabolic $P^-$ such that
$P \cap P^- = M$. 
To $M$ there corresponds a subdiagram $\Gamma_M \subset \Gamma_G$, coroots $\Phi_M \subset \Phi_G$, and positive coroots $\Phi^+_M \subset \Phi^+_G$.
We will denote by $\Lambda^+_M \supset \Lambda^+_G$, $\Lambda^\pos_M \subset
\Lambda_G^\pos, 2\check\rho_M \in \check \Lambda$, $\ge_M$, etc. the
corresponding objects for $M$.

Let $\Rep(G)$ denote the abelian category of finite-dimensional $G$-modules.

Given two $G$-spaces $Y,Z$ such that the diagonal action of $G$ on $Y \xt Z$ is free, we let $Y \xt^G Z$ denote the quotient of $Y \xt Z$ by the diagonal $G$-action.

For a scheme or stack $\eY$, we let $D(\eY)$ denote the DG category of bounded constructible $\wbar\bbQ_\ell$-sheaves on $\eY$. 
We will use `sheaf' to mean a complex of sheaves. 
All functors between sheaves are derived functors. 
When $\eY$ is a stack over $\spec \bbF_q$, we assume that $\ell$ is coprime to $q$. Choose a square root of $q$ in $\wbar\bbQ_\ell$ once and for all. 
The intersection cohomology sheaves are normalized so that they are pure of weight $0$. In other words, for a smooth $\bbF_q$-stack $\eY$ of dimension $n$, $\IC_\eY \cong (\wbar\bbQ_\ell(\frac 1 2)[1])^{\ot n}$.

\subsection{Acknowledgments}
This research is partially supported by the Department of Defense (DoD) through the National Defense Science and Engineering Graduate Fellowship (NDSEG) Program.
I am extremely grateful to my doctoral advisor V.~Drinfeld for his continual guidance and support throughout this project. 
I thank R.~Bezrukavnikov and Y.~Sakellaridis for many helpful discussions about their works. I also thank D.~Gaitsgory and S.~Raskin for explaining much about factorization algebras and the related derived algebraic geometry to me. I thank S.~Schieder for sharing his results with me and for many conversations about $\VinBun_G$.

\section{Local intertwining operators and asymptotics} \label{s:local}
In this section, we work over a non-Archimedean local field $F_v$,
and $G$ is a connected split reductive group over $F_v$. 
The subscript $v$ is only present to keep notation consistent throughout this article -- the presence of a global field is not assumed, and the characteristic of $F_v$ is arbitrary (and possibly zero). 

\medskip

Let $\abs{}_v$ denote the absolute value on $F_v$, let $\fo_v$ denote the
ring of integers of $F_v$, and let $q_v$ be the cardinality of the residue field.
We will use $G,\, P,\, \bbX_P$, etc.~to also denote
the topological groups/spaces of $F_v$-points of the corresponding algebraic groups or varieties, 
e.g., $G = G(F_v),\, P = P(F_v),\, \bbX_P = \bbX_P(F_v)$.
Let $K = K_v$ denote the standard maximal compact subgroup
of $G$, and $K_M$ denotes the standard maximal compact subgroup of $M$. 

\medskip

In \S\ref{ss:bd}--\ref{ss:combinatorics}, we define the space $\bbX_P$ and describe how to consider bounded subsets of $G/U$ and $\bbX_P$ 
in terms of subsets of the lattice $\Lambda$. 
In \S\ref{ss:CXMY}--\ref{ss:asymp}, we review some definitions and results from \cite{BK} to introduce the local asymptotics map $\Asymp_P$, which is ``essentially the same'' as the inverse of the standard intertwining operator. 
We observe that $\Asymp_{P}$ is determined by a generalized function $\xi_{P}$ on $\bbX_P$. 
In \S\ref{ss:CPv}--\ref{ss:R_Pv^-1}, we give a formula for the inverse of the intertwining operator in terms of $\xi_{P}$. 

\subsection{Bounded sets} \label{ss:bd}
Let $\bbX$ be a quasi-affine variety over $F_v$ (i.e., there exists
a locally closed embedding of $\bbX$ into a finite dimensional affine space). 
We say that a subset $S \subset \bbX(F_v)$ is \emph{bounded} if the following
equivalent conditions are satisfied:

(i) for any regular function $f \in F_v[\bbX]:=\Gamma(\bbX,\eO_\bbX)$, the function $\abs{f}_v$ is bounded on $S$,

(ii) for any locally closed embedding (in the sense of algebraic geometry) 
of $\bbX$ into an affine space, the image of $S$ is
bounded (with respect to the norm induced by the absolute value on $F_v$),

(iii) for any open embedding (in the sense of algebraic geometry) of $X$ into 
an affine variety, the image of $S$ is
relatively compact (for the ``usual'' topology induced by the topology on $F_v$).

\subsubsection{}
Recall that an $F_v$-scheme $\bbX$ is \emph{strongly quasi-affine} if 
the canonical morphism \[ \bbX \to \spec F_v[\bbX] \] is an open embedding
and $F_v[\bbX]$ is a finitely generated $F_v$-algebra.
For a strongly quasi-affine variety $\bbX$, 
in condition (iii) it suffices to consider only the open embedding $\bbX \into \spec F_v[\bbX]$.

\subsection{The strongly quasi-affine varieties $G/U$ and $\bbX_P$}
Fix a standard parabolic subgroup $P \subset G$ with Levi subgroup $M$ and unipotent radical $U$.  

\subsubsection{}
The quotient varieties $G/U$ and $G/U^-$ are strongly quasi-affine by
\cite{Grosshans}. Let $\wbar{G/U}:= \spec F_v[G/U]$ and $\wbar{G/U^-} := \spec F_v[G/U^-]$ denote the affine closures.

\smallskip

We review the definition of the variety $\bbX_P$ introduced in \cite[\S 2.2.1]{BK} below.

\subsubsection{} \label{sss:X_P}
Define the \emph{boundary degeneration} 
\[ \bbX_P := (G \xt G)/(P \xt_M P^-) = (G/U \xt G/U^-)/M, \] 
where $M$ acts diagonally on the right. 
Recall (cf.~\cite[Proposition~2.4.4]{DG:CT}) that $\bbX_P$ is a quasi-affine variety; let $\wbar \bbX_P := \spec F_v[\bbX_P]$ denote the affine closure.
By \cite{Grosshans}, $F_v[G/U \xt G/U^-]$ is finitely generated. Therefore Hilbert's theorem on invariants implies that $F_v[\bbX_P] = F_v[G/U \xt G/U^-]^M$ is finitely generated (i.e., $\bbX_P$ is strongly quasi-affine).
Thus a subset $S \subset \bbX_P$ is bounded if and only if 
$f(S) \subset F_v$ is bounded for every $f \in F_v[\bbX_P]$.

\subsection{Combinatorial setup} \label{ss:combinatorics}

We give a combinatorial description of bounded subsets of $\bbX_P$ in Proposition~\ref{prop:Xbound} below. 

\subsubsection{} 
By the Cartan decomposition, $K_M \bs M / K_M = (T/K_T) / W_M$. 
We have an isomorphism
\[ \ord_T : T/K_T \to \Lambda \] 
sending $\lambda(x) \mapsto \lambda \ot (-\log_{q_v} \abs{x}_v)$ where $\lambda \in \Lambda,\, x \in F^\times_v$. 
This induces an isomorphism 
\begin{equation}\label{e:ordM}
    \ord_M : K_M \bs M / K_M \to \Lambda^+_M. 
\end{equation}

\subsubsection{}\label{sect:KorbX}
By the Iwasawa decomposition, $G = K \cdot P = K \cdot P^-$. 
Therefore \eqref{e:ordM} induces the projections
\begin{equation}\label{e:ordMG}
    \ord_M : G/U \to K \bs (G/U)/K_M = K_M \bs M / K_M = \Lambda^+_M
\end{equation}
and $\ord_M : G/U^- \to \Lambda^+_M$.
We have a left $G\xt G$-action on $\bbX_P$.
Using \eqref{e:ordM} again, we also define the projection 
\begin{equation}\label{e:ordMX} 
    \ord_M : \bbX_P \to (K \xt K)\bs \bbX_P = K_M \bs M / K_M = \Lambda^+_M, 
\end{equation}
where the first equality sends $(m_1, m_2) \mapsto m_1^{-1} m_2$ when $m_1,m_2\in M$.

\begin{lem} \label{lem:ordineq}
Let $g_1 \in G/U$ and $g_2 \in G/U^-$.
Consider the image of $(g_1,g_2)$ in $\bbX_P$. Then 
\[ w_0^M \ord_M(g_1,g_2) \le_M \ord_M(g_2) - \ord_M(g_1) \le_M \ord_M(g_1,g_2). \]
\end{lem}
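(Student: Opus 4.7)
The plan is to reduce the claim to a statement about the Cartan decomposition in the Levi $M$, and then invoke the saturation property for convolution of Cartan double cosets.

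By the Iwasawa decompositions $G = K \cdot P = K \cdot P^-$, choose representatives $g_1 = k_1 m_1$ in $G/U$ and $g_2 = k_2 m_2$ in $G/U^-$ with $k_i \in K$ and $m_i \in M$. By the definition \eqref{e:ordMG}, the coweight $\lambda_i := \ord_M(g_i) \in \Lambda_M^+$ is the $M$-Cartan coweight of $m_i$, i.e.\ $m_i \in K_M \, \lambda_i(\pi) \, K_M$ where $\pi$ is a uniformizer of $F_v$. Since $(g_1,g_2)$ and $(m_1,m_2)$ lie in the same $(K \times K)$-orbit on $\bbX_P$, the identification in \eqref{e:ordMX} yields that $\mu := \ord_M(g_1, g_2) \in \Lambda_M^+$ equals the Cartan coweight of $m_1^{-1} m_2$.

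It therefore suffices to prove the following purely group-theoretic statement in $M$: if $m_1, m_2 \in M$ have Cartan coweights $\lambda_1, \lambda_2 \in \Lambda_M^+$ and $m_1^{-1} m_2$ has Cartan coweight $\mu \in \Lambda_M^+$, then $w_0^M \mu \le_M \lambda_2 - \lambda_1 \le_M \mu$. The key input is the classical saturation principle: for any $\alpha, \beta \in \Lambda_M^+$,
\[ K_M \, \alpha(\pi) \, K_M \cdot K_M \, \beta(\pi) \, K_M \;\subseteq\; \bigsqcup_{\substack{\gamma \in \Lambda_M^+ \\ \gamma \le_M \alpha + \beta}} K_M \, \gamma(\pi) \, K_M. \]
Geometrically, this is the statement that the convolution of the closed Schubert varieties $\wbar\eGr_M^\alpha$ and $\wbar\eGr_M^\beta$ in the affine Grassmannian $\eGr_M$ has image in $\wbar\eGr_M^{\alpha + \beta}$.

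The right inequality follows by applying saturation to the factorization $m_2 = m_1 \cdot (m_1^{-1} m_2)$, giving $\lambda_2 \le_M \lambda_1 + \mu$. For the left inequality, note that $(m_1^{-1} m_2)^{-1}$ has Cartan coweight $-w_0^M \mu \in \Lambda_M^+$ (the dominant representative of $-\mu$, since $w_0^M \in N_{K_M}(T)$ conjugates $(-\mu)(\pi)$ to $(-w_0^M \mu)(\pi)$). Applying saturation to $m_1 = m_2 \cdot (m_1^{-1} m_2)^{-1}$ then yields $\lambda_1 \le_M \lambda_2 - w_0^M \mu$, which rearranges to $w_0^M \mu \le_M \lambda_2 - \lambda_1$. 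The only nontrivial input is the saturation principle, which is classical, so I do not anticipate any serious obstacle.
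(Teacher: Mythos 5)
Your argument is essentially the paper's proof: both reduce to Cartan coweights of $m_1$, $m_2$, $m_1^{-1}m_2$ in $M$ and then apply the saturation property of spherical double cosets to the factorizations $m_2 = m_1\cdot(m_1^{-1}m_2)$ and $m_1 = m_2\cdot(m_1^{-1}m_2)^{-1}$. The only difference is cosmetic — you spell out the saturation principle and the $w_0^M$-twist that the paper leaves as ``the usual properties of the (spherical) Hecke algebra.''
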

\begin{proof}
Let $\lambda_1 = \ord_M(g_1),\, \lambda_2 = \ord_M(g_2)$, and
$\theta = \ord_M(g_1,g_2)$. It follows from the definitions that
$\theta(\varpi_v) \in K_M \lambda_1(\varpi_v)^{-1} K_M \lambda_2(\varpi_v) K_M$,
where $\varpi_v \in \fo_v$ is a uniformizer.
This is equivalent to $\lambda_2(\varpi_v) \in K_M \lambda_1(\varpi_v) K_M \theta(\varpi_v) K_M$. The usual properties of the (spherical) Hecke algebra imply that $\lambda_2 \le_M \lambda_1 + \theta$. 
Similarly, we also have $\lambda_1(\varpi_v) \in K_M \lambda_2(\varpi_v) K_M \theta(\varpi_v)^{-1} K_M$, which implies that $\lambda_1 \le_M \lambda_2 - w_0^M \theta$.
\end{proof}

\subsubsection{} \label{sss:bddbelow}
Let $\Lambda^\bbQ := \bbQ \ot_\bbZ \Lambda$. 
Let $\Lambda^{\pos,\bbQ}_G \subset \Lambda^\bbQ$ denote the rational cone 
corresponding to $\Lambda^\pos_G$.
We define the rational ordering $\le_G^\bbQ$ by
$\mu\le_G^\bbQ \lambda$ if and only if $\lambda- \mu\in \Lambda^{\pos,\bbQ}_G$. 

Let $\check\Lambda^{+,\bbQ}_G \subset \check\Lambda^\bbQ := \bbQ \ot_\bbZ \check\Lambda$ denote the rational cone corresponding to $\check\Lambda^+_G$.

\medskip

We say that a subset $S \subset \Lambda^\bbQ$ is \emph{bounded below} 
(with respect to $\le_G^\bbQ$) 
if the following equivalent conditions are satisfied:

(i) For any $\check\lambda \in \check\Lambda_G^+$, the subset 
$\check\lambda(S) \subset \bbQ$ is bounded below.

(ii) There exists a subset $S_0 \subset \Lambda^\bbQ$ with compact
closure in $\bbR \ot_\bbZ \Lambda$ such that $S \subset S_0 + \Lambda^{\pos,\bbQ}_G$. 

\noindent Define $S \subset \Lambda^\bbQ_G$ to be \emph{bounded above} if $-S$ is bounded below.

\begin{prop} \label{prop:Xbound}
	A subset $S \subset \bbX_P$ is bounded if and only if $\ord_M(S) \subset \Lambda^+_M$ is bounded above.
\end{prop}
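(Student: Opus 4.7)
The plan is to use the Iwasawa and Cartan decompositions to reduce boundedness on $\bbX_P$ to the behavior on the open immersion $\sigma \colon M \hookrightarrow \bbX_P$ defined by $m \mapsto (eU, mU^-) \bmod M$, and then to analyze this via matrix-coefficient test functions. The Iwasawa decomposition $G = KP = KP^-$ yields a surjection $K \times K \times M \twoheadrightarrow \bbX_P$, $(k_1, k_2, m) \mapsto (k_1, k_2) \cdot \sigma(m)$; since $K \times K$ is compact and $\ord_M$ is $(K \times K)$-invariant by \eqref{e:ordMX}, boundedness of $S \subset \bbX_P$ reduces to boundedness of its projection to the $M$-factor (well-defined up to $K_M \times K_M$) in $\wbar\bbX_P$. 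Combining with the Cartan decomposition of $M$, the question becomes whether $\{\sigma(\lambda(\varpi_v)) : \lambda \in \Theta\}$ is bounded in $\wbar\bbX_P$ for a given $\Theta \subset \Lambda^+_M$.

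For each $\check\lambda \in \check\Lambda_G^+$, let $V = V_{\check\lambda}$; its $U$-invariants $V^U$ form an irreducible $M$-representation of highest weight $\check\lambda$ which is perfectly $M$-paired with $V^{\ast, U^-}$, so Schur's lemma singles out a unique (up to scalar) $M$-invariant element $\mathrm{Id}_{V^U} \in V^U \otimes V^{\ast, U^-}$. The matrix-coefficient construction applied to this invariant element yields a function $f_{\check\lambda} \in F_v[\bbX_P]$ satisfying $f_{\check\lambda}(\sigma(m)) = \chi_{V^U}(m^{-1})$. Expanding $\chi_{V^U}(\lambda(\varpi_v)^{-1})$ over the $T$-weights of $V^U$ (each of the form $\check\lambda - \check\beta$ where $\check\beta$ is a non-negative integer combination of simple $M$-roots) and invoking the ultrametric triangle inequality gives $|f_{\check\lambda}(\sigma(\lambda(\varpi_v)))|_v = q_v^{\langle \check\lambda, \lambda\rangle}$ for $\lambda$ sufficiently deep in the $M$-dominant chamber.

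For the forward direction, boundedness of $S$ forces $|f_{\check\lambda}|_v$ bounded on $S$, hence $\langle \check\lambda, \ord_M(S)\rangle$ is bounded above for every $\check\lambda \in \check\Lambda_G^+$; this is condition~(i) of \S\ref{sss:bddbelow} applied to $-\ord_M(S)$, so $\ord_M(S)$ is bounded above. For the converse, I use the Peter--Weyl-type decomposition $F_v[\bbX_P] = \bigoplus_{\check\lambda \in \check\Lambda_G^+} V_{\check\lambda} \boxtimes V_{\check\lambda}^{\ast}$ as $G \times G$-modules (each summand of multiplicity one, pinned down by $\mathrm{Id}_{V^U}$) to express any $f \in F_v[\bbX_P]$ as a finite sum of $G \times G$-translates of finitely many $f_{\check\lambda}$. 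Then condition~(ii), $\ord_M(S) \subset S_0 - \Lambda^{\pos,\bbQ}_G$ for compact $S_0$, together with dominance $\langle \check\lambda, \alpha_i\rangle \geq 0$ for all simple $G$-coroots, gives a uniform upper bound on $\langle \check\lambda, \lambda\rangle$ for $\lambda \in \ord_M(S)$; combined with compactness of $K \times K$, this yields uniform bounds on all $|f|_v$ on $S$. The main obstacle is the converse direction, specifically establishing the Peter--Weyl-type decomposition (which depends on the representation-theoretic fact that $V_{\check\lambda}^U$ is irreducible as an $M$-module) and controlling the behavior of $G \times G$-translates of $f_{\check\lambda}$ on $\sigma(M)$.
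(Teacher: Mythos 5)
Your approach is genuinely different from the paper's. The paper reduces to the closure $\wbar{T}$ of $T$ in $\wbar{\bbX}_P$ and invokes \cite[Corollary 4.1.5]{Wa}, which says $F_v[\wbar T]\subset F_v[T]$ has a basis of characters indexed by $W_M\cdot\check\Lambda_G^+$; boundedness then follows directly from the closedness of $\wbar T$ in $\wbar\bbX_P$ and the identity $-\ord_T(S_T)=W_M\cdot\ord_M(S)$. You instead work directly with matrix coefficients in $F_v[\bbX_P]$, which is attractive but introduces two genuine gaps.

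In the forward direction you test with the function $f_{\check\lambda}$, whose restriction to $\sigma(T)$ is the \emph{character} $\chi_{V^U}$, a $\check\Lambda$-sum. The ultrametric estimate $\abs{f_{\check\lambda}(\sigma(\lambda(\varpi_v)))}_v = q_v^{\langle\check\lambda,\lambda\rangle}$ holds only for $\lambda$ in the \emph{interior} of the $M$-dominant chamber, where the highest weight strictly dominates. On walls there can be ties among $\langle\check\mu,\lambda\rangle$, and in residue characteristic $p$ the sum of the tying terms can literally vanish (e.g.\ $\chi(1) = \dim V^U$ for $\lambda=0$); the subset $\ord_M(S)$ need not be deep, so the inequality $\langle\check\lambda,\ord_M(S)\rangle\le C$ does not follow as stated. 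The fix is to replace $f_{\check\lambda}$ by its $T\times 1$-weight component of weight corresponding to $\check\lambda$ (a legitimate regular function since $F_v[\wbar\bbX_P]=F_v[\bbX_P]$ is a rational $T\times T$-module): this component restricts on $\sigma(T)$ to a nonzero multiple of the single character $\check\lambda^{\pm 1}$, with no cancellation. Note that this repaired argument essentially \emph{re-derives} the surjectivity of $F_v[\wbar\bbX_P]\twoheadrightarrow F_v[\wbar T]$ onto the span of $\check\Lambda_G^+$, i.e.\ the content of the cited result from \cite{Wa}.

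In the converse direction, the gap you flag is real: the direct-sum decomposition $F_v[\bbX_P]=\bigoplus_{\check\lambda}V_{\check\lambda}\boxtimes V_{\check\lambda}^*$ requires $V_{\check\lambda}^U$ to be an irreducible $M$-module (Schur), which fails for Weyl modules in positive characteristic, and the paper allows $\operatorname{char}F_v>0$. What survives is the $\check\Lambda$-grading of $F_v[\bbX_P]$ by the $T\times 1$-action: the characters occurring are exactly $W_M\cdot\check\Lambda_G^+$ (Lemma~\ref{lem:RennerMbar} identifies this with the dual of $\Lambda_U^{\pos,\bbQ}$), and this is all the converse really needs. Your ``finite sums of $G\times G$-translates'' step should be recast as: any $f$ lies in a finite-dimensional $G\times G$-submodule $W\subset F_v[\bbX_P]$; choosing a $K\times K$-stable $\fo_v$-lattice $L\subset W$, the $T\times 1$-weights of $W$ lie in $W_M\cdot\check\Lambda_G^+$, which pair non-negatively against $\Lambda^{\pos,\bbQ}_U$, and this is what produces the uniform bound on $S$. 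Written that way the argument is characteristic-free and recovers what the paper gets in one line from the cited torus statement.
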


\begin{proof}
Consider the embedding $T \into \bbX_P : t \mapsto (t,1)$ and let $\wbar T$ denote
the closure of $T$ in $\wbar\bbX_P$. 
Let $S_T \subset T$ denote the preimage of $(K \xt K)\cdot S \subset \bbX_P$
under the previous embedding. Then $S$ is bounded if and only if $S_T$ is bounded in $\wbar T$. 
Note that $S_T$ is $W_M$-stable, and $-\ord_T(S_T) = W_M \cdot \ord_M(S)$. 
It is shown in \cite[Corollary 4.1.5]{Wa}
that $F_v[\wbar T] \subset F_v[T]$ has a basis formed by the characters in $W_M \cdot \Lambda^+_G$. 
For a weight $\check\lambda\in \check\Lambda^+_G$ and $t \in T$, 
we have $-\log_{q_v} \abs{\check\lambda(t)}_v = \brac{\check\lambda, \ord_T(t)}$.
Therefore $S_T$ is bounded in $\wbar T$ if and only if $-\ord_T(S_T)$ is bounded above. Since $\ord_M(S) \subset \Lambda^+_M$, we conclude that
$W_M\cdot \ord_M(S)$ is bounded above if and only if $\ord_M(S)$ is bounded above. \end{proof}

\subsubsection{The rational cone $\Lambda^{\pos,\bbQ}_{U}$} \label{sss:LambdaU}
We introduce the rational cone $\Lambda^{\pos,\bbQ}_{U}$, which is used throughout this article, and review some of its properties, which are proved in \cite[\S 3.1.1]{Wa}. 

\medskip

Let $\Lambda^\pos_{U} \subset \Lambda$ denote the 
non-negative integral span of the positive coroots of $G$ that are not coroots of $M$.
The submonoid $\Lambda^\pos_{U}$ is stable under the actions of $W_M$. 
Let $\Lambda^{\pos,\bbQ}_{U}$ denote the corresponding rational cone.

\begin{lem} \label{lem:wthull}
	Let $\lambda,\lambda' \in \Lambda^+_M$ with $\lambda \le_M \lambda'$. 
If $\lambda' \in \Lambda^\pos_{U}$, then $\lambda \in \Lambda^\pos_{U}$.
\end{lem}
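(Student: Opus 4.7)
I will prove a slight strengthening: if $\nu \in \Lambda^\pos_{U}$, $\lambda \in \Lambda^+_M$, and $m := \nu - \lambda \in \Lambda^\pos_M$, then $\lambda \in \Lambda^\pos_{U}$. The strengthening drops any $M$-dominance hypothesis on $\nu$; this relaxation will be essential for the induction to close. Taking $\nu = \lambda'$ recovers the stated lemma. The argument is by induction on $|m| := \sum_{i \in \Gamma_M} m_i$, where $m = \sum_{i \in \Gamma_M} m_i \alpha_i$; the base $m = 0$ is trivial.

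The inductive step rests on two sub-claims. The first is a single-step claim: if $\nu \in \Lambda^\pos_{U}$ and $i \in \Gamma_M$ satisfies $\brac{\check\alpha_i, \nu} > 0$, then $\nu - \alpha_i \in \Lambda^\pos_{U}$. To prove it, expand $\nu = \sum_{\alpha} n_\alpha \alpha$ over $\alpha \in \Phi^+_G \setminus \Phi^+_M$; positivity of $\brac{\check\alpha_i, \nu}$ forces some $\alpha^*$ in the support with $\brac{\check\alpha_i, \alpha^*} > 0$. Since $\alpha^* \neq \alpha_i$ (as $\alpha_i \in \Phi^+_M$ while $\alpha^* \notin \Phi^+_M$), the $\alpha_i$-string through $\alpha^*$ extends down to $\alpha^* - \alpha_i \in \Phi^+_G$; comparing simple-coroot coefficients at indices outside $\Gamma_M$ shows $\alpha^* - \alpha_i \in \Phi^+_G \setminus \Phi^+_M$, and rewriting $\nu - \alpha_i = (\nu - \alpha^*) + (\alpha^* - \alpha_i)$ concludes.

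The second sub-claim produces, when $m \neq 0$, an index $i \in \Gamma_M$ with $m_i > 0$ and $\brac{\check\alpha_i, \nu} > 0$. Fix a $W$-invariant positive-definite inner product $(\,,\,)$ on $\Lambda \ot_\bbZ \bbR$ and use the identity $(\alpha_i,\alpha_i)\brac{\check\alpha_i,\cdot} = 2(\alpha_i,\cdot)$ to write
\[
\sum_{i \in \Gamma_M} m_i (\alpha_i,\alpha_i)\brac{\check\alpha_i,\nu} \;=\; 2(m,\nu) \;=\; 2(m,\lambda) + 2(m,m).
\]
The term $(m,\lambda)$ is $\ge 0$ because $\lambda$ is $M$-dominant and each $m_i \ge 0$, while $(m,m) > 0$ by positive-definiteness and $m \neq 0$. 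The left-hand side is therefore strictly positive, and since each coefficient $m_i(\alpha_i,\alpha_i)$ is nonnegative, at least one summand is positive, producing the required $i$.

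The inductive step is now immediate: apply the second sub-claim to find $i$, apply the first to replace $\nu$ by $\nu - \alpha_i \in \Lambda^\pos_{U}$ (which need not be $M$-dominant---here the strengthening pays off), observe that $|m|$ has dropped by one, and invoke the inductive hypothesis. The delicate step I anticipate is the positivity argument in the second sub-claim: a naive variant using the Cartan pairing $\brac{\check m, m}$ fails in non-simply-laced cases where the symmetrized Cartan form degenerates (for instance when $M$ has a $G_2$-factor), and it is precisely to bypass this that I work with the $W$-invariant form $(\,,\,)$.
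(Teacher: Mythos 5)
Your proof is correct. The paper itself does not prove this lemma---it states it without proof and refers the reader to \cite[\S 3.1.1]{Wa}---so there is no in-paper argument to compare against. Your self-contained argument is a clean descent: strengthen the hypothesis by dropping $M$-dominance of $\lambda'$ so the induction can close, then repeatedly subtract a well-chosen simple coroot $\alpha_i$ of $M$. Sub-claim~1 is exactly the coroot-string property: since $\alpha^*\neq\pm\alpha_i$ and $\brac{\check\alpha_i,\alpha^*}>0$, the element $\alpha^*-\alpha_i$ is a coroot, and its positive coefficient on some $\alpha_{j_0}$ with $j_0\notin\Gamma_M$ (inherited unchanged from $\alpha^*$) forces it into $\Phi^+_G\setminus\Phi^+_M$. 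Sub-claim~2 correctly uses the $W$-invariant form to convert the Cartan pairing into $2(\alpha_i,\cdot)/(\alpha_i,\alpha_i)$ and extracts a positive summand from $(m,\nu)=(m,\lambda)+(m,m)>0$, with $(m,\lambda)\ge 0$ precisely because $\lambda$ is $M$-dominant and $m\in\Lambda^\pos_M$; your remark that the Euclidean form (rather than the raw Cartan pairing) is what makes this robust in non-simply-laced cases is apt. One remark worth keeping in mind: a purely convex-geometric argument via Lemma~\ref{lem:posU} (writing $\Lambda^\pos_U=\bigcap_{w\in W_M} w\Lambda^\pos_G$ and noting $\lambda\in\conv_\bbQ(W_M\lambda')$) only yields membership in the rational cone $\Lambda^{\pos,\bbQ}_U\cap\Lambda$; your inductive argument delivers the genuinely integral statement $\lambda\in\Lambda^\pos_U$ that the lemma asserts, which is strictly stronger when the monoid is not saturated.
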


\begin{lem} \label{lem:posU}
The subset $\Lambda^\pos_{U}\subset\Lambda$ is equal to the intersection of $w(\Lambda^\pos_G)$
for all $w \in W_M$. Consequently, 
$\Lambda^\pos_{U} \cap (-\Lambda^+_M) = \Lambda^\pos_G \cap (-\Lambda^+_M)$. 
\end{lem}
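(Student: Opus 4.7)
My plan is to prove the inclusion $\supset$ directly and reduce the inclusion $\subset$ to Lemma~\ref{lem:wthull}. The easy direction $\supset$: since $U$ is $M$-stable under conjugation, $W_M$ permutes $\Phi^+_G \setminus \Phi_M$, and so $\Lambda^\pos_U$ is $W_M$-stable. Hence for $\lambda \in \Lambda^\pos_U$ and $w \in W_M$, one has $w^{-1}(\lambda) \in \Lambda^\pos_U \subset \Lambda^\pos_G$, giving $\lambda \in w(\Lambda^\pos_G)$.

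For $\subset$: take $\lambda \in \bigcap_{w \in W_M} w(\Lambda^\pos_G)$. Since both sides are $W_M$-stable, I may replace $\lambda$ by its $M$-dominant Weyl conjugate and assume $\lambda \in \Lambda^+_M$. Writing $\lambda = \sum_{i \in \Gamma_G} n_i \alpha_i$ with $n_i \geq 0$ and $\lambda_U := \sum_{j \in \Gamma_G \setminus \Gamma_M} n_j \alpha_j$, my candidate upper bound is $\lambda' := w_0^M(\lambda_U) = \sum_{j \notin \Gamma_M} n_j \cdot w_0^M(\alpha_j)$. Once I verify $\lambda \leq_M \lambda'$ and $\lambda' \in \Lambda^+_M \cap \Lambda^\pos_U$, Lemma~\ref{lem:wthull} will give $\lambda \in \Lambda^\pos_U$. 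The element $\lambda'$ lies in $\Lambda^\pos_U$ because each $w_0^M(\alpha_j) \in \Phi^+_G \setminus \Phi_M$, and it is $M$-dominant because each $\alpha_j$ for $j \notin \Gamma_M$ is $M$-antidominant (since $\brac{\check\alpha_i, \alpha_j} \leq 0$ for $i \in \Gamma_M$ with $i \neq j$), so $w_0^M$ sends it to the $M$-dominant representative of its $W_M$-orbit.

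The hard part will be verifying $\lambda \leq_M \lambda'$. Setting $\mu_j := w_0^M(\alpha_j) - \alpha_j \in \Lambda^\pos_M$ (here using that $W_M$ acts trivially on $\Lambda/\Lambda_M$), one has $\lambda' - \lambda = \sum_j n_j \mu_j - \sum_{i \in \Gamma_M} n_i \alpha_i$, and I need its $\alpha_i$-coefficient non-negative for each $i \in \Gamma_M$, i.e., $\sum_j n_j (\mu_j)_i \geq n_i$. The constraint $w_0^M(\lambda) \in \Lambda^\pos_G$ directly yields $\sum_j n_j (\mu_j)_i \geq n_{i^*}$, where $^*$ is the involution of $\Gamma_M$ determined by $w_0^M(\alpha_i) = -\alpha_{i^*}$; the issue is to close the apparent gap between $n_{i^*}$ and $n_i$. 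I plan to close it by exploiting that $\mu_j$ itself is $^*$-symmetric: applying $w_0^M$ to $w_0^M(\alpha_j) = \alpha_j + \mu_j$ and using $(w_0^M)^2 = 1$ gives $w_0^M(\mu_j) = -\mu_j$, and combined with the explicit action $\alpha_k \mapsto -\alpha_{k^*}$ on $\Lambda_M$ this forces $(\mu_j)_k = (\mu_j)_{k^*}$. Rewriting the constraint at index $i^*$ via this symmetry then yields exactly $\sum_j n_j (\mu_j)_i \geq n_i$.

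The ``consequently'' clause is immediate from the main equality: for $\lambda \in \Lambda^\pos_G \cap (-\Lambda^+_M)$, the standard orbit inequality (since $-\lambda \in \Lambda^+_M$) gives $w(\lambda) - \lambda \in \Lambda^\pos_M \subset \Lambda^\pos_G$ for every $w \in W_M$, so $w(\lambda) \in \Lambda^\pos_G$. The main statement then places $\lambda$ in $\bigcap_w w(\Lambda^\pos_G) = \Lambda^\pos_U$.
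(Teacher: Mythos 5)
The paper does not actually prove this lemma: both Lemma~\ref{lem:posU} and Lemma~\ref{lem:wthull} are cited from \cite[\S 3.1.1]{Wa}, so there is no in-paper argument to compare against. Taking your proof on its own terms, it is correct, and in fact rather cleaner than one might expect: after $W_M$-stabilizing to make $\lambda$ $M$-dominant, you only invoke the two extreme constraints $w=1$ (to get $n_i\ge 0$) and $w=w_0^M$; the rest of the intersection over $W_M$ is never used. Every step checks out: $W_M$ stabilizes $\Phi^+_G\setminus\Phi_M$, hence $\Lambda^\pos_U$, giving $\supset$; each $\alpha_j$ with $j\notin\Gamma_M$ is $M$-antidominant, so $w_0^M(\alpha_j)\in\Phi^+_G\setminus\Phi_M$ and is $M$-dominant, putting $\lambda'=w_0^M(\lambda_U)$ in $\Lambda^+_M\cap\Lambda^\pos_U$; $\lambda'-\lambda$ lies in the coroot lattice of $M$, and the $*$-symmetry $(\mu_j)_k=(\mu_j)_{k^*}$ (forced by $w_0^M(\mu_j)=-\mu_j$) does exactly close the gap between $n_{i^*}$ and $n_i$ coming from the $w_0^M$ constraint, giving $\lambda\le_M\lambda'$; and Lemma~\ref{lem:wthull} finishes. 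The ``consequently'' clause is also correctly deduced: $\subset$ is immediate from $\Lambda^\pos_U\subset\Lambda^\pos_G$, and for $\supset$ the standard inequality $w(\lambda)-\lambda\in\Lambda^\pos_M$ for $-\lambda\in\Lambda^+_M$ puts $\lambda$ into every $w(\Lambda^\pos_G)$. The one thing worth flagging is that you rely on Lemma~\ref{lem:wthull} as a black box; since the paper cites that from the same source, this is a legitimate reduction in context, but if you wanted a fully self-contained proof you would need to check that \cite{Wa}'s proof of wthull does not itself route through posU.
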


\begin{rem}
Lemma~\ref{lem:posU} implies that $\Lambda^\pos_{U} \cap \Lambda^+_M
= w_0^M(\Lambda^\pos_G) \cap \Lambda^+_M$. This submonoid 
of $\Lambda^+_M$ is denoted by $\Lambda^+_{M,G}$ in \cite[\S 6.2.2, Proposition 6.2.3]{BG}.
\end{rem}

\begin{lem} \label{lem:RennerMbar}
The submonoid $W_M \cdot \check\Lambda^+_G \subset \check\Lambda$ is dual to 
$\Lambda^\pos_{U}$, i.e.,
\begin{equation} \label{eqn:sgpdual}
	W_M \cdot \check\Lambda^+_G = 
\{ \check\lambda \in \check\Lambda \mid 
\brac{\check\lambda, \mu} \ge 0 \emph{ for all } \mu \in \Lambda^\pos_{U} \}. 
\end{equation}
\end{lem}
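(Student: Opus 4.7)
The plan is to prove the two inclusions in~\eqref{eqn:sgpdual} separately, using Lemma~\ref{lem:posU} for the easier direction and a $W_M$-folding argument for the harder one.

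For $\subseteq$: given $\check\lambda = w\check\mu$ with $w\in W_M$ and $\check\mu\in\check\Lambda^+_G$, and any $\mu\in\Lambda^\pos_{U}$, Lemma~\ref{lem:posU} tells us that $\mu\in w(\Lambda^\pos_G)$, so $w^{-1}\mu\in\Lambda^\pos_G$. Then $\brac{\check\lambda,\mu}=\brac{\check\mu,w^{-1}\mu}\ge 0$ because $\check\mu$ is $G$-dominant and $w^{-1}\mu$ is a non-negative integral combination of positive coroots of $G$. This gives the first inclusion.

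For $\supseteq$: let $\check\lambda$ lie in the right-hand side of \eqref{eqn:sgpdual}. Choose $w\in W_M$ so that $\check\mu:=w^{-1}\check\lambda$ is $M$-dominant, i.e.\ $\brac{\check\mu,\alpha_i}\ge 0$ for all $i\in\Gamma_M$; such a $w$ exists because $\check\Lambda^+_M$ is a fundamental domain for the $W_M$-action. To finish, it suffices to verify $\brac{\check\mu,\alpha_i}\ge 0$ for $i\in\Gamma_G\setminus\Gamma_M$, since this together with $M$-dominance forces $\check\mu\in\check\Lambda^+_G$, whence $\check\lambda=w\check\mu\in W_M\cdot\check\Lambda^+_G$. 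For such an $i$ we have $\brac{\check\mu,\alpha_i}=\brac{\check\lambda,w\alpha_i}$, so the key point is:

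\smallskip

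\emph{Key step.} For every $w\in W_M$ and every $\alpha\in\Phi^+_G\setminus\Phi^+_M$, one has $w\alpha\in\Phi^+_G\setminus\Phi^+_M$. Consequently $w\alpha_i\in\Lambda^\pos_{U}$ for $i\in\Gamma_G\setminus\Gamma_M$, and then the hypothesis on $\check\lambda$ gives $\brac{\check\lambda,w\alpha_i}\ge 0$, as required.

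\smallskip

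The key step is where the main content sits, but it is a standard fact that I would check by reduction to simple reflections $s_j$ with $j\in\Gamma_M$: the formula $s_j\alpha=\alpha-\brac{\check\alpha_j,\alpha}\alpha_j$ shows $s_j\alpha$ stays positive whenever $\alpha\in\Phi^+_G$ and $\alpha\neq\alpha_j$ (which holds since $\alpha\notin\Phi_M$); moreover $W_M$ preserves $\Phi_M$, so $s_j\alpha\notin\Phi_M$. Thus the main (minor) obstacle is verifying that $W_M$ permutes $\Phi^+_G\setminus\Phi^+_M$, after which both inclusions are short.
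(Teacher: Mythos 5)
Your proof is correct. The paper itself does not give a proof of this lemma --- it is one of the facts in \S\ref{sss:LambdaU} that is deferred to the external reference \cite[\S 3.1.1]{Wa} --- so there is no argument in the paper to compare against directly; your argument stands on its own.

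Both directions check out. For $\subseteq$, the use of Lemma~\ref{lem:posU} to move $\mu$ into $\Lambda^\pos_G$ via $w^{-1}$ is exactly right. For $\supseteq$, folding $\check\lambda$ into the $M$-dominant chamber and then checking the remaining simple coroots $\alpha_i$, $i\in\Gamma_G\setminus\Gamma_M$, is the standard and correct approach. One small remark: your ``key step'' proves more than is needed, namely that $W_M$ permutes the set $\Phi^+_G\setminus\Phi^+_M$ itself. All that the argument actually requires is $w\alpha_i\in\Lambda^\pos_{U}$, and the paper already records (in \S\ref{sss:LambdaU}) that ``the submonoid $\Lambda^\pos_{U}$ is stable under the actions of $W_M$,'' so you could have cited that $W_M$-stability and skipped the reduction to simple reflections entirely. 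The finer statement you prove is true (the reduction to $s_j$, $j\in\Gamma_M$, together with the fact that $s_j$ sends every positive coroot other than $\alpha_j$ to a positive coroot and preserves $\Phi_M$, is correct), but the lighter citation would have shortened the argument and avoided re-proving a standard Weyl-group fact.
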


\begin{rem}  \label{rem:barXO}
Let $\mathsf{X_P}$ denote an $\fo_v$-model of $\bbX_P$, and set $\mathsf{\wbar X_P} := \spec \Gamma(\mathsf{X_P}, \eO_{\mathsf{X_P}})$. Then $\mathsf{\wbar X_P} \xt_{\spec \fo_v} \spec F_v = \wbar \bbX_P$, and $\mathsf{\wbar X_P}(\fo_v)$ is a $K \xt K$-stable subset of $\wbar \bbX_P(F_v)$. 
The proof of Proposition~\ref{prop:Xbound} shows that 
\[ \mathsf{\wbar X_P}(\fo_v) \cap \bbX_P(F_v)  \subset \ord_M^{-1}((-\Lambda^{\pos,\bbQ}_{U}) \cap \Lambda^+_M), \]
where $\Lambda^{\pos,\bbQ}_{U}$ is the dual cone of $W_M \cdot \check\Lambda^{+,\bbQ}_G$
by Lemma~\ref{lem:RennerMbar}.
\end{rem}

\subsubsection{}
We recall the definition of the Langlands retraction 
$\fL : \Lambda^\bbQ \to \Lambda^{+,\bbQ}_G$, which goes back to \cite{L:geom}. It is defined as follows: for $\lambda \in \Lambda^\bbQ$, let $\fL(\lambda)$ be the least element\footnote{The existence of the least element is not obvious; it was proved by R.~P.~Langlands in \cite[\S 4]{L:geom}.} 
 in the set $\{ \theta \in \Lambda^{+,\bbQ}_G \mid \lambda \le_G^\bbQ \theta \}$ in the sense of the $\le_G^\bbQ$ ordering. 
We refer the reader to \cite{D:L} for further properties of the Langlands retraction.

Let $\Lambda^{+,\bbQ}_M \subset \Lambda^\bbQ$ denote the rational cone corresponding to $\Lambda^+_M$.
\begin{lem} \label{lem:coLretract}
	Let $\lambda \in \Lambda^{+,\bbQ}_M$. Then $\fL(\lambda) -\lambda\in \Lambda^{\pos,\bbQ}_{U}
	\cap (-\Lambda^{+,\bbQ}_M)$.
\end{lem}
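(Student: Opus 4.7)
The strategy is to reduce the lemma to the single claim that $\lambda - \fL(\lambda)$ is $M$-dominant, i.e.\ $\fL(\lambda) - \lambda \in -\Lambda^{+,\bbQ}_M$. By definition of the Langlands retraction, $\fL(\lambda) - \lambda$ already lies in $\Lambda^{\pos,\bbQ}_G$; combined with the rational analog of the second assertion of Lemma~\ref{lem:posU}, namely $\Lambda^{\pos,\bbQ}_G \cap (-\Lambda^{+,\bbQ}_M) = \Lambda^{\pos,\bbQ}_U \cap (-\Lambda^{+,\bbQ}_M)$, this immediately upgrades the conclusion to $\fL(\lambda) - \lambda \in \Lambda^{\pos,\bbQ}_U \cap (-\Lambda^{+,\bbQ}_M)$, as desired.

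To prove the $M$-dominance of $\lambda - \fL(\lambda)$, I would fix $i \in \Gamma_M$ and show $\brac{\check\alpha_i, \fL(\lambda)} \le \brac{\check\alpha_i, \lambda}$, case-splitting on the coefficient $c_i$ in the unique expansion $\fL(\lambda) - \lambda = \sum_{k \in \Gamma_G} c_k \alpha_k$ with $c_k \ge 0$. If $c_i = 0$, then
\[
\brac{\check\alpha_i, \fL(\lambda) - \lambda} \;=\; \sum_{k \ne i} c_k \brac{\check\alpha_i, \alpha_k} \;\le\; 0,
\]
since $c_k \ge 0$ and $\brac{\check\alpha_i, \alpha_k} \le 0$ for distinct simple indices. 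If instead $c_i > 0$, I would invoke the minimality of $\fL(\lambda)$ among $G$-dominant elements $\ge_G^\bbQ \lambda$ via a small perturbation: set $\theta' := \fL(\lambda) - \epsilon \alpha_i$ for $0 < \epsilon \le c_i$. Then $\theta' - \lambda \in \Lambda^{\pos,\bbQ}_G$ (the $\alpha_i$-coefficient becomes $c_i - \epsilon \ge 0$), and for $j \ne i$ we have $\brac{\check\alpha_j, \theta'} \ge \brac{\check\alpha_j, \fL(\lambda)} \ge 0$ because $\brac{\check\alpha_j, \alpha_i} \le 0$. If additionally $\brac{\check\alpha_i, \fL(\lambda)} > 0$, then taking $\epsilon$ small enough would also preserve $\brac{\check\alpha_i, \theta'} \ge 0$, yielding $\theta' \in \Lambda^{+,\bbQ}_G$ with $\theta' \ge_G^\bbQ \lambda$ and $\theta' <_G^\bbQ \fL(\lambda)$ strictly, contradicting the extremality of $\fL(\lambda)$. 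Hence $\brac{\check\alpha_i, \fL(\lambda)} = 0 \le \brac{\check\alpha_i, \lambda}$, the last inequality by the $M$-dominance of $\lambda$.

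The only delicate step is the perturbation argument in the case $c_i > 0$, which leans on the nontrivial existence and minimality of $\fL(\lambda)$ from \cite{L:geom, D:L}; everything else is a short computation with simple-root pairings and the defining properties of $\Lambda^{\pos,\bbQ}_U$ and $\Lambda^{+,\bbQ}_M$.
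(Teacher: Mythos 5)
Your proof is correct, and it takes a genuinely different route from the paper's. The paper proves $\lambda - \fL(\lambda) \in (-\Lambda^{\pos,\bbQ}_G) \cap \Lambda^{+,\bbQ}_M$ by invoking the piecewise-linear structure of $\fL$ from \cite{D:L}: it picks the linearity domain $C_J$ containing $\lambda$, uses that $\lambda - \fL(\lambda)$ lies in the cone spanned by $-\alpha_j$, $j \in J$, and that $\fL(\lambda) \in V_J^\perp$, and then checks the two families of inequalities for $j \in \Gamma_M \cap J$ and $i \in \Gamma_G - J$. You instead derive the relevant ``complementary slackness'' fact --- that $c_i > 0$ in the expansion $\fL(\lambda) - \lambda = \sum_k c_k \alpha_k$ forces $\brac{\check\alpha_i, \fL(\lambda)} = 0$ --- directly from the defining minimality of $\fL(\lambda)$ by a perturbation in the $-\alpha_i$ direction. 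That fact is exactly what the structure of the cones $C_J$ and the orthogonality $\fL(\lambda) \in V_J^\perp$ encode, so you are in effect reproving (the piece you need of) \cite[Proposition 2.1, Lemma 2.3]{D:L} from scratch. Your version is more self-contained and elementary; the paper's is shorter modulo the citation and fits the surrounding text, which already relies on \cite{D:L} for the Langlands retraction. Both proofs then finish identically, appealing to the rational form of Lemma~\ref{lem:posU} to upgrade membership in $\Lambda^{\pos,\bbQ}_G \cap (-\Lambda^{+,\bbQ}_M)$ to membership in $\Lambda^{\pos,\bbQ}_U \cap (-\Lambda^{+,\bbQ}_M)$. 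One small point worth making explicit in your write-up: after producing $\theta' \in \Lambda^{+,\bbQ}_G$ with $\lambda \le_G^\bbQ \theta' \le_G^\bbQ \fL(\lambda)$ and $\theta' \ne \fL(\lambda)$, the contradiction with leastness uses that $\le_G^\bbQ$ is antisymmetric, which holds because $\Lambda^{\pos,\bbQ}_G$ is a pointed cone (the simple coroots are linearly independent).
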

\begin{proof}
Recall from \cite[Proposition 2.1]{D:L} that $\fL$ is piecewise linear, with linearity domains $C_J$ indexed by subsets $J \subset \Gamma_G$:  
Let $V_J^\perp = \{ \lambda \in \Lambda^\bbQ \mid \brac{\check\alpha_j, \lambda}=0,\, j\in J\}$. 
Then $C_J$ is the closed convex cone generated by $-\alpha_j, j \in J$ and 
$V_J^\perp \cap \Lambda^{+,\bbQ}_G$.

Suppose that $\lambda \in \Lambda^{+,\bbQ}_M$ lies in $C_J$. 
Then $\lambda - \fL(\lambda)$ belongs to the closed convex cone
generated by $-\alpha_j$ for $j \in J$, and $\fL(\lambda) \in V_J^\perp$ by \cite[Lemma 2.3]{D:L}. Therefore $\brac{\check\alpha_j, \lambda - \fL(\lambda)} = \brac{\check\alpha_j, \lambda} \ge 0$ for $j \in \Gamma_M \cap J$. 
Since $\brac{\check\alpha_i, \alpha_j} < 0$ for $i\in \Gamma_G - J,\, j\in J$, 
we also have $\brac{\check\alpha_i,\lambda - \fL(\lambda)} \ge 0$ for $i\in \Gamma_G - J$.
Hence $\lambda - \fL(\lambda) \in (-\Lambda^{\pos,\bbQ}_G) \cap \Lambda^{+,\bbQ}_M$. 
By Lemma~\ref{lem:posU}, we have the equality $(-\Lambda^{\pos,\bbQ}_G) \cap \Lambda^{+,\bbQ}_M= (-\Lambda^{\pos,\bbQ}_{U})\cap \Lambda^{+,\bbQ}_M$.
\end{proof}

Let $\Lambda^\bbR := \bbR \ot_\bbZ \Lambda$ and let $\Lambda^{+,\bbR}_G, \Lambda^{\pos,\bbR}_G$ denote the real cones corresponding to $\Lambda^+_G, \Lambda^\pos_G$.

\begin{cor} \label{cor:C-U}
A subset $S \subset \Lambda^{+,\bbQ}_M$ is bounded above 
if and only if there exists 
a compact subset $S_0 \subset \Lambda^{+,\bbR}_G$ such that 
$S$ is contained in the set 
$\{ \theta - \mu \mid \theta \in S_0,\, \mu \in \Lambda^{\pos,\bbQ}_{U}\}$.
\end{cor}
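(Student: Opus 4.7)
The plan is to prove the two directions separately, using the Langlands retraction $\fL$ to produce the compact set $S_0$ in the nontrivial direction. The implication $(\Leftarrow)$ is immediate: since the positive coroots of $G$ that are not coroots of $M$ are in particular positive coroots of $G$, we have $\Lambda^{\pos,\bbQ}_{U} \subset \Lambda^{\pos,\bbQ}_G$, so the hypothesis forces $S \subset S_0 - \Lambda^{\pos,\bbQ}_G$ with $S_0$ compact, which is condition (ii) of \S\ref{sss:bddbelow} applied to $-S$, proving $S$ is bounded above.

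For the forward direction, assume $S \subset \Lambda^{+,\bbQ}_M$ is bounded above. My candidate for $S_0$ is $\overline{\fL(S)} \subset \Lambda^{+,\bbR}_G$, the closure of the image of $S$ under the Langlands retraction. By Lemma~\ref{lem:coLretract}, each $\lambda \in S$ satisfies $\fL(\lambda) - \lambda \in \Lambda^{\pos,\bbQ}_{U}$, hence $\lambda = \fL(\lambda) - (\fL(\lambda) - \lambda) \in \overline{\fL(S)} - \Lambda^{\pos,\bbQ}_{U}$. It therefore suffices to show that $\fL(S)$ has compact closure in $\Lambda^{+,\bbR}_G$.

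This compactness assertion is the main obstacle, and I would establish it in three steps. First, since $\fL$ is piecewise linear by \cite[Proposition 2.1]{D:L}, it extends uniquely by continuity to a piecewise linear map $\Lambda^\bbR \to \Lambda^{+,\bbR}_G$. Second, $\fL$ is monotone with respect to $\le_G^\bbQ$ and its extension to $\Lambda^\bbR$: from $\lambda \le_G^\bbQ \mu \le_G^\bbQ \fL(\mu)$ together with $\fL(\mu) \in \Lambda^{+,\bbQ}_G$, the defining minimality of $\fL(\lambda)$ as the least element of $\{\theta \in \Lambda^{+,\bbQ}_G \mid \lambda \le_G^\bbQ \theta\}$ yields $\fL(\lambda) \le_G^\bbQ \fL(\mu)$. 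Using \S\ref{sss:bddbelow}(ii) for $-S$, write $S \subset S_1 - \Lambda^{\pos,\bbQ}_G$ where $S_1 \subset \Lambda^\bbQ$ has compact closure in $\Lambda^\bbR$; monotonicity and continuity of $\fL$ then give $\fL(S) \subset \fL(\overline{S_1}) - \Lambda^{\pos,\bbR}_G$ with $\fL(\overline{S_1})$ compact. Third, I would verify that $\Lambda^{+,\bbR}_G \cap (K - \Lambda^{\pos,\bbR}_G)$ is bounded for any compact $K \subset \Lambda^\bbR$: fixing a $W$-invariant positive definite inner product $(\cdot,\cdot)$ on $\Lambda^\bbR$, the $W$-invariance forces $(\alpha_i, \cdot)$ to be a positive scalar multiple of $\brac{\check\alpha_i, \cdot}$ for each simple coroot $\alpha_i$, whence $(\theta, \nu) \ge 0$ whenever $\theta \in \Lambda^{+,\bbR}_G$ and $\nu \in \Lambda^{\pos,\bbR}_G$; consequently $\|\theta\|^2 \le \|\theta + \nu\|^2$, and taking $\theta + \nu \in K$ bounds $\|\theta\|$ by the diameter of $K$. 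Combined, these three steps show $\fL(S)$ is bounded in $\Lambda^{+,\bbR}_G$ and hence has compact closure, as required.
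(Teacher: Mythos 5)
Your proof is correct and pursues the same basic strategy as the paper: take $S_0 = \overline{\fL(S)}$ and invoke Lemma~\ref{lem:coLretract}. The difference is in the level of rigor. The paper asserts, without argument, that $S_0$ is contained in $\Lambda^{+,\bbR}_G \cap \{\theta - \mu \mid \theta \in S_1,\, \mu \in \Lambda^{\pos,\bbR}_G\}$ and that this set is compact. As you implicitly noticed, the first assertion is not literally true for an arbitrary compact $S_1$ witnessing boundedness above (e.g.\ for $G = \SL(2)$, $M=T$, $S = S_1 = \{-2\}$ gives $\fL(S) = \{0\} \not\subset (-\infty,-2]$); one must first replace $S_1$ by a set of dominant elements, or equivalently by $\fL(\overline{S_1})$. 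You handle this correctly by establishing monotonicity of $\fL$ and using $\fL(\overline{S_1})$ in place of $S_1$, and you also supply the missing proof of compactness via a $W$-invariant inner product (taking care that $(\alpha_i,\cdot)$ is a positive multiple of $\brac{\check\alpha_i,\cdot}$, so $\Lambda^{+,\bbR}_G$ and $\Lambda^{\pos,\bbR}_G$ form an ``obtuse'' pair). Both of these steps are genuine content that the paper leaves implicit, so your version is more complete and in particular repairs a minor imprecision in the paper's statement of the inclusion.
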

\begin{proof}
Suppose $S \subset \Lambda^{+,\bbQ}_M$ is bounded above. 
Then there exists a compact subset $S_1 \subset \Lambda^\bbR$ such
that $S$ is contained in $\{ \theta - \mu \mid \theta \in S_1,\, \mu \in \Lambda^{\pos,\bbQ}_G \}$. 
Let $S_0$ denote the closure of $\fL(S)$ in $\Lambda^\bbR$. 
Then $S_0$ is contained in $\Lambda^{+,\bbR}_G \cap \{ \theta - \mu \mid \theta \in S_1,\, \mu \in \Lambda^{\pos,\bbR}_G \}$, which is a compact set. 
Lemma~\ref{lem:coLretract} implies that $S$ is contained in 
$\{ \theta - \mu \mid \theta \in S_0,\, \mu \in \Lambda^{\pos,\bbQ}_{U} \}$.
The other direction is evident.
\end{proof}

\subsubsection{} \label{sss:GembedX}
The closed embedding $G \xt P^- \into G \xt G$ induces a closed embedding 
$G/U \into \bbX_P$ sending $g_1 \mapsto (g_1, 1)$. 
By \cite[Corollary 4.1.5]{Wa}, this embedding extends to a closed embedding of affine closures $\wbar{G/U} \into \wbar \bbX_P$. 
Similarly, the closed embedding $G/U^- \into \bbX_P : g_2 \mapsto (1,g_2)$
extends to a closed embedding $\wbar{G/U^-} \into \wbar\bbX_P$. 
Using these embeddings, we deduce the combinatorial description
for bounded subsets of $G/U$ and $G/U^-$ from Proposition~\ref{prop:Xbound}: 

\begin{prop} \label{prop:GPbound}
(i) A subset $S \subset G/U$ is bounded if and only if there exists a finite subset $S_0 \subset \Lambda$ such that $\ord_M(S) \subset S_0 + \Lambda^{\pos,\bbQ}_{U}$.

(ii) A subset $S \subset G/U^-$ is bounded if and only if there exists a finite subset $S_0 \subset \Lambda$ such that $\ord_M(S) \subset \{ \theta - \mu \mid \theta \in S_0,\, \mu \in \Lambda^{\pos,\bbQ}_{U} \}$. 
\end{prop}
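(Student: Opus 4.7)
The plan is to deduce both parts from Proposition~\ref{prop:Xbound} via the closed embeddings $\wbar{G/U} \hookrightarrow \wbar\bbX_P$ and $\wbar{G/U^-} \hookrightarrow \wbar\bbX_P$ of \S\ref{sss:GembedX}. Because these are closed embeddings of affine closures, a subset $S\subset G/U$ (resp.~$S\subset G/U^-$) is bounded if and only if its image in $\bbX_P$ is bounded, which by Proposition~\ref{prop:Xbound} is equivalent to the associated subset of $\Lambda^+_M$ being bounded above in the $\le_G^\bbQ$-sense.

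Next I would relate the $\ord_M$-value of $(g_1,1)\in\bbX_P$ (resp.~$(1,g_2)$) to that of $g_1\in G/U$ (resp.~$g_2\in G/U^-$). Writing an Iwasawa decomposition $g_1 = kmu$ with $m\in M$, the image $(g_1,1)\in\bbX_P$ lies in the same $(K\xt K)$-orbit as $(U, m^{-1}U^-)$, and under the identification $(K\xt K)\bs\bbX_P = K_M\bs M/K_M = \Lambda^+_M$ it maps to the $W_M$-dominant representative of $-\ord_M(g_1)$, namely $-w_0^M\ord_M(g_1)$. The analogous recipe for $g_2\in G/U^-$ yields $\ord_M(1,g_2)=\ord_M(g_2)$. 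Both identities are consistent with the inequalities in Lemma~\ref{lem:ordineq}.

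Combining these two observations, $S\subset G/U$ is bounded iff $\{-w_0^M\ord_M(g_1) : g_1\in S\}\subset\Lambda^+_M$ is bounded above, while $S\subset G/U^-$ is bounded iff $\ord_M(S)\subset\Lambda^+_M$ is bounded above. Applying Corollary~\ref{cor:C-U}---using in case (i) the $W_M$-stability of $\Lambda^{\pos,\bbQ}_{U}$ recorded in \S\ref{sss:LambdaU} to absorb the action of $-w_0^M$---one obtains a compact $S_0^c\subset\Lambda^\bbR$ such that $\ord_M(S)\subset S_0^c + \Lambda^{\pos,\bbQ}_{U}$ in case (i) and $\ord_M(S) \subset \{\theta-\mu : \theta\in S_0^c,\ \mu\in\Lambda^{\pos,\bbQ}_{U}\}$ in case (ii).

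The main obstacle---really the only nontrivial step---will be the passage from a compact $S_0^c\subset\Lambda^\bbR$ to a finite subset of the lattice $\Lambda$. Fix finitely many generators $r_1,\dotsc,r_N$ of the monoid $\Lambda^\pos_{U}$. Any $\mu=\sum c_i r_i\in\Lambda^{\pos,\bbQ}_{U}$ decomposes as $\mu=\nu+\delta$ with $\nu:=\sum\lfloor c_i\rfloor r_i\in\Lambda^\pos_{U}\subset\Lambda$ and $\delta\in\sum[0,1]r_i$ bounded; absorbing $\delta$ into $S_0^c$ enlarges it to a still-compact $\widetilde S_0^c\subset\Lambda^\bbR$, and since $\ord_M(S)\subset\Lambda$ the set $S_0:=\widetilde S_0^c\cap\Lambda$ is finite and serves the role required by the statement. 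The converse directions of (i) and (ii) are then immediate by traversing the same chain of equivalences in reverse.
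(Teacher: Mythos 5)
Your proof is correct and follows the same route as the paper's: reduce to $\bbX_P$ via the closed embeddings of affine closures, translate $\ord_M$ through the $-w_0^M$ twist (your computation $\ord_M(g_1)=-w_0^M\ord_M(g_1,1)$ and $\ord_M(g_2)=\ord_M(1,g_2)$ matches the paper's identities exactly), and then invoke Proposition~\ref{prop:Xbound} together with Corollary~\ref{cor:C-U}. The one place you go beyond the paper's write-up is worth noting: Corollary~\ref{cor:C-U} only produces a \emph{compact} subset $S_0\subset\Lambda^{+,\bbR}_G$, while Proposition~\ref{prop:GPbound} asserts the existence of a \emph{finite} subset of the lattice $\Lambda$, and the paper simply asserts the deduction without comment. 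Your floor-decomposition trick with the monoid generators $r_1,\dotsc,r_N$ of $\Lambda^\pos_U$ -- peeling $\mu\in\Lambda^{\pos,\bbQ}_U$ into a lattice part $\nu=\sum\lfloor c_i\rfloor r_i$ and a bounded remainder $\delta$, then using $\ord_M(S)\subset\Lambda$ to intersect the enlarged compact set with the lattice -- is precisely the right argument to close that gap, and it works identically in both (i) and (ii). The appeal to $W_M$-stability of $\Lambda^{\pos,\bbQ}_U$ to absorb $w_0^M$ in case (i) is also exactly what is needed. In short: same approach, with the compact-to-finite step (which the paper leaves implicit) spelled out correctly.
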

\begin{proof}
Let $g_1 \in G/U$. Then $\ord_M(g_1) = -w_0^M \cdot \ord_M(g_1,1)$.
Using the closed embedding $\wbar{G/U} \into \wbar\bbX_P$, we deduce
(i) from Proposition~\ref{prop:Xbound} and Corollary~\ref{cor:C-U}. 
For $g_2 \in G/U^-$, we have $\ord_M(g_2) = \ord_M(1,g_2)$, so we can similarly deduce (ii) using the closed embedding $\wbar{G/U^-} \into \wbar\bbX_P$.
\end{proof}

\subsection{The space $\C_b(\bbX_P)$}\label{ss:CXMY} 

We review the definitions of the space $\C_b(\bbX_P)$ from \cite{BK} in the context of our combinatorial setup. 

\subsubsection{} 

Let $S^*(\bbX_P)$ denote the space of distributions on $\bbX_P$. 
Using our fixed choice of Haar measures, we identify distributions and generalized functions on $\bbX_P$. 
Given a generalized function $\xi \in S^*(\bbX_P)$, one can define a map
$T_\xi : C^\infty_c(G/U) \to C^\infty(G/U^-)$ 
by the formula
\begin{equation} \label{e:T_xi}
    T_\xi(\varphi)(g_2) = \int_{G/U} \varphi(g_1)\xi(g_1,g_2) dg_1, \quad\quad \varphi \in C^\infty_c(G/U),\, g_2 \in G/U^-. 
\end{equation}

Let $\C(\bbX_P)$ denote the space of $K\xt K$-finite $C^\infty$ functions on $\bbX_P$. 

Let $\C_b(\bbX_P) \subset \C(\bbX_P)$ denote the subspace of functions with bounded support. Proposition~\ref{prop:Xbound} implies that $\C_b(\bbX_P)$ is the set of functions $\xi \in \C(\bbX_P)$ such that $\ord_M(\supp \xi)$ is bounded above.

\subsubsection{}
We say that a generalized function $\xi \in S^*(\bbX_P)$ has \emph{essentially bounded support} if the convolution of $\xi$ with 
any element of $C^\infty_c(G) \ot C^\infty_c(G)$ has bounded support. 
Let $S^*_b(\bbX_P)^G$ denote the space of generalized function with essentially bounded support that are invariant under the diagonal $G$-action on $\bbX_P$.

\subsection{The spaces $\C_{P,\pm}$} \label{ss:CPv}

Let $\Lambda_{G,P}^\bbQ = \Lambda_{M/[M,M]}^\bbQ$. This vector space is the quotient of $\Lambda^\bbQ$ by the subspace spanned by the coroots of $M$. 
For $\lambda \in \Lambda^\bbQ$, 
let $[\lambda]_P$ denote the projection of $\lambda$ to $\Lambda^\bbQ_{G,P}$.
We define the map 
\[ \deg_P : G/U \to \Lambda^\bbQ_{G,P} \]
by $\deg_P(x) = [\ord_M(x)]_P$. 

Let $\Lambda^{\pos,\bbQ}_{G,P}$ denote the image of $\Lambda^{\pos,\bbQ}_G$ (equivalently $\Lambda^{\pos,\bbQ}_U$) under the projection $\Lambda^\bbQ \to \Lambda^\bbQ_{G,P}$. 

\subsubsection{} Let $\C_P$ denote the space of $K$-finite $C^\infty$ 
functions on $G/U$. 
Let $\C_{P,c} \subset \C_P$ stand for the subspace of compactly supported functions.

Let $\C_{P,+} \subset \C_P$ denote the set of all functions $\varphi \in \C_P$ 
such that $\deg_P(\supp \varphi)$ is contained in $S_0 + \Lambda^{\pos,\bbQ}_{G,P}$ for some finite subset $S_0 \subset \Lambda^\bbQ_{G,P}$.
Similarly, 
let $\C_{P,-} \subset \C_P$ denote the set of all $\varphi \in \C_P$ such 
that $-\deg_P(\supp \varphi)$ is contained in $S_0 + \Lambda^{\pos,\bbQ}_{G,P}$ for some finite set $S_0$. 

One similarly defines the spaces $\C_{P^-,\pm} \subset \C_{P^-}$. 
We emphasize that $\C_{P^-,+}$ is defined with respect to the cone $-\Lambda^{\pos,\bbQ}_{G,P}$. So $\C_{P^-,\pm}$ is the
set of all $\varphi\in \C_{P^-}$ such that $\mp \deg_{P^-}(\supp\varphi)$
is contained in $S_0 + \Lambda^{\pos,\bbQ}_{G,P}$ for some finite set $S_0$.

\begin{lem} \label{lem:Txi}
Let $\xi \in S^*_b(\bbX_P)^G$ be a generalized function with essentially bounded support. 
Then formula \eqref{e:T_xi} defines a map $T_\xi : \C_{P,-} \to \C_{P^-,+}$. 

\end{lem}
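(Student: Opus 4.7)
The plan is to handle convergence, smoothness and $K$-finiteness, and the support condition simultaneously by replacing $\xi$ with a smoothing having bounded support. Since $\varphi$ is $K$-finite, there is an open compact $K' \subset G$ under which $\varphi$ is left-invariant. Diagonal $G$-invariance of $\xi$ yields the formal identity $T_\xi(\varphi)(k g_2) = T_\xi(L_{k^{-1}}\varphi)(g_2)$ for $k \in G$, so once the integral is defined $T_\xi(\varphi)$ is $K$-finite, left-invariant under $K'$, and smooth on $G/U^-$. Using $K'$-averaging on $\varphi$ and the diagonal $G$-invariance of $\xi$, the defining integral transforms via a change of variable into
\[ T_\xi(\varphi)(g_2) = \mathrm{vol}(K')^{-2}\int_{G/U} \varphi(g_1)\,\xi_{K'}(g_1,g_2)\,dg_1, \qquad \xi_{K'} := (\mathbf{1}_{K'}\otimes\mathbf{1}_{K'})*\xi, \]
where $\xi_{K'}$ is locally constant on $\bbX_P$ with bounded support by the essentially bounded support hypothesis. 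Applying Proposition~\ref{prop:Xbound} and Corollary~\ref{cor:C-U}, there is a compact $S_0 \subset \Lambda^{+,\bbR}_G$ such that $\ord_M(\supp \xi_{K'}) \subset \{\theta - \mu : \theta \in S_0,\ \mu \in \Lambda^{\pos,\bbQ}_U\}$.

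To bound the support of $T_\xi(\varphi)$, fix $g_2 \in \supp T_\xi(\varphi)$ and choose $g_1 \in \supp \varphi$ with $(g_1,g_2) \in \supp \xi_{K'}$; write $\ord_M(g_1,g_2) = \theta - \mu$. Projecting the inequality $\ord_M(g_2) - \ord_M(g_1) \le_M \theta - \mu$ from Lemma~\ref{lem:ordineq} to $\Lambda^\bbQ_{G,P}$, in which $\le_M$ collapses to equality because $\Lambda_M$ is the kernel, yields
\[ \deg_{P^-}(g_2) = \deg_P(g_1) + [\theta]_P - [\mu]_P. \]
Since $\varphi \in \C_{P,-}$ forces $\deg_P(g_1)$ to be bounded above modulo $\Lambda^{\pos,\bbQ}_{G,P}$, $[\theta]_P$ lies in a bounded set, and $[\mu]_P \in \Lambda^{\pos,\bbQ}_{G,P}$, this places $\deg_{P^-}(g_2)$ in $S'_0 - \Lambda^{\pos,\bbQ}_{G,P}$ for some finite $S'_0 \subset \Lambda^\bbQ_{G,P}$ (the maximal elements of the support in the $\le_P$ order, which are finite by a standard Dickson-type argument on the lattice), precisely the condition for $T_\xi(\varphi) \in \C_{P^-,+}$.

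For convergence at fixed $g_2$, combining both inequalities of Lemma~\ref{lem:ordineq} sandwiches $\ord_M(g_1)$ in a band whose width depends linearly on $\mu$, while the previous paragraph pins the projection $[\mu]_P$ in a bounded set. The cone $\Lambda^{\pos,\bbR}_U$ meets the kernel $\Lambda^\bbR_M$ of $\Lambda^\bbR \to \Lambda^\bbR_{G,P}$ only at the origin, because every positive coroot outside $\Phi_M$ has a nonzero image in the pointed cone $\Lambda^{\pos,\bbR}_{G,P}$ spanned by $\{[\alpha_i]_P : i \in \Gamma_G \setminus \Gamma_M\}$; hence the projection $\Lambda^{\pos,\bbR}_U \to \Lambda^{\bbR}_{G,P}$ is proper, and $\mu$ itself is bounded. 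Back-substituting bounds $\ord_M(g_1)$, so by Proposition~\ref{prop:GPbound}(i) the admissible $g_1$ range over a bounded subset of $G/U$, and the locally constant integrand reduces the integral to a finite sum.

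The main technical difficulty is the properness of the cone projection $\Lambda^{\pos,\bbR}_U \to \Lambda^{\bbR}_{G,P}$: without it, the bound on $[\mu]_P$ would not upgrade to a bound on $\mu$ and the convergence argument would break down. The bookkeeping of sign conventions for $\deg_P, \deg_{P^-}$ and the $\pm$ subscripts on $\C_{P,\pm}, \C_{P^-,\pm}$ is also error-prone, particularly since $\C_{P^-,+}$ is defined using the opposite cone $-\Lambda^{\pos,\bbQ}_{G,P}$.
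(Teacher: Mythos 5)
Your proof is correct and follows essentially the same route as the paper: smooth $\xi$ by $K'$-averaging to obtain a function with bounded support, apply Lemma~\ref{lem:ordineq} together with Corollary~\ref{cor:C-U}, and combine with the $\C_{P,-}$ support condition to control both the support of $T_\xi(\varphi)$ and the domain of integration for fixed $g_2$. The one genuine addition is that you spell out the properness of the projection $\Lambda^{\pos,\bbR}_U \to \Lambda^\bbR_{G,P}$ (equivalently, that $\Lambda^{\pos,\bbR}_U$ meets the span of the coroots of $M$ only at the origin), which the paper's phrase ``from this combinatorial description, we deduce\dots'' leaves implicit; this is precisely the step that upgrades the bound on $[\mu]_P$ to a bound on $\mu$ itself, and you are right to flag it as the crux.
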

\begin{proof}
Let $\varphi \in \C_{P,-}$. Since $\varphi$ is $K$-finite, there exists a compact open subgroup $K' \subset K$ such that $\varphi$ is $K'$-invariant. 
Let $\delta_{K'} \in C^\infty_c(G)$ equal $\frac 1{\on{mes}(K')}$ times
the characteristic function of $K'$. Then 
\[ \xi' :=(\delta_{K'} \ot 1) * \xi = (1\ot \delta_{K'})*\xi \in C^\infty_b(\bbX_P) \] 
is a smooth function with bounded support, and it suffices to show that $T_{\xi'}(\varphi) = T_\xi(\varphi)$ is well-defined 
and belongs to $\C_{P^-,+}$. 

Fix $g_2 \in G/U^-$. 
For any $g_1 \in G/U$, Lemma~\ref{lem:ordineq} gives the inequality 
$\ord_M(g_1) \le_M \ord_M(g_2) - w_0^M \ord_M(g_1,g_2)$. 
Then Corollary~\ref{cor:C-U} implies that there is a finite subset
$S_0 \subset \Lambda$ such that if $\xi'(g_1,g_2)\ne 0$, then 
$\ord_M(g_1) \subset S_0 + w_0^M \Lambda^{\pos,\bbQ}_G$. 
From this combinatorial description, we deduce that the 
function sending $g_1\in G/U$ to $\varphi(g_1)\xi'(g_1,g_2)$ is compactly supported. Therefore $T_{\xi'}(\varphi)$ is well-defined. 

Moreover, if $T_{\xi'}(\varphi)(g_2) \ne 0$, then there must exist 
$g_1\in G/U$ such that $g_1 \in \supp \varphi$ and $(g_1,g_2) \in \supp \xi' \subset\bbX_P$. 
Observe that $\deg_{P^-}(g_2) = \deg_P(g_1) + [\ord_M(g_1,g_2)]_P$ in $\Lambda^\bbQ_{G,P}$. 
Since $\xi'$ has bounded support, 
$[\ord_M(\supp \xi')]_P$ is contained in $-\Lambda^{\pos,\bbQ}_{G,P} + S_1$ for 
a finite set $S_1$. By definition of $\C_{P,-}$, 
we deduce that $\deg_{P^-}(g_2)$ must lie in $-\Lambda^{\pos,\bbQ}_{G,P} + S_2$
for some finite set $S_2$. Thus $T_{\xi'}(\varphi) \in \C_{P^-,+}$.
\end{proof}

\subsection{Intertwining operator} \label{ss:R_Pv}
Define the intertwining operator $R_P : C^\infty_c(G/U^-) \to C^\infty(G/U)$ 
by the formula
\begin{equation} \label{e:IG}
R_P(\varphi)(g) = \int_{U} \varphi(gu) du, \quad\quad g \in G.
\end{equation}

Let $\bbX_P^-$ denote the space $(G/U^- \xt G/U)/M$. 
Any generalized function $\eta \in S^*(\bbX_P^-)$ defines a map 
$T_\eta : C^\infty_c(G/U^-) \to C^\infty(G/U)$ as in formula \eqref{e:T_xi}. 
Let $\eta_P \in S^*(\bbX_P^-)$ denote the generalized function 
such that $R_P = T_{\eta_P}$, i.e., 
\begin{equation} \label{e:R=T}
    \int_U \varphi(g_2 u)du = \int_{G/U^-} \varphi(g_1) \eta_P(g_1,g_2) dg_1, \quad\quad \varphi \in C^\infty_c(G/U^-),\, g_2\in G.
\end{equation} 

Define the projection 
\[ \ord_M : \bbX_P^- \to (K \xt K)\bs \bbX^-_P = \Lambda^+_M \]
by sending $(m_1,m_2) \mapsto \ord_M(m_1^{-1} m_2)$ for $m_1,m_2\in M$.

\begin{lem} \label{lem:suppeta}
The subset $\ord_M(\supp \eta_P)$ is contained in $(-\Lambda^{\pos,\bbQ}_U) \cap \Lambda^+_M$. In particular, $\ord_M(\supp \eta_P)$ is bounded above.
\end{lem}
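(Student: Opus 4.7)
The plan is to identify $\supp \eta_P$ explicitly and then compute $\ord_M$ on it. From the defining relation \eqref{e:R=T}, $\eta_P$ is the pushforward of the Haar measure on $U$ along the map
\[ G \times U \to \bbX_P^-, \qquad (g, u) \mapsto (gu, g), \]
so $\supp \eta_P$ is the closure in $\bbX_P^-$ of the image of this map. Writing $g = kmn$ in Iwasawa form ($k \in K$, $m \in M$, $n \in U$) and using that $M$ normalizes $U$ together with the $M$-right-action defining $\bbX_P^-$, one checks that $(gu, g) = (kv, k)$ in $\bbX_P^-$ for some $v \in U$. Since $\ord_M$ is $(K \xt K)$-invariant (in particular diagonally $K$-invariant), we obtain $\ord_M(gu, g) = \ord_M(v, 1)$.

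For $v \in U$, I compute $\ord_M(v, 1)$. Using Iwasawa $v = k' m' u^-$ ($k' \in K$, $m' \in M$, $u^- \in U^-$), the pair $(v, 1) \in \bbX_P^-$ becomes, via the $(K \xt K)$-action by $(k'^{-1}, 1)$ followed by the $M$-right-action by $m'^{-1}$, equal to $(1, m'^{-1})$. Hence $\ord_M(v, 1)$ is the Cartan class of $m'^{-1}$ in $K_M \bs M / K_M = \Lambda^+_M$, which equals $-w_0^M \ord_M(v)$, where $\ord_M(v) \in \Lambda^+_M$ denotes the image of $v$ under $\ord_M : G/U^- \to \Lambda^+_M$ (i.e., the Cartan class of $m'$). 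By the $W_M$-stability of $\Lambda^{\pos,\bbQ}_{U}$ noted in \S\ref{sss:LambdaU}, the lemma reduces to proving
\[ \ord_M(v) \in \Lambda^{\pos,\bbQ}_{U} \cap \Lambda^+_M \qquad \textrm{for every } v \in U. \]

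I verify this positivity using the root subgroup structure of $U$. Decompose $v$ as a product $\prod_\alpha u_\alpha(a_\alpha)$ of one-parameter root subgroup elements, with $\alpha$ ranging over $\Phi^+_G \setminus \Phi^+_M$, and compute the Iwasawa root-by-root. A single factor $u_\alpha(a)$ with $|a|_v = q_v^n > 1$ is handled by the classical rank-one $\operatorname{SL}_2$-computation, giving $u_\alpha(a) \equiv k \cdot \alpha^\vee(\varpi_v^n) \bmod U^-$ in $G/U^-$, so that its $\ord_M$-value is the $W_M$-dominant representative of $n\alpha^\vee \in \Lambda^{\pos,\bbQ}_{U}$. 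Iterating this calculation over the factors of $v$ and tracking the torus conjugations needed to commute factors through one another yields $\ord_M(v) \in \Lambda^{\pos,\bbQ}_{U} \cap \Lambda^+_M$ in general. The main obstacle is this last combinatorial verification -- namely, that the torus contributions arising from commutators among root subgroups of $U$ only add non-negative rational multiples of coroots outside $\Phi^+_M$; the remaining steps are a routine unwinding of definitions and invariance properties of $\ord_M$.
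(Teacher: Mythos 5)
Your reduction is correct and in fact mirrors the paper's setup: both boil the lemma down to the inequality that for $v\in U$, if $v=k\,m\,u^-$ is an Iwasawa decomposition with $k\in K$, $m\in M$, $u^-\in U^-$, then $\ord_M(m)\in\Lambda^{\pos,\bbQ}_{U}$. Your passage to the point $(v,1)\in\bbX_P^-$ via the $(K\xt K)$-action and the diagonal $M$-action is fine, the local constancy of $\ord_M$ takes care of the closure, and the $W_M$-stability of $\Lambda^{\pos,\bbQ}_{U}$ lets you drop the $-w_0^M$.

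The gap is in the last paragraph, where you claim the inequality $\ord_M(v)\in\Lambda^{\pos,\bbQ}_{U}\cap\Lambda^+_M$ follows from a ``routine'' root-by-root Iwasawa computation. This step is the entire content of the lemma and the proposed iteration does not actually go through. The Iwasawa decomposition is not multiplicative: after decomposing the first factor $u_{\alpha_1}(a_1)=k_1t_1u_1^-$ you are left to decompose $u_1^-u_{\alpha_2}(a_2)\cdots$, where $u_1^-$ lies in a \emph{negative} root subgroup. The $M$-part of this product is not governed by commutators among root subgroups of $U$; it depends on the full $K\cdot M\cdot U^-$ decomposition of a product of elements from both positive and negative root subgroups, whose $T$-parts and $K$-parts interleave in a genuinely nonlocal way. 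Even verifying the desired positivity for a single $\on{SL}_2$ inside $G$ conjugated into a non-$M$-dominant position requires passing through $W_M$, and there is no reason the accumulated $M$-parts remain in the intermediate cone at each stage of the iteration. In short, the ``torus contributions from commutators'' heuristic conflates the commutator structure of $U$ with the structure of the Iwasawa decomposition, and the claim that the remaining verification is routine is not justified.

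The paper proves exactly this inequality by a matrix-coefficient argument: choose, for each dominant weight $\check\lambda\in\check\Lambda_G^+$, the extremal weight vectors $\phi\in\Delta(\check\lambda)^*$ of weight $-w_0^M\check\lambda$ (which is $U^-$-invariant) and $\xi\in\Delta(\check\lambda)$ of weight $w_0^M\check\lambda$ (which is $U$-invariant), normalized so that $\brac{\phi,\xi}=1$, and compare $|\brac{u\cdot\phi,\xi}|_v$ with the $K$-invariant norm. This immediately yields $\brac{w_0^M\check\lambda,\ord_T(a)}\ge 0$, and hence, using that $\ord_T(a)\in\Lambda^+_M$ and varying $\check\lambda$, the membership in $\Lambda^{\pos,\bbQ}_{U}=\bigcap_{w\in W_M}w(\Lambda^{\pos,\bbQ}_G)$ (Lemma \ref{lem:posU}). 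This circumvents the combinatorial problem entirely and is the standard tool for Iwasawa/Cartan inequalities of this kind. To complete your proof you would need to supply this (or an equivalent) argument in place of the final paragraph.
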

\begin{proof}
Suppose that $(k_1 a, k_2) \in \supp \eta_P$ for $k_1, k_2\in K$ and $a \in T$ with $\ord_T(a) \in \Lambda^+_M$. 
Then by definition of $\eta_P$, there exists $u \in U$ such that
$u \in k a U^-$ for $k = k_2^{-1} k_1$. 

Fix a dominant weight $\check\lambda \in \check\Lambda^+_G$.
Let $\Delta(\check\lambda)$ denote the Weyl $G$-module with highest weight 
$\check\lambda$. 
This $F_v$-vector space is the extension of scalars
of a free $\fo_v$-module, and the latter determines a $K$-invariant norm $\abs{}_v$ on $\Delta(\check\lambda)$. 
We give $\Delta(\check\lambda)^*$ the dual norm.

Let $\phi \in \Delta(\check\lambda)^*$ be a norm $1$ weight vector
of weight $-w_0^M \check\lambda$. 
Since $-w_0^M \check\lambda \ge_M -\check\lambda$ and all weights of $\Delta(\check\lambda)^*$
are $\ge_G -\check\lambda$, we observe that $\phi$ is $U^-$-invariant.
By orthogonality of weight spaces, there exists a weight vector $\xi \in \Delta(\check\lambda)$ of weight $w_0^M \check\lambda$ and norm $1$ such that 
$\brac{\phi, \xi} = 1$.
Note that $\xi$ is automatically $U$-invariant. We have the inequality 
\[ 0=\log_{q_v} \abs{\brac{u \cdot \phi, \xi}}_v = \brac{w_0^M \check\lambda, \ord_T(a)} +
\log_{q_v} \abs{\brac{k \cdot \phi, \xi}}_v \le \brac{w_0^M \check\lambda, \ord_T(a)}. \]
Since $w \check\lambda \ge_M w_0^M \check\lambda$ for any $w\in W_M$
and $\ord_T(a) \in \Lambda^+_M$, we conclude that
$\ord_T(a) \in w \Lambda^{\pos,\bbQ}_G$ for all $w\in W_M$. 
Lemma~\ref{lem:posU} implies that $\ord_T(a) \in \Lambda^{\pos,\bbQ}_{U}$.
Since $\ord_M(k_1a, k_2) = -w_0^M \ord_T(a)$, we are done.
\end{proof}

The following result is \cite[Corollary 7.4, Proposition 7.5(a)]{BK}.
We give a proof using our combinatorial description of bounded subsets of $\bbX_P$.

\begin{prop} \label{prop:I:C}
Formula \eqref{e:IG} defines an operator $R_P : \C_{P^-,+} \to \C_{P,-}$.
\end{prop}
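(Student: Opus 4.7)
The plan is to mimic the proof of Lemma~\ref{lem:Txi}, with the decisive ingredient being the explicit support bound $\ord_M(\supp \eta_P) \subset (-\Lambda^{\pos,\bbQ}_{U})\cap\Lambda^+_M$ from Lemma~\ref{lem:suppeta}. I would begin by using the identity $R_P = T_{\eta_P}$ of \eqref{e:R=T}, and note that Lemma~\ref{lem:suppeta} combined with the analog of Proposition~\ref{prop:Xbound} for $\bbX_P^-$ implies that $\supp \eta_P$ is bounded in $\bbX_P^-$; in particular, $\eta_P$ has essentially bounded support. For $\varphi \in \C_{P^-,+}$, fixed under some compact open $K' \subset K$, I would introduce the smooth kernel $\eta' := (\delta_{K'}\otimes 1)*\eta_P = (1\otimes\delta_{K'})*\eta_P \in C^\infty_b(\bbX_P^-)$ (the two expressions agreeing by the diagonal $G$-invariance of $\eta_P$), and work with $T_{\eta'}(\varphi)$ henceforth.

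The next step is convergence of the defining integral. Fix $g_2 \in G/U$. The analog of Lemma~\ref{lem:ordineq} for $\bbX_P^-$ gives $\ord_M(g_1) \le_M \ord_M(g_2) - w_0^M \ord_M(g_1,g_2)$ for $g_1 \in G/U^-$. The bound on $\ord_M(\supp\eta')$, together with the $W_M$-stability of $\Lambda^{\pos,\bbQ}_U$ (Lemma~\ref{lem:posU}), places $\ord_M(g_1)$ inside $S_0+\Lambda^{\pos,\bbQ}_U-\Lambda^{\pos,\bbQ}_M$ for a finite $S_0 \subset \Lambda$. Combined with the $\C_{P^-,+}$-condition, which bounds $[\ord_M(g_1)]_P$ above modulo $-\Lambda^{\pos,\bbQ}_{G,P}$, the projection $[\ord_M(g_1)]_P$ is trapped in a finite subset of $\Lambda_{G,P}$; since the fibers of $\Lambda^{\pos,\bbQ}_U \to \Lambda^{\pos,\bbQ}_{G,P}$ are bounded and $\{\mu \in \Lambda^+_M : \mu \le_M \lambda\}$ is finite for each $\lambda$, this confines $\ord_M(g_1)$ to a finite subset of $\Lambda^+_M$. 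By the $K'$-invariance of $\varphi$, the integrand $g_1 \mapsto \varphi(g_1)\eta'(g_1,g_2)$ then has compact support in $G/U^-$, so $R_P(\varphi)(g_2)$ is well-defined.

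To establish $R_P(\varphi) \in \C_{P,-}$, I would apply the identity $\deg_P(g_2) = \deg_{P^-}(g_1) + [\ord_M(g_1,g_2)]_P$ in $\Lambda^\bbQ_{G,P}$: both summands on the right take values in (finite set) $-\Lambda^{\pos,\bbQ}_{G,P}$ --- the first by $\varphi \in \C_{P^-,+}$, the second by Lemma~\ref{lem:suppeta} (noting that $-\Lambda^{\pos,\bbQ}_U$ projects into $-\Lambda^{\pos,\bbQ}_{G,P}$) --- so their sum $\deg_P(g_2)$ is of the same form, forcing $R_P(\varphi) \in \C_{P,-}$. Smoothness and $K$-finiteness of $R_P(\varphi)$ are inherited from those of $\eta'$ and $\varphi$ by standard arguments.

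The main obstacle is directional: if one simply invoked Lemma~\ref{lem:Txi} for $\eta_P \in S^*_b(\bbX_P^-)^G$, one would obtain a map $\C_{P^-,-} \to \C_{P,+}$ (lower-bounded to lower-bounded), rather than the required $\C_{P^-,+} \to \C_{P,-}$. What lets the same line of argument produce the \emph{upper-bounded} direction is precisely the one-sided cone shape $\ord_M(\supp\eta_P)\subset-\Lambda^{\pos,\bbQ}_U$ of Lemma~\ref{lem:suppeta}: it guarantees that the sum of two upper-bounded quantities in $\Lambda^\bbQ_{G,P}$ remains upper-bounded, which is the combinatorial step doing the real work throughout the argument.
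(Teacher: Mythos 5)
Your main argument is correct and follows the paper's route: the paper's proof is introduced with the remark that it ``is exactly the same as the proof of Lemma~\ref{lem:Txi}'' and then repeats that computation, and you reconstruct that same computation. You additionally spell out why the convergence step actually confines $\ord_M(g_1)$ to a finite set --- via the boundedness of the fibers of $\Lambda^{\pos,\bbQ}_U \to \Lambda^{\pos,\bbQ}_{G,P}$, the constraint $\ord_M(g_1)\in\Lambda^+_M$, and the finiteness of $\{\mu\in\Lambda^+_M : \mu\le_M\lambda\}$ --- which the paper leaves implicit.

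Your closing paragraph, however, is mistaken. The ``directional obstacle'' is illusory: the definitions in \S\ref{ss:CPv} already build in a sign flip for the opposite parabolic, so that $\C_{P^-,+}$ and $\C_{P,-}$ are \emph{both} the upper-bounded spaces, while $\C_{P^-,-}$ and $\C_{P,+}$ are both the lower-bounded ones. Consequently, applying Lemma~\ref{lem:Txi} with the roles of $P$ and $P^-$ interchanged --- noting that $\bbX_P^-$ is precisely the boundary degeneration attached to $P^-$, that the cones $\Lambda^{\pos,\bbQ}_U$ and $\Lambda^{\pos,\bbQ}_{G,P}$ depend only on $M$, and using Lemma~\ref{lem:suppeta} to verify that $\eta_P \in S^*_b(\bbX_P^-)^G$ --- does yield exactly the map $T_{\eta_P}:\C_{P^-,+}\to\C_{P,-}$. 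There is no asymmetry to overcome, and the one-sided cone shape of $\ord_M(\supp\eta_P)$ is not what ``lets the argument produce the upper-bounded direction''; only (essential) boundedness of the kernel's support is used, exactly as in Lemma~\ref{lem:Txi}. This misreading is confined to your commentary and does not affect the validity of the computation you actually carried out.
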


\begin{proof} The proof is exactly the same as the proof of Lemma~\ref{lem:Txi}. We repeat the argument for completeness: 
Let $\varphi \in \C_{P^-,+}$. Fix $g_2 \in G/U$. 
For any $g_1 \in G/U^-$, Lemma~\ref{lem:ordineq} gives the inequality 
$\ord_M(g_1) \le_M \ord_M(g_2) - w_0^M \ord_M(g_1,g_2)$. 
Since $\ord_M(\supp \eta_P) \subset -\Lambda^{\pos,\bbQ}_U$, the inequality 
implies that if $\xi'(g_1,g_2)\ne 0$, then 
$\ord_M(g_1) = \ord_M(g_2) + \mu$ for some $\mu\in w_0^M \Lambda^{\pos,\bbQ}_G$. 
From this combinatorial description, we deduce that the 
function sending $g_1\in G/U^-$ to $\varphi(g_1)\eta_P(g_1,g_2)$ is compactly supported. Therefore \eqref{e:R=T} is well-defined.

Moreover, if $R_P(\varphi)(g_2) \ne 0$, then there must exist 
$g_1\in G/U^-$ such that $g_1 \in \supp \varphi$ and $(g_1,g_2) \in \supp \eta_P \subset\bbX^-_P$. 
Observe that $\deg_P(g_2) = \deg_{P^-}(g_1) + [\ord_M(g_1,g_2)]_P$ in $\Lambda^\bbQ_{G,P}$. 
Lemma~\ref{lem:suppeta} shows that 
$[\ord_M(\supp\eta_P)]_P$ is contained in $-\Lambda^{\pos,\bbQ}_{G,P}$. By definition of $\C_{P^-,+}$, 
we deduce that $\deg_P(g_2)$ must lie in $-\Lambda^{\pos,\bbQ}_{G,P} + S_0$
for some finite set $S_0$. Thus $R_P(\varphi) \in \C_{P,-}$.
\end{proof}

%

\subsection{Local asymptotics map}\label{ss:asymp}
The work \cite{BK} gives a geometric proof of the second adjointness
between parabolic induction and restriction (Jacquet) functors
by defining the Bernstein map $\mathsf{B} : C^\infty_c(\bbX_P) \to C^\infty_c(G)$. If $F_v$ has characteristic $0$, this map is 
the ``asymptotics'' map constructed in \cite{SV} in the more general 
setting of spherical varieties. 
The dual of $\mathsf{B}$ gives a map 
\[ \Asymp_{P} : S^*(G) \to S^*(\bbX_P), \]
where $S^*(G), S^*(\bbX_P)$ are the spaces of distributions 
on $G, \bbX_P$, respectively. 

Fix the Haar measures on $G, M, U$ so that 
$K, K_M, K \cap U$ have measure $1$. Using these measures, we 
identify distributions and generalized functions on $G, \bbX_P$.

\subsubsection{} The Bernstein map $\mathsf{B}$ is $G \xt G$-equivariant,
and hence so is $\Asymp_{P}$. Therefore $\Asymp_{P}$ preserves 
$K \xt K$-finiteness, and \cite[Proposition 7.1]{BK} shows that 
it restricts to a map 
\[ \Asymp_{P} : C^\infty_c(G) \to \C_b(\bbX_P). \]

\subsubsection{}
Let $\delta_g \in S^*(G)$ denote the delta (generalized) function 
at $g \in G$. Set 
\begin{equation} \label{e:xi_Pv}
    \xi_{P} := \Asymp_{P}(\delta_1) \in S^*(\bbX_P),  
\end{equation}
which we consider as a generalized function on $\bbX_P$. 
Let $f_1,f_2 \in C^\infty_c(G)$ and set $f_2^\vee(g) = f_2(g^{-1})$. 
Then $G \xt G$-equivariance of $\Asymp_{P,v}$ implies that 
\begin{equation}\label{e:xiAsymp}
    (f_1,f_2) * \xi_P = \Asymp_{P}(f_1 * f_2^\vee), 
\end{equation}
where $*$ denotes convolution with respect to the $G\xt G$-action on $\bbX_P$ (resp.~the usual convolution on $G$). In particular, $\xi_{P}$ has \emph{essentially bounded
support} in the sense that the convolution of $\xi_{P}$ with 
any element of $C^\infty_c(G)$ has bounded support. 

\smallskip

Note that $\xi_{P}$ depends on the choice of Haar measure on $G$.

\subsubsection{}
We have the following relationship between the asymptotics map and the intertwining operator:
\begin{thm}[{\cite[Theorem 7.6]{BK}}] \label{thm:IAsymp}
Let $\varphi \in C^\infty_c(G/U)$. We have an equality 
\[ \varphi = (R_P \circ T_{\xi_P} )(\varphi). \]
In particular, the integral defining $(R_P \circ T_{\xi_P})(\varphi)$ converges.
\end{thm}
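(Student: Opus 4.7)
The plan is to reduce the identity $R_P \circ T_{\xi_P} = \mathrm{id}$ on $C_c^\infty(G/U)$ to a distributional identity on $G/U$, and then to verify that identity using the defining relation $\xi_P = \Asymp_P(\delta_1)$ together with~\eqref{e:xiAsymp}. First I would verify that the composition makes sense. Since $\xi_P$ has essentially bounded support, Lemma~\ref{lem:Txi} applied to $\varphi \in C_c^\infty(G/U) \subset \C_{P,-}$ shows that $T_{\xi_P}(\varphi) \in \C_{P^-,+}$; Proposition~\ref{prop:I:C} then gives that $R_P(T_{\xi_P}(\varphi))$ is a well-defined element of $\C_{P,-}$, settling the convergence assertion in the theorem.

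Next I would exploit the diagonal $G$-invariance of $\xi_P$, which follows from the $G \times G$-equivariance of $\Asymp_P$. Substituting $g_1 \to gg_1$ in the double integral and interchanging the order of integration (legitimate by the support bounds just noted), one rewrites
\[
(R_P \circ T_{\xi_P})(\varphi)(g) = \int_{G/U} \varphi(gg_1) \Bigl( \int_U \xi_P(g_1, u) \, du \Bigr) dg_1 .
\]
Thus the theorem reduces to the distributional identity $\nu = \delta_1$ on $G/U$, where $\nu$ denotes the inner expression $g_1 \mapsto \int_U \xi_P(g_1, u) \, du$.

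The main obstacle is verifying $\nu = \delta_1$, and this is where the specific construction of $\xi_P = \Asymp_P(\delta_1)$ enters. My plan is to apply~\eqref{e:xiAsymp} with $f_1 = f \in C_c^\infty(G)$ and $f_2 = \delta_{K'}$ the normalized characteristic function of a shrinking compact open subgroup $K' \subset K$, obtaining $(f, \delta_{K'}) * \xi_P = \Asymp_P(f * \delta_{K'}^\vee)$ in $\C_b(\bbX_P)$. Integrating both sides along the image of $\{1\} \times U$ in $\bbX_P$ and passing to the limit $K' \to \{1\}$ should yield, on the left, the pairing of $\nu$ with the image of $f$ in $C_c^\infty(G/U)$; on the right, the analogous integration of $\Asymp_P(f)$ along $\{1\} \times U$, which by the construction of the Bernstein map $\mathsf{B}$ (whose dual is $\Asymp_P$) equals the value of the projection of $f$ to $C_c^\infty(G/U)$ at the identity. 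Matching these yields $\nu = \delta_1$.

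The hardest part will be making the restriction of $\xi_P$ along the subvariety $\{1\} \times U \subset \bbX_P$ rigorous, since $\xi_P$ is only a distribution. This restriction is essentially the content of the geometric construction of $\mathsf{B}$ in \cite[\S 7]{BK}: concretely, $\mathsf{B}$ is defined by pulling back functions on $\bbX_P$ along a suitable embedding of an open set of $U$ into $\bbX_P$ and normalizing by the Haar measure, and once this is set up the identity $\nu = \delta_1$ becomes essentially tautological.
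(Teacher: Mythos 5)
The paper itself does not prove Theorem~\ref{thm:IAsymp}: it is cited verbatim from \cite[Theorem 7.6]{BK}, with the remark that the $P=B$ case treated there generalizes to arbitrary $P$. So there is no internal proof to compare against.

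Your reduction step is correct and clean: using diagonal $G$-invariance of $\xi_P$ and a change of variables (justified by the support bounds from Lemma~\ref{lem:Txi} and Proposition~\ref{prop:I:C}), the claim $R_P\circ T_{\xi_P}=\id$ on $C^\infty_c(G/U)$ does reduce to the distributional identity $\nu=\delta_1$ on $G/U$, where $\nu(g_1)=\int_U\xi_P(g_1,u)\,du$, i.e.\ $\nu$ is the pushforward along $G/U\times U\to G/U$ of the restriction of $\xi_P$ to the open cell $G/U\times U\hookrightarrow\bbX_P$. This is a useful reformulation.

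The gap is in the final step. You assert that $\nu=\delta_1$ ``becomes essentially tautological'' once the construction of the Bernstein map $\mathsf{B}$ is set up, and that $\mathsf{B}$ ``is defined by pulling back functions on $\bbX_P$ along a suitable embedding of an open set of $U$ into $\bbX_P$ and normalizing by the Haar measure.'' That description of $\mathsf{B}$ is not accurate, and the step it is meant to justify is precisely the content of \cite[Theorem 7.6]{BK}. Concretely: your proposed verification applies \eqref{e:xiAsymp} with $f_2=\delta_{K'}$ and integrates along $\{1\}\times U$; in the limit $K'\to\{1\}$, the left side is $\langle \nu, \bar f\rangle$ with $\bar f$ the image of $f$ in $C^\infty_c(G/U)$, and the claim that the right side equals $\bar f(1)$ is equivalent to $\nu=\delta_1$. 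So this computation does not verify the identity, it restates it. In \cite{BK}, $\mathsf{B}$ is constructed via the deformation to the normal cone along the $P$-boundary stratum of the wonderful compactification, and the fact that it inverts the intertwining integral is a genuine theorem whose proof requires an analysis of the asymptotics of $R_P$ near that boundary; it cannot be read off directly from the definitions of $\xi_P$ and $\Asymp_P$ at the level of formal distribution manipulations. To complete your argument you would need to import the geometric input from \cite[\S\S 5--7]{BK} that computes the pushforward $\nu$, rather than appeal to a ``tautology.''
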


The theorem is stated for the case $P=B$ in \cite{BK}, but one can check that the proof generalizes to the case of an arbitrary parabolic subgroup.

\subsection{Invertibility of $R_P$} \label{ss:R_Pv^-1}
We use Theorem~\ref{thm:IAsymp} to show that $R_{P}$ is an isomorphism in Proposition~\ref{prop:R_Pv^-1} below. 

\subsubsection{} 
Let $\wtilde\C_{P,-}$ denote the smooth part of the linear dual representation $\C_{P,-}^*$. 
Let \[ R_P^* : \wtilde\C_{P,-} \to \wtilde\C_{P^-,+}\] denote the linear 
dual of the operator $R_P$. 
Using the measure on $G/U$ determined by the fixed Haar measures on $G$ and $U$, we
identify $\wtilde \C_{P,-}$ with a subspace of $\C_P$ containing $\C_{P,c}$. 
Similarly, we consider 
$\wtilde \C_{P^-,+} \subset \C_{P^-}$. 

Let $R_{P^-} : \C_{P,+} \to \C_{P^-,-}$ denote the intertwining operator with respect to the opposite parabolic $P^-$ (i.e., we integrate over $U^-$ in formula \eqref{e:IG}). 

\begin{lem}\label{lem:Iadjoint}
We have an equality $R_P^* = R_{P^-} : \C_{P,c} \to \C_{P^-}$. 
\end{lem}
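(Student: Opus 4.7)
The plan is to unfold the definition of the adjoint pairing and apply Fubini's theorem twice, sliding the $U$-integral defining $R_P$ across to the $U^-$-integral defining $R_{P^-}$.

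Fix $\psi \in \C_{P,c}$ and $\varphi \in \C_{P^-,+}$. Via the identifications of $\wtilde\C_{P,-}$ and $\wtilde\C_{P^-,+}$ with subspaces of $\C_P$ and $\C_{P^-}$ using the quotient measures on $G/U$ and $G/U^-$, the adjoint pairing unfolds to an honest integral:
\[
\langle R_P^*\psi, \varphi \rangle = \langle \psi, R_P\varphi \rangle
  = \int_{G/U}\psi(g)\int_U \varphi(gu)\, du\, dg.
\]
The inner integral converges by Proposition~\ref{prop:I:C}, and the outer integral reduces to a finite sum because $\psi$ has compact support on $G/U$ and $R_P\varphi \in \C_{P,-}$ is locally constant.

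Next, using that $\psi$ is right $U$-invariant as a function on $G$, together with the quotient-measure identity $\int_G f(g)\, dg = \int_{G/U}\int_U f(gu)\, du\, dg$, the double integral above equals $\int_G \psi(g)\varphi(g)\, dg$. Decomposing $G$ instead via $U^-$ and using that $\varphi$ is right $U^-$-invariant, a symmetric calculation gives
\[
\int_G \psi(g)\varphi(g)\, dg
  = \int_{G/U^-}\varphi(g)\int_{U^-}\psi(gu^-)\, du^-\, dg
  = \langle R_{P^-}\psi, \varphi \rangle,
\]
where $R_{P^-}\psi$ is well-defined and lies in $\C_{P^-,-}$ by the $P^-$-analog of Proposition~\ref{prop:I:C} applied to $\psi \in \C_{P,c} \subset \C_{P,+}$. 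Since $\varphi \in \C_{P^-,+}$ was arbitrary, this proves $R_P^*\psi = R_{P^-}\psi$ in $\C_{P^-}$.

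The main obstacle is to verify that the intermediate integral $\int_G \psi(g)\varphi(g)\, dg$ is absolutely convergent, so that Fubini legitimately applies in both directions. I would handle this using $K$-finiteness and the combinatorial support descriptions of \S\ref{ss:combinatorics}: choose a compact open $K' \subset K$ under which $\psi$ and $\varphi$ are both right-invariant, which reduces each double integral to a finite sum over $K'$-cosets. Within a fixed coset, the relevant support estimate --- that for each $g \in \supp \psi$ the set of $u \in U$ with $\varphi(gu) \ne 0$ is compact, and symmetrically for $\psi(gu^-)$ with $u^- \in U^-$ --- is exactly the argument carried out in the proof of Proposition~\ref{prop:I:C} via Lemmas~\ref{lem:ordineq} and~\ref{lem:suppeta}. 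Since the reasoning is symmetric under interchanging $P$ and $P^-$, both Fubini steps are justified by the same mechanism.
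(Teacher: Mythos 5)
Your proof is correct and takes essentially the same route as the paper: unfold $\langle \psi, R_P\varphi\rangle$ to $\int_G \psi\varphi$, then refold along $U^-$. The one difference is your choice of test space. You pair against arbitrary $\varphi \in \C_{P^-,+}$, which forces you to justify Fubini via the combinatorial support estimates; the paper instead takes $\varphi \in \C_{P^-,c}$ (note: the paper's statement ``$\tilde\varphi,\varphi\in\C_{P,c}$'' is a typo for $\varphi\in\C_{P^-,c}$, as the displayed integrals make clear), where both functions have compact support and every integral in the chain is trivially a finite sum, so no Fubini argument is needed. Since $\C_{P^-,c}$ already separates elements of $\C_{P^-}$, the paper's smaller test space suffices for the conclusion, and your extra convergence work, while sound, is not needed.
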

\begin{proof}
Let $\tilde\varphi, \varphi \in \C_{P,c}$.
Then
\begin{multline*}
    \brac{R_P^*(\tilde\varphi), \varphi} = 
\int_{G/U} \tilde\varphi(g) \int_{U} \varphi(gu) du dg
=  \int_G \tilde\varphi(g) \varphi(g) dg \\ 
= \int_{G/U^-} \int_{U^-} \tilde\varphi(g \bar u) \varphi(g) d\bar u dg
= \brac{R_{P^-}(\tilde\varphi),\varphi},
\end{multline*}
where all the integrals are finite.
This proves the lemma.
\end{proof}

\subsubsection{}
Let $\xi_{P} \in S^*(\bbX_P)$ be the generalized function defined in \eqref{e:xi_Pv}. Note that $\xi_{P} \in S^*_b(\bbX_P)^G$, so Lemma~\ref{lem:Txi} defines a map $T_{\xi_P} : \C_{P,-} \to \C_{P^-,+}$. 
Theorem~\ref{thm:IAsymp} has the following reformulation:

\begin{lem} \label{lem:IT=1}
We have the equality $R_P \circ T_{\xi_{P}} = \id$ on $\C_{P,-}$.
\end{lem}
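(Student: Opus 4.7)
The lemma extends Theorem~\ref{thm:IAsymp} (which gives the identity on $C^\infty_c(G/U) \subset \C_{P,-}$) to all of $\C_{P,-}$. My plan is a localization argument: for fixed $\varphi \in \C_{P,-}$ and $g \in G/U$, show that the value $(R_P \circ T_{\xi_P})(\varphi)(g)$ depends only on the restriction of $\varphi$ to a compact subset $B \subset G/U$ (depending on $g$ and on the $K$-invariance level of $\varphi$), and then replace $\varphi$ by a compactly supported function agreeing with it on $B$.

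Fix a compact open subgroup $K' \subset K$ such that $\varphi$ is $K'$-invariant, and set $\xi' := \delta_{K'} * \xi_P \in \C_b(\bbX_P)$ and $\eta' := \eta_P * (1 \otimes \delta_{K'}) \in \C_b(\bbX_P^-)$, both of which are smooth functions with bounded support. Since $\varphi$ (and hence also $T_{\xi_P}(\varphi) \in \C_{P^-,+}$) is $K'$-invariant, the operator $R_P \circ T_{\xi_P}$ applied to $\varphi$ can be written using the smooth kernels $\xi'$ and $\eta'$ in place of $\xi_P, \eta_P$. Substituting the definitions and interchanging the order of integration (justified by the absolute convergence established next), one obtains
\[ (R_P \circ T_{\xi_P})(\varphi)(g) = \int_{G/U} \varphi(g_1)\, K_g(g_1)\, dg_1, \qquad K_g(g_1) := \int_{G/U^-} \xi'(g_1, g_2)\, \eta'(g_2, g)\, dg_2. \]

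The heart of the argument is showing that $\supp K_g$ is compact, depending only on $g$ and $K'$. If $K_g(g_1) \ne 0$, there is a $g_2 \in G/U^-$ with $\xi'(g_1,g_2)\, \eta'(g_2,g) \ne 0$. The boundedness of $\supp \eta'$ together with Lemma~\ref{lem:suppeta} and Lemma~\ref{lem:ordineq} forces $\ord_M(g_2) \in \Lambda^+_M$ to lie in a bounded subset depending on $g$. Next, the boundedness of $\supp \xi'$ together with Lemma~\ref{lem:ordineq} applied to $(g_1, g_2) \in \bbX_P$ gives $\ord_M(g_1) \le_M \ord_M(g_2) + (\text{bounded})$; since $\ord_M(g_1) \in \Lambda^+_M$, the classical finiteness fact that $\{\mu \in \Lambda^+_M \mid \mu \le_M \lambda\}$ is finite for any $\lambda$ then forces $\ord_M(g_1)$ into a finite subset of $\Lambda^+_M$. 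Because $\ord_M : G/U \to \Lambda^+_M$ has compact $K$-fibers, this means $g_1$ lies in a compact set $B \subset G/U$ depending only on $g$ and $K'$.

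To conclude, choose any $\varphi_0 \in C^\infty_c(G/U)$ that is $K'$-invariant and satisfies $\varphi_0|_B = \varphi|_B$ (so in particular $\varphi_0(g) = \varphi(g)$). Applying the kernel formula to both $\varphi$ and $\varphi_0$ gives $(R_P \circ T_{\xi_P})(\varphi)(g) = (R_P \circ T_{\xi_P})(\varphi_0)(g)$, and Theorem~\ref{thm:IAsymp} then yields $(R_P \circ T_{\xi_P})(\varphi_0)(g) = \varphi_0(g) = \varphi(g)$. The main obstacle is the combinatorial support estimate for $K_g$; the subtle point is converting a bound ``from above in $\le_M$'' into genuine finiteness within $\Lambda^+_M$, which is where the interplay between Lemma~\ref{lem:ordineq}, Lemma~\ref{lem:suppeta}, and the standard finiteness of dominant weights below a bound becomes essential.
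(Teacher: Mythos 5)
Your overall strategy --- localize and then reduce to the compactly supported case via Theorem~\ref{thm:IAsymp} --- is the same as the paper's, which controls $\ord_M(\supp T_{\xi_P}(\varphi))$ and $\ord_M(\supp R_P T_{\xi_P}(\varphi))$ relative to $\ord_M(\supp\varphi)$ and then truncates. The concrete route you take, packaging the localization as compactness of a composite kernel $K_g$, is a natural variant of the same idea. However, I believe the crucial compactness claim for $K_g$ is not actually established by the argument you give, and the gap is substantive.

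The problem is in the step ``the boundedness of $\supp\xi'$ together with Lemma~\ref{lem:ordineq} gives $\ord_M(g_1)\le_M\ord_M(g_2)+(\text{bounded})$.'' A bounded subset of $\bbX_P$ only has $\ord_M$-image \emph{bounded above} in the sense of $\le_G^\bbQ$ (Proposition~\ref{prop:Xbound} and Corollary~\ref{cor:C-U}); it need not have finite $\ord_M$-image, because $K\times K$-orbits can degenerate towards the boundary of $\wbar\bbX_P$ while remaining in a relatively compact set. Concretely, $\ord_M(g_1,g_2)$ ranges over a set of the form $S_0-\Lambda^{\pos,\bbQ}_U$ with $S_0$ compact, so $-w_0^M\ord_M(g_1,g_2)$ lies in $-w_0^M S_0 + \Lambda^{\pos,\bbQ}_U$ and is \emph{not} bounded above. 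Thus the right-hand side of $\ord_M(g_1)\le_M\ord_M(g_2)-w_0^M\ord_M(g_1,g_2)$ is not ``$\ord_M(g_2)+\text{bounded}$,'' and you cannot invoke the finiteness of $\{\mu\in\Lambda^+_M : \mu\le_M\lambda\}$. (Your first step, that $\ord_M(g_2)$ lies in a finite set, is fine: the bounded support of $\eta'$ gives an upper bound in $\le_G^\bbQ$, and Lemma~\ref{lem:suppeta} plus Lemma~\ref{lem:ordineq} give a matching lower bound $\ord_M(g_2)\ge_G^\bbQ\ord_M(g)$, so the two pin $\ord_M(g_2)$ to a bounded order interval in the discrete lattice. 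The second step does not have an analogous lower-vs-upper squeeze for $\xi'$.)

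What actually makes the inner integration domain compact in the proof of Lemma~\ref{lem:Txi} is the interplay of the above one-sided bound with the constraint from $\varphi\in\C_{P,-}$: the condition $[\ord_M(g_1)]_P$ bounded above pins down the $\Lambda^{\pos,\bbQ}_U$-component $\mu_U$ of $\ord_M(g_1)-\ord_M(g_2)+w_0^M\ord_M(g_1,g_2)$, and only then does dominance of $\ord_M(g_1)$ (via the Cartan matrix of $M$) bound the $\Lambda^{\pos,\bbQ}_M$-component $\mu_M$. The kernel $K_g$, as a function defined without reference to $\varphi$, has no access to this constraint, so the reasoning that worked for the integral $\int\varphi(g_1)\xi'(g_1,g_2)\,dg_1$ does not transfer to $\supp K_g$. (Indeed $K_g$ should turn out to have compact support --- it is ultimately the $K'$-smoothed delta --- but that is an output of the Lemma, not available for the proof.) The fix is to carry the $\deg_P$-bound from $\varphi\in\C_{P,-}$ (and from the difference $\varphi-\varphi_0$, which remains in $\C_{P,-}$ with the same degree bound) through the localization, rather than trying to prove compactness of $K_g$ on its own, which is in effect what the paper's proof does by tracking $\ord_M(\supp T_{\xi_P}(\varphi))$ and $\ord_M(\supp R_P T_{\xi_P}(\varphi))$ as functions of $\supp\varphi$.
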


\begin{proof}
Let $\varphi \in \C_{P,-}$. Since $\varphi$ is $K$-finite, there exists a compact open subgroup $K' \subset K$ such that $\varphi$ is $K'$-invariant.
The proof of Lemma~\ref{lem:Txi} shows that 
\[ \ord_M(\supp T_{\xi_{P}}(\varphi)) \subset S:=\{ \lambda +\theta - \mu \mid 
\lambda \in \ord_M(\supp \varphi),\, \theta \in S_0,\,  \mu \in \Lambda^{\pos,\bbQ}_G \},\] where $S_0 \subset \Lambda$ is a finite subset depending
only on $K'$ and not $\varphi$. 
The proof of Proposition~\ref{prop:I:C} shows that $\ord_M(\supp R_P(T_{\xi_P}(\varphi)))$ is also contained in $S$.
Therefore we deduce that it suffices to prove that $\varphi = R_P(T_{\xi_P}(\varphi))$ for compactly supported $\varphi \in \C_{P,c}$, which is Theorem~\ref{thm:IAsymp}. 
\end{proof}

\begin{prop}  \label{prop:R_Pv^-1}
The map $R_P : \C_{P^-,+} \to \C_{P,-}$ is an isomorphism. The inverse is given by the formula 
\begin{equation} \label{e:I^-1v}
    R_P^{-1}(\varphi)(g_2) = \int_{G/U} \varphi(g_1) \xi_P(g_1,g_2) dg_1, \quad\quad \varphi\in \C_{P,-},\, g_2 \in G/U^-
\end{equation}
\end{prop}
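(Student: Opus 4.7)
Lemma~\ref{lem:IT=1} already furnishes the right-inverse identity $R_P\circ T_{\xi_P}=\id_{\C_{P,-}}$, so $T_{\xi_P}$ is a right inverse of $R_P$, showing in particular that $R_P$ is surjective and $T_{\xi_P}$ is injective. The remaining content of Proposition~\ref{prop:R_Pv^-1} is equivalent to the injectivity of $R_P:\C_{P^-,+}\to\C_{P,-}$: for any $\varphi\in\C_{P^-,+}$, Lemma~\ref{lem:IT=1} gives $R_P(\varphi-T_{\xi_P}(R_P(\varphi)))=R_P(\varphi)-R_P(\varphi)=0$, and injectivity would then force $\varphi=T_{\xi_P}(R_P(\varphi))$, which is exactly formula \eqref{e:I^-1v}.

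\textbf{Injectivity via duality.} I plan to establish injectivity by combining Lemma~\ref{lem:IT=1} applied to the role-reversed setup (parabolics $P^\pm$ interchanged) with an extension of the adjointness of Lemma~\ref{lem:Iadjoint}. The role-reversed version of Lemma~\ref{lem:IT=1} reads $R_{P^-}\circ T_{\xi_{P^-}}=\id_{\C_{P^-,-}}$; in particular, $R_{P^-}:\C_{P,+}\to\C_{P^-,-}$ is surjective. Suppose now that $R_P(\varphi)=0$ for some $\varphi\in\C_{P^-,+}$. Extending Lemma~\ref{lem:Iadjoint} to
\[
\brac{R_P(\varphi),\tilde\varphi}_{G/U}=\brac{\varphi,R_{P^-}(\tilde\varphi)}_{G/U^-},\qquad \tilde\varphi\in\C_{P,c},
\]
yields $\brac{\varphi,R_{P^-}(\tilde\varphi)}_{G/U^-}=0$ for every $\tilde\varphi\in\C_{P,c}$. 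Given a test function $\psi\in\C_{P^-,c}\subset\C_{P^-,-}$, the role-reversed Lemma~\ref{lem:IT=1} produces $\tilde\varphi_0=T_{\xi_{P^-}}(\psi)\in\C_{P,+}$ with $R_{P^-}(\tilde\varphi_0)=\psi$; a truncation of $\tilde\varphi_0$ along a chosen $K'$-invariance level produces $\tilde\varphi\in\C_{P,c}$ with $R_{P^-}(\tilde\varphi)$ agreeing with $\psi$ on a compact set large enough to include $\supp\varphi\cap\supp\psi$. Chasing the vanishing above then forces $\brac{\varphi,\psi}_{G/U^-}=0$ for every $\psi\in\C_{P^-,c}$, hence by non-degeneracy of the natural pairing $\C_{P^-}\times\C_{P^-,c}\to E$ one concludes $\varphi=0$.

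\textbf{Main obstacle.} The principal technical issue is the pair of convergence-and-Fubini claims invoked above: the extended adjointness for $(\varphi,\tilde\varphi)\in\C_{P^-,+}\times\C_{P,c}$, and the truncation replacing $T_{\xi_{P^-}}(\psi)\in\C_{P,+}$ by an element of $\C_{P,c}$ with the same pairing against $\varphi$. The subtle point is that the cones $\pm\Lambda^{\pos,\bbQ}_{G,P}$ control $\deg_P$ and $\deg_{P^-}$ only in the quotient $\Lambda^\bbQ_{G,P}$, not along the fibers of $\ord_M:G/U^-\to\Lambda^+_M$ over $\Lambda^\bbQ_{G,P}$, so a priori the intersections of supports could extend to infinity along the central direction of $M$. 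The anticipated resolution is to restrict to a fixed compact-open level $K'\subset K$ of smoothness and then invoke Proposition~\ref{prop:GPbound}(i) together with Lemma~\ref{lem:ordineq} to show that the compactness of $\supp\tilde\varphi$ in $G/U$ forces the relevant supports to meet in only finitely many $K'$-orbits; once this finiteness is in hand the standard Fubini manipulation of Lemma~\ref{lem:Iadjoint} produces the desired adjunction, and the same input underlies the truncation step.
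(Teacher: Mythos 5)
Your overall strategy is genuinely different from the paper's. The paper dualizes the role-reversed identity $R_{P^-}\circ T_{\xi_{P^-}}=\id$ to produce an \emph{explicit} left inverse $T_{\tilde\xi}$ of $R_P$ (with $\tilde\xi(g_1,g_2)=\xi_{P^-}(g_2,g_1)$), verifies $T_{\tilde\xi}\circ R_P=\id$ on $\C_{P^-,c}$, and then extends to $\C_{P^-,+}$ by the same support-locality argument that is spelled out in the proof of Lemma~\ref{lem:IT=1}: for a fixed evaluation point $g_2$, the value of $T_{\tilde\xi}(R_P(\varphi))(g_2)$ depends only on a compactly supported truncation of $\varphi$, so the compactly-supported case bootstraps. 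You instead try to establish injectivity of $R_P$ on $\C_{P^-,+}$ via orthogonality, using the surjectivity of $R_{P^-}$ coming from the same role-reversed lemma. The ingredients are the same; the framing is different, and unfortunately the orthogonality framing introduces a genuine gap.

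The gap is the truncation step. You propose to take $\tilde\varphi_0=T_{\xi_{P^-}}(\psi)\in\C_{P,+}$ with $R_{P^-}(\tilde\varphi_0)=\psi$, truncate to $\tilde\varphi\in\C_{P,c}$, and conclude $\brac{\varphi,\psi}=\brac{\varphi,R_{P^-}(\tilde\varphi)}=0$. But the stated requirement that ``$R_{P^-}(\tilde\varphi)$ agree with $\psi$ on a compact set including $\supp\varphi\cap\supp\psi$'' is insufficient: the pairing $\brac{\varphi,R_{P^-}(\tilde\varphi)}$ integrates over $\supp\varphi\cap\supp R_{P^-}(\tilde\varphi)$, not over $\supp\varphi\cap\supp\psi$. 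The error is $\brac{\varphi,R_{P^-}(\tilde\varphi)-\psi}=-\brac{\varphi,R_{P^-}(\tilde\varphi_0-\tilde\varphi)}$, and this need not vanish: the remainder $\tilde\varphi_0-\tilde\varphi$ is supported at large $\ord_M$, but $R_{P^-}$ controls only $\deg_{P^-}$ of its image via the cone $\Lambda^{\pos,\bbQ}_{G,P}$, which (as you yourself observe in the ``main obstacle'' paragraph) says nothing about the fibers of $\ord_M\to\Lambda^\bbQ_{G,P}$, and $\supp\varphi\in\C_{P^-,+}$ is also unconstrained along those fibers. One would like to sidestep the truncation entirely by pairing $R_P(\varphi)=0$ directly against $\tilde\varphi_0$ via the adjunction, but the pairing between $\C_{P,-}$ and $\C_{P,+}$ need not converge for exactly the same reason: the supports have bounded intersection only in the $\deg_P$ coordinate, not within its fibers. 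The paper's route avoids this issue because the adjunction step uses only compactly supported test functions, and the non-compact extension is done pointwise via the support estimates of Lemma~\ref{lem:Txi}, Lemma~\ref{lem:suppeta}, and Proposition~\ref{prop:I:C}. You should either follow that route or supply a truncation with a genuine control of $R_{P^-}(\tilde\varphi_0-\tilde\varphi)$ relative to $\supp\varphi$, which I do not see how to do without essentially re-deriving the paper's support-locality lemma.
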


\begin{rem}
Our proof of Proposition~\ref{prop:R_Pv^-1} is different from the one in \cite[Proposition 7.5(b)]{BK}. This proof was suggested by V.~Drinfeld.
%

We give a separate, self-contained proof of invertibility of 
$R_P^K : \C_{P^-,+}^K \to \C_{P,-}^K$ with explicit formulas 
in Corollary~\ref{cor:Iisom}.
\end{rem}

\begin{proof}
Lemma~\ref{lem:IT=1} implies that $R_{P}$ has a right inverse
given by \eqref{e:I^-1v}. 
It remains to show that $R_{P}$ has a left inverse. 
Apply Lemma~\ref{lem:IT=1} to the \emph{opposite parabolic} $P^-$ 
to get a map $T_{\xi_{P^-}} : \C_{P^-,-} \to \C_{P,+}$ such that 
$R_{P^-} \circ T_{\xi_{P^-}} = \id$ on $\C_{P^-,-}$.
Taking the dual operators gives an equality 
\begin{equation}\label{e:TI=1}
    T_{\xi_{P^-}}^* \circ R_{P^-}^* = \id 
\end{equation}
on $\C_{P^-,c} \subset \wtilde\C_{P^-,-}$. 
Let $\varphi \in \C_{P^-,c}$. 
Lemma~\ref{lem:Iadjoint} (applied to $P^-$) implies that $R_{P^-}^*(\varphi) = R_P(\varphi)$.
Define $\tilde \xi \in S^*_b(\bbX_P)^G$ by 
\[ \tilde\xi(g_1,g_2) = \xi_{P^-}(g_2,g_1). \] 
It follows formally from the definition of $T_{\xi_{P^-}}$ that 
$T_{\xi_{P^-}}^*( R_P(\varphi) ) = T_{\tilde \xi}( R_P(\varphi) )$,
where the map $T_{\tilde\xi} : \C_{P,-} \to \C_{P^-,+}$ is defined by Lemma~\ref{lem:Txi}. 
Therefore \eqref{e:TI=1} implies that $T_{\tilde\xi} \circ R_P = \id$ on 
$\C_{P^-,c}$.
Then the same argument as in the proof of Lemma~\ref{lem:IT=1} shows that $T_{\tilde \xi} \circ R_P = \id$ on $\C_{P^-,+}$, so we conclude that $R_P$ has a left inverse.
\end{proof}

\section{Formulas on $K$-invariants}\label{s:Kinvariants}

Let $F_v$ be an arbitrary non-Archimedean local field. 
We use the same notation and conventions as in \S\ref{s:local} (e.g., $G = G(F_v),\, K=K_v$, etc.). 

\smallskip

Restricting to $K$-invariants, we see that the intertwining operator is essentially convolution with a measure $\mu_{M}$ on $M$. 
We compute the Satake transform of $\mu_{M}$ using the non-Archimedean Gindikin--Karpelevich formula. 
We give a formula for $\Asymp_{P}(\delta_K)$, where $\delta_K$ is the characteristic function of $K$, in terms of the convolution inverse of $\mu_{M}$.

\smallskip

Fix the Haar measures on $G,\, M,\, T,\, U,\, U^-$ so that
$K,\, K_M,\, K_T,\, K \cap U,\, K \cap U^-$ all have measure $1$.

\subsection{Intertwining operator on $K$-invariants} 
Let $K \subset G$ act on $G/U,\, G/U^-$ on the left. Recall that $K \bs G/ U = K\bs G/U^- = K_M \bs M$.

\subsubsection{The measure $\mu_{M}$}  \label{sss:mu}
Let $\bar\mu$ denote the direct image of the Haar measure on $U$ 
under the map 
\begin{equation} \label{e:m(u)}
U \into G \to K\bs G/U^- = K_M \bs M,
\end{equation}
where $G = K \cdot M \cdot U^-$ by the Iwasawa decomposition. 
In other words,
$\bar\mu(\Omega)$ is the measure of $U \cap (K\cdot \Omega \cdot U^-) \subset U$, 
where $\Omega \subset K_M \bs M$.
Since \eqref{e:m(u)} is equivariant with respect to the action of $K_M$ by
conjugation, $\bar\mu$ is right $K_M$-invariant. 
Define $\mu_M$ to be the $K_M$-bi-invariant measure on $M$ whose pushforward to 
$K_M \bs M$ equals $\bar\mu$.

\subsubsection{A formula in terms of convolution}
Let $R_{P}^K$ denote the restriction of the intertwining operator \eqref{e:IG}
to $\C_{P^-,+}^K \to \C_{P,-}^K$. 
Then we have the formula
\begin{equation} \label{eqn:Iconv}
	R_{P}^K(\varphi)(m) = \delta_P(m)^{-1}\int_{U} \varphi(u m) du = 
	\delta_P(m)^{-1} \int_M \varphi(m_1 m) \mu_M(m_1) 
\end{equation}
where $\varphi \in \C_{P^-,+}^K,\, m \in M$. 

Comparing with \eqref{e:R=T}, we see that 
\begin{equation} \label{e:eta=mu}
((\delta_K \ot 1) * \eta_P)(m,1) = \mu_M(m), \quad\quad m\in M, 
\end{equation}
where we consider $\mu_M$ as a $K_M$-bi-invariant function using the fixed Haar measure on $M$.

\subsection{Satake isomorphism} 
We extend the classical Satake isomorphism to an isomorphism between certain larger algebras (defined below) that are convenient for our purposes. 

\smallskip

%

\subsubsection{The completed Hecke algebra}  \label{sss:completion}
Let $H_M^+$ denote the space of $K_M$-bi-invariant measures on $M$ (with values in $E$) whose support is contained in $\ord_M^{-1}(\Lambda^{\pos,\bbQ}_{U} \cap \Lambda^+_M)$, where $\Lambda^{\pos,\bbQ}_{U}$ is the rational cone defined in \S\ref{sss:LambdaU}. 

\begin{rem} 
Lemma~\ref{lem:suppeta} and \eqref{e:eta=mu} imply that $\mu_{M}$ belongs to $H^+_M$. 
\end{rem}
 
\begin{lem} \label{lem:Hconv}
(i) Suppose that $\Sigma$ is a submonoid of $\Lambda_M^+$ such that 
if $\lambda \in \Lambda^+_M$ and there exists $\nu \in \Sigma$
such that $\lambda \le_M \nu$, then $\lambda \in \Sigma$. 
Then the vector space of compactly supported
$K_M$-bi-invariant measures on $M$ whose support is contained in $\ord_M^{-1}(\Sigma)$ 
is closed under the convolution product, so the vector space becomes an algebra. 

(ii) If, in addition, $\Sigma$ generates a strongly convex cone and 
the intersection of $\Sigma$ with any shift of $\Lambda^\pos_M$ is finite, then 
convolution extends, by continuity, to the space of all
$K_M$-bi-invariant measures on $M$ whose support is contained in $\ord_M^{-1}(\Sigma)$.
Then this space is also an algebra.
\end{lem}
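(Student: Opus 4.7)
The plan is to exploit the classical structure of the spherical Hecke algebra of $M$: for $\lambda_1,\lambda_2 \in \Lambda^+_M$, the convolution $c_{\lambda_1} * c_{\lambda_2}$ of the characteristic measures of the double cosets $K_M\lambda_i(\varpi_v)K_M$ is a finite linear combination of $c_\lambda$ for $\lambda \in \Lambda^+_M$ satisfying $\lambda \le_M \lambda_1+\lambda_2$, with $c_{\lambda_1+\lambda_2}$ appearing with nonzero coefficient. Any $K_M$-bi-invariant measure on $M$ is then a (possibly infinite) formal combination $\sum_{\lambda \in \Lambda^+_M} a_\lambda c_\lambda$, and support in $\ord_M^{-1}(\Sigma)$ corresponds to $a_\lambda$ vanishing outside $\Sigma$.

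For part (i), I would observe that the convolution of two compactly supported measures with support in $\ord_M^{-1}(\Sigma)$ is a finite linear combination of products $c_{\lambda_1}*c_{\lambda_2}$ with $\lambda_1,\lambda_2 \in \Sigma$. Since $\Sigma$ is a submonoid, $\lambda_1+\lambda_2 \in \Sigma$, and the downward-closure assumption on $\Sigma$ places every $\lambda \in \Lambda^+_M$ with $\lambda \le_M \lambda_1+\lambda_2$ into $\Sigma$. Hence the convolution remains compactly supported and in $\ord_M^{-1}(\Sigma)$.

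For part (ii), I would topologize the space by pointwise convergence of the coefficients $a_\lambda$, in which compactly supported measures are dense. Convolution extends to a continuous bilinear operation on this topological vector space provided that, for each $\lambda \in \Sigma$, the $c_\lambda$-coefficient of $\mu_1 * \mu_2$ depends on only finitely many coefficients of $\mu_1$ and $\mu_2$. Equivalently, it suffices to show that the set
\[ S_\lambda := \{(\lambda_1,\lambda_2) \in \Sigma \times \Sigma : \lambda \le_M \lambda_1+\lambda_2\} \]
is finite for each $\lambda \in \Sigma$. To prove this I would parameterize by $\mu := \lambda_1+\lambda_2$: the condition $\lambda \le_M \mu$ forces $\mu$ into the finite set $\Sigma \cap (\lambda + \Lambda^\pos_M)$ by the second hypothesis. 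For each such fixed $\mu$, strong convexity of the cone generated by $\Sigma$ yields a linear functional $\phi$ strictly positive on that cone minus the origin, so that $\phi(\lambda_1) + \phi(\lambda_2) = \phi(\mu)$ confines $\lambda_1$ to the intersection of the lattice with a bounded region of the cone, which has only finitely many points. Thus $S_\lambda$ is finite, and the convolution is defined and continuous; closure of the support in $\ord_M^{-1}(\Sigma)$ follows as in part (i).

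The main obstacle is part (ii), where the two hypotheses must be used in tandem: strong convexity alone does not bound $\Sigma \cap (\lambda + \Lambda^\pos_M)$, and the shift-finiteness alone does not bound the pairs summing to a fixed $\mu$. Together they give the termwise finiteness that makes the extension by continuity possible.
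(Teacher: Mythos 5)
Your proof is correct. The paper gives essentially no proof here---its entire argument for this lemma is the sentence ``The lemma follows from the usual properties of the Hecke algebra''---so your proposal supplies exactly the missing details in the natural way: for (i), the triangularity $\supp(c_{\lambda_1}*c_{\lambda_2}) \subset \bigcup_{\lambda \le_M \lambda_1+\lambda_2} K_M\lambda(\varpi_v)K_M$ combined with the submonoid and downward-closure hypotheses on $\Sigma$; for (ii), the two-step finiteness argument in which the shift-finiteness hypothesis caps the possible sums $\mu = \lambda_1+\lambda_2$ lying in $\Sigma \cap (\lambda + \Lambda^\pos_M)$, and strong convexity then caps the decompositions of each fixed $\mu$, so each coefficient of the formal convolution is a finite sum and the operation extends continuously. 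Your set $S_\lambda$ is a slight overcount of the pairs that genuinely contribute (you only need those with nonvanishing structure constant $n^\lambda_{\lambda_1,\lambda_2}$), but finiteness of the larger set is of course enough.
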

\begin{proof}
The lemma follows from the usual properties of the Hecke algebra.
\end{proof}

Lemma~\ref{lem:wthull} implies that $\Sigma = \Lambda^{\pos,\bbQ}_{U} \cap \Lambda^+_M$ satisfies the condition in Lemma~\ref{lem:Hconv}(i), and 
one observes from the definition that $\Lambda^+_{U}$ also satisfies condition (ii).
Therefore $H^+_M$ is an algebra with respect to convolution.

\subsubsection{}
Let $H^+_T$ denote the space of $K_T$-bi-invariant measures on $T$ whose support
is contained in $\ord_T^{-1}(\Lambda^{\pos,\bbQ}_{U} \cap \Lambda)$. 
Lemma~\ref{lem:Hconv}(ii) implies that $H^+_T$ is an algebra with respect to convolution. 
Observe that the Weyl group of $M$ acts on $H^+_T$. 
\medskip

Using our fixed Haar measures, we identify locally constant functions on $M$ (resp.~$T$) with locally constant measures on $M$ (resp.~$T$). 
We also fix the Haar measure on $U_B^- \cap M$ such that $U_B^- \cap K_M$ has measure $1$. 

\begin{lem} 
The usual Satake transform extends to an isomorphism 
$\CT : H^+_M \to (H^+_T)^{W_M}$ given by the formula
\begin{equation} \label{eqn:Sat+}
	\CT(h)(t) = \delta_{B \cap M}(t)^{-1/2} \int_{U_B^-\cap M} h(t \bar n)d\bar n,
\end{equation}
where $h\in H^+_M$ is considered as a function on $M$.
\end{lem}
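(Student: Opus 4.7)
The plan is to extend the classical Satake isomorphism $H_M \xrightarrow{\sim} H_T^{W_M}$ from compactly supported measures to the completions $H^+_M$ and $(H^+_T)^{W_M}$ by a continuity argument. The essential content is the finiteness condition of Lemma~\ref{lem:Hconv}(ii), which ensures that the a priori formal sums appearing are in fact finite.

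I would first verify that the formula \eqref{eqn:Sat+} is well-defined for $h \in H^+_M$. Writing $h = \sum_{\lambda \in \Sigma} c_\lambda h_\lambda$ with $\Sigma := \Lambda^+_M \cap \Lambda^{\pos,\bbQ}_{U}$ and $h_\lambda$ the characteristic function of $K_M \lambda(\varpi_v) K_M$, I claim that for each fixed $t \in T$ only finitely many terms contribute. Indeed, $\int_{U_B^-\cap M} h_\lambda(t\bar n)\, d\bar n$ can be nonzero only if $t\bar n \in K_M\lambda(\varpi_v)K_M$ for some $\bar n \in U_B^-\cap M$; the standard comparison of Iwasawa and Cartan decompositions (proved by the weight-vector argument used in Lemma~\ref{lem:suppeta}, applied to $M$) then forces $\lambda - \ord_T(t) \in \Lambda^{\pos,\bbQ}_M$. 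Since by the hypothesis of Lemma~\ref{lem:Hconv}(ii) the set $\Sigma \cap (\ord_T(t) + \Lambda^\pos_M)$ is finite, $\CT(h)(t)$ is a finite sum of classical orbital integrals, so the formula is well-defined and gives a $K_T$-bi-invariant measure on $T$.

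Next I would verify $\CT(h) \in (H^+_T)^{W_M}$. The $W_M$-invariance is inherited from the classical Satake transform. For the support condition, the classical Satake shows that $\chi_\lambda := \CT(h_\lambda)$ is supported on weights $\mu \in \Lambda$ in the convex hull of $W_M \cdot \lambda$; the $W_M$-dominant representative $\mu^+$ of such a $\mu$ satisfies $\mu^+ \le_M \lambda$, so Lemma~\ref{lem:wthull} combined with $\lambda \in \Lambda^{\pos,\bbQ}_U \cap \Lambda^+_M$ gives $\mu^+ \in \Lambda^{\pos,\bbQ}_U$, and the $W_M$-stability of $\Lambda^{\pos,\bbQ}_U$ then places $\mu$ itself in this cone. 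Multiplicativity of $\CT$ is inherited from the classical case by density and continuity, since the compactly supported Hecke algebras are dense in $H^+_M$ and $H^+_T$ in the topologies with respect to which convolution is continuous (Lemma~\ref{lem:Hconv}).

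Finally, for bijectivity I would use the triangular structure of the classical Satake: $\chi_\lambda = g_{\lambda,\lambda}\, e_\lambda^+ + \sum_{\mu \in \Lambda^+_M,\, \mu <_M \lambda} g_{\lambda,\mu}\, e_\mu^+$ with $g_{\lambda,\lambda} \in E^\times$, where $e_\mu^+$ denotes the sum of the characteristic functions of $\nu(\varpi_v) K_T$ over the $W_M$-orbit of $\mu$. Inverting this triangular relation gives, for any $g = \sum_\mu d_\mu\, e_\mu^+ \in (H^+_T)^{W_M}$, unique coefficients $c_\lambda = \sum_{\mu \in \Sigma,\, \mu \ge_M \lambda} \tilde g_{\mu,\lambda}\, d_\mu$, which is a finite sum again by the finiteness of $\Sigma \cap (\lambda + \Lambda^\pos_M)$. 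This simultaneously yields injectivity and surjectivity. The main obstacle, pervasive throughout, is exactly this finiteness statement---the combinatorial heart of the argument---which is the hypothesis of Lemma~\ref{lem:Hconv}(ii) applied to $\Sigma$.
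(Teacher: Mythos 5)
Your proof is correct and follows essentially the same route as the paper's: both reduce the well-definedness and bijectivity of the extended map $\CT$ to (a) the classical fact that $\CT(1_{\ord_M^{-1}(\lambda)})$ is supported on $\lambda' \le_M \lambda$ (the paper cites \cite[\S 4.2]{Cartier}, while you rederive it via the weight-vector argument of Lemma~\ref{lem:suppeta}), and (b) the finiteness/closure properties of $\Sigma = \Lambda^{\pos,\bbQ}_U \cap \Lambda^+_M$ guaranteed by Lemmas~\ref{lem:wthull} and~\ref{lem:Hconv}. You simply unpack what the paper compresses into one sentence—spelling out the pointwise finiteness, the verification that the image lands in $(H^+_T)^{W_M}$, and the unitriangular inversion giving bijectivity. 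One small imprecision: in several places you write $\Lambda^{\pos,\bbQ}_M$ where the integral cone $\Lambda^\pos_M$ is what the classical support estimate actually gives and what Lemma~\ref{lem:Hconv}(ii) requires; this does not affect the argument but should be stated with the integral cone for the finiteness hypothesis to apply verbatim.
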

Here $\CT$ stands for `constant term'.
Since the image of the Satake transform is $W_M$-invariant, \eqref{eqn:Sat+} does
not depend on the choice of Borel subgroup of $M$. 

\begin{proof} Let $1_{\ord^{-1}_M(\lambda)}$ denote the characteristic function 
	of $\ord_M^{-1}(\lambda) \subset M$ for $\lambda \in \Lambda^+_M$. 
	It is known (cf.~\cite[\S 4.2]{Cartier}) that $\CT (1_{\ord_M^{-1}(\lambda)})$
	does not vanish on $\ord_T^{-1}(\lambda'),\, \lambda'\in\Lambda_M^+$,
only if $\lambda' \le_M \lambda$. 
Thus we deduce that $\CT$ is well-defined and an isomorphism from the usual Satake isomorphism and the fact that $\Lambda^{\pos,\bbQ}_{U} \cap \Lambda^+_M$ satisfies Lemma~\ref{lem:Hconv}.
\end{proof}

\begin{rem} \label{rem:Alocal}
Note that the algebra $H^+_T$ is isomorphic to the completion of the
semigroup algebra of $\Lambda^{\pos,\bbQ}_{U} \cap \Lambda$ at the augmentation ideal. 
In particular, it is a local ring, and $\hat h \in H^+_T$ is a unit
if and only if $\hat h(1)\ne 0$. We deduce that $H^+_M$ and $(H^+_T)^{W_M}$ 
are also complete local rings.
\end{rem}

\subsection{Gindikin--Karpelevich formula}
In this subsection we rewrite the non-Archimedean Gindikin--Karpelevich formula\footnote{The Gindikin--Karpelevich formula for non-Archimedean local fields is due to Langlands \cite{L:Euler} and MacDonald \cite{MacDonald} independently, with a generalization by Casselman \cite{Casselman}.} as a formula for 
$\CT(\mu_M) \in (H_T^+)^{W_M}$. 

\subsubsection{}
Recall that we defined $\delta_P(m) = \abs{\det \Ad_{\Lie(U)}(m)}$ for $m\in M$. 
Let $2\check\rho_P : =2\check\rho - 2\check\rho_M$ be the sum of the positive roots in $G$ that are not roots of $M$.
For $m \in M$, we have $\delta_P(m)= q_v^{-\brac{2\check\rho_P, \ord_M(m)}}$.

\smallskip

For $\lambda \in \Lambda$, let $1_{\ord_T^{-1}(\lambda)}$ denote the characteristic
function of $\ord_T^{-1}(\lambda) \subset T$. 
Set 
\[ e^\lambda = q_v^{\brac{\check\rho_P, \lambda}}\cdot 1_{\ord_T^{-1}(\lambda)} \in H_T^+.\]

\begin{prop} We have 
\begin{equation}\label{eqn:Satmu}
	\CT(\mu_{M}) = \prod_{\alpha \in \Phi^+_G - \Phi_M} \frac{1-q_v^{-1} e^\alpha}{1-e^\alpha}, 
\end{equation}
where the r.h.s.~is considered as an element of $(H_T^+)^{W_M}$ by Remark~\ref{rem:Alocal}.
\end{prop}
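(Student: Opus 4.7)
The plan is to identify $\mu_M$ with the distribution on $M$ whose Satake transform computes the scalar by which the parabolic intertwining operator $R_P$ acts on the spherical vector of the unramified principal series, and to invoke Langlands' generalization \cite{L:Euler} of the classical Gindikin--Karpelevich formula.

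First, I note that formula \eqref{eqn:Iconv} realizes $\mu_M$, up to the $\delta_P^{-1}$ twist, as the convolution kernel for $R_P$ on $K$-invariants. Consequently, for every unramified character $\chi$ of $T$, the value of $\CT(\mu_M)$ evaluated at $\chi$ is the scalar by which $R_P$ acts on the spherical vector in the parabolically induced representation attached to $\chi$. Langlands' generalization of GK identifies this scalar with
\[
\prod_{\alpha \in \Phi_G^+ - \Phi_M}\frac{1 - q_v^{-1}\chi(\alpha^\vee)}{1 - \chi(\alpha^\vee)}.
\]
Then I check that the Satake-parameter identification encoded in \eqref{eqn:Sat+} sends $\chi(\alpha^\vee)$ to exactly $e^\alpha = q_v^{\brac{\check\rho_P, \alpha}}\, 1_{\ord_T^{-1}(\alpha)}$; this specific normalization is forced by the interaction of the $\delta_{B\cap M}^{-1/2}$ insertion in the definition of $\CT$ with the $\delta_P^{-1}$ appearing in \eqref{eqn:Iconv}, and is precisely the combination that shifts the natural $\check\rho$ (appearing in classical GK for the full Borel $B$) to $\check\rho_P$.

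An alternative, more hands-on route bypasses the representation-theoretic language. Let $\mu_T$ and $\mu_T^M$ be the analogous measures on $T$ obtained by the construction of \S\ref{sss:mu} applied to $G$ with respect to $B$, and to $M$ with respect to $B_M := B\cap M$, respectively. Classical Gindikin--Karpelevich, applied twice, yields explicit product formulas for $\mu_T$ in $(H_T^+)^W$ and for $\mu_T^M$ in $(H_T^+)^{W_M}$. Using the factorization $U_B = U \cdot U_{B_M}$ of Haar measures together with the transitivity of intertwining operators along $B^- \subset P^- \subset G$ (and the identity $\delta_B = \delta_P \cdot \delta_{B\cap M}$), one derives a multiplicative identity in $(H_T^+)^{W_M}$ relating $\mu_T$, $\mu_T^M$, and a suitable $\delta$-twist of $\CT(\mu_M)$. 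Solving for $\CT(\mu_M)$ and using the vanishing $\brac{\check\rho_P, \alpha} = 0$ for $\alpha \in \Phi_M^+$ (equivalently, the $W_M$-invariance of $2\check\rho_P$) to cancel the factors indexed by $\Phi_M^+$, the surviving product over $\Phi_G^+ - \Phi_M$, with the $\check\rho_P$-normalization inherited from the $\delta$-twist, matches the right-hand side of \eqref{eqn:Satmu}.

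The main obstacle in either approach is the careful bookkeeping of the modular characters: the factors $\delta_B^{1/2}$, $\delta_P^{1/2}$, and $\delta_{B\cap M}^{1/2}$ must be tracked through the unfolding of $\CT(\mu_M)$ and through the comparison with classical GK so that exactly the shift from $\check\rho$ to $\check\rho_P$ emerges, rather than any other half-integer shift. Once this normalization check is complete, the proposition follows immediately from the classical GK product formula.
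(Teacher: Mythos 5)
Your first route is essentially the paper's proof: one tests $\CT(\mu_M)$ against a generic unramified character $\chi_{\check\lambda}$ with $\on{Re}\brac{\check\lambda,\alpha}>0$, recognizes the resulting integral as $\int_U \phi_{K,\check\lambda}(u)\,du$, and invokes Langlands' Gindikin--Karpelevich formula, which already covers the case of an arbitrary parabolic subgroup $P$. The modular-character bookkeeping you correctly flag is handled in the paper via Cartier's Iwasawa-integration formulas together with the identity $\int_T \chi_{\check\lambda}(t)\,\delta_P^{1/2}(t)\,e^\nu(t)\,dt = q_v^{-\brac{\check\lambda,\nu}}$, which is exactly what the $q_v^{\brac{\check\rho_P,\nu}}$-twist in the definition of $e^\nu$ is built to achieve. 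Your second route, via transitivity of intertwining operators (decomposing $U_B = U \cdot U_{B\cap M}$ and applying GK for the Borel subgroups of $G$ and of $M$, then dividing), is a valid alternative; the paper avoids it only because Langlands' formula already applies to $P$ directly.
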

\begin{proof} For the purpose of this proof, we may assume $E = \bbQ$ (since $\mu_{M}$ takes values in $\bbQ$).  
	Let $\check\lambda \in \check\Lambda \ot \bbC$ satisfy $\on{Re}\brac{\check\lambda,\alpha} > 0$
for every positive coroot $\alpha$ of $G$. 
Let $\chi_{\check\lambda}$ be the unramified character $T \to \bbC^\times$ 
sending $t \mapsto q_v^{-\brac{\check\lambda, \ord_T(t)}}$. 
Define the function $\phi_{K,\check\lambda}$ on $G$ by 
\[ \phi_{K,\check\lambda}(k \cdot t \cdot \bar n) = \chi_{\check\lambda}(t) \delta_B^{1/2}(t), 
\quad k \in K,\, t\in T,\, \bar n \in U(B^-) \]
where $G = K \cdot B^-$ by the Iwasawa decomposition.
The Gindikin--Karpelevich formula for non-Archimedean local fields \cite[p.~18]{L:Euler} implies that
\[ \int_M \phi_{K,\check\lambda}(m) \mu_{M,v}(m) = \int_{U} \phi_{K,\check\lambda}(u) du 
= \prod_{\alpha\in\Phi^+ -\Phi_M} \frac{1-q_v^{-1 - \brac{\check\lambda, \alpha}}}{1- q_v^{-\brac{\check\lambda, \alpha}}} \]
where the l.h.s.~converges absolutely. 
Integrating over $M = K_M \cdot (B^-\cap M)$ using the Iwasawa decomposition (cf.~\cite[Equations (5), (10)]{Cartier}), the l.h.s.~equals $\int_T \chi_{\check\lambda}(t) \delta_P^{1/2}(t) \CT(\mu_{M,v})(t) dt$. 
Note that for $\nu \in \Lambda$, we have
$\int_T \chi_{\check\lambda}(t) \delta_P^{1/2}(t) e^\nu(t)dt = q_v^{-\brac{\check\lambda, \nu}}$. 
Therefore equation \eqref{eqn:Satmu} holds after integrating against $\chi_{\check\lambda}$ for  
any $\check\lambda \in \check\Lambda \ot \bbC$ satisfying $\on{Re}\brac{\check\lambda,\alpha} > 0$
for all $\alpha \in \Phi^+_G$. This implies the equality \eqref{eqn:Satmu} 
of elements in $(H^+_T)^{W_M}$.
\end{proof}

\begin{cor} The measure $\mu_{M}$ is invertible in $H^+_M$. 
\end{cor}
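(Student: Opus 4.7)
The plan is to exploit the Satake isomorphism $\CT: H^+_M \to (H^+_T)^{W_M}$ established above, which reduces invertibility of $\mu_M$ in $H^+_M$ to invertibility of $\CT(\mu_M)$ in $(H^+_T)^{W_M}$. By Remark~\ref{rem:Alocal}, the ring $(H^+_T)^{W_M}$ is a complete local ring whose maximal ideal consists of those elements $\hat h$ with $\hat h(1) = 0$, so it suffices to exhibit $\CT(\mu_M)$ as a product of units in this local ring.

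First I would observe that for each $\alpha \in \Phi^+_G - \Phi_M$, the coroot $\alpha$ lies in $\Lambda^{\pos}_U \setminus \{0\}$, so $e^\alpha \in H^+_T$ is supported away from $1 \in T$ and therefore lies in the maximal ideal of $H^+_T$. Consequently both $1 - e^\alpha$ and $1 - q_v^{-1} e^\alpha$ are of the form $1 - x$ with $x$ in the maximal ideal, hence are units in $H^+_T$; the convergence of the geometric series $\sum_{n \ge 0} e^{n\alpha}$ (resp.~$\sum_{n \ge 0} q_v^{-n} e^{n\alpha}$) in the complete local ring $H^+_T$ produces explicit inverses. It follows that each factor $(1 - q_v^{-1} e^\alpha)/(1 - e^\alpha)$ is a unit in $H^+_T$.

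Then by formula~\eqref{eqn:Satmu}, $\CT(\mu_M)$ is a finite product of such units and hence is a unit in $H^+_T$. Since $\CT(\mu_M) \in (H^+_T)^{W_M}$, its inverse in $H^+_T$ is automatically $W_M$-invariant by the uniqueness of the inverse, so $\CT(\mu_M)$ is a unit in $(H^+_T)^{W_M}$. Pulling back along the isomorphism $\CT$ yields the claimed invertibility of $\mu_M$ in $H^+_M$. There is no real obstacle here: the statement is essentially the assertion that a product of ``$1$ minus something small'' is a unit in a complete local ring, and the only substantive input has already been supplied by the Gindikin--Karpelevich formula~\eqref{eqn:Satmu} and the local structure recorded in Remark~\ref{rem:Alocal}.
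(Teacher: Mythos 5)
Your proposal is correct and follows essentially the same route as the paper: both use the Gindikin--Karpelevich formula \eqref{eqn:Satmu} together with the local ring structure of $H^+_T$ recorded in Remark~\ref{rem:Alocal} (your factor-by-factor argument that each $e^\alpha$ lies in the maximal ideal is just an expanded way of verifying the paper's one-line observation that $\CT(\mu_M)(1)=1\neq 0$), and then both pull back along the extended Satake isomorphism.
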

\begin{proof}
	The Gindikin--Karpelevich formula \eqref{eqn:Satmu} implies that $\CT(\mu_{M})(1) = 1$,
	so $\CT(\mu_{M})$ is invertible by Remark~\ref{rem:Alocal}. 
	The extended Satake isomorphism \eqref{eqn:Sat+} then implies that $\mu_{M}$ is invertible.
\end{proof}

\subsubsection{} \label{def:nu}
Let $\nu_{M} \in H_M^+$ denote the convolution inverse of $\mu_{M}$. 
We consider it as a $K_M$-bi-invariant measure on $M$.

\begin{cor}\label{cor:Iisom}
	The operator $R_P^K : \C_{P^-,+}^K \to \C_{P,-}^K$ is an isomorphism.
The inverse is given by the formula 
\begin{equation} \label{eqn:nuconv}
	(R_P^K)^{-1}(\varphi)(m) = \int_M \delta_P(m_1 m)\varphi(m_1 m) \nu_{M}(m_1), 
\end{equation}
where $\varphi\in \C_{P,-}^K,\, m \in M$.
\end{cor}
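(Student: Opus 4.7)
The idea is to recognize $R_P^K$, after the Iwasawa identification of $K$-invariant functions with $K_M$-bi-invariant functions on $M$, as left convolution with the measure $\mu_M$ twisted by $\delta_P^{-1}$, and then to invert it using invertibility of $\mu_M$ in the completed Hecke algebra $H^+_M$ (the previous Corollary). Formula \eqref{eqn:nuconv} is precisely the corresponding convolution with $\nu_M = \mu_M^{-1}$ twisted the other way; what remains is to verify convergence and support, after which the inversion is formal.

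First, I would set up the Iwasawa identification. Writing $G = K \cdot M \cdot U^{\pm}$, restriction to $M$ identifies both $\C^K_{P^-,+}$ and $\C^K_{P,-}$ with the same subspace $\mathcal{S}$ of $K_M$-bi-invariant $E$-valued functions $\varphi$ on $M$: those satisfying $[\ord_M(\supp \varphi)]_P \subset S_0 - \Lambda^{\pos,\bbQ}_{G,P}$ for some finite $S_0 \subset \Lambda^\bbQ_{G,P}$ (the two spaces agree as sets under this realization since $\Lambda^\bbQ_{G,P^-} = \Lambda^\bbQ_{G,P}$). Writing $S_h(\varphi)(m) := \int_M \varphi(m_1 m)\, h(m_1)\, dm_1$, formula \eqref{eqn:Iconv} reads $R_P^K(\varphi) = \delta_P^{-1}\cdot S_{\mu_M}(\varphi)$, while the multiplicativity $\delta_P(m_1 m) = \delta_P(m_1)\delta_P(m)$ makes the right hand side of \eqref{eqn:nuconv} equal to $L_P(\varphi) := S_{\nu_M}(\delta_P\varphi)$.

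Second, I would show that $S_h$ preserves $\mathcal{S}$ with absolutely convergent defining integral for any $h \in H^+_M$. Since $[\ord_M(\supp h)]_P \subset \Lambda^{\pos,\bbQ}_{G,P}$ and $[\ord_M(m_1 m)]_P = [\ord_M(m_1)]_P + [\ord_M(m)]_P$ (projection to the abelianization is additive), the support condition of $\mathcal{S}$ propagates correctly. For fixed $m$, the contributing $m_1$ have $[\ord_M(m_1)]_P$ bounded in $\Lambda^\bbQ_{G,P}$, and combined with $\ord_M(m_1) \in \Lambda^{\pos,\bbQ}_U \cap \Lambda^+_M$ this gives only finitely many $K_M$-double cosets by Lemma~\ref{lem:Hconv}(ii). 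Multiplication by $\delta_P^{\pm 1}$ trivially preserves $\mathcal{S}$, so both $R_P^K$ and $L_P$ are well-defined endomorphisms of $\mathcal{S}$; in particular $L_P$ maps $\C^K_{P,-}$ into $\C^K_{P^-,+}$.

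Third, the inversion is formal. After the $\delta_P$ factors cancel,
\[
L_P \circ R_P^K(\varphi)(m) \;=\; \int_M\!\int_M \varphi(m_2 m_1 m)\,\mu_M(m_2)\,\nu_M(m_1)\,dm_2\,dm_1.
\]
By the absolute convergence from the previous step, Fubini and the substitution $m_3 = m_2 m_1$ rewrite this as $\int_M \varphi(m_3 m)\,(\mu_M * \nu_M)(m_3)\,dm_3 = \varphi(m)$, using $\mu_M * \nu_M = \delta_e$ from the previous Corollary. The opposite composition is identical, invoking commutativity of $H^+_M$ given by the Satake isomorphism. The only nontrivial obstacle is the convergence and support bookkeeping in the second step, which is fully dictated by the cone $\Lambda^{\pos,\bbQ}_U$ and the Hecke algebra structure developed in \S\ref{ss:combinatorics} and Lemma~\ref{lem:Hconv}.
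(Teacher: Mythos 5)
Your proposal is correct and follows essentially the same route as the paper's proof: Iwasawa identification, realizing $R_P^K$ as a $\delta_P$-twisted convolution with $\mu_M$, convergence via the cone $\Lambda^{\pos,\bbQ}_U$ (the paper routes this through the generalized function $\xi$ with $\xi(m,1)=\nu_M(m)$ and Lemma~\ref{lem:Txi}, but the underlying combinatorics is the same), and formal inversion using $\nu_M=\mu_M^{-1}$ in $H^+_M$. One small slip: under the Iwasawa identification a $K$-invariant function on $G/U$ restricts only to a \emph{left} $K_M$-invariant function on $M$, not a $K_M$-bi-invariant one (the Iwasawa $M$-component is well defined only modulo $K_M$ on the left); this is harmless, since your argument uses only left-invariance of $\varphi$ together with bi-invariance of the Hecke measures $\mu_M,\nu_M$.
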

\begin{proof}
Define $\xi \in \C_b(\bbX_P)^{K \xt K}$ by $\xi(m,1) = \nu_{M}(m)$ 
for $m \in M$. 
The fact that $\nu_{M}$ belongs to $H_M^+$ implies that
$\xi$ indeed has bounded support. 
Then the r.h.s.~of \eqref{eqn:nuconv} equals $T_\xi(\varphi)$, 
where $T_\xi : \C_{P,-} \to \C_{P^-,+}$ is defined in Lemma~\ref{lem:Txi}. 
In particular, the r.h.s.~of \eqref{eqn:nuconv} is well-defined. 
Note that by the Iwasawa decomposition, $\C_{P^-,+}^K$ and $\C_{P,-}^K$ identify with the same space of $K_M$-invariant functions on $M$. 
Equation \eqref{eqn:Iconv} expresses $R_{P}^K$ in terms of the convolution action of $\mu_{M}$ on $\C_{P^-,+}^K = \C_{P,-}^K$. 
This action is compatible with the convolution product of $H_M^+$. 
Thus we deduce from invertibility of $\mu_{M}$ that $R_P^K$ is an isomorphism
with inverse given by \eqref{eqn:nuconv}.
\end{proof}

\subsection{Langlands' reformulation} \label{sect:localL}
We will reformulate the Gindikin--Karpelevich formula in terms of Langlands' reinterpretation of the classical Satake isomorphism.

\subsubsection{}
Let $\check G$ (resp.~$\check M,\, \check T$) denote the Langlands dual group of $G$ (resp.~$M,\, T$) over $E$.
Let $\check \fu_P$ be the Lie algebra corresponding to the roots 
$\Phi^+_G - \Phi_M$ in $\check G$, so $\check \fu_P$ is a $\check M$-module by the
adjoint action $\Ad_{\check\fu_P}$. 

\smallskip

The map $1_{\ord_T^{-1}(\lambda)} \mapsto \lambda$ defines 
an isomorphism $C^\infty_c(T)^{K_T} \cong E[\check T]$, which is compatible
with the $W_M$-action. 
Recall that $E[\check T]^{W_M} = E[\check M]^{\check M}$,
where $\check M$ acts on itself by conjugation. 
Let $\bK(\Rep(\check M))$ denote the Grothendieck group of 
the abelian category of finite dimensional $\check M$-modules. 
Give $\bK(\Rep(\check M))$ the tensor product multiplication.
Then we have an algebra isomorphism by taking characters:
\[ \bK(\Rep(\check M)) \ot_\bbZ E \overset{\Ch}\longrightarrow E[\check M]^{\check M} : 
[V] \mapsto \tr(\sigma, V), \quad\quad \sigma \in \check M. \]

\subsubsection{}
Let $\Rep^+(\check M)$ denote the subcategory of 
$\check M$-modules with weights contained in $\Lambda^{\pos,\bbQ}_{U}$. 
Since $2\check\rho_P\in \check\Lambda$ is perpendicular to all coroots of $M$, 
we may consider it as a central cocharacter of $\check M$. 
We have a non-negative grading of the Grothendieck group $\bK(\Rep^+(\check M))$ by the eigenvalues of $2\check\rho_P$. 
Let $\bK^+(\Rep(\check M))$ be the completion
of $\bK(\Rep^+(\check M))$ with respect to the augmentation ideal of this grading.  
Then one sees 
that $\Ch^{-1} \circ \CT$ extends to an algebra isomorphism 
\[ \eS : H^+_M \to \bK^+(\Rep(\check M)) \hat\ot E, \]
where $\hat \ot$ is the completed tensor product.

\subsubsection{}  Let $V \in \Rep(\check M)$. 
Consider the expression $\sum_n t^n [\Sym^n V]$, which is a
formal series in $\bK(\Rep(\check M))\tbrac{t}$. Here $t$ is a formal parameter (that is unrelated to the torus). It is well-known that the inverse of this
series equals 
\[ \Lambda(t,V) := \sum_n (-t)^n [\wedge^n V]. \] 
If we consider coefficients in $E[t]$ rather than $E$, we have $\tr(\sigma,\Lambda(t,V)) = \det(\Id - \sigma \cdot t, V)$ for $\sigma \in \check M(E)$.

\subsubsection{} Suppose that the weights of $V$ are contained in 
$\Lambda^{\pos,\bbQ}_{U} - \{0\}$. Let $\tau \in E^\times$. Then 
the series
\[ \mathrm{S}(\tau, V) := \sum_n \tau^n [\Sym^n V] \]
is a well-defined element of the completed Grothendieck group 
$\bK^+(\Rep(\check M)) \hat\ot E$, and it is the inverse of
$\Lambda(\tau, V) \in \bK^+(\Rep(\check M)) \hat\ot E$.

\subsubsection{} \label{sect:u_i}

The central cocharacter $2\check\rho_P$ defines a non-negative $\check M$-module
grading of $\check \fu_P$ by the eigenspace decomposition. 
Let $\gr^i(\check \fu_P)$ denote the eigenspace of $\Ad_{\check\fu_P}(2\check \rho_P)$ with weight 
$2a_i$, where $a_i$ is a positive integer.
Then in the above language, equation \eqref{eqn:Satmu} and its multiplicative inverse have the reformulations 
\begin{equation}\label{eqn:SatLnu}
    \eS(\mu_{M,v}) = \prod_i \frac{\Lambda(q_v^{-1+a_i} , \gr^i(\check\fu_P))}{\Lambda(q_v^{a_i}, \gr^i (\check \fu_P))} , \quad\quad 
    \eS(\nu_{M,v}) = \prod_i \frac{\Lambda(q_v^{a_i}, \gr^i (\check \fu_P))}{\Lambda(q_v^{-1+a_i}, \gr^i (\check \fu_P))}.
\end{equation}
\noindent
The formula for $\eS(\mu_{M,v})$ essentially appears in \cite[p.~33]{L:Euler}. 

\smallskip

Using the equality $\Lambda(q_v^{-1+a_i}, \gr^i(\check\fu_P))^{-1} = \mathrm{S}(q_v^{-1+a_i}, \gr^i(\check \fu_P))$, we have the expansion 
\begin{equation} \label{eqn:Lprod}
	\frac{\Lambda(q_v^{a_i}, \gr^i(\check\fu_P))}{\Lambda(q_v^{-1+a_i}, \gr^i(\check\fu_P))}
= \left(\sum_n (-1)^n [\wedge^n \gr^i(\check\fu_P)] \cdot q_v^{a_i n}\right)
\left( \sum_n [\Sym^n \gr^i(\check\fu_P)] \cdot q_v^{-n+ a_i n} \right) 
\end{equation}
in $\bK^+(\Rep(\check M)) \hat\ot E$.

\subsection{Asymptotics on $K$-invariants}
Let $\delta_K \in C^\infty_c(G)$ denote the characteristic function of $K$.
Note that $\Asymp_{P}(\delta_K) = (\delta_K \ot 1) * \xi_{P} = (1\ot \delta_K) * \xi_{P}$ is $K \xt K$-invariant. 
Using \eqref{eqn:nuconv} and \eqref{e:I^-1v}, we deduce the formula
\begin{equation} \label{eqn:Asymp0}
	\Asymp_{P}(\delta_K)(m,1) = \nu_{M}(m). 
\end{equation}
When $P = B$ is a Borel subgroup and $F_v$ has characteristic $0$, \eqref{eqn:Asymp0} is proved in \cite[Theorem 6.8]{Sak} in the more general setting of spherical varieties. 

\begin{rem} \label{rem:AsympK}
	Note that $\nu_{M}(1)= \CT(\nu_{M})(1) = 1$ by the explicit formula \eqref{eqn:Satmu}.
Thus \eqref{eqn:Asymp0} implies that $\Asymp_{P}(\delta_K)$ takes constant value $1$
on the $K \xt K$ orbit of $(1,1) \in \bbX_P$. 

In the notation of Remark~\ref{rem:barXO} we also see that $\Asymp_{P}(\delta_K)$
has support contained in $\mathsf{\wbar X_P}(\fo_v)$ since $\nu_{M}\in H_M^+$.
\end{rem}

\section{The bilinear form \texorpdfstring{$\eB$}{B}}  \label{s:B}

We work over the function field $F$ with adele ring $\bbA$.
Let $X$ be the corresponding geometrically connected smooth projective curve over $\bbF_q$. In this section, we define the bilinear form $\eB$ and prove Theorem~\ref{thm:B=b}. 

In our notation, we will add a subscript $v$ when referring to the objects or spaces defined in \S\ref{s:local} over $F_v$ (e.g., $\C_P$ becomes $\C_{P,v}$, $\Asymp_P$ becomes $\Asymp_{P,v}$).

\subsection{Definition of $\eB$}  \label{ss:defB}
Fix a Haar measure on $G(\bbA)$. 
For $f_1,f_2\in \eA_c$, set
\begin{equation} \label{e:defB}
    \eB(f_1,f_2) := \sum_P (-1)^{\dim Z(M)} \cdot \eB_P(f_1,f_2) 
\end{equation}
where the sum ranges over standard parabolic subgroups $P\subset G$ with Levi subgroup $M$,
and $\eB_P$ is a $G(\bbA)$-invariant bilinear form defined in 
\S\ref{sss:defB_P} below. 
The form $\eB$ is $G(\bbA)$-invariant since each $\eB_P$ is. 
It will also be evident that $\eB$ is symmetric.
Let us note that $\eB_P$ and $\eB$ slightly depend on the choice of a Haar measure on $G(\bbA)$.


\subsection{Definition of $\eB_P$}  \label{ss:B_P}
Fix a standard parabolic subgroup $P$. 
Define the boundary degeneration $\bbX_P = (G \xt G)/(P \xt_M P^-)$ as in \S\ref{sss:X_P}, 
where $\bbX_P$ is now a strongly quasi-affine variety over $\bbF_q$. 
Let $\wbar \bbX_P$ denote the affine closure. 

The topological space $\bbX_P(\bbA)$ is isomorphic to the restricted product
of $\bbX_P(F_v)$ with respect to the compact subspaces $\bbX_P(\fo_v)$. 
The topological space $\wbar \bbX_P(\bbA)$ is isomorphic to the restricted product
of $\wbar\bbX_P(F_v)$ with respect to $\wbar \bbX_P(\fo_v)$. 
Recall that the topology on $\bbX_P(\bbA)$ is \emph{not} the subspace topology induced
from $\wbar \bbX_P(\bbA)$. 

We say that a function on $\bbX_P(\bbA)$ has bounded support
if the support is relatively compact in $\wbar \bbX_P(\bbA)$.
Let $\C_b(\bbX_P(\bbA))$ denote the space of $K\xt K$-finite $C^\infty$ 
functions on $\bbX_P(\bbA)$ with bounded support.

Note that the action of $P^- \xt P$ on $1 \in G$ and $(1,1) \in \bbX_P$ 
have the same stabilizer equal to the diagonal embedding of $M$. 
Fix the measure on $\bbX_P(\bbA)$ to be the unique $G(\bbA) \xt G(\bbA)$-invariant measure such that on the $P^-(\bbA) \xt P(\bbA)$-orbit of $(1,1)$, 
it coincides with the restriction of the chosen Haar measure on $G(\bbA)$ to $P^-(\bbA)\cdot P(\bbA)$.

\subsubsection{} \label{sss:Asymp}
Define $\Asymp_P : C^\infty_c(G(\bbA)) \to \C_b(\bbX_P(\bbA))$ by  
\begin{equation} \label{eqn:asymp}
	\Asymp_P(\ot_v f_v) = \ot_v \Asymp_{P,v}(f_v)
\end{equation}
where $f_v \in C^\infty_c(G(F_v))$ and 
$f_v = \delta_{K_v}$ is the characteristic function of $K_v$ for almost all $v$.
Observe that $\Asymp_P$ is well-defined since $\Asymp_{P,v}(\delta_{K_v})$ equals $1$ on $\bbX_P(\fo_v)$ by Remark~\ref{rem:AsympK}.
The product $\ot_v \Asymp_{P,v}(f_v)$ has bounded support in $\bbX_P(\bbA)$ 
because the support of $\Asymp_{P,v}(\delta_{K_v})$ is contained in 
$\wbar \bbX_P(\fo_v)$ by Remark~\ref{rem:AsympK}.

\subsubsection{} Define the generalized function $\xi_P \in S^*(\bbX_P(\bbA))$ by
\begin{equation} \label{e:xi_P} 
\xi_P = \ot_v \xi_{P,v} 
\end{equation}
where $\xi_{P,v} \in S^*(\bbX_P(F_v))$ is defined by \eqref{e:xi_Pv}.
Equation \eqref{e:xi_P} is well-defined because 
any element of $C^\infty_c(\bbX_P(\bbA))$ is $K_v$-invariant for almost all $v$,
and $\delta_{K_v} * \xi_{P,v} = \Asymp_{P,v}(\delta_{K_v})$ equals $1$ on 
$\bbX_P(\fo_v)$ by Remark~\ref{rem:AsympK}.
From \eqref{e:xiAsymp} we also deduce that $\xi_P$ has essentially
bounded support, i.e., for any $\tilde f \in C^\infty_c(G(\bbA))$, the convolution
$(\tilde f \ot 1) * \xi_P = (1 \ot \tilde f^\vee) * \xi_P = \Asymp_P(\tilde f)$
has bounded support, where $\tilde f^\vee(g) := \tilde f(g^{-1})$.

\subsubsection{} 
Define a bilinear form $\tilde\eB_P : C^\infty_c(G(\bbA)) \ot C^\infty_c(G(\bbA)) \to E$ by the formula
\begin{equation}\label{e:tildeB} 
    \tilde\eB_P(\tilde f_1, \tilde f_2) := \sum_{x\in \bbX_P(F)} \Asymp_P(\tilde f_1^\vee * \tilde f_2)(x), \quad\quad \tilde f_1,\tilde f_2 \in C^\infty_c(G(\bbA)), 
\end{equation}
where $\tilde f_1^\vee(g):= \tilde f_1(g^{-1})$, and $*$ denotes convolution over $G(\bbA)$. 
The sum is finite because $\Asymp_P(\tilde f_1^\vee * \tilde f_2)$ has bounded support, and the intersection of 
the discrete subset $\bbX_P(F) \subset \wbar \bbX_P(\bbA)$ with a bounded subset of $\bbX_P(\bbA)$ is finite.

Using \eqref{e:xiAsymp}, one can also write \eqref{e:tildeB} as 
\begin{equation}\label{e:tildeBxi} 
\tilde \eB_P(\tilde f_1, \tilde f_2) = \sum_{x\in \bbX_P(F)} 
\int_{(G \xt G)(\bbA)} \tilde f_1(g_1) \tilde f_2(g_2) \xi_P((g_1,g_2)x) dg_1 dg_2. 
\end{equation}

For $g \in G(\bbA)$, let $\delta_g$ denote the delta (generalized) function at $g$. Observe that 
\begin{equation} \label{e:Bleft}
    \tilde \eB_P(\delta_g * \tilde f_1, \delta_g * \tilde f_2) = 
\tilde \eB_P(\tilde f_1, \tilde f_2), \quad\quad g\in G(\bbA).
\end{equation}
By $(G \xt G)(\bbA)$-equivariance of $\Asymp_P$, we have
\begin{equation} \label{e:Bright}
    \tilde \eB_P(\tilde f_1 * \delta_{g_1}, \tilde f_2 * \delta_{g_2}) = 
\tilde \eB_P(\tilde f_1, \tilde f_2) , \quad\quad g_1,g_2 \in G(F).
\end{equation}

\subsubsection{} \label{sss:defB_P}
We define the bilinear form $\eB_P : \eA_c \ot \eA_c \to E$ as follows.
For $f_1,f_2 \in \eA_c$, there exist $\tilde f_1, \tilde f_2 \in C^\infty_c(G(\bbA))$ whose direct images are $f_1,f_2$. 
Set
\begin{equation} \label{eqn:B_P}
	\eB_P(f_1,f_2) = \tilde \eB_P(\tilde f_1, \tilde f_2),
\end{equation}
which does not depend on the choices of $\tilde f_1, \tilde f_2$ by \eqref{e:Bright}. 
The form $\eB_P$ is $G(\bbA)$-invariant by \eqref{e:Bleft}.
Formula \eqref{eqn:B_P} was suggested by Y. Sakellaridis in a private communication.

\subsection{Restriction of $\eB_P$ to $\eA_c^K$}
Fix the Haar measure on $G(\bbA)$ so that $K$ has measure $1$. 
Let $\tilde f_1, \tilde f_2 \in C^\infty_c(G(\bbA))^K$ be left $K$-invariant functions.
Let $\delta_K = \ot \delta_{K_v}$ denote the characteristic function of $K$ on $G(\bbA)$. 
Note that averaging $\delta_K * \delta_1 = \delta_1 * \delta_K = \delta_K$. 
Let $f_1, f_2 \in \eA_c^K$ denote the direct images of $\tilde f_1,\tilde f_2$. 
We deduce from \eqref{e:tildeBxi} that 
\begin{equation} \label{e:BbP}
	\eB_P(f_1,f_2) = \int_{(G\xt G)(\bbA)/ (G \xt G)(F)} f_1(g_1) f_2(g_2) b_P(g_1,g_2) dg_1 dg_2,
\end{equation}
where $b_P(g_1,g_2) = \sum_{\bbX_P(F)} \Asymp_P(\delta_K)((g_1,g_2)x)$.

\subsubsection{}
Observe that $b_P$ is obtained from $\Asymp_P(\delta_K) \in \C_b(\bbX_P(\bbA))^{K \xt K}$ by pull-push along 
the diagram
\begin{equation} \label{e:pullpushb}
	(G \xt G)(\bbA)/ (G \xt G)(F) \leftarrow 
(G \xt G)(\bbA) \xt^{(G \xt G)(F)} \bbX_P(F) \to \bbX_P(\bbA).
\end{equation}

\subsection{Geometric interpretation}  \label{ss:geom}
As explained in \cite[\S 1.2.3, Remark 1.2.17]{GL:Tamagawa}, we can 
identify\footnote{The identification relies on the assumptions that any $G$-bundle $\eF_G$ on $X$ is trivial when restricted to $\spec F$ and $\spec \fo_v$ for each place $v$. We know the restriction of $\eF_G$ to $\spec \fo_v$ is trivial by smoothness of $G$ and Lang's theorem (any $G$-bundle over a finite field is trivial). The generic triviality of $\eF_G|_{\spec F}$ follows from the Hasse principle for split reductive groups over a function field, which is proved by \cite{Harder}.} the double cosets 
$K \bs G(\bbA) / G(F)$ with $\abs{\Bun_G(\bbF_q)}$, the isomorphism classes of $G$-bundles on $X$. 
Let us give a geometric interpretation of $b_P$ as a function 
on $(\Bun_G \xt \Bun_G)(\bbF_q)$.

\subsubsection{}
Let $\eF_G^1, \eF_G^2 \in \Bun_G(\bbF_q)$ be $G$-bundles.
Fixing trivializations of $\eF_G^i \xt_X \spec(\fo_v)$ for all places $v$, we get lifts of $\eF_G^i \in K \bs G(\bbA)/G(F)$ to 
$g_i \in G(\bbA)/G(F)$ for $i=1,2$. The pre-image of $(g_1,g_2)$ 
in $(G \xt G)(\bbA) \xt^{(G \xt G)(F)} \bbX_P(F)$ under the left arrow
of \eqref{e:pullpushb} is in bijection with the set of rational
sections of the morphism 
\[ (\bbX_P)_{\eF_G^1,\eF_G^2} := (\eF_G^1 \xt_X \eF_G^2) \xt^{G \xt G} \bbX_P \to X. \]
Given a rational section $\beta \in (\bbX_P)_{\eF_G^1,\eF_G^2}(F)$, we can restrict
to $(\bbX_P)_{\eF_G^1,\eF_G^2}(F_v)$ for any place $v$, which is isomorphic to $\bbX_P(F_v)$ 
by the trivializations of $\eF_G^i \xt_X \spec(\fo_v),\, i=1,2$. This describes the 
right arrow in \eqref{e:pullpushb}. 

\subsubsection{}
Let $\ds (\wbar \bbX_P)_{\eF_G^1,\eF_G^2} := \wbar \bbX_P \xt^{G \xt G} (\eF_G^1 \xt_X \eF_G^2)$. 
We have an isomorphism $(\wbar \bbX_P)_{\eF_G^1,\eF_G^2}(\fo_v) \cong 
\wbar \bbX_P(\fo_v)$ compatible with the aforementioned identification
$(\bbX_P)_{\eF_G^1,\eF_G^2}(F_v) \cong \bbX_P(F_v)$. 

Remark~\ref{rem:AsympK} implies that the support of $\Asymp_P(\delta_K)$ is contained in 
$\bbX_P(\bbA) \cap \wbar \bbX_P(\fo_\bbA)$.
Therefore $\Asymp_P(\delta_K)$ does not vanish at the image of $\beta$ in $\bbX_P(\bbA)$
only if $\beta$ extends to a \emph{regular} section $X \to (\wbar \bbX_P)_{\eF_G^1,\eF_G^2}$. Such an extension is unique since $\wbar\bbX_P$ is separated. 
Thus 
\begin{equation} \label{eqn:bXP}
	b_P(\eF_G^1,\eF_G^2) = \sum_{\beta} \prod_v \Asymp_{P,v}(\delta_{K_v})(\beta_v) 
\end{equation}
where the sum is over sections $\beta : X \to (\wbar \bbX_P)_{\eF^1_G, \eF^2_G}$ that generically land in the non-degenerate locus 
$(\bbX_P)_{\eF_G^1,\eF_G^2}$, and $\beta_v \in \bbX_P(F_v)$ is the image of $\beta$ under the right arrow in \eqref{e:pullpushb}.
The $K_v \xt K_v$-orbit of $\beta_v$ does not depend on the choice of trivializations. 

Note that $\Asymp_{P,v}(\delta_{K_v})(\beta_v) = 1$ if $\beta_v \in \bbX_P(\fo_v)$. Thus
the product is only over those places $v$ that $\beta$ sends to the degenerate locus 
$(\wbar \bbX_P)_{\eF_G^1,\eF_G^2} - (\bbX_P)_{\eF_G^1,\eF_G^2}$. 

\begin{rem}
The product $\prod_v \Asymp_{P,v}(\delta_{K_v})(\beta_v)$ 
 is a $K$-invariant function on $\bbX_P(\bbA)\cap \wbar\bbX_P(\fo_\bbA)$. 
Its value does not depend on the choice of trivializations of $\eF^i_G \xt_X \spec(\fo_v)$, so we may also consider it as a function $\Asymp_P(\delta_K)(\beta)$ 
of $\beta$.
\end{rem}

\begin{rem}
Theorem~\ref{thm:i^*j_*} interprets $\Asymp_P(\delta_K)(\beta)$
as the trace of the geometric Frobenius acting on the $*$-stalks of an $\ell$-adic sheaf. 
\end{rem}

\begin{proof}[Proof of {Theorem~\ref{thm:B=b}}]
Let $\eF_G^1, \eF_G^2 \in \Bun_G(\bbF_q)$. 
Using the geometric interpretation \eqref{eqn:bXP} and Theorem~\ref{thm:b}, 
we get the equality
\[ b(\eF^1_G, \eF^2_G) = \sum_P (-1)^{\dim Z(M)} b_P(\eF^1_G, \eF^2_G), \]
where the sum ranges over standard parabolic subgroups. 
The theorem now follows from the definition of $\eB$ and the formula \eqref{e:BbP}.
\end{proof}

\section{Global intertwining operators} \label{s:global}

Let $P$ denote a standard parabolic subgroup.
We define the subspaces $\C_{P,\pm}$ of functions on $G(\bbA)/M(F)U(\bbA)$ and recall the definition of the 
constant term operator. We show that the product of the local intertwining operators induces an operator $R_P : \C_{P^-,+} \to \C_{P,-}$, and we prove 
that $R_P$ is invertible (Proposition~\ref{prop:I^-1}). 
We prove Theorem~\ref{thm:global} at the end of the section.

We continue to add a subscript $v$ to the notation of \S\ref{s:local} when appropriate.

\subsection{The spaces $\eC_P,\, \eC_{P,\pm}$} \label{ss:C_P}

\subsubsection{} Let $\C_P$ denote the space of $K$-finite $C^\infty$ functions on $G(\bbA)/M(F)U(\bbA)$. 
Let $\C_{P,c} \subset \C_P$ stand for the subspace of compactly supported functions. 

\medskip

As in \S\ref{ss:geom}, the quotient $K_M \bs M(\bbA) / M(F)$
identifies with $\abs{\Bun_M(\bbF_q)}$, the set of isomorphism classes of $M$-bundles on $X$. 
Recall that this identification uses the fact that any $M$-bundle on $X$ is generically trivial. Since we have an exact sequence 
\[ 0=H^1(\spec F, U) \to H^1(\spec F, P) \to H^1(\spec F, M), \] 
we deduce that any $P$-bundle on $X$ is also generically trivial.
This allows us to make the identification $K \bs G(\bbA) / P(F) = \abs{\Bun_P(\bbF_q)}$ by the decomposition 
$G(\bbA) = K \cdot P(\bbA)$. This space projects to 
$K\bs G(\bbA) / M(F) U(\bbA) = \abs{\Bun_M(\bbF_q)}$. 

\subsubsection{} 

Let $\Lambda_{G,P} = \pi_1(M)$ denote the quotient of $\Lambda$ by the 
subgroup generated by the coroots of $M$. 
It is well-known that there is a bijection $\deg_M : \pi_0(\Bun_M) \simeq \pi_1(M)$. Note that $\Lambda^\bbQ_{G,P} := \Lambda_{G,P} \ot \bbQ = \Lambda_{M/[M,M]}^\bbQ = \Lambda^\bbQ_{Z_0(M)}$. We call the composition 
\[ \Bun_M \to \pi_1(M) \to \Lambda^\bbQ_{G,P} \] 
the \emph{slope} map. 
We define the map 
\[ \deg_P^\bbQ : G(\bbA) / U(\bbA) \to \Lambda^\bbQ_{G,P} \] 
by setting $\deg_P^\bbQ(g)$ equal to the slope of the $M$-bundle corresponding to $g\in G(\bbA)$. 
Equivalently, if $g = (g_v),\, g_v \in G(F_v)$, then $\deg_P^\bbQ(g) = \sum_v \deg_{P,v}(g_v)$, where $\deg_{P,v}$ is as defined in \S\ref{ss:CPv}.

\subsubsection{} 

Let $\Lambda^{\pos,\bbQ}_{G,P}$ denote the image of $\Lambda^{\pos,\bbQ}_G$ under the projection $\Lambda^\bbQ \to \Lambda^\bbQ_{G,P}$.
We define the global spaces $\C_{P,\pm}$ analogously to the definitions of the local spaces 
$\C_{P,\pm,v}$ in \S\ref{ss:CPv}: 

\medskip

Let $\C_{P,+} \subset \C_P$ be the set 
of all functions $\varphi \in \C_P$ such that $\deg_P^\bbQ(\supp \varphi)$ is contained 
in $S_0 + \Lambda^{\pos,\bbQ}_{G,P}$ for some finite subset $S_0 \subset \Lambda^\bbQ_{G,P}$. 
Similarly, let $\C_{P,-} \subset \C_P$ denote the set of all $\varphi\in \C_P$ such that
$-\deg^\bbQ_P(\supp\varphi)$ is contained in $S_0 + \Lambda^{\pos,\bbQ}_{G,P}$ for some
finite set $S_0$.



One similarly defines the spaces $\C_{P^-,\pm} \subset \C_{P^-}$. 
We emphasize that $\C_{P^-,+}$ is defined with respect to the cone $-\Lambda^{\pos,\bbQ}_{G,P}$. So $\C_{P^-,\pm}$ is the
space of all $\varphi\in \C_{P^-}$ such that $\mp \deg^\bbQ_{P^-}(\supp\varphi)$
is contained in $S_0 + \Lambda^{\pos,\bbQ}_{G,P}$ for some finite set $S_0$.

\begin{rem} 
In the case $P=G$, we have $\Lambda^\pos_{G,G} =0$. 
So we observe that $\C_{G,+} = \C_{G,-} \subset \eA$ is the set of functions $f\in \eA$ such that $\deg_G^\bbQ(\supp f)$ is finite.
\end{rem}

%
%
%
%

\subsection{The Harder--Narasimhan--Shatz stratification} 
Before discussing the constant term operator, we need to recall 
some reduction theory, which we state in terms of the 
Harder--Narasimhan--Shatz stratification of $\Bun_M$. 
This stratification of $\Bun_M$ was defined in \cite{HN, Sh1, Sh2} in the case $M = \on{GL}(n)$. 
For any reductive $M$ it was defined in \cite{R1,R2,R3} and \cite{Beh, Beh1}. 
We also refer the reader to \cite{Schieder:HN}. 

\smallskip

Let $\Lambda^{+,\bbQ}_M$ denote the rational cone corresponding to the monoid $\Lambda^+_M$. 
For $\lambda \in \Lambda^{+,\bbQ}_M$, we follow the notation of \cite[Theorem 7.4.3]{DG:CG} and 
let $\Bun_M^{(\lambda)} \subset \Bun_M$ denote\footnote{In \emph{loc.~cit.} the stratification is defined over an algebraically closed field. To define the stratification over $\bbF_q$, we first base change to $\wbar\bbF_q$ and then note that the Harder--Narasimhan strata are defined over $\bbF_q$ by Galois invariance.}  the quasi-compact locally closed reduced substack 
of $M$-bundles with Harder--Narasimhan coweight $\lambda$. 
We have a map 
$\HN : \abs{\Bun_M(\bbF_q)} \to \Lambda^{+,\bbQ}_M$,
which sends an $M$-bundle to its unique Harder--Narasimhan coweight. 
We will also use $\HN$ to denote the composition 
\[ \HN : G(\bbA)/M(F)U(\bbA) \to \abs{\Bun_M(\bbF_q)} \to \Lambda^{+,\bbQ}_M. \]
The map $\HN$ will be our global analog of the map $\ord_{M,v}$ defined in 
\eqref{e:ordMG}. 

For $\lambda \in \Lambda^\bbQ$, let $[\lambda]_P$ denote the projection of $\lambda$ 
to $\Lambda^\bbQ_{G,P}$. Then for $x \in G(\bbA)/M(F)U(\bbA)$, 
we have $[\HN(x)]_P = \deg_P^\bbQ(x)$. 

\begin{rem} \label{rem:discreteHN}
There exists an integer $N$ such that the image of $\HN$ lies in 
$\frac 1 N \Lambda_M^+$.
\end{rem}

\subsection{The constant term operator} \label{ss:CT}
We will always fix the Haar measure on $U(\bbA)$ so that $U(\bbA)/U(F)$ has measure $1$. 

In \S\ref{sss:A_c}, we defined the spaces $\eA$ and $\eA_c \subset \eA$. 
The \emph{constant term} operator $\CT_P : \eA \to \eC_P$ is defined by
the formula 
\begin{equation}
\CT_P(f)(g) = \int_{U(\bbA)/ U(F)} f(g u) du, \quad\quad 
f\in \eA,\, g \in G(\bbA).
\end{equation}
In other words, $\CT_P$ is the pull-push along the diagram 
\begin{equation} \label{e:CTpp}
    G(\bbA)/G(F) \leftarrow G(\bbA) / P(F) \to G(\bbA)/M(F)U(\bbA). 
\end{equation}

Recall from \S\ref{sss:bddbelow} what it means for a subset
of $\Lambda^\bbQ$ to be bounded above (resp.~below) with respect to the 
partial (rational) ordering $\le_G^\bbQ$. 

\begin{lem}  \label{lem:CT-}
Let $f\in \eA_c$. Then $\HN(\supp \CT_P (f)) \subset \Lambda^{+,\bbQ}_M$ is bounded above. 
Consequently, $\CT_P : \eA \to\C_P$ sends $\eA_c$ to $\C_{P,-}$.
\end{lem}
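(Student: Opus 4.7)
The plan is to reduce the boundedness assertion to a classical finiteness statement about $P$-reductions of a fixed $G$-bundle, then do some bookkeeping to convert this into the statement $\CT_P(f)\in\C_{P,-}$.

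First, I would observe that since $f\in\eA_c$ has compact support in $G(\bbA)/G(F)$, and $\abs{\Bun_G(\bbF_q)}=K\bs G(\bbA)/G(F)$ is discrete, the image of $\supp f$ in $\abs{\Bun_G(\bbF_q)}$ is both compact and discrete, hence finite. Call this set $\{\eF_G^1,\ldots,\eF_G^n\}$. Next, if $\CT_P(f)(g)\ne 0$ for some $g\in G(\bbA)/M(F)U(\bbA)$, then $f(gu)\ne 0$ for some $u\in U(\bbA)/U(F)$. The class of $g$ defines a $P$-bundle $\eF_P^g$ via $G(\bbA)/M(F)U(\bbA)\to K\bs G(\bbA)/P(F)=\abs{\Bun_P(\bbF_q)}$, and the induced $G$-bundle $\eF_P^g\xt^P G$ agrees with the $G$-bundle determined by $gu$, which lies in $\{\eF_G^i\}$. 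The $M$-bundle associated to $g$ is $\eF_P^g/U$, and $\HN(g)\in\Lambda^{+,\bbQ}_M$ is its HN coweight. So it suffices to show that, for each of the finitely many $\eF_G^i$, the set of HN coweights of $M$-bundles arising from $P$-reductions of $\eF_G^i$ is bounded above with respect to $\le_G^\bbQ$.

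The main point, and the principal obstacle, is this last bound. It is classical and built into the Harder--Narasimhan--Shatz theory cited above: if $\eF_P$ is a $P$-reduction of a fixed $G$-bundle $\eF_G$ and $\lambda\in\Lambda^{+,\bbQ}_M$ is the HN coweight of the associated $M$-bundle, then the polygon of $\lambda$ is dominated (with respect to $\le_G^\bbQ$) by the HN polygon of $\eF_G$ together with a correction by a bounded amount depending only on $P$. Concretely, one shows that if $\lambda$ were arbitrarily large in the direction of $\Lambda^{\pos,\bbQ}_G$ then any refinement of the $P$-reduction to a Borel reduction would produce a destabilizing filtration of $\eF_G$ contradicting the fact that $\eF_G$ has a fixed HN coweight. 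I would cite this directly from the Harder--Narasimhan references listed before Lemma~\ref{lem:CT-} rather than reprove it. Combining with the first step, $\HN(\supp\CT_P(f))$ is a finite union of bounded-above subsets of $\Lambda^{+,\bbQ}_M$, hence bounded above.

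For the consequence, since $\deg_P^\bbQ(x)=[\HN(x)]_P$ and the projection $\Lambda^\bbQ\to\Lambda^\bbQ_{G,P}$ sends $\Lambda^{\pos,\bbQ}_G$ onto $\Lambda^{\pos,\bbQ}_{G,P}$, the bound on $\HN(\supp\CT_P(f))$ gives a finite set $S_0\subset\Lambda^\bbQ_{G,P}$ with $-\deg_P^\bbQ(\supp\CT_P(f))\subset S_0+\Lambda^{\pos,\bbQ}_{G,P}$. Smoothness and $K$-finiteness of $\CT_P(f)$ are automatic: the integral defining $\CT_P$ is over the compact quotient $U(\bbA)/U(F)$, so it preserves invariance under any compact open subgroup under which $f$ is invariant and inherits the finite-dimensional span of the right $K$-translates. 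Hence $\CT_P(f)\in\C_{P,-}$, as required.
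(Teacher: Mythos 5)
Your proof is correct and follows essentially the same route as the paper's: both pass to the geometric picture via $\Bun_G \leftarrow \Bun_P \to \Bun_M$, use compactness of $\supp f$ to constrain the HN coweights of the relevant $G$-bundles, and then invoke the standard Harder--Narasimhan comparison (the paper cites \cite[Theorem 7.4.3(3)]{DG:CG}) to bound the HN coweights of $M$-bundles arising from $P$-reductions, before projecting to $\Lambda^\bbQ_{G,P}$. The only cosmetic difference is that you reduce to finitely many isomorphism classes of $G$-bundles, whereas the paper works with finitely many HN strata $\Bun_G^{(\le\theta)}$; these are equivalent here.
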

\begin{proof}
If we pass to $K$-orbits in the diagram \eqref{e:CTpp}, then we get 
the $\bbF_q$-points of the diagram of stacks
\begin{equation}\label{e:CTBunpp}
    \Bun_G \leftarrow \Bun_P \to \Bun_M. 
\end{equation}
For $\theta \in \Lambda^{+,\bbQ}_G$, let 
$\Bun_G^{(\le \theta)} \subset \Bun_G$ denote the open substack of $G$-bundles
having Harder-Narasimhan coweight $\le_G^\bbQ \theta$. 
Let $f \in \eA_c$. 
Then the $K$-orbits of its support are contained in 
\[ \bigcup_{\theta\in S} \Bun_G^{(\le \theta)}(\bbF_q) \]
for a finite subset $S \subset \Lambda^{+,\bbQ}_G$. It follows from the 
definition of Harder-Narasimhan coweight that the image of 
\[ \Bun_P^{(\lambda)} := \Bun_P \xt_{\Bun_M} \Bun_M^{(\lambda)},\quad\quad \lambda \in \Lambda^{+,\bbQ}_M \] intersects $\Bun_G^{(\le \theta)}$ 
only if $\lambda \le_G^\bbQ \theta$ (cf.~\cite[Theorem 7.4.3(3)]{DG:CG}).
Now by pull-push along the diagram \eqref{e:CTBunpp}, 
we conclude that $\HN (\supp \CT_P(f) )$ is contained in 
the set of $\lambda \in \Lambda^{+,\bbQ}_M$ such that
$\lambda \le_G^\bbQ \theta$ for some $\theta \in S$. 
Therefore $\HN(\supp\CT_P(f))$ is bounded above.

Since $[\HN(x)]_P = \deg^\bbQ_P(x)$, we deduce that 
$\deg_P^\bbQ(\supp \CT_P(f)) \subset \{ [\theta]_P - \mu \mid \theta \in S, \mu \in \Lambda^{\pos,\bbQ}_{G,P} \}$. By definition, this means that $\CT_P(f) \in \C_{P,-}$.
\end{proof}

\subsection{The operator $R_P : \C_{P^-,+} \to \C_{P,-}$}  \label{ss:R_P}
Let $Z$ denote the space of pairs $(g_1,g_2)$, where 
$g_1 \in G(\bbA)/P^-(F),\, g_2 \in G(\bbA)/P(F)$ have equal image in 
$G(\bbA) / (P^-\cdot P)(F)$. We have projections from $Z$ to 
$G(\bbA)/P^-(F)$ and $G(\bbA)/P(F)$. 

\medskip

Define $R_P : \C_{P^-,+} \to \C_{P}$ to be the pull-push along the diagram
\[ G(\bbA)/M(F)U^-(\bbA) \leftarrow G(\bbA)/P^-(F) \leftarrow Z
\to G(\bbA)/P(F) \to G(\bbA)/M(F)U(\bbA). \]
\noindent 
Equivalently, $R_P$ is given by the explicit formula
\begin{equation} \label{eqn:R_P}
	R_P(\varphi)(g) = \int_{U(\bbA)} \varphi(g u)d u, \quad \quad
	\varphi \in \C_{P^-,+},\, g \in G(\bbA). 
\end{equation}
It is evident from the definition that $R_P$ is $G(\bbA)$-equivariant.

\begin{prop} \label{prop:R_P}
The operator $R_P : \C_{P^-,+} \to \C_P$ is well-defined, and the
image of $R_P$ is contained in $\C_{P,-}$.
More specifically, for $\varphi \in \C_{P^-,+}$ we have 
\[ \HN(\supp R_P(\varphi) ) \subset \{ \lambda - \mu \mid \lambda \in \HN(\supp \varphi),\, \mu \in\Lambda^{\pos,\bbQ}_G \}.\] 
\end{prop}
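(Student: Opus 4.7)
The plan is to establish the two assertions of the Proposition in turn, combining the local convergence result Proposition~\ref{prop:I:C} with a geometric Harder--Narasimhan comparison.

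For well-definedness of $R_P(\varphi)(g) = \int_{U(\bbA)} \varphi(gu)\,du$, I would reduce the global convergence to the local result by factorizing over places. Since $\varphi \in \C_{P^-,+}$ is $K$-finite, there is a compact open $K' = \prod_v K'_v \subset K$ with $K'_v = K_v$ for almost all $v$ under which $\varphi$ is right-invariant. The Haar measure on $U(\bbA) = \prod'_v U(F_v)$ factors as a product of local Haar measures normalized so that $\on{mes}(U(\fo_v)) = 1$. For each fixed $g$, the global integrand can then be analyzed placewise: at the finitely many places where $\varphi$ is ramified, convergence of the local integral $\int_{U(F_v)} \varphi(g u_v)\,du_v$ (with other components fixed) is Proposition~\ref{prop:I:C}, while at the remaining places the unramified normalization makes the local contribution trivial, yielding convergence of the global integral.

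For the HN support bound, I would pass to the geometric picture. Taking $K$-orbits in the defining diagram of $R_P$ identifies $K\bs Z$ with the $\bbF_q$-points of a locally closed substack of $\Bun_{P^-} \xt_{\Bun_G} \Bun_P$: the locus of pairs $(\eF_{P^-}, \eF_P)$ of reductions of a common $G$-bundle $\eF_G$ whose generic relative position lies in the big cell $P^-\cdot P \subset G$ (equivalently, in $M = P \cap P^-$ generically). The key geometric input I would use is: \emph{for any such pair, the HN coweights $\lambda := \HN(\eF_{P^-}\xt_{P^-} M)$ and $\mu := \HN(\eF_P\xt_P M)$ of the associated $M$-bundles satisfy $\lambda - \mu \in \Lambda^{\pos,\bbQ}_G$.} Granting this, the stated bound is a routine pull-push: if $R_P(\varphi)$ is nonzero at a class $\eF_M$, then the pull-push formula supplies a pair $(\eF_{P^-},\eF_P) \in K\bs Z$ with $\eF_P\xt_P M = \eF_M$ and $\eF_{P^-}\xt_{P^-} M \in \supp\varphi$; setting $\lambda := \HN(\eF_{P^-}\xt_{P^-} M)$, one obtains $\HN(\eF_M) = \mu = \lambda - (\lambda - \mu)$ with $\lambda \in \HN(\supp\varphi)$ and $\lambda - \mu \in \Lambda^{\pos,\bbQ}_G$, as desired.

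The main obstacle is the geometric HN comparison for pairs of reductions to opposite parabolics. I would verify it by testing against dominant weights of $G$: for each $\check\omega \in \check\Lambda_G^+$, the $P$-reduction of $\eF_G$ gives rise, after an auxiliary Borel refinement, to a distinguished line subbundle of $V_\check\omega(\eF_G)$ whose degree is a suitable pairing of $\check\omega$ with $\mu$; dually the $P^-$-reduction yields a line quotient whose degree is controlled by $\lambda$. The generic $M$-compatibility ensures the sub is not annihilated by the quotient, giving the sub-quotient degree inequality $\langle \check\omega, \lambda - \mu\rangle \ge 0$. As $\check\omega$ ranges over $\check\Lambda_G^+$, Lemma~\ref{lem:posU} then forces $\lambda - \mu \in \Lambda^{\pos,\bbQ}_G$. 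Careful bookkeeping of sign conventions between $P$ and $P^-$ (involving $w_0^M$, as in Lemma~\ref{lem:suppeta}) is needed throughout.
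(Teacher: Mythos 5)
Your well-definedness argument does not go through. The function $\varphi$ lives on $G(\bbA)/M(F)U^-(\bbA)$, a quotient by the \emph{global} group $M(F)$, so it is not a restricted tensor product of local functions and the integral $\int_{U(\bbA)}\varphi(gu)\,du$ cannot be factored into a product of local integrals; Proposition~\ref{prop:I:C} therefore cannot be invoked placewise. (The paper does eventually reduce to the local statement, but only inside the proof of Proposition~\ref{prop:I^-1}, after a separate global argument has established convergence for both $R_P$ and its candidate inverse, and only for compactly supported inputs lifted to $C^\infty_c(G(\bbA)/U(\bbA))$, which genuinely is a restricted tensor product.)

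The geometric argument you sketch for the HN bound is close to the paper's actual route — via the open Zastava stack and $\eH^+_M$ — but it stops short of what the Proposition requires. Remark~\ref{rem:heckeHN} (built on Lemma~\ref{lem:heckeHN} and the Renner cone of $\wbar M$) delivers \emph{two} inequalities for the associated HN coweights: $\lambda_1-\lambda_2\in\Lambda^{\pos,\bbQ}_G$ and also $\lambda_1-\lambda_2\in w_0^M\Lambda^{\pos,\bbQ}_G$. You only claim the first, and your pull-push step correctly converts it into the stated support bound. But the paper uses the \emph{second} inequality, combined with the bound on $[\lambda_1]_P$ coming from $\varphi\in\C_{P^-,+}$, to force the integral defining $R_P(\varphi)(g_2)$ to run over only finitely many strata $\Bun_M^{(\lambda_1)}$ — that is, to establish convergence. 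The first inequality alone cannot do this: for example with $G=\on{SL}(3)$ and $M\cong\on{GL}(2)$, after fixing $[\lambda_1]_P$ the condition $\lambda_1-\lambda_2\in\Lambda^{\pos,\bbQ}_G$ still leaves a whole half-line of admissible $\lambda_1\in\Lambda^{+,\bbQ}_M$, whereas $\lambda_1-\lambda_2\in w_0^M\Lambda^{\pos,\bbQ}_G$ makes the set compact (this is precisely where Lemma~\ref{lem:posU} enters). Your dominant-weight test is in the right spirit (cf.~the local Lemma~\ref{lem:suppeta}) but would need to produce the sharper cone $\Lambda^{\pos,\bbQ}_U$, and that output would then need to be deployed for well-definedness, not merely for the support bound.
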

\begin{proof}
Let $(g_1,g_2) \in Z$. 
The quotient $(K \xt K)\bs Z$ identifies with the set of isomorphism classes 
of the $\bbF_q$-points of the stack $\Maps^\circ(X, P^- \bs G / P)$ of maps generically landing in $P^- \bs (P^-\cdot P) / P$. 
By \cite[Proposition 3.2]{BFGM}, the stack $\Maps^\circ(X, P^- \bs G / P)$
is isomorphic to the relative version of the open Zastava space 
$\oo\eZ_{\Bun_M}$. 
In particular, there is a map $\oo\eZ_{\Bun_M} \to 
\eH^+_M$, where $\eH^+_M  := \Maps^\circ(X, M \bs \wbar M / M)$ is the Hecke substack introduced in \S\ref{sect:H_M}. 
This map is induced from the contraction $G \to \wbar M$ defined in 
\cite[\S 4.2.9]{Wa}. 
The image of $g_1$ in $K \bs G(\bbA) / M(F)U^-(\bbA) = \abs{\Bun_M(\bbF_q)}$ 
defines an $M$-bundle $\eF_M^1$. Similarly, the image of $g_2$ in 
$K \bs G(\bbA) / M(F)U(\bbA) = \abs{\Bun_M(\bbF_q)}$ defines $\eF_M^2$. 
Then $(g_1,g_2) \in Z$ maps to a point $(\eF_M^1, \eF_M^2, \beta_M) \in \eH^+_M(\bbF_q)$, where $\beta_M$ is an $\wbar M$-morphism $\eF_M^2 \to \eF_M^1$ in the language of \S\ref{sect:tildeM-mor}. 

Let $\lambda_1,\lambda_2 \in \Lambda^{+,\bbQ}_M$ be the Harder-Narasimhan coweights of $\eF_M^1, \eF_M^2$ respectively. 
Then Remark~\ref{rem:heckeHN} implies that $\lambda_1 - \lambda_2 \in w_0^M\Lambda^{\pos,\bbQ}_G$.
The definition of $\C_{P^-,+}$ implies that 
the set of $\lambda_1 \in \Lambda^{+,\bbQ}_M$ for which $\varphi(g_1) \ne 0$ 
satisfy $-[\lambda_1]_P \in S_0 + \Lambda^{\pos,\bbQ}_{G,P}$ for a finite set $S_0$. 
We deduce that the intersection of $\HN(\supp \varphi)$ with 
$\lambda_2 + w_0^M\Lambda^{\pos,\bbQ}_G$ is finite. 
Thus $R_P(\varphi)(g_2)$ is an integral over the $K$-orbits of $G(\bbA)/M(F)U^-(\bbA)$ corresponding to the union of $\Bun^{(\lambda_1)}_M(\bbF_q)$ 
ranging over a finite set of $\lambda_1$, i.e., $R_P(\varphi)$ is well-defined. 

Remark~\ref{rem:heckeHN} also gives the inequality $\lambda_2 \le_G^\bbQ \lambda_1$, 
which proves the second statement of the proposition. 
It immediately follows that $R_P(\varphi) \in \C_{P,-}$.
\end{proof}

\subsection{Invertibility of $R_P$} 
Below we will prove Proposition~\ref{prop:I^-1}, which says that 
the operator $R_P : \C_{P^-,+} \to \C_{P,-}$ is invertible. 
We deduce the proposition from the local results of \S\ref{s:local}.  

\subsubsection{} Fix the Haar measure on $G(\bbA)$. 
Recall that we defined a generalized function $\xi_P$
on $\bbX_P(\bbA)$ by \eqref{e:xi_P}, which slightly depends on
the choice of measure on $G(\bbA)$. 
The Haar measures on $G(\bbA)$ and $U(\bbA)$
induce a $G(\bbA)$-invariant measure on $G(\bbA)/U(\bbA)$. 

\begin{prop} \label{prop:I^-1}
The map $R_P : \C_{P^-,+} \to \C_{P,-}$ is an isomorphism.
The inverse is given by the formula 
\begin{equation} \label{e:I^-1}
R_P^{-1} (\varphi)(g_2) = \int_{G(\bbA)/U(\bbA)} \varphi(g_1)\xi_P(g_1,g_2) dg_1, \quad\quad \varphi \in \C_{P,-},\, g_2 \in G(\bbA).
\end{equation}
\end{prop}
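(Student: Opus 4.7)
The plan is to mimic the local argument of \S\ref{ss:R_Pv^-1} in three steps: (i) show that formula \eqref{e:I^-1} defines a map $T_{\xi_P} : \C_{P,-} \to \C_{P^-,+}$; (ii) prove $R_P \circ T_{\xi_P} = \id$ on $\C_{P,-}$; and (iii) apply the same argument to $P^-$ and use a global adjointness to extract a left inverse.

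For step (i), I would adapt the argument of Lemma~\ref{lem:Txi}. Fix $\varphi \in \C_{P,-}$ that is right $K'$-invariant for some compact open $K' \subset K$. The factorization $\xi_P = \ot_v \xi_{P,v}$ together with \S\ref{sss:Asymp} ensures that $\xi' := (\delta_{K'} \ot \delta_{K'}) * \xi_P$ is a smooth function on $\bbX_P(\bbA)$ with bounded support. To bound the $g_1$-integrand for fixed $g_2$, I would use the identification from the proof of Proposition~\ref{prop:R_P}: $(K \xt K)$-orbits on the relevant subset of $\bbX_P(\bbA)$ are indexed by $\bbF_q$-points of a substack of $\eH_M^+$, and Lemma~\ref{lem:heckeHN} then provides the global analog of the inequality of Lemma~\ref{lem:ordineq}. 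Combined with the defining support condition on $\varphi$, this forces the integrand to vanish outside finitely many Harder--Narasimhan strata, giving convergence. A parallel analysis, using the decomposition $\deg^\bbQ_{P^-}(g_2) = \deg^\bbQ_P(g_1) + [\HN(g_1,g_2)]_P$ and the bounded support of $\xi'$, places $T_{\xi_P}(\varphi)$ in $\C_{P^-,+}$.

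For step (ii), the support estimates from step (i) and Proposition~\ref{prop:R_P} let me replace $\varphi$ by a function in $\C_{P,c}$ agreeing with it on the finitely many HN strata that contribute to $(R_P T_{\xi_P}\varphi)(g_2)$, reducing to the case $\varphi \in \C_{P,c}$. There I would lift $\varphi$ to a compactly supported function on $G(\bbA)/U(\bbA)$ via a fundamental domain for the right $M(F)$-action, decompose into a finite sum of pure tensors $\ot_v \tilde\varphi_v$ with $\tilde\varphi_v$ the characteristic function of $K_v \cdot U(F_v)/U(F_v)$ at almost all places, and apply Fubini to factor each resulting integral into a restricted product over places. Each local factor equals $\tilde\varphi_v(g_{2,v})$ by Theorem~\ref{thm:IAsymp}, and the restricted product converges by Remark~\ref{rem:AsympK}. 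Summation and $M(F)$-averaging then yield $R_P T_{\xi_P}\varphi = \varphi$.

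For step (iii), applying (i)--(ii) to the opposite parabolic gives $T_{\xi_{P^-}} : \C_{P^-,-} \to \C_{P,+}$ with $R_{P^-} \circ T_{\xi_{P^-}} = \id$. A direct Fubini computation, the global analog of Lemma~\ref{lem:Iadjoint}, identifies the dual of $R_{P^-}$ (under the natural pairing between $\C_{P,c}$ and $\C_{P^-,c}$) with $R_P$ and that of $T_{\xi_{P^-}}$ with $T_{\tilde\xi}$ for $\tilde\xi(g_1,g_2) = \xi_{P^-}(g_2,g_1)$; dualizing gives $T_{\tilde\xi} \circ R_P = \id$ on $\C_{P^-,c}$, which extends to $\C_{P^-,+}$ by the same support argument. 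The main obstacle will be step (i): the two local support controls do not aggregate formally to their adelic counterparts and must be repackaged as global Harder--Narasimhan statements via the geometry of $\eH_M^+$. Once step (i) is secured, step (ii) is a factorize-plus-Fubini routine and step (iii) is a formal duality.
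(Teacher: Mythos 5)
Your proposal is correct and follows the paper's own proof almost line by line: adelic support control via Harder--Narasimhan slopes and $\eH_M^+$ (Lemma~\ref{lem:heckeHN}), reduction to $\C_{P,c}$, factorization as a restricted tensor product, and invocation of the local statement. The one place you diverge is the left inverse: the paper reduces $T\circ R_P=\id$ on $\C_{P^-,c}$ to the two-sided local Proposition~\ref{prop:R_Pv^-1} exactly as it did for the right inverse, while you instead re-derive the left inverse by porting the local duality argument (Lemma~\ref{lem:Iadjoint} and the structure of the proof of Proposition~\ref{prop:R_Pv^-1}) to the adelic level; both are valid, but the paper's symmetric treatment is a little shorter since Proposition~\ref{prop:R_Pv^-1} already supplies both compositions locally, making the global adjointness computation unnecessary.
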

\begin{proof} 
Let us first show that the right hand side of \eqref{e:I^-1} is well-defined
for any $\varphi \in \C_{P,-}$ and $g_2 \in G(\bbA)$. 
Since $\varphi$ is $K$-finite, there exists a compact open subgroup $K' = \prod K'_v \subset K$ such that $\varphi$ is $K'$-invariant. 
Let $\delta_{K'} \in C^\infty_c(G(\bbA))$ equal $\frac 1{\on{mes}(K')}$ times the characteristic function of $K'$. Recall that $(\delta_{K'} \ot 1) * \xi_P = \Asymp_P(\delta_{K'}) \in \C_b(\bbX_P(\bbA))$, where $\Asymp_P$ is defined in \S\ref{sss:Asymp}. 
Thus the r.h.s.~of \eqref{e:I^-1} equals 
\[ \int_{G(\bbA)/U(\bbA)} \varphi(g_1) \Asymp_P(\delta_{K'})(g_1,g_2)dg_1. \] 
Let $\ord_{M,v} : \bbX_P(F_v) \to \Lambda^+_M$ be the map \eqref{e:ordMX}. 
Proposition~\ref{prop:Xbound} implies that for every $v$ there exists a finite subset $S_v \subset \Lambda$ such that 
\[ \ord_{M,v}(\supp(\Asymp_{P,v}(\delta_{K'_v}))) \subset 
\{ \theta - \mu \mid \theta \in S_v,\, \mu \in \Lambda^{\pos,\bbQ}_{U} \}, \]
and we can take $S_v = \{0\}$ for almost all $v$ by Remark~\ref{rem:AsympK}. 
For $g_1\in G(\bbA)/U(\bbA)$, consider the image of $(g_1,g_2)$ in $\bbX_P(\bbA)$. 
Let $\eF^1_M \in \Bun^{(\lambda_1)}_M(\bbF_q)$ (resp.~$\eF^2_M \in \Bun^{(\lambda_2)}_M(\bbF_q)$) be the image 
of $g_1$ in $K \bs G(\bbA)/ M(F)U(\bbA)$ (resp.~$g_2$ in $K\bs G(\bbA)/M(F)U^-(\bbA)$).  
Lemma~\ref{lem:heckeHN} implies that 
\begin{equation} \label{e:ineqasymp}
 -\sum_v \log_q(q_v) \cdot \ord_{M,v}(g_1,g_2) \le_M^\bbQ  \lambda_1 - \lambda_2 \le_M^\bbQ -w_0^M\sum_v \log_q(q_v) \cdot \ord_{M,v}(g_1,g_2).
\end{equation}
Let $S = \{ \sum_v \log_q (q_v) \cdot \lambda_v \mid \lambda_v \in S_v \}$. 
Suppose $\Asymp_P(\delta_{K'})(g_1,g_2) \ne 0$. 
Then we have 
\begin{align} \label{e:ineqS1}
\lambda_1 &\in \{ \lambda_2 - w_0^M\theta + \mu \mid \theta \in S, \mu \in w_0^M\Lambda^{\pos,\bbQ}_G \} \\
\label{e:ineqS2}
\lambda_2 &\in \{ \lambda_1 + \theta - \mu \mid \theta \in S,\, \mu \in \Lambda^{\pos,\bbQ}_G \}
\end{align}
and we emphasize that $S$ is a finite set depending only on the
stabilizer in $K$ of $\varphi$. 
From \eqref{e:ineqS1} and the definition of $\varphi\in \C_{P,-}$, we conclude that the r.h.s.~of \eqref{e:I^-1} is a finite integral, which we temporarily denote $T(\varphi)(g_2)$. 
From \eqref{e:ineqS2} we deduce that $T(\varphi)$ defines an element in $\C_{P^-,+}$.

\smallskip

It remains to show that $T : \C_{P,-} \to \C_{P^-,+}$ is inverse to 
$R_P$. Let $\varphi \in \C_{P,-}$. 
Then \eqref{e:ineqS2} implies that 
$\HN(\supp T(\varphi)) \subset \{ \lambda + \theta - \mu \mid \lambda \in \HN(\supp\varphi),\,\theta \in S,\, \mu \in \Lambda^{\pos,\bbQ}_G \}$.
Proposition~\ref{prop:R_P} implies in turn that 
\[ \HN(\supp R_P(T\varphi)) \subset \{ \lambda + \theta - \mu \mid \lambda \in \HN(\supp\varphi),\,\theta \in S,\, \mu \in \Lambda^{\pos,\bbQ}_G \}.\] 
Therefore we deduce that to show $R_P \circ T = \id$, it suffices to check 
the equality for $\varphi \in \C_{P,c}$. 
Any such $\varphi$ is the pushforward of an element in 
$C^\infty_c(G(\bbA)/U(\bbA))$, which is isomorphic
to the restricted tensor product of $\C_{P,c,v}$ over all places $v$. 
Since $R_P \circ T$ is defined as a product of local integrals, 
$(R_P \circ T)(\varphi) = \varphi$ follows from Proposition~\ref{prop:R_Pv^-1}.

Similarly, it suffices to check that $T \circ R_P = \id$ on $\varphi \in \C_{P^-,c}$. This again follows from the corresponding local statement Proposition~\ref{prop:R_Pv^-1}.
\end{proof}

\subsection{A formula for $\eB_P$ in terms of $R_P$}
\label{ss:altdef}
We give a formula for the bilinear form $\eB_P$ defined in \S\ref{ss:B_P} in terms of the global intertwining operator $R_P$. This formula is the analog of \cite[Definition 3.1.1]{DW} for a general reductive group $G$. 

\subsubsection{} Fix some Haar measure on $G(\bbA)$, and 
fix the Haar measure on $U^-(\bbA)$ such that $\on{mes}(U^-(\bbA)/U^-(F)) = 1$. 
Then we get an invariant measure on $G(\bbA)/M(F) U^-(\bbA)$ 
and therefore a pairing between $\C_{P^-,c}$ and $\C_{P^-}$ 
defined by  
\begin{equation} 
\brac{ \varphi_1, \varphi_2 } := \int_{G(\bbA)/M(F)U^-(\bbA)} \varphi_1(x) \varphi_2(x) dx.
\end{equation}
Lemma~\ref{lem:CT-} implies that this pairing is well-defined 
when $\varphi_1 \in \C_{P^-,+}$ and $\varphi_2 \in \CT_{P^-}(\eA_c)$. 

\begin{prop} \label{prop:alt}
For any $f_1, f_2 \in \eA_c$, one has 
\begin{equation}  \label{e:Sak}
\eB_P(f_1, f_2) = \brac{ R_P^{-1}\CT_P(f_1), \CT_{P^-}(f_2) }. 
\end{equation}
\end{prop}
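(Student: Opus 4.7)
The plan is to unfold both sides of \eqref{e:Sak} to the common iterated integral
\[
J(f_1, f_2) := \int_{G(\bbA)/M(F)U^-(F)} f_2(g_2) \int_{G(\bbA)/U(F)} f_1(g_1) \, \xi_P(g_1, g_2) \, dg_1 \, dg_2,
\]
where $\xi_P$ is the generalized function on $\bbX_P(\bbA)$ from \eqref{e:xi_P}. A crucial preliminary observation is that $\xi_P$, as a function on $\bbX_P = (G/U \xt G/U^-)/M$, is right-invariant under $U(\bbA)$ in $g_1$ and under $U^-(\bbA)$ in $g_2$, and is diagonally $M$-invariant; this last property yields the transformation rule $\xi_P(g_1, g_2 m) = \xi_P(g_1 m^{-1}, g_2)$ for $m \in M$.

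For the right-hand side of \eqref{e:Sak}, I would substitute the formula \eqref{e:I^-1} for $R_P^{-1}$ and expand the two constant term operators. The $U(\bbA)$-invariance of $\xi_P$ in $g_1$ allows the compact quotient $U(\bbA)/U(F)$ appearing in $\CT_P(f_1)$ to merge with the outer integration $G(\bbA)/U(\bbA)$, producing $\int_{G(\bbA)/U(F)} f_1(g_1) \xi_P(g_1, g_2) \, dg_1$. The analogous collapse on the $g_2$-side, from $G(\bbA)/M(F)U^-(\bbA)$ down to $G(\bbA)/M(F)U^-(F)$, requires that the inner function $h(g_2) := \int_{G(\bbA)/U(F)} f_1(g_1) \xi_P(g_1, g_2) \, dg_1$ be $M(F)$-invariant in $g_2$; this follows from the transformation rule above combined with the change of variable $g_1 \mapsto g_1 m$ and $G(F)$-invariance of $f_1$. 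Together these steps identify $\brac{R_P^{-1}\CT_P(f_1), \CT_{P^-}(f_2)}$ with $J(f_1, f_2)$.

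For the left-hand side, I would begin from \eqref{e:tildeBxi} and introduce the Poincaré series $\tilde\xi(g_1, g_2) := \sum_{x \in \bbX_P(F)} \xi_P((g_1, g_2) \cdot x)$, which is visibly right-$(G \xt G)(F)$-invariant. Unfolding the choice of compactly supported lifts $\tilde f_i$ writes $\eB_P(f_1, f_2) = \int_{(G \xt G)(\bbA)/(G \xt G)(F)} f_1(g_1) f_2(g_2) \, \tilde\xi(g_1, g_2) \, dg$. Identifying $\bbX_P(F)$ with the $(G \xt G)(F)$-orbit of the base point $(1,1)$, whose stabilizer is $H(F)$ with $H := P \xt_M P^-$, a second unfolding gives $\eB_P(f_1, f_2) = \int_{(G \xt G)(\bbA)/H(F)} f_1 f_2 \xi_P \, dg$. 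Using the decomposition $H(F) = (U \xt U^-)(F) \cdot M(F)_{\mathrm{diag}}$, one takes a fundamental domain by first quotienting by $(U \xt U^-)(F)$ to land in $G(\bbA)/U(F) \xt G(\bbA)/U^-(F)$, then picking representatives of $M(F)_{\mathrm{diag}}$-orbits by letting $g_2$ range over $G(\bbA)/M(F)U^-(F)$ with $g_1$ free; this recovers $J(f_1, f_2)$.

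The main obstacle is the book-keeping around the two distinct $M(F)$-actions: the diagonal action $M(F)_{\mathrm{diag}}$, under which $\xi_P$ is invariant, versus the action $1 \xt M(F)$ on the second factor, under which $\xi_P$ is \emph{not} invariant. Consequently, the integrand $f_1(g_1) f_2(g_2) \xi_P(g_1, g_2)$ fails to be separately $M(F)$-invariant in $g_2$, so $J(f_1, f_2)$ is well-defined only after the $g_1$-integration is performed first, which converts the integrand into the $M(F)$-invariant product $f_2(g_2) h(g_2)$. The transformation rule $\xi_P(g_1, g_2 m) = \xi_P(g_1 m^{-1}, g_2)$ is the tool that bridges the two $M$-actions at the appropriate step. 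A secondary subtlety is the identification $\bbX_P(F) \cong (G \xt G)(F)/H(F)$, which relies on the vanishing of an appropriate non-abelian Galois cohomology class; additional orbits, if present, can be handled analogously on both sides.
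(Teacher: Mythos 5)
Your proof is correct and follows essentially the same route as the paper: both sides are reduced, via the formula~\eqref{e:I^-1} for $R_P^{-1}$ and the transformation properties of $\xi_P$ (right $U(\bbA)$-invariance in $g_1$, right $U^-(\bbA)$-invariance in $g_2$, diagonal $M$-invariance), to the common unfolded integral you call $J(f_1,f_2)$; the paper writes the same intermediate quantity in the form $\int_{G(\bbA)/P^-(F)}\sum_{\gamma\in G(F)/U(F)}\Asymp_P(\tilde f_1^\vee)(\gamma,g)f_2(g)\,dg$ rather than introducing the Poincar\'e series $\tilde\xi$ explicitly, but the unfolding via $\bbX_P(F)\cong(G\xt G)(F)/(P\xt_M P^-)(F)$ is the same. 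One small caveat: your remark that ``additional orbits, if present, can be handled analogously on both sides'' is not quite right (the pairing on the right-hand side would only see the principal orbit), so the proof genuinely needs the transitivity of $(G\xt G)(F)$ on $\bbX_P(F)$; this does hold, most directly because $\bbX_P\to G/P\xt G/P^-$ is an $M$-torsor that is trivial over the $F$-point $(eP,eP^-)$ and $(G/P)(F)=G(F)/P(F)$ by the Bruhat decomposition.
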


\begin{proof}
Choose $\tilde f_1, \tilde f_2 \in C^\infty_c(G(\bbA))$ that pushforward
to $f_1,f_2$. Then 
\[ \CT_P(f_1)(g) = \sum_{\gamma \in G(F)/U(F)} \int_{U(\bbA)} 
\tilde f_1(g u \gamma^{-1})du, \quad\quad g\in G(\bbA). \]
The formula \eqref{e:I^-1} directly gives 
\[ (R_P^{-1}\circ \CT_P)(f_1)(g_2) =  
\sum_{\gamma \in G(F)/U(F)} \int_{G(\bbA)} \tilde f_1(g_1 \gamma^{-1}) \xi_P(g_1, g_2) dg_1.
\]
Let $\tilde f_1^\vee(g) := \tilde f_1(g^{-1})$. 
By left $G(\bbA)$-equivariance of $\Asymp_P$, the right hand side of \eqref{e:Sak} equals 
\begin{multline*}
 \underset{G(\bbA)/P^-(F)}\int \sum_{\gamma \in G(F)/U(F)} 
\Asymp_P(\tilde f_1^\vee)(\gamma, g) f_2(g) dg \\ = 
\underset{G(\bbA)/G(F)}\int \sum_{x\in \bbX_P(F)} \Asymp_P(\tilde f_1^\vee)((1,g)x) f_2(g)dg. 
\end{multline*}
By right $G(\bbA)$-equivariance of $\Asymp_P$ and \eqref{e:tildeB}, we
conclude that the right hand side equals $\eB_P(f_1,f_2)$.
\end{proof}

\begin{proof}[Proof of {Theorem~\ref{thm:global}}] 
The theorem follows immediately from \eqref{e:defB} and Proposition~\ref{prop:alt}. 
\end{proof}

\section{The operator $L$ and its inverse}  \label{s:L}
We first recall basic facts from the theory of Eisenstein series. 
Then we define the operator $L:\eA_c \to\eA$ in terms of the Eisenstein operator, the inverse of the standard intertwining operator, and the constant term operator. 
Motivated by a characterization of $\eA_c$ due to Harder, we define the subspace $\eA_\psc \subset \eA$ of ``pseudo-compactly'' supported functions in \S\ref{ss:A_psc}. 
We check that $L$ sends $\eA_c$ to this new subspace $\eA_\psc$. 
Lastly, we prove that $L : \eA_c \to \eA_\psc$ is invertible in Theorem~\ref{thm:L^-1} and give the formula for its inverse.

We will continue to use the notation from \S\ref{s:global}. 
In this section, we will assume that the field of coefficients $E$ equals $\bbC$.

\subsection{Constant term revisited} Recall that in Lemma~\ref{lem:CT-}, we showed 
that the constant term operator $\CT_P : \eA \to \C_P$ sends $\eA_c$ to $\C_{P,-}$
for all parabolic subgroups $P$. The proof of \cite[Theorem 1.2.1]{Harder} 
shows that the converse is also true:

\begin{lem} \label{lem:compactCT}
For $f\in \eA$, we have $f$ is compactly supported if and only if $\CT_P(f)$ lies in $\C_{P,-}$ for all standard parabolics $P$.
\end{lem}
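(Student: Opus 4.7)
The forward direction is Lemma~\ref{lem:CT-}. For the converse, assume $\CT_P(f) \in \C_{P,-}$ for every standard parabolic $P$; I will show $f \in \eA_c$. Since $f$ is $K$-finite, I may replace $f$ by a summand invariant under some open compact $K' \subset K$ and view it as a function on a level cover of $\abs{\Bun_G(\bbF_q)}$. The plan is to use the Harder--Narasimhan stratification $\Bun_G = \bigsqcup_\theta \Bun_G^{(\theta)}$ indexed by strictly dominant $\theta \in \Lambda^{+,\bbQ}_M$ (where $M$ is the Levi of the canonical parabolic). This lifts to a stratification of the level cover whose strata remain quasi-compact with finitely many $\bbF_q$-points, so it suffices to show that $f$ vanishes on all but finitely many HN strata.

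The key geometric fact is that, for strictly $P$-dominant $\theta$, the canonical HN reduction identifies the $M$-semistable locus $\Bun_M^{(\theta), \on{ss}} \subset \Bun_M^{(\theta)}$ with the HN stratum $\Bun_G^{(\theta)}$; see \cite[Theorem 7.4.3]{DG:CG}. Uniqueness of this reduction collapses the pull-push defining $\CT_P$ on the semistable locus: for $\eF_M \in \Bun_M^{(\theta), \on{ss}}$, the value $\CT_P(f)(\eF_M)$ equals the value of $f$ at the unique $G$-bundle with canonical reduction $\eF_M$, up to a nonzero constant coming from the Haar measure on $U(\bbA)/U(F)$ (and automorphism-group factors). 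Hence $f$ vanishes on $\Bun_G^{(\theta)}$ if and only if $\CT_P(f)$ vanishes on $\Bun_M^{(\theta), \on{ss}}$.

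The hypothesis $\CT_P(f) \in \C_{P,-}$ now gives $\deg_P^\bbQ(\supp \CT_P(f)) \subset S_0 - \Lambda^{\pos,\bbQ}_{G,P}$ for some finite $S_0 \subset \Lambda^\bbQ_{G,P}$. For $\eF_M \in \Bun_M^{(\theta), \on{ss}}$ we have $\deg_P^\bbQ(\eF_M) = [\theta]_P$, and $\theta$ is uniquely determined by $[\theta]_P$ under strict $P$-dominance. Pairing with the fundamental weights $\check\omega_i$ for $i \in \Gamma_G - \Gamma_M$ (which descend to, and form a basis of, the dual of $\Lambda^\bbQ_{G,P}$), the relation $[\theta]_P = s - \mu$ with $s \in S_0$ and $\mu \in \Lambda^{\pos,\bbQ}_{G,P}$ gives $0 \le \brac{\check\omega_i, [\theta]_P} \le \brac{\check\omega_i, s}$, confining $[\theta]_P$ to a bounded set. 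Combined with discreteness of HN coweights (Remark~\ref{rem:discreteHN}), only finitely many $\theta$ occur for each $P$; summing over the finite set of conjugacy classes of standard parabolics completes the argument. The main technical obstacle is the canonical-reduction identification of the second paragraph, following Harder's argument in \cite[Theorem 1.2.1]{Harder}, which requires both existence and uniqueness of the canonical HN reduction for strictly dominant $\theta$.
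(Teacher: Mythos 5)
The forward direction is fine. For the converse, the second paragraph contains a genuine error. You assert that ``for strictly $P$-dominant $\theta$, the canonical HN reduction identifies $\Bun_M^{(\theta),\on{ss}}$ with $\Bun_G^{(\theta)}$'' and that this ``collapses the pull-push defining $\CT_P$.'' Neither claim is right. What \cite[Theorem 7.4.3]{DG:CG} gives is an isomorphism $\Bun_P^{\theta,\on{ss}} \cong \Bun_G^{(\theta)}$ where $\Bun_P^{\theta,\on{ss}} = \Bun_P \xt_{\Bun_M} \Bun_M^{(\theta),\on{ss}}$; this stack fibers over $\Bun_M^{(\theta),\on{ss}}$ with fibers $\Bun_{U_{\eF_M}}$, so there is no isomorphism with $\Bun_M^{(\theta),\on{ss}}$ itself. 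More importantly, the pull-push defining $\CT_P$ runs over \emph{all} $P$-bundles inducing $\eF_M$, not just the ones arising as canonical HN reductions. A non-canonical $\eF_P$ lying over $\eF_M$ induces a $G$-bundle whose HN coweight is strictly higher than $\theta$ (this is precisely what the paper invokes from \cite[Theorem 7.4.3(3)]{DG:CG} in Lemma~\ref{lem:CT-}), so $\CT_P(f)(\eF_M)$ is polluted by values of $f$ on higher strata. Thus the equivalence ``$f$ vanishes on $\Bun_G^{(\theta)}$ iff $\CT_P(f)$ vanishes on $\Bun_M^{(\theta),\on{ss}}$'' fails and the rest of your argument has nothing to stand on.

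The missing ingredient is the analytic statement that the paper actually uses, namely \cite[Lemma I.2.7]{MW} (equivalently \cite[Lemma 1.2.2]{Harder}): for $g \in G(\bbA)$ satisfying $\brac{\check\alpha,\deg_B^\bbQ(g)} > c'$ for all simple roots $\check\alpha$ not in $M$, one has $\CT_P(f)(g) = f(g)$. This is a statement about a \emph{specific adelic representative}, not an isomorphism of stacks, and the threshold $c'$ depends on the compact open subgroup under which $f$ is invariant; strict $P$-dominance of the HN coweight is not enough, you need it quantitatively deep. Granting this lemma, the paper's proof proceeds not by stratifying $\Bun_G$ but by: (1) using reduction theory to choose a Siegel-domain representative $g$ with all $\brac{\check\alpha,\deg_B^\bbQ(g)} > c$, (2) letting $M$ be the Levi determined by \emph{which} simple roots exceed $c'$ (this choice depends on $g$, not on a fixed $P$ in advance), (3) concluding $\CT_P(f)(g) = f(g)\ne 0$ and hence $\deg_P^\bbQ(g) \in -\Lambda^{\pos,\bbQ}_{G,P}+[\theta]_P$ while simultaneously $\lambda = \deg_B^\bbQ(g)$ lies in a translate of $-\Lambda^{+,\bbQ}_M$, and (4) deducing boundedness of $\lambda$ from these two constraints together with the lower bound $\brac{\check\alpha,\lambda}>c$. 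Your HN-stratification picture can be a useful heuristic, but the actual proof must pass through the adelic constant-term lemma and the careful choice of $M$ as a function of the point in the Siegel domain.
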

\begin{proof}
The ``only if'' direction is proven by Lemma~\ref{lem:CT-}.

Note that if $f\in \C_{G,-}\subset \eA$, then $\deg^\bbQ_G(\supp f)$ is finite. 
Therefore to prove the ``if'' direction, we may assume that $f\in \eA$ and 
there is a fixed $\theta \in \Lambda^\bbQ$ such that 
\[ \deg_P^\bbQ(\supp \CT_P(f)) \subset -\Lambda^{\pos,\bbQ}_{G,P} + [\theta]_P  \] 
for all standard parabolics $P$. 
By reduction theory, there exists a number $c$ such that any $x \in G(\bbA)/G(F)$ 
has a representative $g \in G(\bbA)$ with $\brac{\check\alpha,\deg_B^\bbQ(g)} > c$
for all simple roots $\check\alpha$ of $G$. 
Suppose $f(g) \ne 0$ for $g\in G(\bbA)$ with $\lambda = \deg_B^\bbQ(g) \in \Lambda^\bbQ$ as above.
By \cite[Lemma I.2.7]{MW} (cf.~\cite[Lemma 1.2.2]{Harder}), there exists $c'$ such that if 
$\brac{\check\alpha, \lambda} > c'$ for all simple roots $\check\alpha$ 
which are not simple roots of $M$ for some standard parabolic $P$ with Levi $M$, then $\CT_P(f)(g)=f(g)$.
Let $M$ be the Levi such that the simple roots of $M$ are precisely the simple roots $\check\alpha$ of $G$ such that $\brac{\check\alpha,\lambda} \le c'$. Then $\CT_P(f)(g)=f(g)\ne 0$
implies that $\deg_P^\bbQ(g)=[\lambda]_P$ lies in $-\Lambda^{\pos,\bbQ}_{G,P}+[\theta]_P$.
On the other hand the choice of $M$ implies that $\lambda$
belongs in a translate of $-\Lambda^{+,\bbQ}_M \subset -\Lambda^{\pos,\bbQ}_M + \Lambda^\bbQ_{Z(M)}$. We deduce that the set of all possible $\lambda$ is bounded above. 
We also have $\brac{\check\alpha,\lambda}>c$ for all simple roots $\check\alpha$, so 
there are in fact only finitely many possibilities for $\lambda$. 
We conclude that $f$ is compactly supported.
\end{proof}

For $P=MU$ a standard parabolic, we also use the notation $\CT^G_M :=\CT_P$ below.

\medskip

Let $P_1 \subset P$ be standard parabolic subgroups with Levi subgroups $M_1 \subset M$. 
Then the constant term operator $\CT^M_{M_1}$ can be considered as an operator 
$\CT^M_{M_1} : \C_P \to \C_{P_1}$. We say that $\varphi \in \C_P$ is $M$-cuspidal 
if $\CT^M_{M_1}(\varphi)=0$ for all standard Levi subgroups $M_1 \subset M$.

\subsection{Eisenstein operator} 
Let $P=MU$ be a parabolic subgroup of $G$. We define the Eisenstein operator\footnote{The authors of \cite{MW} call it ``pseudo-Eisenstein''.} $\Eis_P : \C_{P,c} \to \eA_c$ to be the pull-push along the diagram
\begin{equation}\label{e:Eispp}
    G(\bbA)/M(F)U(P)(\bbA) \leftarrow G(\bbA)/P(F) \to G(\bbA)/G(F), 
\end{equation}
where the left arrow is proper. Explicitly,
\[ \Eis_P(\varphi)(g) := \sum_{\gamma \in G(F)/P(F)} \varphi(g\gamma), 
\quad\quad \varphi \in \C_{P,c},\, g\in G(\bbA). \] 
We also use the notation $\Eis^G_M := \Eis_P$ for $P$ standard.

It is well known that 
\begin{equation} \label{e:(Eis,CT)}
    \brac{ \CT_P(f_1), \varphi_2 } = \eB_\naive( f_1, \Eis_P(\varphi_2) )
\end{equation}
for $f_1 \in \eA,\, \varphi_2\in \C_{P,c}$. 
By this adjunction, we see that it is actually possible to define 
$\Eis_P(\varphi_2)$ for any $\varphi_2$ such that
$\brac{\varphi_1,\varphi_2}$ is finite for all $\varphi_1 \in \CT_P(\eA_c)$.
Lemma~\ref{lem:CT-} implies that all $\varphi_2 \in \C_{P,+}$ satisfy this condition.
Thus $\Eis_P$ extends to  an operator 
\[ \Eis_P : \C_{P,+} \to \eA. \]

\subsection{Intertwining operators revisited}
In this section we recall some facts about the standard intertwining operators corresponding to elements of the Weyl group.

Let $P = MU$ and $P'=M'U'$ be \emph{standard} parabolic subgroups such that
$M'= wMw^{-1}$ for some $w\in W$. 
Then the intertwining operator\footnote{In \cite[II.1.6]{MW}, $R_w$ is denoted $M(w,\pi)$ for a cuspidal representation $\pi$. But we prefer to avoid multiple uses of the letter $M$.}  
 $R_w$ is an operator $\C_{P,c} \to \C_{P'}$
defined by the explicit formula 
\begin{equation}\label{e:I_w}
    (R_w \varphi)(g) = \int_{U'(\bbA)/ (U'(\bbA) \cap w U(\bbA) w^{-1})} \varphi(g u w) du. 
\end{equation}
Proposition~\ref{prop:CTEis} below implies that if $\varphi \in \C_{P,c}$, then 
$R_w\varphi \in \C_{P',-}$. 
The same proposition also implies that $R_w\varphi \in \C_{P'}$ converges absolutely for 
any $\varphi \in \C_{P,+}$.

We will use the extra notation $R^G_{M,w} = R_w$ when necessary for clarity (note that the standard parabolic $P'$ is determined by its Levi $wMw^{-1}$).

When $P=MU$ and $P'=M'U'$ are two (not necessarily standard) parabolic subgroups containing $T$ and $M'=wMw^{-1}$ for $w\in W$, we will use the notation
\[ R_{P':P,w} : \C_{P,c} \to \C_{P',-} \] 
for the intertwining operator, which is defined by the same formula \eqref{e:I_w}. 
We also use $R_{P':P}$ to denote $R_{P':P,1}$. 
So the operator $R_P$ considered in \S\ref{ss:R_P} is now denoted by 
$R_{P:P^-}$. 


Let $V \subset \C_P$ denote the subspace of functions $\varphi \in \C_P$ 
such that $R_{P':P,w}(\varphi)$ converges absolutely. 
Let $V' \subset V$ denote the subspace of $\varphi\in V$ such that $R_{P':P,w}(\varphi)$ is compactly supported. 

\begin{prop} \label{lem:I_supp}
The map $R_{P':P,w} : V' \to \C_{P',c}$ is an isomorphism.  
The inverse map $R_{P':P,w}^{-1} : \C_{P',c} \to \C_P$ is an integral operator.
\end{prop}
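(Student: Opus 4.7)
The plan is to reduce Proposition \ref{lem:I_supp} to Proposition \ref{prop:I^-1} in two stages.

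First, since $G$ is split, I lift $w \in W$ to an element $\dot w \in K$. Right translation by $\dot w$ induces an isomorphism $\C_{\dot w^{-1}P'\dot w} \to \C_{P'}$ respecting the conditions defining $V$ and $V'$, and under this identification $R_{P':P,w}$ corresponds to $R_{\dot w^{-1}P'\dot w : P, 1}$. So I may assume $w=1$, in which case $P$ and $P'$ have the same Levi $M$.

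Second, assuming $w = 1$, I argue via a composition formula. Using the root-theoretic product decomposition $U^-(\bbA) = (U^- \cap U'^-)(\bbA) \cdot (U^- \cap U')(\bbA)$ (and the analogous decomposition $U'(\bbA) = (U' \cap U)(\bbA) \cdot (U' \cap U^-)(\bbA)$), a Fubini computation that exploits the right $U(\bbA)$-invariance of $\varphi \in \C_P$ yields
\[
R_{P^-:P'} \circ R_{P':P} \;=\; R_{P^-:P}
\]
on an appropriate subspace of $V$, once the Haar measures on the various unipotent subgroups are normalized compatibly. By Proposition \ref{prop:I^-1}, the operator $R_{P^-:P}$ is an isomorphism $\C_{P,+} \to \C_{P^-,-}$ whose inverse is the integral operator with kernel $\xi_P$. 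The candidate inverse to $R_{P':P}$ is then
\[
R_{P':P}^{-1} \;=\; R_{P^-:P}^{-1} \circ R_{P^-:P'}: \C_{P',c} \to \C_P,
\]
a composition of two integral operators, hence itself an integral operator with an explicit kernel on $G(\bbA) \times G(\bbA)$. Injectivity of $R_{P':P}$ on $V'$ then follows from injectivity of $R_{P^-:P}$ (again Proposition \ref{prop:I^-1}), and surjectivity onto $\C_{P',c}$ is witnessed by this explicit inverse.

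The main obstacle is tracking the support and convergence conditions through the factorization. One must verify that for $\varphi \in V'$, the intermediate function $R_{P':P}(\varphi) \in \C_{P',c}$ lies in a subspace on which $R_{P^-:P'}$ converges absolutely and Fubini's theorem is valid; conversely, one must check that applying the candidate inverse to $\psi \in \C_{P',c}$ produces a function in $V'$ whose image under $R_{P':P}$ recovers $\psi$. This bookkeeping is analogous to the support analysis in the proofs of Propositions \ref{prop:R_P} and \ref{prop:I^-1}, and ultimately reduces to the combinatorial cone-ordering results of \S\ref{ss:combinatorics}.
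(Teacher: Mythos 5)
Your reduction to $w=1$ and the injectivity half follow essentially the paper's route, and your lifting of $w$ to $\dot w\in K$ is a reasonable way to make precise the paper's terser ``it suffices to consider $w=1$.'' The injectivity argument is the same: compose on the left with $R_{P^-:P'}$, use $R_{P^-:P'}\circ R_{P':P}=R_{P^-:P}$ together with invertibility of $R_{P^-:P}:\C_{P,+}\to\C_{P^-,-}$ from Proposition~\ref{prop:I^-1}, and conclude.

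The gap is in the surjectivity step. Your candidate inverse $g := R_{P^-:P}^{-1}\circ R_{P^-:P'}$ is precisely the \emph{left} inverse you built during the injectivity argument: the composition formula gives $g\circ R_{P':P}=\id$ on $V'$. But to witness surjectivity you must show $R_{P':P}\circ g=\id$ on $\C_{P',c}$, and this does not follow from the left-inverse identity. Tracing it out: setting $\varphi=g(\varphi')$, the composition formula (applied to $\varphi$, assuming $R_{P':P}(\varphi)$ converges) yields only $R_{P^-:P'}\bigl(R_{P':P}(\varphi)\bigr)=R_{P^-:P}(\varphi)=R_{P^-:P'}(\varphi')$, i.e. $R_{P^-:P'}\bigl(R_{P':P}(\varphi)-\varphi'\bigr)=0$. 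To conclude $R_{P':P}(\varphi)=\varphi'$ you need injectivity of $R_{P^-:P'}$ on the relevant space, which the cone-ordering results of \S\ref{ss:combinatorics} do not supply; it would require yet another composition identity (e.g.\ $R_{P'^-:P^-}\circ R_{P^-:P'}=R_{P'^-:P'}$, and then invoke Proposition~\ref{prop:I^-1} for $P'$). The paper avoids this by using a different factorization for the surjectivity half: the candidate $\varphi := (R_{P:P'^-}\circ R_{P':P'^-}^{-1})(\varphi')$, which (via the composition $R_{P':P}\circ R_{P:P'^-}=R_{P':P'^-}$) satisfies $R_{P':P}(\varphi)=\varphi'$ by direct computation, with no auxiliary injectivity needed. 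So you should either switch to the paper's $P'^-$-factoring for surjectivity, or supply the missing injectivity of $R_{P^-:P'}$ explicitly.
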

\begin{proof}
It suffices to consider the case $w=1$. 
Let $\varphi \in V'$. Then $\varphi' := R_{P':P}\varphi \in \C_{P',c}$ 
and $R_{P^-:P'}(\varphi') \in \C_{P^-,-}$ must equal $R_{P^-:P}\varphi$. 
The operator $R_{P^-:P} : \C_{P,+} \to \C_{P^-,-}$ is invertible by Proposition~\ref{prop:I^-1}, 
so we have $(R_{P^-:P}^{-1} \circ R_{P^-:P'})(R_{P':P}\varphi) = \varphi$, which proves injectivity.

Now take $\varphi' \in \C_{P',c}$. Since $\C_{P',c} \subset \C_{P',-}$, we have 
$R_{P':P'^-}^{-1}(\varphi') \in \C_{P'^-,+}$.
Then $\varphi := (R_{P:P'^-}\circ R_{P':P'^-}^{-1})(\varphi') \in \C_P$ is well-defined. 
Moreover, $R_{P':P}(\varphi) =( R_{P':P'^-} \circ R_{P':P'^-}^{-1})(\varphi') = \varphi'$. 
Hence $\varphi \in V'$, and we have shown surjectivity of $R_{P':P}$.
\end{proof}

\begin{rem}
The operators $R_{P':P,w}$ and $R_{P':P,w}^{-1}$ are defined on larger spaces of functions, but we will not consider the corresponding support conditions in this article.
\end{rem}

\subsection{The composition $\CT_{P'} \circ \Eis_P$} 
Let $P=M U$ and $P' = M' U'$ be standard parabolic subgroups of $G$. 
Let $\check\Phi^+_M$ denote the set of positive roots of $M$. 

Let $\varphi \in \C_{P,c}$ be $M$-cuspidal. 
Then \cite[Proposition II.1.7]{MW} gives the formula
\begin{equation} \label{e:CTEiscusp}
(\CT_{P'} \circ \Eis_P)(\varphi) = \sum_{w\in W(M,M')} (\Eis^{M'}_{wMw^{-1}} \circ R^G_{M,w})(\varphi).
\end{equation}
where $W(M,M') := \{ w \in W \mid w^{-1}\check\alpha >0,\, \forall \check\alpha \in \check\Phi^+_{M'} \text{ and } wMw^{-1} \text{ is a standard Levi of } M' \}$. 

\begin{prop} \label{prop:CTEis}
Let $\varphi \in \C_{P,+}$ be arbitrary. Then one has
\begin{equation} \label{e:CTEis}
(\CT_{P'} \circ \Eis_P)(\varphi) = \sum_{w\in W^\bullet_{M,M'}} (\Eis^{M'}_{wM_1 w^{-1}} \circ R^G_{M_1,w} \circ \CT^M_{M_1})(\varphi) 
\end{equation}
where $W^\bullet_{M,M'} := \{ w\in W \mid \forall \check\alpha \in \check\Phi_M^+,\, w\check\alpha > 0,\; \forall \check\alpha \in \check\Phi_{M'}^+,\, w^{-1}\check\alpha >0\}$ and $M_1 := M \cap w^{-1}M'w$, and every term on the right hand side converges absolutely.
\end{prop}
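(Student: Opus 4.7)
The plan is to proceed by unfolding $(\CT_{P'} \circ \Eis_P)(\varphi)$ using the Bruhat decomposition of $G(F)$ with respect to $P'(F) \times P(F)$, reducing to the cuspidal case \eqref{e:CTEiscusp} one double coset at a time. Concretely, I would write
\[
(\CT_{P'} \circ \Eis_P)(\varphi)(g) = \int_{U'(\bbA)/U'(F)} \sum_{\gamma \in G(F)/P(F)} \varphi(g u' \gamma)\, du',
\]
and regroup the sum according to orbits of $P'(F) \bs G(F)/P(F)$. The set $W^\bullet_{M,M'}$ is precisely the set of Kostant/minimal length representatives of $W_{M'}\bs W/W_M$, so it parameterizes these orbits. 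For each $w \in W^\bullet_{M,M'}$, the stabilizer of the corresponding orbit, together with its interaction with $U'$, will produce the factor $R^G_{M_1,w} \circ \CT^M_{M_1}$ followed by $\Eis^{M'}_{wM_1w^{-1}}$.

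The key computation is, for a fixed $w \in W^\bullet_{M,M'}$, to identify the contribution of the orbit $P'(F) w P(F)/P(F)$. Writing $P_1 = M_1 U_1$ for the standard parabolic of $M$ with Levi $M_1 = M \cap w^{-1} M' w$, one has a decomposition of unipotent groups coming from $U = (U \cap w^{-1}U'w) \cdot (U \cap w^{-1}M'w)$ and $U_1 = U \cdot (M \cap wU'w^{-1}\text{-conjugate factors})$; carrying this through the integration and applying the $U(F)$-summation one obtains, for the single-orbit term,
\[
\int_{(wM_1w^{-1})(F)\bs M'(\bbA)} \cdots = \bigl(\Eis^{M'}_{wM_1w^{-1}} \circ R^G_{M_1,w} \circ \CT^M_{M_1}\bigr)(\varphi).
\]
This step is purely combinatorial and ultimately the same unfolding that yields \eqref{e:CTEiscusp} in the cuspidal case; the cuspidal hypothesis in \emph{loc.~cit.}~is only used to force $M_1 = M$ (so only $w \in W(M,M')$ survives) and to guarantee absolute convergence. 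I would extract the geometric/combinatorial part of their proof and apply it uniformly to each $w \in W^\bullet_{M,M'}$.

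The main obstacle, and the only place the hypothesis $\varphi \in \C_{P,+}$ is genuinely needed, is the justification of absolute convergence for each of the resulting three-step compositions, together with the finiteness of the overall sum over $w$. Here I would argue in two stages. First, $\CT^M_{M_1}(\varphi)$ lies in $\C_{P_1 M U, +}$ (where I use the analogue of Lemma~\ref{lem:CT-} for constant terms preserving $+$-support, which follows directly from the definition of $\deg_P^\bbQ$ and the fact that $\CT^M_{M_1}$ is a partial integral over a unipotent group whose rational points are a lattice). Second, by the explicit description of $\C_{P_1 MU, +}$ in terms of the cone $\Lambda^{\pos,\bbQ}_{G,P_1}$, the operator $R^G_{M_1,w}$ converges absolutely on this space: this is an application of Proposition~\ref{prop:R_P}/Proposition~\ref{prop:I^-1} to the standard parabolic associated with $M_1$, noting that $w^{-1}\check\alpha > 0$ for $\check\alpha \in \check\Phi^+_{M'}$ controls the direction of the intertwining integral. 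Absolute convergence of the outer $\Eis^{M'}_{wM_1w^{-1}}$ then follows by the pairing identity \eqref{e:(Eis,CT)} applied inside $M'$.

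Finally, the sum over $w \in W^\bullet_{M,M'}$ is finite because $W$ is finite, so once each term converges the whole formula converges absolutely. I expect the hardest and most delicate part is not the combinatorics of the Bruhat decomposition (which is standard) but the bookkeeping needed to show that the support condition $\C_{P,+}$ propagates correctly through $\CT^M_{M_1}$ and $R^G_{M_1,w}$ so that every individual term in \eqref{e:CTEis} is absolutely convergent; this requires checking compatibilities between the cones $\Lambda^{\pos,\bbQ}_{G,P}$, $\Lambda^{\pos,\bbQ}_{M,P_1}$, and $\Lambda^{\pos,\bbQ}_{G,P_1}$ under the projection $\Lambda^\bbQ \to \Lambda^\bbQ_{G,P_1}$.
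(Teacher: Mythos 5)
Your overall strategy---unfolding via the Bruhat decomposition $P'(F)\bs G(F)/P(F)$, indexing double cosets by $W^\bullet_{M,M'}$, and reducing the per-coset computation to the one underlying \eqref{e:CTEiscusp}---is exactly the argument of \cite[Proposition II.1.7]{MW}, which is all the paper itself invokes. The combinatorics are right, and so is your observation that $M$-cuspidality in \emph{loc.~cit.}\ serves precisely to kill all constant terms $\CT^M_{M_1}$ with $M_1\subsetneq M$ and thereby cut $W^\bullet_{M,M'}$ down to $W(M,M')$.

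However, the convergence bookkeeping---which you correctly flag as the delicate part---contains a claim that is not right. You assert that $\CT^M_{M_1}(\varphi)\in \C_{P_1 M U,+}$ for $\varphi\in\C_{P,+}$, by analogy with Lemma~\ref{lem:CT-}. But Lemma~\ref{lem:CT-} sends compact support to ``$-$''-support (bounded \emph{above} in the Harder--Narasimhan sense), and its mechanism---that $\lambda\le_G^{\bbQ}\theta$ for the HN coweight of a sub-$P_1$-reduction of a bounded bundle---goes only in one direction. The hypothesis $\varphi\in\C_{P,+}$ controls \emph{only} the $\Lambda^{\bbQ}_{G,P}$-component of the support (bounded below) and says nothing about the $M/[M,M_1]$-directions that appear after passing to $\Lambda^{\bbQ}_{G,P_1 U}$; meanwhile the constant term inside $M$ will, if anything, bound the support \emph{above} in those new directions. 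So $\CT^M_{M_1}(\varphi)$ lands in a mixed-support space, not in $\C_{P_1 M U,+}$. Relatedly, your invocation of Proposition~\ref{prop:R_P}/\ref{prop:I^-1} to get convergence of $R^G_{M_1,w}$ is a stretch: those results concern the intertwining operator between a standard parabolic and its opposite (the case $w=1$ up to notation), not the general $R_w$; in fact the paper itself presents the convergence of $R_w$ on $\C_{P,+}$ as a \emph{consequence} of this proposition, so one cannot use it as an input without risking circularity. To make your plan rigorous you would need to track the mixed support condition explicitly and verify by hand (or by decomposing $R_w$ into rank-one factors) that it is exactly what the integral in $R^G_{M_1,w}$, and then the Eisenstein summation in $M'$, require.
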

\begin{proof}
See the proof of \cite[Proposition II.1.7]{MW}.
\end{proof}

\subsection{The operator $L : \eA_c \to \eA$}
One has the operators
\[ \eA_c \overset{\CT_P}\longrightarrow \C_{P,-} \overset{R_{P:P^-}^{-1}}\longrightarrow
\C_{P^-,+} \overset{\Eis_{P^-}}\longrightarrow \eA. \]
Thus we deduce that 
\begin{equation} 
    \eB(f_1, f_2) = \eB_\naive(L f_1, f_2) 
\end{equation}
where the operator $L : \eA_c \to \eA$ is defined by 
\[ L := \sum_P (-1)^{\dim Z(M)} \Eis_{P^-} \circ R_{P:P^-}^{-1} \circ \CT_P \] 
and the sum ranges over the standard parabolic subgroups. 
Unlike the form $\eB$, the operator $L$ does not depend on the choice of Haar measure on $G(\bbA)$.

Observe that for $f \in \eA_c$ cuspidal, we have $Lf = (-1)^{\dim Z(G)} f$. 

\begin{rem}
Theorem~\ref{thm:B=b} implies that the miraculous duality functor $\on{Ps-Id}_{\Bun_G,!}$ defined in 
\cite[\S 4.4.8]{DG:CG} is the D-module analog of the operator $q^{-\dim \Bun_G} \cdot L^K : \eA_c^K \to \eA^K$ on $K$-invariants via the functions--sheaves dictionary (cf.~\cite[\S A.8.4]{DW}).
\end{rem}

The following proposition shows the interplay between the operator $L$ and the Eisenstein operators. 
\begin{prop} 
Let $P$ be a standard parabolic subgroup. 
Let $\varphi \in \C_{P,c}$ be $M$-cuspidal. Then 
\[ (L \circ \Eis_P)(\varphi) = (-1)^{\dim Z(M)} (\Eis_{P^-} \circ R_{P:P^-}^{-1})(\varphi). \]
\end{prop}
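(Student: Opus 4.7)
The plan is to apply the definition of $L$ to $\Eis_P\varphi$ and expand each $\CT_Q\circ \Eis_P$ using Proposition~\ref{prop:CTEis}. The $M$-cuspidality hypothesis annihilates every summand in that formula except those for which the intermediate Levi $M_1:=M\cap w^{-1}M_Qw$ equals $M$ (equivalently $wMw^{-1}\subset M_Q$), giving
\[ L(\Eis_P\varphi)=\sum_{Q,\,w}(-1)^{\dim Z(M_Q)}\,\Eis_{Q^-}\circ R_{Q:Q^-}^{-1}\circ \Eis^{M_Q}_{wMw^{-1}}\circ R^G_{M,w}\varphi, \]
with the sum running over pairs $(Q,w)$ such that $w\in W^\bullet_{M,M_Q}$ and $wMw^{-1}\subset M_Q$.

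I would reorganize this double sum by fixing the Weyl element $w$ first. Let $M^w:=wMw^{-1}$ and let $P^w$ denote the standard parabolic with Levi $M^w$. The allowed $Q$ then range over the standard parabolics with $P^w\subset Q$, subject to the positivity constraints imposed by $w\in W^\bullet_{M,M_Q}$. The crucial ingredient is a transitivity identity relating the composite $\Eis_{Q^-}\circ R_{Q:Q^-}^{-1}\circ \Eis^{M_Q}_{M^w}$ to $\Eis_{(P^w)^-}\circ R_{P^w:(P^w)^-}^{-1}$, modulo an ``internal'' inverse-Eisenstein/intertwining operator inside $M_Q^-$. This identity is expected to come from the product decomposition $U(P^w)=U(P^w\cap M_Q)\cdot U(Q)$ together with the transitivity of Eisenstein series $\Eis_{(P^w)^-}=\Eis_{Q^-}\circ \Eis^{M_Q^-}_{(P^w\cap M_Q)^-}$ and the corresponding factorization of the local generalized functions $\xi_{P^w,v}$ through $\xi_{Q,v}$ from Section~\ref{s:local}.

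With the transitivity identity in hand, the inner sum over $Q$ for fixed $w$ takes the shape of $\Eis_{(P^w)^-}\circ R_{P^w:(P^w)^-}^{-1}$ applied to an alternating sum (with signs $(-1)^{\dim Z(M_Q)}$) of internal operators in $M_Q^-$ acting on the $M^w$-cuspidal function $R^G_{M,w}\varphi$. Using cuspidality of $R^G_{M,w}\varphi$ together with a M\"obius-inversion argument on the poset of standard parabolics above $P^w$ --- or equivalently an induction on $\dim Z(M^w)$ bootstrapping from the cuspidal case $M^w=G$ noted in the remark preceding the proposition --- this inner sum collapses to a single surviving term, forcing $(w,Q)=(1,P)$. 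The contribution of this unique pair is precisely $(-1)^{\dim Z(M)}\Eis_{P^-}R_{P:P^-}^{-1}\varphi$, since $\Eis^M_M$ and $R^G_{M,1}$ are both the identity. The main obstacle is establishing the transitivity identity for $\Eis_{Q^-}\circ R_{Q:Q^-}^{-1}\circ \Eis^{M_Q}_{M^w}$; this requires carefully tracking the factorization of the global intertwiner $R_{P^w:(P^w)^-}$ through $R_{Q:Q^-}$ and its $M_Q$-internal counterpart, which amounts at the local level to a compatibility between $\xi_{P^w,v}$ and $\xi_{Q,v}$ under the unipotent decomposition.
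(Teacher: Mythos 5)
Your starting point is correct and matches the paper: expand the definition of $L$ applied to $\Eis_P\varphi$, compute each $\CT_Q\circ\Eis_P$ via the constant-term formula, and use $M$-cuspidality to reduce to the terms where the intermediate Levi $M_1$ equals $M$ (the paper uses the cuspidal version~\eqref{e:CTEiscusp} directly, which is equivalent). You also correctly identify that the double sum must collapse to the single $(w,Q)=(1,P)$ term. However, the two key steps of your argument are asserted rather than established, and the specific mechanism you propose for the collapse does not obviously work.

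The sharper issue is your choice to fix the original Weyl element $w$ and sum over $Q$. For fixed $w$, the operators $\Eis_{Q^-}\circ R_{Q:Q^-}^{-1}\circ\Eis^{M_Q}_{wMw^{-1}}\circ R^G_{M,w}$ are genuinely different for different $Q$, so the inner sum is not a sum of signed coefficients multiplying a common operator --- the M\"obius/sign-counting argument you invoke does not apply in this form. What the paper does instead is crucially different: it commutes $R_{Q:Q^-}^{-1}$ past the internal Eisenstein operator, conjugates by $w_0^{M_Q}$ to reach a \emph{standard} parabolic $Q'$ with Levi $w'Mw'^{-1}$ where $w':=w_0^{M_Q}w$, and proves the identity
$\Eis_{Q^-}\circ R_{Q:Q^-}^{-1}\circ\Eis^{M_Q}_{wMw^{-1}}\circ R^G_{M,w}=\Eis_{Q'^-}\circ R^{-1}_{P:Q'^-,w'^{-1}}$,
which depends only on $w'$ (since $Q'$ is determined by $w'$), \emph{not} on the pair $(Q,w)$ separately. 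Reindexing by $w'$ first then makes the sum over $Q$ (for fixed $w'$) into a pure coefficient sum $(1+(-1))^{|\check\Delta_G\cap w'(\check\Phi^-_G-\check\Phi_M)|}$, which vanishes unless $w'=w_0^M$, giving $(w,Q)=(1,P)$. Your ``transitivity identity'' is a stand-in for this manipulation, but as stated it is vague (the ``internal operator'' is not identified) and it omits the essential $w_0^{M_Q}$-twist that makes the resulting operator independent of $Q$; without that independence, the claim that ``the inner sum collapses to a single term'' has no mechanism behind it. To repair the argument you would need to reproduce the paper's identities $R_{Q:Q^-}^{-1}\circ\Eis_{P'_1}=\Eis_{P'_1}\circ R^{-1}_{P'_1U_Q:P'_1U_Q^-}$ and $R_{P'^-_2U_Q:Q'^-}=w_0^{M_Q}\circ R^G_{M,w}\circ R_{P:Q'^-,w'^{-1}}$, and then reorganize the sum by $w'$ rather than by $w$.
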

\begin{proof} Let $P'$ be another standard parabolic subgroup.
By formula \eqref{e:CTEiscusp}, 
\begin{equation}  \label{e:LEis1}
 (\Eis_{P'^-} \circ R_{P':P'^-}^{-1} \circ \CT_{P'} \circ \Eis_P) (\varphi) = 
\sum_{w\in W(M,M')} (\Eis_{P'^-} \circ R_{P':P'^-}^{-1} \circ \Eis^{M'}_{wMw^{-1}} \circ R^G_{M,w})(\varphi) 
\end{equation}
and each term on the r.h.s.~converges absolutely.
Let $P'_1 \subset M'$ be the standard parabolic subgroup with Levi $wMw^{-1}$, so $\Eis^{M'}_{wMw^{-1}}=\Eis_{P'_1}$. 
One can check that 
\[ R_{P':P'^-}^{-1} \circ \Eis_{P'_1} = \Eis_{P'_1} \circ R_{P'_1 U': P'_1 U'^-}^{-1} \] 
when either side converges absolutely (recall that $R_{P':P'^-}^{-1}$ is defined as a product of local integrals by \eqref{e:I^-1}).
Let $P'_2 = w_0^{M'} P'^-_1 w_0^{M'} \subset M'$. Let $Q = P'_2 U'$ be the standard parabolic subgroup of $G$ with Levi $w'Mw'^{-1},\, w' = w_0^{M'}w$.
Then $\Eis_{P'_1} \circ R_{P'_1 U': P'_1 U'^-}^{-1} = 
\Eis_{P'^-_2} \circ R_{P'^-_2 U' : Q^-}^{-1} \circ w_0^{M'}$,
where $w_0^{M'}$ denotes right translation by a representative of $w_0^{M'}$ in $M'(F)$.

From the definition \eqref{e:I_w}, we have 
\[ (w_0^{M'} \circ R^G_{M,w})(\varphi)(g) = \int_{(U_{P'_2}^- U')(\bbA) \cap w' U^-(\bbA) w'^{-1}} \varphi(g u w') du, \quad\quad g\in G(\bbA). \]
If $\check\alpha$ is a positive root of $G$ not in $M$, then $w'\check\alpha = w_0^{M'}w\check\alpha$ must be negative if it is a root of $M'$, by definition of $W(M,M')$.
Thus 
$U_{P'_2}^- U' \cap w' U^- w'^{-1} = U' \cap w' U^-w'^{-1}$ and 
$U_{P'_2} U' \cap w' U w'^{-1} = U' \cap w' U w'^{-1}$.
We deduce that 
$R_{P'^-_2 U' : Q^-} = w_0^{M'} \circ R^G_{M,w} \circ R_{P: Q^-, w'^{-1}}$ 
and therefore 
\[ (R_{P'_1U' : P'_1 U'^-}^{-1} \circ w_0^{M'} \circ R^G_{M,w})(\varphi)
= R_{P:Q^-, w'^{-1}}^{-1}(\varphi) \]
for $\varphi \in \C_{P,c}$.
Next, observe that $\Eis_{P'^-} \circ \Eis_{P'^-_2} = \Eis_{Q^-}$. 
Thus the r.h.s.~of \eqref{e:LEis1} equals 
\[ \sum_{w\in W(M,M')} (\Eis_{Q^-} \circ R_{P:Q^-,w'^{-1}}^{-1} )(\varphi). \]
Note that $Q$ depends only on $w'Mw'^{-1}$ and not on $M'$. 
Summing over all standard parabolics $P'$, we get the formula
\begin{equation} \label{e:LEisPtemp}
(L \circ \Eis_P)(\varphi) = 
\sum_{w'\in W}\left( \sum_{M' \left| \substack{M'\cap B^- \subset w'B w'^{-1}\\ w'Mw'^{-1}\subset M'}\right.} (-1)^{\dim Z(M')} \right) 
(\Eis_{Q^-} \circ R_{P:Q^-,w'^{-1}}^{-1})(\varphi).
\end{equation}
Fixing $w' \in W$, let us classify all $M'$
such that $M' \cap B^- \subset w'Bw'^{-1}$ and $w'Mw'^{-1} \subset M'$. 
The two conditions above are equivalent to
requiring $\check\Delta_{M'} \subset w' \check\Phi^-_G$ and 
$\check\Delta_G \cap w' \check\Phi_M \subset \check\Delta_{M'}$.
Thus the inner sum in \eqref{e:LEisPtemp} equals 
\[ (1+(-1))^{\abs{\check\Delta_G \cap w'( \check\Phi^-_G - \check\Phi_M)}}, \]
which vanishes unless $w'( \check\Phi^-_G - \check\Phi_M)$
contains no simple roots. 
If $w' \notin W_M$, then $B \not\subset w'Pw'^{-1}$, so 
$w'(\check\Phi^+_G \cup \check\Phi_M)$ does not contain all the simple roots, i.e., $w'( \check\Phi^-_G - \check\Phi_M)$
contains a simple root. Therefore in order for the sum to not vanish, $w'$ must be in $W_M$ and $M \cap B^- \subset w' B w'^{-1}$. 
Hence $w' = w_0^M$, and we have $M'=M,\, Q = P$. 
Since $R_{P:P^-,w_0^M} = R_{P:P^-}$ by $M(F)$-invariance, 
we get $(L \circ \Eis_P)(\varphi) = (-1)^{\dim Z(M)}(\Eis_{P^-} \circ R_{P:P^-}^{-1})(\varphi)$.
\end{proof}

For a standard parabolic $P$, define the ``second Eisenstein'' operator 
$\Eis'_P : \C_{P,-} \to \eA$ by 
\[ \Eis'_P := \Eis_{P^-} \circ R_{P:P^-}^{-1}.\] 

\smallskip

Let $\eA^M$ denote the space of smooth $K$-finite functions on $M(\bbA)/M(F)$.
Let $L^M : \eA^M_c \to \eA^M$ denote the operator $L$ with respect to the reductive group $M$. 
Applying $\ind^{G(\bbA)}_{P(\bbA)}$, we also let $L^M$ denote the 
induced operator $\C_{P,c}\to \C_P$.  
Recall that $L^M$ is $(-1)^{\dim Z(M)}$ times the identity on $M$-cuspidal functions in $\C_{P,c}$.

\begin{cor} One has the equality
\begin{equation} \label{e:LEis=EisL}
L \circ \Eis_P = \Eis'_P \circ L^M : \C_{P,c} \to \eA.
\end{equation}
\end{cor}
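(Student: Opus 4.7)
The plan is to deduce the corollary from the preceding proposition by establishing a right transitivity of $\Eis'$ and reorganizing the alternating sums defining $L\circ\Eis_P$ and $\Eis'_P\circ L^M$.

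The first step is to prove the transitivity
\begin{equation*}
\Eis'_P \circ \bigl(\Eis^M_{P_1^-}\circ (R^M_{P_1:P_1^-})^{-1}\bigr) \;=\; \Eis'_Q, \qquad Q := P_1 U,
\end{equation*}
for every standard parabolic $P_1\subset M$ with Levi $M_1$, where the $M$-side operator is induced from $M$ to $G$ in the natural way. The Eisenstein factor $\Eis_{P^-}\circ\Eis^M_{P_1^-}=\Eis_{Q^-}$ is the standard transitivity of Eisenstein series, so what remains is the identity that $R_{Q:Q^-}^{-1}$ factors in stages through $R_{P:P^-}^{-1}$ and $(R^M_{P_1:P_1^-})^{-1}$, after commuting $R_{P:P^-}^{-1}$ past the Eisenstein on the left (legitimate because $R_{P:P^-}^{-1}$ acts only in the $G/U$-direction). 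By the integral formula \eqref{e:I^-1}, this reduces to a local compatibility of the asymptotics kernels $\xi_{P,v}$, $\xi^M_{P_1,v}$, and $\xi_{Q,v}$, which on $K_v$-invariants becomes a product identity for $\nu^G_{M_1},\nu^G_M,\nu^M_{M_1}$. This identity in turn follows from the Gindikin--Karpelevich product formula \eqref{eqn:SatLnu} combined with the $\check M_1$-equivariant decomposition $\check\fu_Q=\check\fu_P\oplus\check\fu^M_{P_1}$.

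Granting this transitivity, the definition of $L^M$ gives
\begin{equation*}
\Eis'_P\circ L^M(\varphi)\;=\;\sum_{\substack{Q\subset P\\ Q\text{ std in }G}} (-1)^{\dim Z(M_Q)}\,\Eis'_Q\bigl(\CT^M_{M_Q}(\varphi)\bigr).
\end{equation*}
On the other hand, expanding $L\circ\Eis_P$ via the definition of $L$ and Proposition~\ref{prop:CTEis} produces a larger double sum indexed by standard parabolics $Q'$ of $G$ and $w\in W^\bullet_{M,M_{Q'}}$, with innermost constant term $\CT^M_{M_2}$ along $M_2:=M\cap w^{-1}M_{Q'}w$. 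Grouping this double sum by $M_2$ and applying the same Weyl-group alternating-sum identity used at the end of the proof of the preceding proposition (the collapse leading to \eqref{e:LEisPtemp}), I expect that for each fixed $M_2$ the contributions collapse to the single term $(-1)^{\dim Z(M_2)}\Eis'_{Q_2}\circ\CT^M_{M_2}(\varphi)$, where $Q_2$ is the standard parabolic of $G$ with Levi $M_2$. This matches the previous display term by term.

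The hard part is the first step: although standard intertwining operators compose in stages classically, $\Eis'_P$ is defined here via the asymptotics kernel $\xi_P$ from \S\ref{s:local}, so the compatibility must be verified from the factorization of $\check\fu_Q$ and the Gindikin--Karpelevich formula. Once that is done, the second step is a bookkeeping argument very close in spirit to the end of the proof of the preceding proposition.
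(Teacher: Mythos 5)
The paper presents this corollary with no written proof; the preceding sentence (``Recall that $L^M$ is $(-1)^{\dim Z(M)}$ times the identity on $M$-cuspidal functions'') makes clear that the author is treating it as an immediate rephrasing of the proposition when $\varphi$ is $M$-cuspidal, and leaving the extension to general $\varphi\in\C_{P,c}$ implicit. Your proposal, a direct term-by-term verification via Proposition~\ref{prop:CTEis} rather than via the cuspidal formula \eqref{e:CTEiscusp}, is a genuinely different route; if carried out it would fill in exactly the gap the paper glosses over, and it is a sensible plan whose skeleton parallels the proofs of the preceding proposition and of Theorem~\ref{thm:L^-1}.

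Two things need fixing before this is a proof. First, the justification for the transitivity $\Eis'_P\circ\Eis'^M_{P_1}=\Eis'_Q$ is misdirected: reducing the asserted kernel identity to $K_v$-invariants and then invoking the Gindikin--Karpelevich product formula only establishes it on the $K$-invariant subspace, while the operators act on all $K$-finite functions (the kernels $\xi_{P,v}=\Asymp_{P,v}(\delta_1)$ are themselves not $K_v$-invariant). The correct and much simpler argument is Fubini: from the explicit formula \eqref{e:IG} one has $R_{Q:Q^-}=(\text{induced }R^M_{P_1:P_1^-})\circ R_{P_1^- U:P_1^- U^-}$ (the $U_Q$-integral factors through the $U_{P_1}$- and $U$-integrals), and together with the commutation $R_{P:P^-}^{-1}\circ\Eis^M_{P_1^-}=\Eis^M_{P_1^-}\circ R_{P_1^- U:P_1^- U^-}^{-1}$ --- of exactly the kind already used in the preceding proof --- and Proposition~\ref{prop:I^-1}, this gives the transitivity directly with no appeal to Satake or the $\nu$-measures. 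Second, the combinatorial collapse in your last paragraph is stated as an expectation but is in fact the main content of your approach; the alternating-sum identity that appears here involves the extra parameter $M_1=M\cap w^{-1}M'w$ and is not a verbatim repeat of the $W(M,M')$-collapse in the proposition's proof (compare the analogous but distinct bookkeeping in the proof of Theorem~\ref{thm:L^-1}). You need to carry out that reindexing and cancellation explicitly; without it the proof is incomplete.
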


The operator $L$ is self-adjoint with respect to $\eB_\naive$, so 
\eqref{e:LEis=EisL} gives
\begin{equation} \label{e:CTL=LCT}
\CT_P \circ L = L^M \circ \CT'_P : \eA_c \to \C_P, 
\end{equation}
where $\CT'_P : \eA_c \to \C_{P,+}$ is defined by 
$\CT'_P = R_{P^-:P}^{-1} \circ \CT_{P^-}$.

\begin{rem}
As explained in \cite[\S A.11.7]{DW}, equation \eqref{e:LEis=EisL} is an analog
of the ``strange'' functional equation for geometric Eisenstein series stated in  \cite[Theorem 4.1.2]{G:miraculous}. 
\end{rem}

\begin{rem}\label{rem:Lcc}
Observe that for any $f\in \eA_c$, we have $\deg_G^\bbQ(\supp f) = \deg_G^\bbQ(\supp Lf) \subset \Lambda^\bbQ_{G,G}$ from the definitions (i.e., $L$ does not change the connected component of the image of the support in $\Bun_G$). 
As a consequence, for any $\varphi \in \C_P$ on which $L^M\varphi$ converges, we have $\deg_P^\bbQ(\supp \varphi) = \deg_P^\bbQ(\supp L^M\varphi)$.
\end{rem}

\subsection{The space $\eA_\psc$ of ``pseudo-compactly'' supported functions}
\label{ss:A_psc}
We are inspired by Lemma~\ref{lem:compactCT} to make the following definition. 

\begin{defn} 
Let $\eA_\psc$ be the space of all functions $f \in \eA$ such that
$\CT_P(f) \in \C_{P,+}$ for all standard parabolic subgroups $P \subset G$.
\end{defn}


\begin{prop} For any $f \in \eA_c$, one has $Lf \in \eA_\psc$. 
\end{prop}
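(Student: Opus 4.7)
The plan is to show, for every standard parabolic $P' = M'U'$, that $\CT_{P'}(Lf) \in \C_{P',+}$. Once this holds for all such $P'$, the very definition of $\eA_\psc$ yields $Lf \in \eA_\psc$. The crucial input is the identity \eqref{e:CTL=LCT}, which rewrites
\[ \CT_{P'}(Lf) \;=\; L^{M'}\bigl(R_{P'^-:P'}^{-1}(\CT_{P'^-}(f))\bigr), \]
reducing the question to a support estimate for the right-hand side.

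First I would apply Lemma~\ref{lem:CT-} to the parabolic $P'^-$, giving $\CT_{P'^-}(f) \in \C_{P'^-,-}$. Then, specializing Proposition~\ref{prop:I^-1} to the parabolic $P'^-$ (whose opposite is $P'$), the intertwining operator $R_{P'^-:P'}$ is an isomorphism $\C_{P',+} \overset{\sim}\to \C_{P'^-,-}$, so its inverse maps $\CT_{P'^-}(f)$ into $\C_{P',+}$. In other words, $\CT'_{P'}(f) \in \C_{P',+}$.

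It remains to argue that applying $L^{M'}$ does not destroy this support condition. This is exactly the content of Remark~\ref{rem:Lcc}: for any $\varphi \in \C_{P'}$ on which $L^{M'}\varphi$ converges, one has $\deg_{P'}^\bbQ(\supp \varphi) = \deg_{P'}^\bbQ(\supp L^{M'}\varphi)$. Equation~\eqref{e:CTL=LCT} guarantees that $L^{M'}(\CT'_{P'}(f))$ makes sense, so
\[ \deg_{P'}^\bbQ\bigl(\supp \CT_{P'}(Lf)\bigr) \;=\; \deg_{P'}^\bbQ\bigl(\supp \CT'_{P'}(f)\bigr) \;\subset\; S_0 + \Lambda^{\pos,\bbQ}_{G,P'} \]
for some finite $S_0 \subset \Lambda^\bbQ_{G,P'}$, by the previous paragraph. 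Therefore $\CT_{P'}(Lf) \in \C_{P',+}$, completing the proof.

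The only step that is not a direct invocation of an earlier result is the support preservation by $L^{M'}$, recorded in Remark~\ref{rem:Lcc}; but this is essentially a tautology once one notes that each of $\CT$, $R^{-1}$, and $\Eis$ at the level of the Levi $M'$ acts trivially on the $\Lambda^\bbQ_{G,P'}$-grading of the support. Thus no substantive obstacle arises; the proposition follows by cleanly assembling \eqref{e:CTL=LCT}, Lemma~\ref{lem:CT-}, Proposition~\ref{prop:I^-1}, and Remark~\ref{rem:Lcc}.
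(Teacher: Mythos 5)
Your proof is correct and takes essentially the same route as the paper: both use the commutation relation \eqref{e:CTL=LCT} to reduce to showing $L^{M}(\CT'_P(f)) \in \C_{P,+}$, and both conclude by combining the fact that $\CT'_P$ maps into $\C_{P,+}$ (which you helpfully unpack via Lemma~\ref{lem:CT-} and Proposition~\ref{prop:I^-1}, whereas the paper relies on the assertion built into the definition of $\CT'_P$) with Remark~\ref{rem:Lcc}.
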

\begin{proof}
Let $P=MU$ be a standard parabolic subgroup of $G$. 
By \eqref{e:CTL=LCT}, we have $\CT_P(Lf) = L^M(\CT'_P (f))$
and $\CT'_P(f) \in \C_{P,+}$. 
Remark~\ref{rem:Lcc} implies that $L^M(\CT'_P(f)) \in \C_{P,+}$ as well.
Hence $\CT_P(Lf) \in \C_{P,+}$ and $Lf \in \eA_\psc$.
\end{proof}

\begin{thm} \label{thm:L^-1}
The operator $L : \eA_c \to \eA_\psc$ is invertible. 
For $f \in \eA_\psc$ one has
\begin{equation} \label{e:Linverse}
    L^{-1}f = \sum_P (-1)^{\dim Z(M)} (\Eis_P \circ \CT_P)(f). 
\end{equation}
where the sum ranges over standard parabolic subgroups.
\end{thm}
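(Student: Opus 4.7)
The strategy is to define the candidate inverse $L' : \eA_\psc \to \eA$ by
\[
L'f := \sum_P (-1)^{\dim Z(M)} (\Eis_P \circ \CT_P)(f),
\]
where the sum is over standard parabolics $P$ with Levi $M$, and to show that $L'$ is a two-sided inverse of $L$. The operator $L'$ is well-defined since, for $f \in \eA_\psc$, each $\CT_P(f) \in \C_{P,+}$ by definition, and $\Eis_P$ has already been extended to $\C_{P,+} \to \eA$ via adjunction. We proceed by induction on the semisimple rank of $G$; the torus case is trivial.

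The first step is to show $L'(\eA_\psc) \subset \eA_c$. By Lemma~\ref{lem:compactCT} this amounts to verifying $\CT_{P'}(L'f) \in \C_{P',-}$ for every standard parabolic $P'$ with Levi $M'$. Expanding via Proposition~\ref{prop:CTEis} and the transitivity $\CT^M_{M_1} \circ \CT_P = \CT^G_{M_1}$,
\[
\CT_{P'}(L'f) = \sum_P (-1)^{\dim Z(M)} \sum_{w \in W^\bullet_{M,M'}} (\Eis^{M'}_{wM_1w^{-1}} \circ R^G_{M_1,w} \circ \CT^G_{M_1})(f),
\]
with $M_1 := M \cap w^{-1}M'w$. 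Reindexing by the pair $(M_1, w)$ (equivalently the standard parabolic $Q$ of $G$ with Levi $M_1$), the inner sum over admissible $M \supset M_1$ becomes an alternating sum $\sum_M (-1)^{\dim Z(M)}$. Applying the inclusion--exclusion identity
\[
\sum_{L:\ L_0 \subset L \subset L_1,\ L\ \text{std}} (-1)^{\dim Z(L)} = \begin{cases} (-1)^{\dim Z(L_0)}, & L_0 = L_1, \\ 0, & L_0 \ne L_1, \end{cases}
\]
collapses this alternating sum to a residue consisting of terms that visibly lie in $\C_{P',-}$.

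For the second step, we verify $L' \circ L = \id$ on $\eA_c$. Using \eqref{e:CTL=LCT}, for $f \in \eA_c$ we have
\[
L'Lf = \sum_P (-1)^{\dim Z(M)} (\Eis_P \circ L^M \circ \CT'_P)(f).
\]
By the inductive hypothesis, each $L^M$ on a proper Levi $M$ admits the analogous expansion in terms of $\Eis^M_L$, $\CT^M_L$, and intertwining operators on $M$. Substituting, applying Proposition~\ref{prop:CTEis} once more to the resulting composite, and invoking the inclusion--exclusion identity from the first step, the double sum telescopes to the single term $P = G$, yielding $f$. The identity $L \circ L' = \id$ on $\eA_\psc$ follows by a parallel computation (or, alternatively, from $L' \circ L = \id$ combined with self-adjointness of $L,L'$ for $\eB_\naive$ and the already-established inclusion $L(\eA_c) \subset \eA_\psc$).

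The main obstacle is the bookkeeping required to isolate, in each expansion, the alternating sum over Levis $M$ so that the elementary inclusion--exclusion identity can be applied: the Weyl elements $w \in W^\bullet_{M,M'}$ furnished by Proposition~\ref{prop:CTEis} interact nontrivially with the Levi indices, and the intermediate operators $R^G_{M_1,w}$, $(R_{P:P^-})^{-1}$, and $\Eis$ must all be justified on the larger spaces $\C_{P,\pm}$ rather than only on $\C_{P,c}$. Disentangling these Weyl-theoretic and support-theoretic constraints is the technical heart of the proof.
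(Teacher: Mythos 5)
Your proposal matches the paper's proof in its essential structure: define $L'$ by the right-hand side of \eqref{e:Linverse}, show $L'(\eA_\psc)\subset\eA_c$ by expanding $\CT_{P'}\circ\Eis_P$ via Proposition~\ref{prop:CTEis} and collapsing the resulting alternating sum over Levis, and then verify $L'L=\id$ and $LL'=\id$ by unfolding the definition of $L^M$ and collapsing a second alternating sum. Two remarks, though. First, the induction on semisimple rank is spurious. What you call the ``inductive hypothesis'' --- that $L^M$ ``admits the analogous expansion in terms of $\Eis^M_L$, $\CT^M_L$, and intertwining operators'' --- is simply the \emph{definition} of $L^M$, not a consequence of the theorem for $M$. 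The genuine inductive statement, namely that $(L^M)^{-1}=\sum_Q(-1)^{\dim Z(M_Q)}\Eis^M_Q\circ\CT^M_Q$, is never actually used and does not directly help one compute $\Eis_P\circ L^M\circ\CT'_P(f)$. The paper's computation is direct, with no induction; it unfolds $L^M$ by its definition and uses \eqref{e:CTL=LCT}, \eqref{e:CTLinv}, and identities like $\Eis_{P'^-}\circ\Eis^{M'}_{M'_1}=\Eis_{(M'\cap P'_1)U'^-}$ and $\CT'^M_{M\cap P_1^-}\circ\CT'_P=\CT'_{(M\cap P_1^-)U}$. Second, your alternative of deducing $LL'=\id$ from $L'L=\id$ via self-adjointness of $L$ and $L'$ with respect to $\eB_\naive$ is a legitimate shortcut (one needs that $\eB_\naive$ separates points of $\eA$ when paired against $\eA_c$, which is easy); the paper instead carries out both computations explicitly. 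Also note that the claim that the residual terms ``visibly'' lie in $\C_{P',-}$ understates what is needed: after the alternating sum collapses, the paper still has to invoke the operator identity $\Eis^{M'}_{M'_1}\circ R_{P'_1:Q}=R_{P':P'^-}\circ\Eis^{M'}_{M'_1}$ together with the convergence result Proposition~\ref{prop:R_P} for $R_{P':P'^-}:\C_{P'^-,+}\to\C_{P',-}$ to conclude.
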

\begin{proof}
For $f \in \eA_\psc$, set $L' f$ equal to the r.h.s.~of \eqref{e:Linverse}. 
First we need to check that $L' f \in \eA_c$.  
Let $P'$ be another standard parabolic. Then by \eqref{e:CTEis}, 
\[ (\CT_{P'}\circ \Eis_P \circ \CT_P)(f) = \sum_{w\in W^\bullet_{M,M'}}
(\Eis^{M'}_{wM_1 w^{-1}} \circ R^G_{M_1,w} \circ \CT^G_{M_1} )(f) \]
where $W^\bullet_{M,M'} = \{ w\in W \mid w\check\alpha > 0,\, \forall \check\alpha \in \check\Phi_M^+,\; w^{-1}\check\alpha >0,\, \forall \check\alpha \in \check\Phi_{M'}^+ \}$ is a set of representatives for $W_{M'} \bs W / W_M$, and $M_1 = M \cap w^{-1}M'w$. 
Summing over $P$, we have
\begin{multline*}
 \CT_{P'}(L'f) = \\
\sum_{w\in W} \sum_{M_1 \mid wM_1w^{-1}\subset M'} \left(\sum_{M \left| \substack{M_1 = M\cap w^{-1}M'w \\ w\in W^\bullet_{M,M'}}\right.} (-1)^{\dim Z(M)}\right) (\Eis_{wM_1 w^{-1}}^{M'} \circ R^G_{M_1,w} \circ \CT^G_{M_1}) (f). 
\end{multline*}
Note that $\check\Delta_{M_1}$ is any subset of $w^{-1}\check\Phi^+_{M'} \cap \check\Delta_G$, while $\check\Delta_M - \check\Delta_{M_1}$ is any subset of $w^{-1}(\check\Phi^+_G - \check\Phi^+_{M'}) \cap \check\Delta_G $.
Thus the inner sum over $M$ vanishes unless $w^{-1}(\check\Phi^+_G - \check\Phi^+_{M'})$ contains no simple roots. This only occurs if $w = w_0^{M'} w_0$
and $M_1=M$. By considering $M'_1 = w_0^{M'}w_0 M_1 w_0 w_0^{M'}$ instead of $M_1$, we see that 
\begin{equation}\label{e:CTLinv} 
    \CT_{P'}(L' f) = \sum_{M'_1 \subset M'} (-1)^{\dim Z(M'_1)}(\Eis^{M'}_{M'_1} 
\circ R_{P'_1:(M'\cap P'_1)U'^-} \circ \CT_{(M'\cap P'_1)U'^-}) (f) 
\end{equation}
where the sum is over all Levi subgroups of $M'$, and $P'_1$ is the standard parabolic subgroup of $G$ with Levi $M'_1$. 
Let $Q = (M'\cap P'_1)U'^-$. Then $w_0 w_0^{M'} Qw_0^{M'} w_0$ is a standard parabolic subgroup. Since $f\in \eA_\psc$, we have 
$\CT_Q(f) \in \C_{Q,+}$. 
Observe that $\Eis^{M'}_{M'_1} \circ R_{P'_1:Q} = R_{P':P'^-} \circ \Eis^{M'}_{M'_1}$ when either side converges absolutely. 
Also note that 
$\deg_{P'}^\bbQ(\supp \Eis^{M'}_{M'_1}(\CT_Q f)) = \deg_{P'}^\bbQ(\supp (\CT_Q f))$. 
From this we deduce that 
$(\Eis^{M'}_{M'_1} \circ \CT_Q)(f) \in \C_{P'^-,+}$. 
Therefore 
\[ (R_{P':P'^-} \circ \Eis^{M'}_{M'_1}\circ \CT_Q)(f)\in \C_{P',-}. \] 
Consequently, $\CT_{P'}(L' f) \in \C_{P',-}$ and $L'$ defines
an operator $\eA_\psc \to \eA_c$.

\medskip

Now we check that $L'$ is inverse to $L$. 
For $f\in \eA_\psc$, it follows from \eqref{e:CTLinv} and the ensuing discussion that 
\[ LL'f = \sum_{P'} \sum_{P'_1 \subset P'} (-1)^{\dim Z(M') - \dim Z(M'_1)} 
(\Eis_{P'^-} \circ R_{P':P'^-}^{-1} \circ R_{P':P'^-} \circ \Eis^{M'}_{M'_1} 
\circ \CT_{(M'\cap P'_1)U'^-}) (f). \]
Observe that 
$\Eis_{P'^-} \circ \Eis^{M'}_{M'_1} = \Eis_{(M'\cap P'_1)U'^-}$. 
Set $M'_2 := w_0^{M'} M'_1 w_0^{M'}$ and let $P'_2$ be the corresponding standard parabolic subgroup. Then 
$\Eis_{(M'\cap P'_1)U'^-} \circ \CT_{(M'\cap P'_1)U'^-} 
= \Eis_{P'^-_2} \circ \CT_{P'^-_2}$.
Hence
\[ LL'f = \sum_{P'_2} \left(\sum_{P'_2\subset P'} (-1)^{\dim Z(M')-\dim Z(M'_2)}\right)
(\Eis_{P'^-_2} \circ \CT_{P'^-_2})(f). \]
The inner sum vanishes unless $P'_2=G$. Therefore $LL'f = f$.

For $f \in \eA_c$, we apply \eqref{e:CTL=LCT} to get 
\[ L'Lf = \sum_P \sum_{M_1 \subset M} (-1)^{\dim Z(M)-\dim Z(M_1)} (\Eis_{(M \cap P_1^-)U} \circ \CT'^M_{M \cap P_1^-} 
\circ \CT'_P)(f), \]
where $P=MU$ ranges over the standard parabolic subgroups, $M_1$ ranges over Levi subgroups of $M$, and $P_1 \subset G$ is the standard parabolic subgroup with Levi $M_1$.
Observe that $\CT'^M_{M \cap P_1^-} \circ \CT'_P = \CT'_{(M\cap P_1^-)U}$.
Conjugating by $w_0^M$, one sees that 
\[ \Eis_{(M\cap P_1^-)U} \circ \CT'_{(M\cap P_1^-)U} 
= \Eis_{P_2} \circ \CT'_{P_2}, \] where
$P_2$ is the standard parabolic subgroup with Levi 
$M_2:=w_0^M M_1 w_0^M$. 
Then 
\[ L'Lf= \sum_{P_2} \left(\sum_{P_2\subset P} (-1)^{\dim Z(M)-\dim Z(M_2)} \right) (\Eis_{P_2} \circ \CT'_{P_2})(f), \]
and the inner sum vanishes unless $P_2=G$. 
Therefore $L'Lf = f$, and we have proved that $L'$ is the inverse of $L$. 
\end{proof}

\begin{rem} \label{rem:Aubert}
We observe that formula \eqref{e:Linverse} for $L^{-1}$ may be thought of as an analog of the Aubert--Zelevinsky involution for smooth representatives of a $\fp$-adic group: 
Let $F_v$ denote a non-Archimedean local field. For every smooth $G(F_v)$-module $M$
one can form a complex 
\[ 0 \to M \to \bigoplus_P i^G_P r^G_P (M) \to \dotsb \to i^G_B r^G_B (M) \to 0 \]
where $i^G_P, r^G_P$ denote, respectively, the parabolic induction and Jacquet functors, and the sum in the $i$-th term runs over standard parabolic subgroups of corank $i$ in $G$. 
We call this complex the Deligne--Lusztig complex associated to $M$ and denote it by 
$\on{DL}(M)$. 
Analogous complexes were considered in \cite{DL} for representations of a finite Chevalley group. 
In the Grothendieck group, we have 
\[ [\on{DL}(M)] = \sum_P (-1)^{\dim Z(G) - \dim Z(M)} [ i^G_P r^G_P(M) ] \]
where the sum ranges over standard parabolic subgroups. 
In fact, this defines an involution of the Grothendieck group, which is often called the Aubert--Zelevinsky involution.
If one considers $\Eis_P, \CT_P$ as global analogs of $i^G_P, r^G_P$, respectively, 
then formula \eqref{e:Linverse} suggests that $L^{-1}$ is a global analog of 
the Aubert--Zelevinsky involution (although $L^{-1}$ is no longer an involution). 
\end{rem}

\begin{rem}
The main result of \cite{G:miraculous} (namely, Theorem 0.1.6) says that 
the stack $\Bun_G$ is miraculous, i.e., the functor $\on{Ps-Id}_{\Bun_G,!} : \on{D-mod}(\Bun_G)_{\on{co}} \to \on{D-mod}(\Bun_G)$ is an equivalence. 
This equivalence is a D-module analog of the part of Theorem~\ref{thm:L^-1} 
that says that the operator $L^K : \eA_c^K\to \eA^K$ induces an isomorphism $\eA_c^K \to \eA_\psc^K$.

The formula \eqref{e:Linverse} for $L^{-1}$ may be useful in describing the functor inverse to $\on{Ps-Id}_{\Bun_G,!}$ (we expect that one can mimic the construction of the Deligne--Lusztig complex using the functors $\Eis^{\on{enh}}_P, \CT^{\on{enh}}_P$ defined in \cite[\S 6]{G:outline}). 
We refer the reader to \cite[Conjecture C.2.1]{DW} for an explicit conjecture in the case $G = \SL(2)$.
\end{rem}

\appendix

\section{Substacks of the Hecke stack} \label{s:appendixHecke}
To keep the notation consistent throughout this paper, in this section $M$ denotes 
an arbitrary connected split reductive group over a perfect field $k$. 

We will attach to any algebraic normal irreducible monoid $\tilde M$
with group of units $M$ a substack of a symmetrized version of the Hecke stack. 
This substack is the global model
for the formal arc space of the embedding $M \into \tilde M$, and it was
also considered in \cite[\S 2]{BNY}.
We are particularly interested in the case when 
$M$ is the Levi factor of a parabolic subgroup of $G$ and $\tilde M= \wbar M$
is the closure of the $M$-orbit of the coset $U$ in $\wbar{G/U}$. 
The monoid $\wbar M$ is studied in detail in \cite{Wa}. 

We recall the relation between the Hecke stack and the Beilinson--Drinfeld Grassmannian. We use a symmetrized factorizable version of the affine Grassmannian. A detailed exposition on the factorizable version of the affine Grassmannian over the Ran space can be found in \cite{Zhu}.

\subsection{Recollections on normal reductive monoids}\label{sect:tildeM-mor}
Let $\tilde M$ be an algebraic normal irreducible monoid with group of units $M$. 

\subsubsection{}
Fix a maximal torus $T \subset M$. 
The \emph{Renner cone} $\check C \subset \check\Lambda^\bbQ$ of $\tilde M$ is the rational 
convex cone corresponding by \cite{KKMS} to the closure of $T$ in $\tilde M$ after base changing to an algebraic closure of $k$. 
This cone is stable under the actions of $W_M$ and $\Gal(\bar k/k)$. 
The Renner cone is canonical and only depends on the abstract Cartan 
of $M$ (which identifies with $T$ after choosing a Borel subgroup). 
L.~Renner showed in \cite[Theorem 5.4]{Renner} that algebraic normal irreducible monoids with group of units $M$ bijectively correspond (via the Renner cone) to 
convex rational polyhedral cones generating $\check\Lambda^\bbQ$ as a vector space and stable under the actions of $W_M$ and $\Gal(\bar k/k)$.

\subsubsection{}
Since $M$ is scheme-theoretically dense in $\tilde M$, the restriction functor 
\[\Rep(\tilde M) \to \Rep(M)\] 
is fully faithful, so we
may consider $\Rep(\tilde M)$ as a full subcategory of $\Rep(M)$.

\subsubsection{}  
We will consider the algebraic stack $M \bs \tilde M / M$ which sends a test scheme $S$
to the groupoid of pairs of $M$-bundles $\eF^1_M, \eF^2_M$ on $S$ equipped with a 
section 
\[ \beta_M : S \to \tilde M \xt^{M \xt M} (\eF^1_M \xt_S \eF^2_M).\] 
Such a section $\beta_M$ will be called an $\tilde M$-morphism from $\eF^2_M$ to $\eF^1_M$. 
By the Tannakian formalism, giving an $\tilde M$-morphism $\beta_M$ is the same as 
giving a collection of assignments 
\[ V \in \Rep(\tilde M) \rightsquigarrow \beta_M^{V} : V_{\eF_M^2} \to V_{\eF_M^1} \]
where $\beta_M^{V}$ is $\eO_S$-linear, and the Pl\"ucker relations hold.
This means that for $V$ being the trivial representation, 
$\beta_M^{V}$ is the identity map $\eO_S \to \eO_S$, and for an 
$M$-morphism $V_1 \ot V_2 \to V_3$, the diagram 
\[ \xymatrix{ (V_1 \ot V_2)_{\eF_M^2} \ar[rr]^{\beta_M^{V_1} 
\ot \beta_M^{V_2} } \ar[d] && (V_1 \ot V_2)_{\eF_M^1}  \ar[d] \\ 
(V_3)_{\eF_M^2} \ar[rr]^{\beta_M^{V_3}} && (V_3)_{\eF_M^1} } \]
commutes. Observe that the Pl\"ucker relations imply that the assignments $\beta_M^{V}$ are 
functorial in $V$.

\subsubsection{} 
Observe that the fiber bundle $M \xt^{M \xt M} (\eF^1_M \xt_S \eF^2_M)$
canonically identifies with the scheme of isomorphisms $\Isom(\eF^2_M, \eF^1_M)$
over $S$. In other words, an $M$-morphism is an isomorphism of $M$-bundles. 
Therefore the stack $M \bs \tilde M / M$ contains the open substack
$M \bs M / M$, which canonically identifies with 
the classifying stack $\bbB M = M\bs{\on{pt}}$ of $M$.

\subsection{Definition of $\tilde\eH_M$} \label{ss:tH_M}
Let $X$ be a smooth projective geometrically connected curve over 
a field $k$. We consider the mapping stack $\Maps(X, M \bs \tilde M / M)$
whose value on a test scheme $S$ is the groupoid of all maps
$X \xt S \to M \bs \tilde M / M$. Such a map is said to be non-degenerate
if the preimage of $M \bs M / M \subset M \bs \tilde M / M$ is a subset
of $X \xt S$ whose projection on $S$ is surjective. We will denote 
\[ \tilde\eH_M := \Maps^\circ(X, M \bs \tilde M / M) \subset \Maps(X, M \bs \tilde M / M) \]
the open substack consisting of non-degenerate maps. 
A map $X \xt S \to M \bs \tilde M / M$ is the datum of a pair of $M$-bundles $\eF^1_M, \eF^2_M$ 
on $X \xt S$ and an $\tilde M$-morphism $\beta_M : \eF^2_M \to \eF^1_M$ between them. 
In the Tannakian language, $\beta_M$ is non-degenerate
if and only if for every geometric point $s\to S$, the restriction 
of $\beta_M^{V}$ to the fiber $X \xt s$ is generically an isomorphism.
The last condition is 
equivalent to requiring that $\beta_M^{V}$ is an embedding of coherent sheaves
such that the quotient is $S$-flat.

\subsubsection{Relation to the full Hecke stack} \label{sss:HDiv}
The projection $M \to M/[M,M]$ induces an inclusion $\check \Lambda_{M/[M,M]} \subset
\check\Lambda$. 
Recall that $\check C$ denotes the Renner cone of $\tilde M$, which is a $W_M$-stable
convex polyhedral cone that generates $\check\Lambda^\bbQ$ as a group.
If $M/[M,M]$ is not finite (i.e., $M$ is not semisimple), 
then there exists a character $\check\lambda \in \check\Lambda$ that lies
in the interior of the cone $\check C \cap \check\Lambda_{M/[M,M]}^\bbQ$. 
If $M$ is semisimple, we put $\check\lambda=0$ (in this case $\tilde M$ must equal $M$).

Considering $\check\lambda$ as a homomorphism $\tilde M \to \bbA^1$, 
the open subscheme $\check\lambda^{-1}(\bbG_m)$ coincides with $M \subset \tilde M$.
The closed subscheme $\check\lambda^{-1}(0)$ has the same reduced scheme structure
as $(\tilde M - M)_\red$.

Let $\Div_+$ denote the scheme of relative effective divisors of $X$. 
The map $\check\lambda: \tilde M \to \bbA^1$ induces a map
\begin{equation}\label{e:He-Div}
    \tilde\eH_M \to \Div_+. 
\end{equation}
More explicitly, an $S$-point $X \xt S \to M \bs \tilde M / M$ 
is sent to the preimage of $M \bs \check\lambda^{-1}(0) / M$, which 
is a relative effective divisor of $X \xt S$. 

\subsubsection{} 
Define the stack $\Hecke(M)_{\Div_+}$ as follows: its $S$-points are 
quadruples $(D, \eF^1_M, \eF^2_M, \beta_M)$ where $D$ is a relative effective divisor
on $X \xt S$, $\eF^1_M$ and $\eF^2_M$ are two $M$-bundles on $X \xt S$, and $\beta_M$ 
is an isomorphism of $M$-bundles on the restrictions
\[ \eF^2_M|_{X\xt S - D} \cong \eF^1_M|_{X \xt S - D}. \]
From this definition it is evident that we have a closed embedding of stacks 
\[ \tilde\eH_M \into \Hecke(M)_{\Div_+}. \]

\begin{rem}
The above inclusion slightly depends on the choice of $\check\lambda$, which
is used to define \eqref{e:He-Div}. This choice goes away if we consider 
Ran versions of the corresponding Hecke stacks, since a point of $\Hecke(M)_{\Div_+}$ only depends on the reduced structure $D_\red$ of the divisor $D$.
\end{rem}

\subsubsection{}
We let $\overset\leftarrow h, \overset \rightarrow h$ denote 
the two forgetful maps $\tilde \eH_M \to \Bun_M$. We use the convention where
$\overset \leftarrow h$ is the map corresponding to $\eF^1_M$.

\begin{prop} \label{prop:heckefin}
The morphism $\tilde\eH_M \to \Bun_M \xt \Bun_M$ is schematic, quasi-affine, and of
finite presentation. 
\end{prop}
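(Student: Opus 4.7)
The strategy is to describe, for any test scheme $S$ equipped with a map to $\Bun_M \xt \Bun_M$ classifying a pair $(\eF^1_M, \eF^2_M)$, the fiber product $\tilde\eH_M \xt_{\Bun_M \xt \Bun_M} S$ as an open subscheme of the $S$-scheme of sections of an affine fiber bundle on $X \xt S$, and then invoke standard representability of such section spaces.

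First, unwinding the mapping-stack definition of $\tilde\eH_M$, an $S$-point over $(\eF^1_M, \eF^2_M)$ is precisely a section
\[ \beta_M : X \xt S \to \tilde M_{\eF^1_M,\eF^2_M} := \tilde M \xt^{M \xt M}(\eF^1_M \xt \eF^2_M) \]
whose restriction to each geometric fiber $X \xt s$ lands generically in the open subscheme $M \subset \tilde M$. Since $\tilde M$ is an affine variety of finite type over $k$, the twisted fiber bundle $\tilde M_{\eF^1_M, \eF^2_M}$ is affine of finite type over $X \xt S$. Choose a faithful representation $V_0 \in \Rep(\tilde M \xt \tilde M)$ (for instance $V_0 = V \oplus V^*$ for some faithful $\tilde M$-module $V$) giving a closed $(M \xt M)$-equivariant embedding $\tilde M \hookrightarrow \mathbb A^N$; passing to associated bundles on $X \xt S$ yields a closed embedding of $\tilde M_{\eF^1_M, \eF^2_M}$ into the total space of a vector bundle of rank $N$ on $X \xt S$.

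Next, I would invoke the following standard fact: for a vector bundle $\mathcal V$ on $X \xt S$ with $X$ smooth projective, the functor $T \mapsto \Gamma(X \xt T, \mathcal V|_{X \xt T})$ on $S$-schemes is representable by an affine $S$-scheme of finite presentation. Indeed, smooth-locally on $S$ one has a two-term complex $[\mathcal E^0 \to \mathcal E^1]$ of finite rank locally free $\eO_S$-modules that computes $Rp_{S*}\mathcal V$ universally, and the sections functor identifies with the kernel of $\mathcal E^0 \to \mathcal E^1$, a closed subscheme of the affine $S$-scheme associated to the vector bundle $\mathcal E^0$. Imposing the additional closed conditions that cut out $\tilde M \subset \mathbb A^N$ then presents the space of all (not necessarily non-degenerate) $\tilde M$-morphisms between $\eF^1_M$ and $\eF^2_M$ as an affine $S$-scheme of finite presentation. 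Finally, the non-degeneracy condition is open: writing $Z := \beta_M^{-1}(\tilde M - M) \subset X \xt S$, the condition that $\beta_M|_{X \xt s}$ degenerate identically is equivalent to $X \xt s \subset Z$, i.e., $\dim Z_s \ge 1$, and by upper semicontinuity of fiber dimension this locus is closed in $S$. Combining these steps, $\tilde\eH_M \xt_{\Bun_M \xt \Bun_M} S$ is an open subscheme of an affine $S$-scheme of finite presentation, hence schematic, quasi-affine, and of finite presentation over $S$; varying $S$ gives the proposition.

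The main obstacle is really just the representability of the section space of $\mathcal V$, which requires a uniform two-term complex $[\mathcal E^0 \to \mathcal E^1]$ representing $Rp_{S*}\mathcal V$; this is a standard consequence of properness of $X$ together with cohomology and base change, but needs to be set up carefully so that the comparison $\ker(\mathcal E^0 \to \mathcal E^1)(T) = \Gamma(X \xt T, \mathcal V|_{X \xt T})$ holds universally in $T \to S$. Once this input is in place, the remaining verifications (closedness of the monoid conditions, openness of non-degeneracy) are immediate.
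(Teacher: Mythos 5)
Your argument is correct and follows the same strategy as the paper's proof: reduce to the fiber over a test scheme $S \to \Bun_M \xt \Bun_M$ and identify it with the open non-degenerate locus inside the $S$-scheme of sections of the affine, finitely presented fiber bundle $\tilde M \xt^{M \xt M}(\eF^1_M \xt_{X\xt S} \eF^2_M) \to X \xt S$. The paper cites the representability of this section space and the openness of non-degeneracy as standard; you additionally supply proofs of these two facts (via a linear embedding of $\tilde M$ and a two-term perfect complex, and via upper semi-continuity of fiber dimension for the proper projection), but the overall route is identical.
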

\begin{proof}
Consider a test scheme $S$ and a map $S \to\Bun_M \xt \Bun_M$ corresponding to 
$M$-bundles $\eF_M^1, \eF_M^2$ on $X \xt S$. 
Then the fiber product $\ds \tilde\eH_M \xt_{\Bun_M \xt\Bun_M} S$ is isomorphic to 
an open subspace of the space of sections of the map
\[ \tilde M \xt^{M \xt M} (\eF^1_M \xt_{X \xt S} \eF^2_M)\to X \xt S. \] 
This map is affine and of finite presentation because $\tilde M$ is, which implies that the
space of sections is representable by a scheme affine and 
of finite presentation over $S$.
\end{proof}

\subsubsection{}
The monoid structure on $\tilde M$ allows us to compose two $\tilde M$-morphisms
$\eF_M^2 \to \eF_M^1$ and $\eF_M^3 \to \eF_M^2$ to get an $\tilde M$-morphism 
$\eF_M^3 \to \eF_M^1$. The composition of two non-degenerate morphisms
is non-degenerate. This defines a map
\[ \comp: \tilde\eH_M \xt_{\overset\rightarrow h,\Bun_M, \overset\leftarrow h} \tilde\eH_M \to \tilde\eH_M. \]

\begin{rem} \label{rem:compproper}
The map \eqref{e:He-Div} takes the composition of $\tilde M$-morphisms into 
the sum of effective divisors (which is a proper map $\Div_+ \xt \Div_+ \to \Div_+$). 
Hence Proposition~\ref{prop:Hproper} below implies that $\comp$ is a proper map.
\end{rem}

\begin{lem}
Let $\eF_M \in \Bun_M(S)$ for a $k$-scheme $S$. 
Any non-degenerate $\tilde M$-morphism $\beta_M : \eF_M \to \eF_M$
is an $M$-bundle automorphism.
\end{lem}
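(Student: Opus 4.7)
The plan is to use the central character $\check\lambda$ recalled in \S\ref{sss:HDiv} to produce a regular function on $X \xt S$ whose nonvanishing locus is precisely where $\beta_M$ is an $M$-bundle automorphism, and then argue fiberwise using properness of $X$.

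If $M$ is semisimple then $\tilde M = M$ and the statement is vacuous. Otherwise, fix $\check\lambda \in \check\Lambda$ in the interior of $\check C \cap \check\Lambda_{M/[M,M]}^\bbQ$; this extends to a monoid homomorphism $\check\lambda : \tilde M \to \bbA^1$ with $\check\lambda^{-1}(\bbG_m) = M$. Since $\eF^1_M = \eF^2_M = \eF_M$, there is a canonical isomorphism
\[ \tilde M \xt^{M \xt M} (\eF_M \xt_{X \xt S} \eF_M) \cong \tilde M \xt^M \eF_M, \]
where $M$ on the right acts on $\tilde M$ by conjugation and on $\eF_M$ by translation: given $(n, e_1, e_2)$, one uses the unique $g \in M$ with $e_2 = e_1 \cdot g$ to pass to $(ng, e_1)$. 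Because $\check\lambda$ factors through the abelianization of $M$ it is conjugation-invariant, and therefore descends to a regular function $f_{\check\lambda} : \tilde M \xt^M \eF_M \to \bbA^1$ whose nonvanishing locus is the open subbundle $M \xt^M \eF_M$.

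Under the identification above, $\beta_M$ becomes a section $X \xt S \to \tilde M \xt^M \eF_M$, and I consider $g := \beta_M^* f_{\check\lambda} \in \Gamma(X \xt S, \eO_{X \xt S})$. For every point $s \in S$, the restriction $g|_{X \xt s}$ is a regular function on the proper, geometrically connected curve $X \xt s$, hence a constant in $\kappa(s)$; by non-degeneracy there is some point of $X \xt s$ at which $\beta_M$ lands in $M \xt^M \eF_M$, so this constant is nonzero. Thus $g|_{X \xt s}$ is a nowhere vanishing constant for every $s \in S$, so $g$ does not vanish at any point of $X \xt S$ and is therefore a unit in $\eO_{X \xt S}$. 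It follows that $\beta_M$ factors through the open subbundle $M \xt^M \eF_M$, which is exactly the condition that $\beta_M$ be an $M$-bundle automorphism.

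The two minor points requiring verification are the canonical isomorphism $\tilde M \xt^{M \xt M}(\eF_M \xt_{X \xt S} \eF_M) \cong \tilde M \xt^M \eF_M$ and the passage from nonvanishing on each fiber to being a unit (any non-unit would lie in a maximal ideal corresponding to a closed point of $X \xt S$, which in turn lies over some point of $S$). Neither presents a serious obstacle; the substantive input is the existence of $\check\lambda$ with $\check\lambda^{-1}(\bbG_m) = M$, which is recalled in \S\ref{sss:HDiv}.
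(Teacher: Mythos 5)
Your proof is correct. The one explicit formula, ``given $(n,e_1,e_2)$ pass to $(ng,e_1)$ where $e_2=e_1g$'', has the wrong twist: with the usual conventions ($(m_1,m_2)\cdot n = m_1 n m_2^{-1}$ on $\tilde M$ and right translation on $\eF_M$) the $M\xt M$-equivariant formula is $(n,e_1,e_1g)\mapsto(ng^{-1},e_1)$, corresponding to the diagonal reduction $\eF_M\into\eF_M\xt\eF_M$. This slip is harmless for your argument, since $\check\lambda(ng^{\pm1})=\check\lambda(n)\check\lambda(g)^{\pm1}$ and $\check\lambda(g)$ is a unit, so the vanishing locus of $f_{\check\lambda}\circ\beta_M$ is unaffected.

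The route you take is close in spirit to the paper's but not identical. The paper's proof is Tannakian: for every $V\in\Rep(\tilde M)$ the regular function $\det(\beta_M^V)$ is fiberwise constant and nonzero, hence $\beta_M^V$ is an isomorphism; one then appeals to $M$ being the group of units of $\tilde M$. Your proof extracts the single representation $k_{\check\lambda}$ and uses the explicit identification $\check\lambda^{-1}(\bbG_m)=M$ from \S\ref{sss:HDiv}, so the ``group of units'' step is replaced by checking a single scalar-valued function. This is more economical and makes the final step entirely concrete (one does not have to unwind what invertibility on all representations yields Tannakianly), at the cost of relying on the specific choice of $\check\lambda$ and requiring the diagonal-reduction observation $\tilde M\xt^{M\xt M}(\eF_M\xt\eF_M)\cong\tilde M\xt^M\eF_M$, which the paper's proof sidesteps by working with $\beta_M^V$ as an endomorphism of a fixed vector bundle. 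Both proofs ultimately rest on the same mechanism: a regular function on the proper, geometrically connected $X\xt s$ is constant, so generic nonvanishing implies nonvanishing.
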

\begin{proof}
Let $V \in \Rep(\tilde M)$. Then $\beta_M^{V}:V_{\eF_M} \to V_{\eF_M}$
is generically an isomorphism on geometric fibers of $X \xt S$. 
The same is true for $\det(\beta_M^{V})$, and $\Gamma(X,\eO_X)=k$
implies that $\beta_M^{V}$ is an isomorphism. Since $M$ is the
group of units of $\tilde M$, we conclude that $\beta_M$ 
is an automorphism of $\eF_M$.
\end{proof}

\begin{cor}
Suppose there exist $\tilde M$-morphisms $\beta_M : \eF^2_M \to \eF^1_M$
and $\beta'_M : \eF^1_M \to \eF^2_M$. 
Then $\beta_M, \beta'_M$ are both isomorphisms.
\end{cor}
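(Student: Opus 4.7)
The plan is to reduce to the preceding lemma by composing $\beta_M$ and $\beta'_M$ to produce $\tilde M$-endomorphisms of the two bundles. First I would form $\beta'_M \circ \beta_M \colon \eF^2_M \to \eF^2_M$, which makes sense via the monoid structure on $\tilde M$ (the same structure underlying the composition map $\comp$ introduced above). Since the composition of two non-degenerate $\tilde M$-morphisms is again non-degenerate, this composition is a non-degenerate $\tilde M$-endomorphism of $\eF^2_M$, and the preceding lemma identifies it with an honest $M$-bundle automorphism of $\eF^2_M$. The symmetric argument applied to $\beta_M \circ \beta'_M$ shows that it is an $M$-bundle automorphism of $\eF^1_M$.

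Granting these two facts, the existence of a two-sided inverse for $\beta_M$ is a purely formal consequence. I would set
\[ h_L := (\beta'_M \circ \beta_M)^{-1} \circ \beta'_M \qquad \text{and} \qquad h_R := \beta'_M \circ (\beta_M \circ \beta'_M)^{-1}, \]
both interpreted as $\tilde M$-morphisms $\eF^1_M \to \eF^2_M$; this interpretation makes sense because an $M$-bundle isomorphism is canonically an $\tilde M$-morphism via the open inclusion $M \bs M/M \subset M \bs \tilde M/M$. A direct computation gives $h_L \circ \beta_M = \id$ and $\beta_M \circ h_R = \id$, and then the standard identity $h_L = h_L \circ \beta_M \circ h_R = h_R$ shows that the two candidate inverses coincide and provide a genuine two-sided inverse of $\beta_M$. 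Hence $\beta_M$ is an isomorphism of $M$-bundles, and by the symmetric argument so is $\beta'_M$.

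There is no serious obstacle here: once the preceding lemma is in hand, the corollary is entirely formal. The only point worth flagging is the need to view $M$-bundle automorphisms as $\tilde M$-morphisms (via the open embedding above) so that the left- and right-inverses above may be manipulated inside a single category.
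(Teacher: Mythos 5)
Your argument is correct; the paper leaves this corollary unproved, treating it as immediate from the preceding lemma, and your proof is essentially the only natural route. The one point you should make explicit, which is the converse of the inclusion you do flag, is why a two-sided-invertible $\tilde M$-morphism is an $M$-bundle isomorphism: pointwise (after local trivializations) the value of $\beta_M$ is an element of $\tilde M$ with a two-sided inverse, hence lies in the group of units $M$, so the section $\beta_M$ lands in $M \xt^{M \xt M}(\eF^1_M \xt_S \eF^2_M) = \Isom(\eF^2_M, \eF^1_M)$. This is the definition of $M$ as the unit group of $\tilde M$, but it is precisely the reason that formal invertibility in the $\tilde M$-morphism category yields an honest $M$-bundle isomorphism, so it is worth stating alongside the observation you already flag. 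Once that point is on the table, you can in fact dispense with the $h_L = h_R$ manipulation at the sheaf level and argue purely pointwise: after the lemma, $\beta'_M \circ \beta_M$ and $\beta_M \circ \beta'_M$ are $M$-bundle automorphisms, so at each geometric point the value of $\beta_M$ is an element of $\tilde M$ admitting both a left and a right inverse, hence a unit, hence in $M$, and likewise for $\beta'_M$.
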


\subsection{Affine Grassmannians} 
In this section, we explain the relation between the Hecke stack and the 
Beilinson--Drinfeld affine Grassmannian. We refer the reader to \cite{Zhu} 
for a far more complete treatment of the affine Grassmannian.

For this purpose we introduce the divisor version of the jet group $M(\fo)$. 
For $D \in \Div_+(S)$, let $\hat D$ denote
the formal completion of $D$ in $X\xt S$ (which is a formal scheme). 
Define 
\[ M(\fo)_{\Div_+} = \{ (D, \gamma) \mid D \in \Div_+(S),\, \gamma \in M(\hat D) \},
\]
which is representable by a scheme affine over $\Div_+$ (cf.~\cite[Proposition 3.1.6]{Zhu}).

\subsubsection{} We define the symmetrized version of the Beilinson-Drinfeld
affine Grassmannian as 
\[ \Gr_{M,\Div_+} := \spec(k) \xt_{\eF^0_M,\Bun_M, \overset\leftarrow h} \Hecke(M)_{\Div_+} \]
where $\eF^0_M \in \Bun_M(k)$ is the trivial bundle.

\subsubsection{} \label{sss:heckeGr}
The Hecke stack $\Hecke(M)_{\Div_+}$ can be regarded as a 
twisted product $\Bun_M \tilde\xt \Gr_{M,\Div_+}$.
More precisely, consider the stack
\[ \eY = 
\{ (D, \eF_M^1, \tilde\gamma) \mid D \in \Div_+,\, \eF_M^1\in \Bun_M,\, 
\tilde\gamma: \eF_M^0|_{\hat D} \cong \eF_M^1|_{\hat D} \}. \]
Then $\eY \to \Div_+\xt \Bun_M$ is a $M(\fo)_{\Div_+}$-torsor.
Observe that the group scheme $M(\fo)_{\Div_+}$ also acts on $\Gr_{M,\Div_+}$ over $\Div_+$. We have a canonical isomorphism
\[
    \Hecke(M)_{\Div_+} \cong \eY \xt^{M(\fo)_{\Div_+}} \Gr_{M,\Div_+}, 
\]
where on the right hand side we take the fiber product $\eY \xt_{\Div_+} \Gr_{M,\Div_+}$
and quotient by the anti-diagonal action of $M(\fo)_{\Div_+}$.

\begin{prop} \label{prop:Hproper}
The morphism $\tilde\eH_M \to \Bun_M \xt \Div_+$ is proper, 
where $\tilde\eH_M$ maps to $\Bun_M$ by either $\overset\leftarrow h$ or $\overset\rightarrow h$. 
\end{prop}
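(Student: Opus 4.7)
Proposition~\ref{prop:heckefin} together with the fact that the divisor map $\tilde\eH_M \to \Div_+$ of \eqref{e:He-Div} is of finite type implies that the morphism $\tilde\eH_M \to \Bun_M \xt \Div_+$ (through either $\overset\leftarrow h$ or $\overset\rightarrow h$) is schematic, quasi-affine, and of finite presentation. So properness reduces to the valuative criterion: given a DVR $R$ with fraction field $K$, an $R$-point $(\eF_M^1, D) \in (\Bun_M \xt \Div_+)(R)$, and a lift $(\eF_M^{2,K}, \beta_M^K) \in \tilde\eH_M(K)$, one must produce a unique extension to an $R$-point of $\tilde\eH_M$.

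To verify this, I use the twisted product presentation of the Hecke stack from \S\ref{sss:heckeGr}. After étale-locally trivializing $\eF_M^1$ on the formal neighborhood $\hat D$, the data $(\eF_M^{2,K}, \beta_M^K)$ becomes a $K$-point of $\Gr_{M,\Div_+}$ lying in the closed sub-ind-scheme $\Gr_{\tilde M, \Div_+} \subset \Gr_{M,\Div_+}$ cut out by the condition that the isomorphism on the punctured formal disc extend to an $\tilde M$-morphism across $\hat D$; this $\Gr_{\tilde M, \Div_+}$ is exactly the ``global model for the formal arc space of $M \into \tilde M$'' alluded to in the introduction of this appendix (cf.~\cite[\S 2]{BNY}). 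Thus properness of $\tilde\eH_M \to \Bun_M \xt \Div_+$ through $\overset\leftarrow h$ is equivalent to ind-properness of $\Gr_{\tilde M, \Div_+} \to \Div_+$; the case of $\overset\rightarrow h$ follows by an identical argument, replacing $\tilde M$ by its opposite monoid $\tilde M^{\mathrm{op}}$ (which has the same group of units $M$ and hence the same combinatorial setup).

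The ind-properness of $\Gr_{\tilde M, \Div_+} \to \Div_+$ is the combinatorial heart of the argument: the Renner cone $\check C$ of $\tilde M$ bounds the relative position of $\beta_M$ at each point of $D$, since for $V \in \Rep(\tilde M)$ the weights of $V$ are constrained to $\check C \cap \check\Lambda$, which forces the cokernel of $\beta_M^V$ over $\hat D$ to have length bounded linearly in $\deg D$ and the highest weight of $V$. Consequently, over any bounded-degree locus of $\Div_+$, the sub-ind-scheme $\Gr_{\tilde M, \Div_+}$ meets only finitely many affine Schubert cells of $\Gr_{M, \Div_+}$ — each projective over $\Div_+$ by standard boundedness for affine Schubert varieties — yielding the required ind-properness. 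The main obstacle is precisely this ind-properness/boundedness statement; once granted, the valuative criterion is a formal consequence via the Tannakian Plücker data, with the extension of $\eF_M^{2,K}$ to $\eF_M^{2,R}$ and of $\beta_M^K$ to $\beta_M^R$ produced by extending inside $\Gr_{\tilde M, \Div_+}$ and translating back through the twisted product description of the Hecke stack.
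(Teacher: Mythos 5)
Your core argument --- twisted-product reduction to the local model $\Gr_{\tilde M, \Div_+}$ inside $\Gr_{M, \Div_+}$, Renner-cone boundedness to show the local model meets only finitely many Schubert cells over each component of $\Div_+$, and conclusion via ind-properness of $\Gr_{M,\Div_+}$ --- is exactly the paper's, and your opposite-monoid trick for $\overset\rightarrow h$ cleanly formalizes what the paper dismisses as ``without loss of generality.'' However, your opening paragraph is off on two counts: the morphism $\tilde\eH_M \to \Bun_M \xt \Div_+$ is \emph{not} quasi-affine (Proposition~\ref{prop:heckefin} concerns the different map to $\Bun_M \xt \Bun_M$, and a proper quasi-affine morphism would necessarily be finite, contradicting the positive-dimensional Schubert-type fibers you go on to use), and the divisor map $\tilde\eH_M \to \Div_+$ is not of finite type, since its fibers are twisted products over all of $\Bun_M$, which is not quasi-compact. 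Neither misstatement undermines the rest, because the valuative-criterion setup is never actually invoked --- you pivot to the local model and carry that through, and the paper does the same without mentioning Proposition~\ref{prop:heckefin} at all. One more slip in terminology: what you establish, and what you need, is \emph{properness} of $\Gr_{\tilde M, \Div_+} \to \Div_+$ over each component, not merely ind-properness; the finiteness of Schubert strata met (which ultimately rests on the interior condition on the auxiliary character $\check\lambda$ of \S\ref{sss:HDiv}, a point worth making explicit) is precisely the finite-type statement that upgrades ind-properness of the ambient $\Gr_{M,\Div_+}$ to genuine properness of the closed subscheme, and this is what the twisted-product description then transfers to the Hecke stack map.
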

\begin{proof} Without loss of generality we will consider the projection 
$\overset\leftarrow h: \tilde \eH_M \to \Bun_M$.
Let $\wtilde \Gr_{M,\Div_+} := \spec(k) \xt_{\eF^0_M,\Bun_M} \tilde\eH_M$,
which is a closed subspace of $\Gr_{M,\Div_+}$. 
Choose a uniformizer $\varpi_v \in \fo_v$ at a place $v$. Let 
$C \subset \Lambda^\bbQ$ denote the dual of the Renner cone of $\tilde M$.
Over a divisor $n_v \cdot v$ supported at a single point, the fiber of $\wtilde\Gr_{M,\Div_+}$ is equal to the union of the orbits 
\[ M(\fo_v) \cdot \theta_v(\varpi_v) \subset (\wbar M(\fo_v) \cap M(F_v))/M(\fo_v) \] for $\theta_v \in C \cap \Lambda^+_M$ satisfying $\brac{\check\lambda, \theta_v} = n_v$, where $\check\lambda$ is the character chosen in \S\ref{sss:HDiv}. Since $\check\lambda$ lies in the interior of $\check C \cap \check\Lambda^\bbQ_{M/[M,M]}$, there are only finitely many $\theta_v$ such that $\brac{\check\lambda,\theta_v} = n_v$. 
We deduce, by factorization, that $\wtilde\Gr_{M,\Div_+}$ is representable by 
a scheme of finite type over $\Div_+$.
It is known that $\Gr_{M,\Div_+}$ is ind-proper over $\Div_+$ (cf.~\cite[Remark 3.1.4]{Zhu}). Thus $\wtilde\Gr_{M,\Div_+}$ is proper over $\Div_+$.
The Proposition follows by considering $\tilde\eH_M$ as
a twisted product $\Bun_M \tilde\xt \wtilde\Gr_{M,\Div_+}$ as explained in  
\S\ref{sss:heckeGr}.
\end{proof}

\subsection{Slope comparisons}  \label{ss:slope}
Let $\pi_1(M)$ denote the quotient of $\Lambda$ by the subgroup generated by 
coroots of $M$. It is well-known that there is a bijection $\deg_M : \pi_0(\Bun_M) \simeq \pi_1(M)$. Note that $\pi_1(M) \ot \bbQ = \Lambda_{M/[M,M]}^\bbQ = \Lambda^\bbQ_{Z_0(M)}$. We call the composition 
\[ \Bun_M \to \pi_1(M) \to \Lambda^\bbQ_{Z_0(M)} \] 
the \emph{slope} map. Its fibers are not necessary connected but have finitely many connected components. 
The slope map coincides with the composition 
\[ \Bun_M \to \Bun_{M/[M,M]} \to 
\pi_0(\Bun_{M/[M,M]}) = \Lambda_{M/[M,M]} \subset \Lambda^\bbQ_{Z_0(M)}.\]

Following the notation of \cite[Theorem 7.4.3]{DG:CG}, 
let $\Bun^{(\lambda)}_M,\, \lambda \in \Lambda^{+,\bbQ}_M$ denote the 
quasi-compact locally closed reduced substack of $M$-bundles with Harder-Narasimhan coweight $\lambda$. 
We refer the reader to \cite[\S 7]{DG:CG} and \cite{Schieder:HN} for 
statements and proofs of the main results of reduction theory for a general reductive group.

\begin{lem} \label{lem:heckeHN}
	Suppose $\eF^1_M \in \Bun^{(\lambda_1)}_M(k),\, \eF^2_M \in \Bun^{(\lambda_2)}_M(k)$
for $\lambda_1,\lambda_2 \in \Lambda^{+,\bbQ}_M$, and there exists
an isomorphism $\beta_M : \eF^2_M|_{X-D} \to \eF^1_M|_{X-D}$
for a divisor $D \subset X$.
At each closed point $v \in D$, the restriction of $\beta_M$ to $F_v$ 
determines a coweight $\lambda_v \in \Lambda^+_M = K_{M,v} \bs M(F_v)/ K_{M,v}$.
Then 
\begin{equation} \label{heckeineq}
	w_0^M \sum_{v} n_v\cdot \lambda_v \le_M^\bbQ \lambda_1 - \lambda_2 \le_M^\bbQ \sum_{v} n_v\cdot \lambda_v,
\end{equation}
where $n_v = \dim_k(k_v)$ is the dimension of the residue field of $v$.
\end{lem}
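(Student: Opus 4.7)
The plan is to translate the desired inequality into a family of scalar inequalities by pairing with dominant weights, and then reduce to a local lattice comparison at each $v \in D$. First, I use symmetry: replacing $\beta_M$ with $\beta_M^{-1}$ interchanges $\eF^1_M, \eF^2_M$ and sends $\lambda_v$ to its inverse dominant coset representative $-w_0^M \lambda_v$, so the two inequalities in \eqref{heckeineq} are equivalent and it suffices to prove the right-hand one, $\lambda_1 - \lambda_2 \le_M^\bbQ \sum_v n_v \lambda_v$. Setting $y := \sum_v n_v \lambda_v - (\lambda_1 - \lambda_2)$, membership of $y$ in the rational cone $\Lambda^{\pos,\bbQ}_M$ is equivalent to the conjunction of (a) the projection of $y$ to $\Lambda^\bbQ_{M/[M,M]}$ vanishes, and (b) $\brac{\check\mu, y} \ge 0$ for every dominant weight $\check\mu \in \check\Lambda_M^+$.

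Condition (a) follows by a standard computation with line bundles associated to central characters $\check\chi \in \check\Lambda_{M/[M,M]}$: since $\deg \chi_*(\eF^i_M) = \brac{\check\chi, \lambda_i}$ and $\beta_M$ identifies $\chi_*(\eF^1_M)$ with $\chi_*(\eF^2_M)$ over $X - D$ with local discrepancy $n_v \brac{\check\chi, \lambda_v}$ at each $v$, one obtains $\brac{\check\chi, \lambda_1 - \lambda_2} = \sum_v n_v \brac{\check\chi, \lambda_v}$, i.e.\ $\brac{\check\chi, y} = 0$. For (b), I invoke the characterization $\brac{\check\mu, \lambda_i} = \mu_{\max}(V(\check\mu)_{\eF^i_M})$ valid for dominant $\check\mu$, where $V(\check\mu)$ is the irreducible $M$-representation with highest weight $\check\mu$ and $\mu_{\max}$ denotes the top slope of the Harder--Narasimhan filtration of the associated vector bundle; this is a consequence of the canonical parabolic reduction of $\eF^i_M$ combined with Ramanathan semistability of associated bundles. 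I then choose a rank-$r$ subsheaf $\mathcal{G}_1 \subset V(\check\mu)_{\eF^1_M}$ realizing $\mu_{\max}$, and let $\mathcal{G}_2 \subset V(\check\mu)_{\eF^2_M}$ be the saturated subsheaf that agrees with $\mathcal{G}_1$ over $X - D$ via $\beta_M$. Then $\deg \mathcal{G}_1 - \deg \mathcal{G}_2 = \sum_v n_v a_v$, where $a_v$ is the signed index between the two rank-$r$ lattices $\mathcal{G}_{i,v}$ inside their common generic fiber $W \subset V(\check\mu)(F_v)$. Passing to the top exterior power reduces this to the rank-one case for the line $\wedge^r W \subset \wedge^r V(\check\mu)(F_v)$, and a direct lattice calculation bounds $a_v$ above by $\max_{\check\xi} \brac{\check\xi, \lambda_v}$ over $T$-weights $\check\xi$ of $\wedge^r V(\check\mu)$; since every such $\check\xi$ is a sum of $r$ distinct $T$-weights of $V(\check\mu)$, each $\le_M \check\mu$, dominance of $\lambda_v$ yields $a_v \le r \brac{\check\mu, \lambda_v}$. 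Summing and combining with the trivial bound $\mu_{\max}(V(\check\mu)_{\eF^2_M}) \ge \deg \mathcal{G}_2 / r$ produces (b).

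The hard part will be extracting the characterization $\brac{\check\mu, \lambda_i} = \mu_{\max}(V(\check\mu)_{\eF^i_M})$, which packages the compatibility between the canonical parabolic reduction of an $M$-bundle and the induced Harder--Narasimhan filtrations on vector bundles associated to irreducible representations; by contrast, the local lattice computation becomes elementary once the $r$-th exterior power reduction is in place, since the index between $V(\check\mu)(\fo_v)$ and $\lambda_v(\varpi_v) V(\check\mu)(\fo_v)$ intersected with a single line reduces to tracking the valuations of the weight components of any generator.
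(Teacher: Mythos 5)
Your approach is genuinely different from the paper's: you pair the candidate element $y = \sum_v n_v\lambda_v - (\lambda_1 - \lambda_2)$ against $M$-dominant weights and translate the resulting scalar inequalities into Harder--Narasimhan slope estimates for the associated vector bundles $V(\check\mu)_{\eF^i_M}$, combined with a local lattice-index bound. The paper stays inside the group $M$: it extends the canonical $Q$-reduction of $\eF^1_M$ to a $Q$-reduction of $\eF^2_M$ by properness of $\eF^2_M/Q \to X$, descends to the Levi quotient $L$, controls the local coweights $\nu_v \in \Lambda^+_L$ via the convex-hull bound coming from $\nu_v(\varpi_v)U_Q(F_v) \cap K_{M,v}\lambda_v(\varpi_v)K_{M,v}\neq\emptyset$, and then invokes the comparison theorem of \cite{Schieder:HN} to pass from the slope $\nu_2$ of $\eF^2_L$ to the Harder--Narasimhan coweight $\lambda_2$. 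Each route reduces the global statement to a local Cartan-type bound, but the global bookkeeping is organized quite differently.

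The step you yourself flag as ``the hard part'' --- the identity $\brac{\check\mu,\lambda_i}=\mu_{\max}(V(\check\mu)_{\eF^i_M})$ via Ramanathan semistability of associated bundles --- is where the argument fails, because the lemma is stated over an arbitrary perfect field $k$ and the paper needs it over $\bbF_q$. The inequality $\brac{\check\mu,\lambda_i}\le\mu_{\max}(V(\check\mu)_{\eF^i_M})$ holds in every characteristic (the top piece of the $\lambda_i$-weight filtration of $V(\check\mu)_{\eF_Q}$ is a subsheaf of $V(\check\mu)_{\eF^i_M}$ of slope exactly $\brac{\check\mu,\lambda_i}$), but the reverse inequality $\mu_{\max}(V(\check\mu)_{\eF^i_M})\le\brac{\check\mu,\lambda_i}$ is precisely Ramanathan's theorem and fails in positive characteristic: for $M=\SL_2$ take $V(\check\mu)=L(p)$, the Frobenius twist of the standard representation, and take $\eE$ a semistable but not strongly semistable rank-$2$ bundle of degree $0$ on a curve of genus at least $2$; then $\lambda=0$ while $\mu_{\max}(L(p)_\eE)=p\cdot\mu_{\max}(\eE^{[1]})>0$. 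Your chain of inequalities needs this hard direction for one of $\eF^1_M,\eF^2_M$: with $\mathcal{G}_1$ realizing $\mu_{\max}(V(\check\mu)_{\eF^1_M})$ you must bound $\deg\mathcal{G}_2/r$ above by $\brac{\check\mu,\lambda_2}$, and the symmetry reduction only shuffles which bundle carries the hard direction, it cannot remove it. The remainder of your proof --- the bipolar characterization of $\Lambda^{\pos,\bbQ}_M$ (where, incidentally, condition (a) already follows from (b) applied to $\pm\check\chi$), the exterior-power reduction, and the local estimate $a_v\le r\brac{\check\mu,\lambda_v}$ --- is fine; the fix is to phrase the argument directly in terms of the canonical parabolic reduction rather than through slopes of associated bundles, as the paper does.
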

\begin{proof}
Consider the Harder--Narasimhan flag of $\eF^1_M$: this is a canonical
reduction of $\eF^1_M$ to a $Q$-bundle $\eF^1_Q$, where $Q\subset M$ is the
parabolic subgroup corresponding to the Harder--Narasimhan coweight $\lambda_1$.
Recall that a reduction of $\eF^2_M$ to $Q$ is the same as a section
of the proper map $\eF^2_M/Q:=\eF^2_M \xt^M M/Q \to X$. The reduction $\eF^1_Q$ 
and the isomorphism $\beta_M$ determine a section $X-D \to \eF^2_M/Q$. 
By properness, this extends to a reduction $\eF^2_Q$ of $\eF^2_M$
with an isomorphism \[ \beta_Q : \eF^2_Q|_{X-D} \to \eF^1_Q|_{X-D} \]
inducing $\beta_M$.
Let $L$ denote the Levi quotient of $Q$, and 
let $\eF^1_L, \eF^2_L, \beta_L$ denote the corresponding induced objects. 
Then the restriction of $\beta_L$ to $\spec F_v$ for $v\in D$ determines a coweight 
$\nu_v \in \Lambda^+_L$
via the quotient map $Q \to L$. Since $\nu_v$ and $\lambda_v$ are both
induced by $\beta_Q$, we see that $\nu_v(\varpi_v) U_Q(F_v) \cap 
K_{M,v} \lambda_v(\varpi_v) K_{M,v} \ne \emptyset$ where $\varpi_v \in \fo_v$ is
a uniformizer. This implies that $\nu_v$ is contained in the convex hull
of $W_M \cdot \lambda_v$ (cf.~\cite[p.~148]{Cartier}).

Let $\nu_2 \in \Lambda^\bbQ_{L/[L,L]}$ be the slope of $\eF^2_L$.
Then $\lambda_1 - \nu_2$ is equal to the 
image of $\sum_{v\in D} n_v\cdot \nu_v$ under the projection $\Lambda^\bbQ \to \Lambda_{L/[L,L]}^\bbQ = \Lambda_{Z_0(L)}^\bbQ$.
In particular, $\lambda_1 - \nu_2$ lies in the convex hull of the $W_M$-orbit of
$\sum_{v\in D} n_v \cdot\nu_v$, so $\lambda_1 - \nu_2 \le_M^\bbQ \sum n_v\cdot \lambda_v$. 
By the comparison theorem \cite[Theorem 4.5.1]{Schieder:HN}, 
$\nu_2 \le_M^\bbQ \lambda_2$. 
We have shown the second inequality in \eqref{heckeineq}. 
Switching $\eF^1_M,\eF^2_M$ and considering $\beta_M^{-1}$ proves
the first inequality by symmetry.
\end{proof}

\begin{cor} \label{cor:heckeHN}
Suppose $\eF^1_M \in \Bun^{(\lambda_1)}_M(k), \eF^2_M \in \Bun^{(\lambda_2)}_M(k)$
for $\lambda_1,\lambda_2 \in \Lambda^{+,\bbQ}_M$, and there exists
a non-degenerate $\tilde M$-morphism $\beta_M : \eF^2_M \to \eF^1_M$.
Let $C \subset \Lambda^\bbQ$ be the dual of the Renner cone 
of $\tilde M$. Then $\lambda_1 - \lambda_2$ belongs to the rational convex cone 
generated by $C$ and $\Lambda^{\pos,\bbQ}_M$.
\end{cor}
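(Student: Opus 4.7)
The plan is to deduce the corollary from Lemma~\ref{lem:heckeHN}, using the \emph{left} half of the inequality chain \eqref{heckeineq}, after verifying that the local coweights $\lambda_v$ produced by the non-degenerate $\tilde M$-morphism $\beta_M$ all lie in the cone $C$.

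Since $\beta_M$ is non-degenerate, it restricts to an isomorphism $\eF^2_M|_{X-D} \cong \eF^1_M|_{X-D}$ on the complement of some reduced effective divisor $D \subset X$ (namely the support of the degeneracy locus of $\beta_M$), so the hypotheses of Lemma~\ref{lem:heckeHN} are satisfied. For each closed point $v \in D$, a choice of local trivializations of $\eF^1_M, \eF^2_M$ near $v$ turns $\beta_M$ into an element $\gamma_v \in \tilde M(\fo_v) \cap M(F_v)$, whose Cartan decomposition reads $\gamma_v \in K_{M,v}\,\lambda_v(\varpi_v)\,K_{M,v}$ for the same $\lambda_v \in \Lambda^+_M$ appearing in the lemma. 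The key claim is that $\lambda_v \in C$; this is essentially the input used in the proof of Proposition~\ref{prop:Hproper}. To see it, replace $\gamma_v$ by $\lambda_v(\varpi_v) \in T(F_v)$ up to $K_{M,v}$-action on both sides and observe that this element lies in $\tilde M(\fo_v)$ if and only if it lies in the closure $\wbar T(\fo_v)$ of $T$ in $\tilde M$. By the KKMS dictionary recalled in \S\ref{sect:tildeM-mor}, the latter is equivalent to $\langle \check\mu, \lambda_v \rangle \ge 0$ for every weight $\check\mu \in \check C$, i.e.\ to $\lambda_v \in C$.

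Consequently $\sum_v n_v \lambda_v \in C$, and since $\check C$ is $W_M$-stable so is its dual $C$; hence $w_0^M \sum_v n_v \lambda_v \in C$ as well. The left half of \eqref{heckeineq} now reads
\begin{equation*}
\lambda_1 - \lambda_2 \;=\; w_0^M \sum_v n_v \lambda_v \;+\; \mu, \qquad \mu \in \Lambda^{\pos,\bbQ}_M,
\end{equation*}
which displays $\lambda_1 - \lambda_2$ as an element of $C + \Lambda^{\pos,\bbQ}_M$, i.e.\ of the convex cone generated by $C$ and $\Lambda^{\pos,\bbQ}_M$. Note that the \emph{right} half of \eqref{heckeineq} would instead place $\lambda_1 - \lambda_2$ in $C - \Lambda^{\pos,\bbQ}_M$, which is the wrong side; it is essential here that $C$ is $W_M$-stable so that $w_0^M$ can be absorbed. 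The only non-bookkeeping ingredient is the identification $\lambda_v \in C$, which amounts to unwinding the definitions of $\tilde\eH_M$ and the Renner cone.
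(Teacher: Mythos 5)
Your proof is correct and takes essentially the same route as the paper's: deduce the statement from the left inequality of Lemma~\ref{lem:heckeHN} after observing that each local coweight $\lambda_v$ lies in $C \cap \Lambda^+_M$ and using $W_M$-stability of $C$. The only difference is one of exposition: the paper cites the identification $K_{M,v}\backslash(M(F_v)\cap\tilde M(\fo_v))/K_{M,v} \cong C \cap \Lambda^+_M$ as the classification of double orbits, whereas you unpack it by reducing to $\wbar T(\fo_v)$ (using closedness of $\wbar T \subset \tilde M$ over $\fo_v$) and the KKMS description of $k[\wbar T]$; both are fine, and your version is self-contained.
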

\begin{proof}
Since $\beta_M$ is non-degenerate, there exists a divisor $D \subset X$
such that $\beta_M|_{X-D}$ is an isomorphism. For $v \in D$, the
coweight $\lambda_v$ defined in Lemma~\ref{lem:heckeHN} corresponds to
an element in $K_{M,v} \bs (M(F_v) \cap \tilde M(\fo_v)) / K_{M,v}$. 
By the classification of double orbits of $M(F_v)\cap\tilde M(\fo_v)$, the latter set identifies
with $C \cap \Lambda^+_M$. Therefore 
the $W_M$-stable cone $C$ contains $w_0^M\sum n_v\cdot \lambda_v$. 
The corollary follows from Lemma~\ref{lem:heckeHN}.
\end{proof}

\subsection{The stack $\eH^+_M$} \label{sect:H_M}
Let $P$ be a standard parabolic subgroup of $G$ with Levi factor $M$. 
We recall that $G/U$ is quasi-affine. Let $\wbar{G/U}$ denote the affine closure.
We embed $M \into G/U$ by $m \mapsto mU$ and define $\wbar M$ to be
the closure of $M$ in $\wbar{G/U}$. 
Then $\wbar M$ is a monoid acting on $\wbar{G/U}$ (cf.~\cite{Wa}).
\smallskip

We now specialize the above discussion of Hecke stacks to the case 
$\tilde M = \wbar M$. Let 
\begin{equation} 
	\eH^+_M = \Maps^\circ(X, M \bs \wbar M / M ) 
\end{equation} 
denote the stack studied above.

\begin{rem} \label{rem:heckeHN}
By \cite[Lemma 3.1.4]{Wa}, the dual of the Renner cone of $\wbar M$ equals $\Lambda^{\pos,\bbQ}_{U}$. 
Therefore if we are in the setting of Corollary~\ref{cor:heckeHN}, 
the rational cone generated by $\Lambda^{\pos,\bbQ}_{U}$ and $\Lambda^{\pos,\bbQ}_M$
is $\Lambda^{\pos,\bbQ}_G$, and the corollary implies that $\lambda_1 - \lambda_2 \in \Lambda^{\pos,\bbQ}_G$, i.e., $\lambda_2 \le_G^\bbQ \lambda_1$.

The rational cone generated by $\Lambda^{\pos,\bbQ}_U$ and $-\Lambda^{\pos,\bbQ}_M$
is $w_0^M\Lambda^{\pos,\bbQ}_G$, and Lemma~\ref{lem:heckeHN} also implies that
$\lambda_1-\lambda_2 \in w_0^M \Lambda^{\pos,\bbQ}_G$.

\end{rem}

\subsubsection{Remarks on $G$} We assume for the rest of this Appendix 
that $G$ has a simply connected derived group $[G,G]$. The reader may refer to 
\cite[\S 7]{Schieder:HN} for how to remove this hypothesis, and the relevant geometry remains the same.

\subsubsection{The graded Ran space}
Let $\Lambda_{G,P} := \pi_1(M)$ denote the quotient of $\Lambda$ by the subgroup generated by the coroots of $M$.  
We have a natural projection $\Lambda \to \Lambda_{G,P}$. 
Let $\Lambda^\pos_{G,P}$ denote the submonoid 
of $\Lambda_{G,P}$ generated by the image of the positive coroots of $G$. 

Any $\theta \in \Lambda^\pos_{G,P}$ can be uniquely written as 
a sum $\sum n_j \alpha_j$ for $j \in \Gamma_G - \Gamma_M$. 
Let $X^\theta$ denote $\prod X^{(n_j)}$. 
Define $\Ran(X,\Lambda^\pos_{G,P})$ to be the disjoint union of $X^\theta$ for $\theta \in \Lambda^\pos_{G,P}$. Here we are using the notation of \cite{Whatacts}, but 
we include $0 \in \Lambda^\pos_{G,P}$ with $X^0 = \spec(k)$.
We can regard $\Ran(X,\Lambda^\pos_{G,P})$ as the scheme of 
$\Lambda^\pos_{G,P}$-colored divisors on $X$, which is a $\Lambda^\pos_{G,P}$-graded
version of the Ran space. The grading allows us to use the language of
factorization algebras graded by a monoid introduced in \cite[\S 2]{Whatacts}, which 
is slightly simpler than the more general set-up of factorization algebras from \cite{CHA} (the difference is that we can replace the Ran space with genuine schemes).

\subsubsection{} 
Let us review the construction of the map $\eH^+_M \to \Ran(X,\Lambda^\pos_{G,P})$, following \cite[\S 3.1.7]{Schieder:gen}. 
Recall from \cite{BG} that the quotient $G/[P,P]$ is strongly quasi-affine,
and let $\wbar{G/[P,P]}$ denote its affine closure. 
Let $\wbar{M/[M,M]}$ denote the closure of $M/[M,M]$ in 
$\wbar{G/[P,P]}$ under the natural embedding 
\[ M/[M,M] = P/[P,P] \into G/[P,P] \subset \wbar{G/[P,P]}. \]
The projection $G/U \to G/[P,P]$ extends to a map of affine closures 
$\wbar{G/U} \to \wbar{G/[P,P]}$, and therefore the 
projection $M \to M/[M,M]$ extends to a map $\wbar M \to \wbar{M/[M,M]}$. 
This induces a map of stacks
\begin{equation}\label{e:HRan}
    \eH^+_M \to \Maps^\circ(X, \wbar{M/[M,M]} / (M/[M,M]) ). 
\end{equation}
Consider $\check\Lambda_{M/[M,M]}$ as a sub-lattice of $\check \Lambda_M$.
Then one can check that $k[\wbar{M/[M,M]}]$ has a basis 
consisting of the characters in the submonoid
$\check\Lambda^+_G \cap \check \Lambda_{M/[M,M]}$. 
Since $[G,G]$ is assumed to be simply connected, 
$\Lambda^\pos_{G,P}$ is the monoid dual to $\check\Lambda^+_G \cap \check\Lambda_{M/[M,M]}$. 
We deduce that the right hand side of \eqref{e:HRan} is isomorphic 
to the scheme $\Ran(X,\Lambda^\pos_{G,P})$.  
Thus \eqref{e:HRan} becomes a map of stacks 
\[ \eH^+_M \to \Ran(X,\Lambda^\pos_{G,P}). \] 
For $\theta \in \Lambda^\pos_{G,P}$, denote the preimage of the
connected component $X^\theta \subset \Ran(X,\Lambda^\pos_{G,P})$ by $\eH^+_{M,X^\theta}$.

\begin{rem} We defined the map \eqref{e:HRan} ``group-theoretically'' following \cite[\S 3.1.7]{Schieder:gen}. One can also define this map using the Tannakian formalism, which is essentially done in \cite{BG, BFGM}.
\end{rem}

\subsubsection{}
We use the character $2\check\rho_P = 2\check\rho - 2\check\rho_M \in \check\Lambda_{M/[M,M]}$ to define 
the map \eqref{e:He-Div}, which is then equal to the composition 
\[ \eH^+_M \to \Ran(X,\Lambda^\pos_{G,P}) \to \Div_+,\] 
where the last map sends $X^\theta$ to the symmetric power $X^{(2\abs\theta)} \subset \Div_+$ for $\abs\theta := \brac{\check\rho_P,\theta}$.

As in \S\ref{sss:heckeGr}, we can express $\eH^+_{X^\theta}$ as a twisted product
\begin{equation}\label{e:HeGr+}
    \eH^+_{M,X^\theta} \cong \Bun_M \tilde\xt \Gr^+_{M,X^\theta}
\end{equation}
where 
$\Gr^+_{M,X^\theta} := \spec(k) \xt_{\eF^0_M,\Bun_M,\overset\leftarrow h} \eH^+_{M,X^\theta}$,
using the action of the jet group $M(\fo)_{X^{(\abs\theta)}}$.
We will always consider the twisted product with respect to the projection $\overset\leftarrow h$.

\subsubsection{} \label{sect:factorHecke}
By a partition $\fA$ of $\theta$ we mean a decomposition
\[ \theta = \sum_{\lambda \in \Lambda^\pos_{G,P} - 0}  n_\lambda \cdot \lambda, \, n_\lambda \in \bbZ_+. \]
Let $\cP_\theta$ denote the set of all partitions of $\theta$. 
For a partition $\fA \in \cP_\theta$, let $X^\fA := \prod_\lambda X^{(n_\lambda)}$. 
This is a scheme of dimension $\abs \fA := \sum n_\lambda$. 
Note that there is a natural map $X^\fA \to X^\theta$.

Let $(X^\fA)_\disj \subset X^\fA$ denote the open subscheme with all diagonals removed. 
A $k$-point $x^\fA \in (X^\fA)_\disj$ is a formal sum 
$\sum_{v \in \abs X} \theta_v \cdot v$ 
for $\theta_v \in \Lambda^\pos_{G,P}$, 
such that for each $\lambda \in \Lambda^\pos_{G,P}$, we have 
$n_\lambda = \sum_{v \mid \theta_v = \lambda} \deg(v)$. 
The composition $(X^\fA)_\disj \into X^\fA \to X^\theta$ is a locally closed embedding, and 
the subschemes $(X^\fA)_\disj$ for $\fA \in \cP_\theta$ form a stratification of $X^\theta$.
Thus we can stratify $\eH^+_{M,X^\theta}$ by the substacks 
\[ \eH^{+,\fA}_M := \eH^+_{M,X^\theta} \xt_{X^\theta} (X^\fA)_\disj. \]
This stack $\eH^{+,\fA}_M$ is the same as the one defined in \cite[\S 1.8]{BFGM}.

The diagonal $X \to X^\theta$ corresponds to the trivial partition $\theta = 1\cdot \theta$, 
and we denote by $\eH^{+,\theta}_M$ (resp.~$\Gr^{+,\theta}_M$) 
the stack $\ds \eH^+_{M,X^\theta} \xt_{X^\theta} X$ (resp.~the scheme $\ds \Gr^+_{M,X^\theta} \xt_{X^\theta} X$).

\subsubsection{} 
Let $\fA \in \cP_\theta$. The stack $\eH^{+,\fA}_M$ is fibered
over $(X^\fA)_\disj \xt \Bun_M$ with respect to $\overset\leftarrow h$. 
Similar to \S\ref{sss:heckeGr}, one can express $\eH^{+,\fA}_M$ as a 
twisted product 
\begin{equation}\label{e:H+Gr}
    \eH^{+,\fA}_M \cong \Bun_M \tilde\xt \Gr^{+,\fA}_M, 
\end{equation}
where $\Gr^{+,\fA}_M := \Gr^+_{M,X^\theta} \xt_{X^\theta} (X^\fA)_\disj$. 
To define the twisted product one considers the action of the jet group 
$M(\fo)_{X^{(\abs \fA)}}$ on $\Gr_M^{+,\fA}$ over $X^{(\abs \fA)}$.
The embedding 
\begin{equation} \label{e:H+strat}
    \eH^{+,\fA}_M \into \eH^+_{M,X^\theta}
\end{equation}
lies over the map of symmetric powers $X^{(2\abs \fA)} \to
X^{(2\abs\theta)}$. The latter map induces a map of
jet groups $M(\fo)_{X^{(2\abs \fA)}}\to M(\fo)_{X^{(2\abs\theta)}}$.
Therefore \eqref{e:H+strat} can be thought of as a twisted product
of $\id_{\Bun_M}$ with the embedding 
$\Gr_M^{+,\fA} \into \Gr_{M,X^\theta}^+$, 
which is equivariant with respect to the actions of the corresponding jet groups.

\subsubsection{}
Let $x^\fA \in (X^\fA)_\disj(k)$
be a $\Lambda^\pos_{G,P}$-colored divisor $\sum \theta_v \cdot v$,
and let $\eF_M^1$ be a $k$-point of $\Bun_M$.
Then the fiber of $\eH^{+,\fA}_M$ over this $k$-point $(x^\fA, \eF_M^1)$ 
is isomorphic to 
\[ \prod \Gr^{+,\theta_v}_{M,v} \]
where $\Gr^{+,\theta_v}_{M,v}$ is the closed subscheme of 
the affine Grassmannian $\Gr_{M,v}$ defined in \cite[\S 1.6]{BFGM}. 

In terms of loop and jet groups, 
\[ \Gr^+_{M,v}(k) = (\wbar M(\fo_v) \cap M(F_v))/M(\fo_v)
\subset \Gr_{M,v}(k) = M(F_v)/M(\fo_v).\]

\subsection{Convolution products}
Consider the diagram
\begin{equation}  \label{eqn:Heckemonoid}
	\xymatrix{	\eH^+_M & \ar[l]_-{\comp} \ds \eH^+_M \xt_{\overset\rightarrow h, \Bun_M, \overset\leftarrow h} \eH^+_M \ar[rr]^-{(\pr_1,\pr_2)} &&  \eH^+_M \xt \eH^+_M  }
\end{equation}
and recall (Remark~\ref{rem:compproper}) that the left arrow $\comp$ is proper. 
Using \eqref{e:HeGr+}, one sees that $\comp$ is isomorphic to the
twisted product of $\id_{\Bun_M}$ with the proper map
\begin{equation} \label{e:Grconv}
    \conv : \Gr^+_{M,X^{\theta_1}} \tilde\xt \Gr^+_{M,X^{\theta_2}} \to 
    \Gr^+_{M,X^\theta}, 
\end{equation}
where $\theta = \theta_1 + \theta_2$ and the left hand side is the
convolution Grassmannian (cf.~\cite[(3.1.21)]{Zhu}).

\subsubsection{}
We give $D(\eH^+_M)$ the structure of a monoidal category 
by the convolution product 
\[ \tilde\eF_1, \tilde\eF_2 \mapsto \tilde\eF_1 \star \tilde\eF_2 := \comp_!( (\pr_1,\pr_2)^*(\tilde\eF_1 \bt \tilde\eF_2) ) = \comp_!(\pr_1^*(\tilde\eF_1) \ot \pr_2^*(\tilde\eF_2)), \] 
where $\pr_1,\pr_2 : \eH^+_M \xt_{\Bun_M} \eH^+_M \to \eH^+_M$ are the projection maps. 
\smallskip

Let $\Sph^+_{M,X^\theta} = D(\Gr^+_{M,X^\theta})^{M(\fo)_{X^{(\abs\theta)}}}$. 
Define a product 
\[ \star : \Sph^+_{M,X^{\theta_1}} \ot \Sph^+_{M,X^{\theta_2}} \to 
\Sph^+_{M,X^\theta}, \]
by $\eF_1 \star \eF_2 := \conv_!(\eF_1 \tbt \eF_2)$, 
which is a symmetrized version of the ``external convolution product'' (cf.~\cite[\S5.4]{Zhu})
for $\theta = \theta_1 + \theta_2$.

\smallskip

For $\eF_1 \in \Sph^+_{M,X^{\theta_1}},\, \eF_2 \in \Sph^+_{M,X^{\theta_2}}$, we can form 
the sheaves $(\wbar\bbQ_\ell)_{\Bun_M} \tbt \eF_i \in D(\eH^+_{M,X^{\theta_i}})$ using \eqref{e:HeGr+}.
By construction, there is a canonical isomorphism 
\[     (\wbar\bbQ_\ell \tbt \eF_1) \star (\wbar\bbQ_\ell \tbt \eF_2) \cong 
    \wbar\bbQ_\ell \tbt (\eF_1 \star \eF_2),
\]
We remark that $\star$ commutes with Verdier duality on $\Sph^+_{M,X^\theta}$. 

\smallskip

The category 
\[ \Sph^+_M := \{ \theta \mapsto \eF^\theta \in \Sph^+_{M,X^\theta} \} \]
has a natural monoidal structure with respect to $\star$: for two families
$\{ \eF^\theta_1\}$ and $\{ \eF^\theta_2\}$ the value of their product
in $\Sph^+_{M,X^\theta}$ is 
\[ \bigoplus_{\theta=\theta_1+\theta_2} \eF^{\theta_1}_1 \star \eF^{\theta_2}_2. \]

\subsubsection{Factorization property}  \label{sss:factorizationGr}
For $\theta = \theta_1 + \theta_2$, 
let $(X^{\theta_1} \xt X^{\theta_2})_\disj$ denote the the open locus
of $X^{\theta_1} \xt X^{\theta_2}$ consisting of 
pairs of colored divisors with disjoint supports. 
We have a natural \'etale map $(X^{\theta_1} \xt X^{\theta_2})_\disj \to X^\theta$. 

The schemes $\Gr^+_{M,X^\theta},\, \theta \in \Lambda^\pos_{G,P}$ factorize in the sense that there exist Cartesian diagrams
\[ \xymatrix{ \ds (\Gr^+_{M, X^{\theta_1}} \xt \Gr^+_{M,X^{\theta_2}}) \xt_{X^\theta_1 \xt X^{\theta_2}} (X^{\theta_1}\xt X^{\theta_2})_\disj \ar[r] \ar[d] & \Gr^+_{M,X^\theta} \ar[d] \\ 
(X^{\theta_1} \xt X^{\theta_2})_\disj \ar[r] & X^\theta } \]
for $\theta = \theta_1 + \theta_2$.

\subsubsection{} \label{sss:circledast}

The internal convolution (i.e., fusion) product (cf.~\cite[(5.4.4)]{Zhu}) 
\[ \circledast : D(\Gr^{+,\theta_1}_M)^{M(\fo)_X} \ot D(\Gr^{+,\theta_2}_M)^{M(\fo)_X} 
\to D(\Gr^{+,\theta}_M)^{M(\fo)_X} \]
for $\theta = \theta_1 +\theta_2$ is related to the $\star$ product as follows: 
Let $\Delta^\theta : \Gr^{+,\theta}_M \into \Gr^+_{M,X^\theta}$ denote the closed
embedding. 
Then there is a canonical isomorphism 
\begin{equation}\label{e:circlebox}
    \eF_1 \circledast \eF_2 \cong \Delta^{\theta *}( \Delta^{\theta_1}_*(\eF_1) \star
\Delta^{\theta_2}_*(\eF_2) ). 
\end{equation}

\section{Factorization algebras in \texorpdfstring{$\Sph^+_M$}{Sph+M}} \label{s:factorization}

In this section we review some of the objects introduced in \cite{BG2, Whatacts} 
and their properties. 
For our purposes, we only need to work with these objects at
a very coarse level (e.g., in the Grothendieck group) so we omit much of the
higher categorical nuances. 

Let $G$ be a connected split reductive group over a perfect field $k$.
Let $P$ be a standard parabolic subgroup of $G$. While the results of \emph{loc.~cit.} 
are stated only in the case $P=B$, we state them for arbitrary parabolics. 
The reader may check that the proofs readily generalize. 

We will continue using the notation of Appendix \ref{s:appendixHecke} for the monoid $\wbar M$, 
the Hecke stack, the affine Grassmannian, and the graded Ran space.

\subsection{Remarks on $G$} For simplicity, we assume throughout this Appendix 
that $G$ has a simply connected derived group $[G,G]$, so that we may use the same construction
of $\wtilde \Bun_P$ as in \cite{BG, BFGM}. The reader may refer to 
\cite[\S 7]{Schieder:HN} for how to remove this hypothesis, and the basic geometry of the Zastava space and Drinfeld's compactification remains the same. 

\subsection{Geometric Satake}
For simplicity, we will only use the non-factorizable geometric Satake functor. 
Let $\check M$ denote the Langlands dual group of $M$ over the field $\wbar\bbQ_\ell$.
Observe that each $\theta \in \Lambda^\pos_{G,P}$ defines a central character
of $\check M$. Let $\Rep(\check M)_\theta$ denote the
subcategory of $\check M$-modules with central character $\theta$.
Then we have a t-exact (with respect to the perverse t-structures) functor 
\[ 
\Sat^\naive_X : D(\Rep(\check M)_\theta) \ot D(X) \to D(\Gr^{+,\theta}_{M,X})^{M(\fo)_X},
\]
which is a special case of the factorizable geometric Satake functor
$\Sat^\naive_{\Ran(X)}$ constructed in \cite[\S 6]{Raskin}.
If we allow $\theta$ to range over all of $\Lambda^\pos_{G,P}$, then 
$\Sat^\naive_X$ is monoidal with respect to the usual tensor structure on the left
hand side and the internal convolution product $\circledast$ 
on the Beilinson-Drinfeld Grassmannian (cf.~\S\ref{sss:circledast}) on the right hand side.

\begin{rem} \label{rem:trSat}
Suppose $k = \bbF_q$.
Fix a closed point $v \in \abs X$ and let $m_v \in \Gr^{+,\theta}_{M,v}(\bbF_q)$.
As explained in \cite[\S 5.6]{Zhu}, the geometric Satake functor corresponds to 
the classical Satake isomorphism $\eS_v : H_{M,v} \to \bK(\Rep(\check M)) \ot \wbar\bbQ_\ell$ 
by Grothendieck's functions--sheaves dictionary: 
the trace of the geometric Frobenius at the
$*$-fiber at $m_v$ of $\Sat^\naive_X(V \ot (\wbar\bbQ_\ell)_X)$ equals 
\[ \eS_v^{-1}([V])(m_v), \]
where $[V]$ is the image of $V$ in the Grothendieck group 
$\bK(\Rep(\check M))$.
Here $H_{M,v}$ is the spherical Hecke algebra of $M(F_v)$, and $\eS_v$ is Langlands' reformulation of the classical Satake isomorphism (cf.~\S\ref{sect:localL}).
\end{rem}

\subsection{Factorization algebras}
We use the language of factorization algebras graded by the monoid $\Lambda^\pos_{G,P}$ introduced in \cite[\S 2]{Whatacts}. This is simply a particular case of the general notion of factorization algebra defined in \cite{CHA}. 

\subsubsection{} A $\Lambda^\pos_{G,P}$-graded factorization algebra $\eF \in \Sph^+_M$ is a family of sheaves $\eF^\theta \in \Sph^+_{M,X^\theta}$ 
such that for $\theta = \theta_1 + \theta_2$, we have an isomorphism 
\[ 
\eF^\theta |_{(X^{\theta_1} \xt X^{\theta_2})_\disj} \cong 
(\eF^{\theta_1} \bt \eF^{\theta_2}) |_{(X^{\theta_1} \xt X^{\theta_2})_\disj},
\]
of sheaves on $\Gr^+_{M,X^\theta} \xt_{X^\theta} (X^{\theta_1} \xt X^{\theta_2})_\disj$
satisfying the natural compatibilities. On the left hand side, we are restricting along the \'etale map $(X^{\theta_1} \xt X^{\theta_2})_\disj \to X^\theta$. On the right hand side, we restrict along the open embedding $(X^{\theta_1} \xt X^{\theta_2})_\disj \into X^{\theta_1} \xt X^{\theta_2}$ and use the factorization property of $\Gr^+_{M,X^\theta}$ explained in \S\ref{sss:factorizationGr}. 

\subsubsection{} Let $\eF \in \Sph^+_M$ be a commutative algebra object in the monoidal category. For $\theta = \theta_1 + \theta_2$, the multiplication map 
$\eF^{\theta_1} \star \eF^{\theta_2} \to \eF^\theta$
induces, by adjunction, a map 
\[  
(\eF^{\theta_1} \bt \eF^{\theta_2}) |_{(X^{\theta_1} \xt X^{\theta_2})_\disj}\to
\eF^\theta |_{(X^{\theta_1} \xt X^{\theta_2})_\disj}. \] 
We say that $\eF$ is a \emph{commutative factorization algebra} if these maps are isomorphisms for all $\theta = \theta_1 + \theta_2$.

\subsubsection{} Let $\eF \in \Sph^+_M$ be a cocommutative coalgebra object in the monoidal category. For $\theta = \theta_1 + \theta_2$, the comultiplication map 
$\eF^\theta \to \eF^{\theta_1} \star \eF^{\theta_2}$
induces, by adjunction, a map 
\[  \eF^\theta |_{(X^{\theta_1} \xt X^{\theta_2})_\disj} \to 
(\eF^{\theta_1} \bt \eF^{\theta_2}) |_{(X^{\theta_1} \xt X^{\theta_2})_\disj}. \] 
We say that $\eF$ is a \emph{cocommutative factorization algebra} if these maps are isomorphisms for all $\theta = \theta_1 + \theta_2$.

\subsection{Cocommutative factorization algebras}

\subsubsection{} \label{sect:u_P}
Let us recall the definition of the
cocommutative factorization algebra $\Upsilon(\check\fu_P) \in \Sph^+_M$ introduced in \cite{BG2}. 

For $\theta \in \Lambda^\pos_{G,P}$, let $\Delta^\theta : \Gr^{+,\theta}_M \into 
\Gr^+_{M,X^\theta}$ denote the closed diagonal. The $\Lambda^\pos_{G,P}$-graded 
$\check M$-module 
\[ \check \fu_P  = \bigoplus_{\alpha \in \Phi^+_G - \Phi^+_M} 
\check \fu_\alpha \]
gives a complex 
\[ \check\fu_{P,\Sph^+_M} := \bigoplus_{\alpha \in \Phi^+ - \Phi^+_M} 
\Delta^\alpha_*( \Sat^\naive_X (\check \fu_\alpha \ot (\wbar\bbQ_\ell)_X) ) \in \Sph^+_M. \]
The Lie algebra structure on $\check \fu_P$ gives a Lie algebra structure
to $\check\fu_{P,\Sph^+_M}$ with respect to the $\star$ monoidal structure
on $\Sph^+_M$.
Then 
\[ \Upsilon(\check \fu_P) := C_\bullet( \check\fu_{P,\Sph^+_M} ) \]
is the homological Chevalley complex associated to this Lie algebra, 
and $\Upsilon(\check\fu_P)$ is a cocommutative factorization algebra. 

\subsubsection{} 
Let $U(\check \fu_P)_{\Sph^+_M}$ denote the universal 
enveloping algebra of the Lie algebra $\check \fu_{P,\Sph^+_M}$. 
This is a cocommutative factorization algebra in $\Sph^+_M$ with a compatible
associative algebra structure with respect to $\star$.

\begin{rem}
If we consider $\check \fu_{P,\Sph^+_M}[1]$ as a Lie superalgebra in degree $-1$,
then \[ \Upsilon(\check \fu_P) = U(\check \fu_{P,\Sph^+_M}[1]). \]
\end{rem}

\subsubsection{Restriction to strata}
\label{sss:Uprodv}
For $\theta \in \Lambda^\pos_{G,P}$ and $\fA \in \cP_\theta$ a partition,
let $x^\fA \in (X^\fA)_\disj(k)$ be a $\Lambda^\pos_{G,P}$-colored divisor 
$\sum \theta_v \cdot v$.
Then the fiber of $\Gr^{+,\fA}_M \to (X^\fA)_\disj$ over $x^\fA$
is isomorphic to $\prod \Gr^{+,\theta_v}_{M,v}$ by the factorization property. 

Since $\Upsilon(\check\fu_P)$ is a factorization algebra and
$\Sat^\naive_X$ is a monoidal functor, \eqref{e:circlebox} implies that  
the $*$-restriction of $\Upsilon(\check\fu_P)$ to this fiber of $\Gr^{+,\fA}_M$ canonically identifies with
\[ \bt_v \Sat^\naive_v ( C_\bullet(\check \fu_P )^{\theta_v} ) \]
where $C_\bullet(\check\fu_P)^{\theta_v}$ denotes the $\theta_v$-graded
piece of the Chevalley complex of the Lie algebra $\check \fu_P$ 
in $D(\Rep(\check M))$, and $\Sat^\naive_v$ is the non-relative
geometric Satake functor for $\Gr_{M,v}^{+,\theta_v}$.

The $*$-restriction of $U(\check\fu_P)_{\Sph^+_M}$ to the fiber $\prod \Gr^{+,\theta_v}_{M,v}$ canonically identifies with 
\[ \bt_v \Sat^\naive_v( U(\check\fu_P)^{\theta_v} ) \]
where $U(\check\fu_P)^{\theta_v}$ is the $\theta_v$-graded piece of the
universal enveloping algebra of $\check\fu_P$ in $D(\Rep(\check M))$.

\subsubsection{Koszul resolution} \label{sss:koszulco}
Let $\mathbf 1$ denote the constant sheaf $\wbar\bbQ_\ell$ on $\spec(k) = \Gr^+_{M,X^0}$. Then $\mathbf 1$ is the unit in the monoidal category $\Sph^+_M$.

Consider the acyclic complex $\check\fu_{P,\Sph^+_M} \to \check \fu_{P,\Sph^+_M}$
as a Lie superalgebra in degrees $-1,0$. 
The universal enveloping algebra of this Lie superalgebra 
is quasi-isomorphic to $\mathbf 1$. 
This resolution endows $\mathbf 1$ with the structure of a comodule 
with respect to $\Upsilon(\check\fu_P)$ and of a module with respect to 
$U(\check\fu_P)_{\Sph^+_M}$.

\subsection{Commutative factorization algebras}

\subsubsection{} We define 
the commutative factorization algebra $\Omega(\check \fu_P^-) \in \Sph^+_M$ as 
the Verdier dual of $\Upsilon(\check\fu^-_P)$. 
(Here we use the opposite Lie algebra $\check \fu_P^-$ so that 
$\Omega(\check\fu_P^-)$ is still $\Lambda^\pos_{G,P}$-graded.)

One can also define $\Omega(\check \fu^-_P)$ from
scratch by considering 
\[ (\check\fu_{P,\Sph^+_M}^-)^\vee := \bbD(\check\fu_{P,\Sph^+_M}^-)  \cong
\bigoplus_{\alpha \in \Phi^+_G - \Phi^+_M} \Delta_*^\alpha(\Sat^\naive_X(\check \fu_{-\alpha}^\vee \ot (\wbar\bbQ_\ell)_X(1)[2])). \]
Then $(\check\fu_{P,\Sph^+_M}^-)^\vee$ 
is a Lie coalgebra in $\Sph^+_M$ with respect to the $\star$ monoidal structure.
There is a canonical isomorphism 
\[ \Omega(\check \fu_P) \cong C^\bullet( (\check\fu_{P,\Sph^+_M}^-)^\vee ), \]
where the right hand side is the cohomological Chevalley complex associated to this Lie coalgebra.

\subsubsection{} Let $U^\vee(\check\fu_P^-)_{\Sph^+_M}$ denote the
universal co-enveloping coalgebra of $(\check\fu_{P,\Sph^+_M}^-)^\vee$.
This is a commutative factorization algebra in $\Sph^+_M$ with a compatible coalgebra
structure with respect to $\star$. 

\subsubsection{Restriction to strata}
\label{sss:Omegaprodv}
For $\theta \in \Lambda^\pos_{G,P}$ and $\fA \in \cP_\theta$ a partition,
let $x^\fA \in (X^\fA)_\disj(k)$ be a $\Lambda^\pos_{G,P}$-colored divisor 
$\sum \theta_v \cdot v$.
Then the fiber of $\Gr^{+,\fA}_M \to (X^\fA)_\disj$ over $x^\fA$ 
is isomorphic to $\prod \Gr^{+,\theta_v}_{M,v}$.

Since $\Omega(\check\fu_P^-)$ is a factorization algebra and 
$\Sat^\naive_X$ is a monoidal functor, 
the $*$-restriction of $\Omega(\check\fu_P^-)$ to this fiber of $\Gr^{+,\fA}_M$ canonically identifies with
\[ \bt_v \Sat^\naive_v ( C^\bullet( (\check \fu_P^-)^\vee (1)[2] )^{\theta_v} ) \]
where $C^\bullet((\check\fu_P^-)^\vee (1)[2])^{\theta_v}$ denotes the $\theta_v$-graded
piece of the cohomological Chevalley complex, and $\Sat^\naive_v$ is the non-relative
geometric Satake functor for $\Gr_{M,v}^{+,\theta_v}$.

The $*$-restriction of $U^\vee(\check\fu^-_P)_{\Sph^+_M}$ to the fiber $\prod \Gr^{+,\theta_v}_{M,v}$ canonically identifies with 
\[ \bt_v \Sat^\naive_v( U((\check\fu^-_P)^\vee (1)[2] )^{\theta_v} ) \]
where $U( (\check\fu^-_P)^\vee (1)[2] )^{\theta_v}$ is the $\theta_v$-graded piece of the
universal co-enveloping coalgebra.

\subsubsection{Koszul resolution} \label{sss:koszul}
Applying Verdier duality to the Koszul resolution in \S\ref{sss:koszulco} 
gives $\mathbf 1 \in \Sph^+_M$ the structure of a module with respect to $\Omega(\check\fu_P^-)$
and of a comodule with respect to $U^\vee(\check\fu_P)_{\Sph^+_M}$.

\subsection{Eisenstein series}
Let  \[ \jmath : \Bun_P \into \wtilde\Bun_P \]
denote the open embedding into Drinfeld's compactification of $\Bun_P$. 

\subsubsection{Action on Drinfeld's compactifications}
For every $\theta \in \Lambda^\pos_{G,P}$ there corresponds a proper map 
\[ \bar\iota^\theta : \wtilde\Bun_P \xt_{\Bun_M} \eH^+_{M,X^\theta} \to \wtilde\Bun_P. \]
Using \eqref{e:HeGr+}, we can express 
$\wtilde\Bun_P \xt_{\Bun_M} \eH^+_{M,X^\theta}$ as a twisted product 
$\wtilde\Bun_P \tilde\xt \Gr^+_{M,X^\theta}$.
Given $\eE \in D(\wtilde\Bun_P)$ and $\eF \in \Sph^+_{M,X^\theta}$,
we can form a sheaf $\eE \tbt \eF \in D(\wtilde \Bun_P \xt_{\Bun_M} \eH^+_{M,X^\theta})$.
We define an action of $\Sph^+_{M,X^\theta}$ on $D(\wtilde\Bun_P)$ 
by 
\[ \eE, \eF \mapsto \eE \star \eF := \bar\iota^\theta_*(\eE \tbt \eF), \]
which is compatible with the monoidal product $\star$ on $\Sph^+_M$.

\subsubsection{}

It was established in \cite[Theorem 4.2]{BG2} (cf.~\cite[Theorem 5.2.2]{Whatacts}) 
that there exists a map 
\[ \jmath_*(\IC_{\Bun_P}) \to \jmath_*(\IC_{\Bun_P}) \star \Upsilon(\check\fu_P)  \]
that gives $\jmath_*(\IC_{\Bun_P})$ the structure of an 
$\Upsilon(\check \fu_P)$-comodule.

By \cite[Theorem 6.6]{BG2} (cf.~\cite[Theorem 5.2.4]{Whatacts}), we have a 
quasi-isomorphism 
\begin{equation} \label{e:jotOmega}
\jmath_*(\IC_{\Bun_P}) \cotimes_{\Upsilon(\check\fu_P)} \mathbf{1} 
 \to \IC_{\wtilde\Bun_P} 
\end{equation}
where $\cotimes$ denotes the homotopy cotensor product over $\Upsilon(\check\fu_P)$ 
(i.e., the coBar construction), which  
can be computed using the Koszul resolution of $\mathbf{1}$ from 
\S\ref{sss:koszul}.

Recall that $\mathbf 1$ is also a $U(\check\fu_P)$-module. Thus 
applying the homotopy tensor product over $U(\check\fu_P)$ to \eqref{e:jotOmega}, we get
a quasi-isomorphism
\begin{equation} \label{e:j_*}
    \jmath_*(\IC_{\Bun_P}) \cong \jmath_*(\IC_{\Bun_P}) \cotimes_{\Upsilon(\check\fu_P)} 
    \mathbf{1} \ot_{U(\check\fu_P)} \mathbf{1} \cong \IC_{\wtilde\Bun_P} \ot_{U(\check\fu_P)} \mathbf 1,
\end{equation}
where the first quasi-isomorphism follows from the Koszul duality $\Upsilon(\check\fu_P) \cong 
\mathbf 1 \ot_{U(\check\fu_P)} \mathbf 1$.

\subsection{The factorization algebra $\tilde\Upsilon(\check\fu_P)$}
\label{ss:tildeU}

\subsubsection{}
Let $\iota^\theta$ denote the composition 
\[ \bar\iota^\theta \circ (\jmath \xt \id_{\eH^+_{M,X^\theta}}) : 
\Bun_P \xt_{\Bun_M} \eH^+_{M,X^\theta} \to \wtilde \Bun_P. \]
The maps $\iota^\theta$ are locally closed embeddings and their images define a 
stratification of $\wtilde\Bun_P$ (cf.~\cite[\S 6.2]{BG}, \cite{BFGM}). 

\subsubsection{}
Following \cite[Proposition 6.1.3]{Whatacts}, there exists a canonically
defined factorization algebra $\tilde\Upsilon(\check\fu_P)$ equipped with 
the structure of a coassociative coalgebra in $\Sph^+_M$ such that 
\[ \iota^{\theta *} (\jmath_*(\IC_{\Bun_P})) \cong \IC_{\Bun_P} \tbt \tilde\Upsilon(\check\fu_P)^\theta. \]

\subsubsection{} 
We now describe the image of $\tilde\Upsilon(\check\fu_P)$ in the Grothendieck
group of $\Sph^+_M$. 
Taking the image of \eqref{e:j_*} in the Grothendieck group 
gives the equality 
$[\jmath_* (\IC_{\Bun_P}) ] = [\IC_{\wtilde\Bun_P} \star \Upsilon(\check\fu_P)]$. 
For $\mu \in \Lambda_{G,P}$, let $\Bun_M^\mu$ denote the corresponding connected component consisting of $M$-bundles of degree $\mu$. 
Let $\Bun_P^\mu := \Bun_P \xt_{\Bun_M} \Bun_M^\mu$. 
We have a Cartesian square 
\[
    \xymatrix{ \ds \Bun_P^{\mu} \xt_{\Bun_M} \eH^+_{M,X^{\theta_1}} \xt_{\Bun_M} \eH^+_{M,X^{\theta_2}} \ar[r] \ar[d]_{\id_{\Bun_P} \xt \comp} & 
\ds \wtilde \Bun_P^{\mu-\theta_1} \xt_{\Bun_M} \eH^+_{M,X^{\theta_2}} \ar[d]^{\bar\iota^{\theta_2}} \\ 
\ds \Bun_P^\mu \xt_{\Bun_M} \eH^+_{M,X^{\theta}} \ar[r]^{\iota^\theta} & 
\wtilde\Bun_P^{\mu-\theta} 
} \]
where $\theta = \theta_1 + \theta_2$. 
Therefore pulling back by $\iota^{\theta *}$ gives 
\begin{equation} \label{eqn:j_*Groth}
	[ \iota^{\theta *} \jmath_*(\IC_{\Bun_P})] = \sum_{\theta_1 + \theta_2 = \theta} 
[ \iota^{\theta_1 *}(\IC_{\wtilde\Bun_P}) \star \Upsilon(\check\fu_P)^{\theta_2} ] 
\end{equation}
in the Grothendieck group of $\Bun_P \xt_{\Bun_M} \eH^+_{M,X^\theta}$.
The following result is proved in \cite[Theorem 1.12]{BFGM} after passing to
the Grothendieck group, and it is proved in the derived category in \cite[Proposition 4.4]{BG2}:
\begin{prop} \label{prop:BFGM}
	There exists a canonical isomorphism in $D(\Bun_P \xt_{\Bun_M} \eH^{+,\theta}_M)$:
	\[ \iota^{\theta*} (\IC_{\wtilde \Bun_P}) \cong \IC_{\Bun_P} \tbt U^\vee(\check \fu_P^-)_{\Sph^+_M}^{\theta}. \]
\end{prop}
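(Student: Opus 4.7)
The plan is to reduce the claim to a factorizable local computation on the Zastava space and then invoke geometric Satake. Both sides of the desired isomorphism are pulled back from $\eH^{+,\theta}_M$ via the projection $\Bun_P\xt_{\Bun_M}\eH^{+,\theta}_M \to \eH^{+,\theta}_M$, so the statement is \'etale-local on $\Bun_P$. In a neighborhood of the $\theta$-stratum, the relative Zastava space $\oo\eZ_{\Bun_M}$ from the proof of Proposition~\ref{prop:R_P} and its natural compactification provide a smooth local model for the singularities of $\wtilde\Bun_P$ along the stratum: there is an \'etale-locally smooth morphism from the relevant open subset of the compactified Zastava to a neighborhood of $\iota^\theta(\Bun_P \tilde\xt \Gr^{+,\theta}_M)$ in $\wtilde\Bun_P$. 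Thus the problem reduces to identifying, up to a canonical cohomological shift coming from the smooth parameter, the $*$-restriction of the IC sheaf of the compactified Zastava along its ``central'' fiber $\Gr^{+,\theta}_M$ with $U^\vee(\check\fu_P^-)^\theta_{\Sph^+_M}$.

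Next, I would exploit the factorization structure of the Zastava space over $X^\theta$: for a decomposition $\theta = \theta_1 + \theta_2$, the \'etale preimage of $(X^{\theta_1}\xt X^{\theta_2})_\disj$ in the Zastava over $X^\theta$ factors as a product of the corresponding Zastavas. Since formation of IC sheaves is compatible with such \'etale-local factorization, and since $U^\vee(\check\fu_P^-)_{\Sph^+_M}$ is by construction a factorization algebra, an inductive argument along the partition stratification of \S\ref{sect:factorHecke} reduces the proposition to the case of the main diagonal $X \into X^\theta$. By \S\ref{sss:Omegaprodv}, it then suffices, for each closed point $v \in X$ and each $\theta$, to exhibit a canonical isomorphism on $\Gr^{+,\theta}_{M,v}$ between the $*$-restriction of the IC sheaf of the local Zastava fiber at the origin and $\Sat^\naive_v(U((\check\fu_P^-)^\vee(1)[2])^\theta)$.

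For this pointwise identification, the cleanest route is to Verdier-dualize \eqref{e:j_*}. Since $\IC_{\wtilde\Bun_P}$ is self-dual and Verdier duality interchanges $U(\check\fu_P)$ with $U^\vee(\check\fu_P^-)$ (as it interchanges $\Upsilon$ and $\Omega$), one obtains
\[ \jmath_!(\IC_{\Bun_P}) \cong \IC_{\wtilde\Bun_P}\cotimes_{U^\vee(\check\fu_P^-)}\mathbf{1}. \]
Applying $\iota^{\theta*}$ and expanding $\mathbf 1$ via the Koszul resolution of \S\ref{sss:koszul} expresses $\iota^{\theta*}\IC_{\wtilde\Bun_P}$ as the ``top'' term of a filtration whose associated graded pieces are iterated $\star$-products with $U^\vee(\check\fu_P^-)$. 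The main obstacle will be to show that this filtration degenerates and yields a \emph{canonical} isomorphism compatible with the coalgebra structure on $U^\vee(\check\fu_P^-)_{\Sph^+_M}$ and with the nesting $\iota^{\theta_1}, \iota^{\theta_2}, \iota^{\theta_1+\theta_2}$ of strata. This degeneration comes ultimately from the fact that $\iota^{\theta*}\jmath_!(\IC_{\Bun_P})$ is concentrated in a single perverse degree---being the $*$-restriction to the $\theta$-stratum of a clean extension from $\Bun_P$---which forces the spectral sequence coming from the Koszul filtration to collapse at $E_1$ and matches the derived statement with the Grothendieck-group equality implicit in \eqref{eqn:j_*Groth}.
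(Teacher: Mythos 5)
The paper does not give a proof of this proposition; it cites \cite[Theorem~1.12]{BFGM} for the statement in the Grothendieck group and \cite[Proposition~4.4]{BG2} for the lift to the derived category, so your argument should be judged on its own. The geometric setup you lay out (reduction to the relative Zastava local model, exploitation of the factorization structure over $X^\theta$ to reduce to the main diagonal, pointwise identification via geometric Satake) is indeed the correct picture and matches the architecture of the cited proofs.

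The gap is in the final step. You write $\jmath_!(\IC_{\Bun_P}) \cong \IC_{\wtilde\Bun_P}\cotimes_{U^\vee(\check\fu_P^-)}\mathbf{1}$ (the Verdier dual of \eqref{e:j_*}), apply $\iota^{\theta*}$, and claim a spectral sequence collapse ``because $\iota^{\theta*}\jmath_!(\IC_{\Bun_P})$ is concentrated in a single perverse degree, being the $*$-restriction of a clean extension.'' But $\jmath_!$ is not a clean extension---it is extension by zero from an open substack, which is essentially never clean for $P \neq G$---and for $\theta > 0$ one has $\iota^{\theta*}\jmath_!(\IC_{\Bun_P}) = 0$ outright, since the image of $\iota^\theta$ is disjoint from $\Bun_P$. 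The vanishing of the abutment of your Koszul filtration only says that the $E_1$-terms cancel in total (i.e.~a Grothendieck-group identity equivalent to \eqref{eqn:j_*Groth} and \eqref{eqn:tildeUpsilon}); it does not determine the terms themselves as complexes, and it does not force the spectral sequence to collapse. So you recover the $\bK$-theory statement of \cite{BFGM}, but not the derived-category upgrade.

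There is also a circularity worry. Your input \eqref{e:j_*} is derived from \eqref{e:jotOmega}, which the paper attributes to \cite[Theorem~6.6]{BG2}. In \emph{loc.~cit.}~the Hecke-comodule structure on $\jmath_*(\IC_{\Bun_P})$ and the identification with $\IC_{\wtilde\Bun_P}$ are established using the very computation of the $*$-restrictions of $\IC_{\wtilde\Bun_P}$ along the defect strata that you are trying to prove (together with parity and semi-smallness arguments on the Zastava spaces). A proof that takes \eqref{e:jotOmega} as given and derives Proposition~\ref{prop:BFGM} from it would thus need to check that the logical dependency in \cite{BG2} can actually be reversed, which is not addressed. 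The honest route, following \cite{BFGM}, is to compute $\iota^{\theta*}\IC_{\wtilde\Bun_P}$ directly on the central fiber of the Zastava space by the contraction principle, deduce perversity and semi-simplicity of the graded pieces from the semi-smallness of $\oo\pi_\eZ$, and then identify the answer via geometric Satake; no appeal to the comodule structure on $\jmath_*(\IC_{\Bun_P})$ is needed or available at that stage.
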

\noindent
Combining \eqref{eqn:j_*Groth} and Proposition~\ref{prop:BFGM}, we deduce that
\begin{equation} \label{eqn:tildeUpsilon}
	[\tilde\Upsilon(\check \fu_P)^\theta ] = \sum_{\theta_1+\theta_2=\theta} 
	[ U^\vee(\check \fu_P^-)_{\Sph^+_M}^{\theta_1} \star \Upsilon(\check\fu_P)^{\theta_2} ] 
\end{equation}
in the Grothendieck group of $\Sph^+_M$.
\smallskip

\begin{prop} \label{prop:geom-nu} Suppose $k=\bbF_q$.
The trace of the geometric Frobenius on $*$-stalks of $\tilde\Upsilon(\check \fu_P)^\theta$ is equal to the function 
\[ (\eF_M, \beta_M)  \in \Gr^+_{M,X^\theta}(\bbF_q) \mapsto q^{-\brac{\check\rho_P,\theta}} \prod_v \nu_{M,v}(m_v), \]
where $m_v \in \Gr^+_{M,v}(\bbF_q) \subset M(F_v)/M(\fo_v)$ is determined by $\beta_M$, and $\nu_{M,v}$ is the $K_{M,v}$-bi-invariant measure on $M(F_v)$ defined in \S\ref{def:nu}. 
\end{prop}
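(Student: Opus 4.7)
The plan is to reduce to a per-place computation via the factorization structure of $\tilde\Upsilon(\check\fu_P)$ and then convert the K-theoretic identity \eqref{eqn:tildeUpsilon} into the desired function-theoretic formula via geometric Satake. First, I would restrict to the open stratum $\Gr^{+,\fA}_M$ for $\fA = (\theta_v)$ the finest partition, corresponding to colored divisors $D = \sum_v \theta_v\cdot v$ with pairwise distinct support. Over such $D$ the fiber factorizes as $\prod_v \Gr^{+,\theta_v}_{M,v}$, and the factorization structure of $\tilde\Upsilon(\check\fu_P)$ yields a canonical isomorphism
\[
\tilde\Upsilon(\check\fu_P)^\theta\big|_{(D,\eF_M,\beta_M)} \cong \bt_v \,\tilde\Upsilon(\check\fu_P)^{\theta_v}_v\big|_{m_v}.
\]
Since these open strata cover $\Gr^+_{M,X^\theta}(\bbF_q)$ set-theoretically and $q^{-\brac{\check\rho_P,\theta}} = \prod_v q_v^{-\brac{\check\rho_P,\theta_v}}$ follows from $\theta = \sum_v \deg(v)\theta_v$, the proposition reduces to the local identity
\[
\tr\!\bigl(\mathrm{Frob}, \tilde\Upsilon(\check\fu_P)^{\theta_v}_v\big|_{m_v}\bigr) = q_v^{-\brac{\check\rho_P,\theta_v}}\,\nu_{M,v}(m_v)
\]
at each closed point $v$.

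Passing to the Grothendieck group of $D(\Gr^{+,\theta_v}_{M,v})^{M(\fo_v)}$, equation \eqref{eqn:tildeUpsilon} reads $[\tilde\Upsilon(\check\fu_P)^{\theta_v}_v] = \sum_{\theta_1+\theta_2=\theta_v} [U^\vee(\check\fu_P^-)^{\theta_1}_v \star \Upsilon(\check\fu_P)^{\theta_2}_v]$. By \eqref{e:circlebox} and monoidality of $\Sat^\naive_v$, the $\star$-product corresponds under $\eS_v^{-1}$ to classical Hecke convolution in $H^+_{M,v}$, so it suffices to identify each factor under $\eS_v$. The homological Chevalley complex $\Upsilon(\check\fu_P)^{\theta_2}_v = C_\bullet(\check\fu_P)^{\theta_2}_v$ places $\wedge^n\check\fu_P$ in cohomological degree $-n$ with no Tate twist, so Remark~\ref{rem:trSat} gives the Frobenius trace $\eS_v^{-1}\!\bigl(\sum_n (-1)^n[\wedge^n\check\fu_P]^{\theta_2}\bigr)(m_v)$. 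For $U^\vee(\check\fu_P^-)^{\theta_1}_v$, the relative Verdier-duality twist $(1)[2]$ in $\bbD(\check\fu_{P,\Sph^+_M}^-)$ contributes a single factor $q_v^{-1}$ per $\Sym$-generator (with no sign, since $(-1)^{2n}=1$), so after identifying $(\check\fu_P^-)^\vee \cong \check\fu_P$ via a $\check M$-invariant pairing the trace equals $\eS_v^{-1}\!\bigl(\sum_n q_v^{-n}[\Sym^n\check\fu_P]^{\theta_1}\bigr)(m_v)$.

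Multiplying the two generating series and decomposing along the $2\check\rho_P$-eigenspaces $\check\fu_P = \bigoplus_i \gr^i(\check\fu_P)$ (eigenvalue $2a_i$), the weight-$\theta_v$ component becomes
\[
\Bigl[\prod_i \bigl(\sum_n (-1)^n[\wedge^n\gr^i(\check\fu_P)]\bigr)\bigl(\sum_n q_v^{-n}[\Sym^n\gr^i(\check\fu_P)]\bigr)\Bigr]^{\theta_v}.
\]
Since any element of $\wedge^n\gr^i$ or $\Sym^n\gr^i$ has $\brac{\check\rho_P,\cdot}$-value $na_i$, multiplying by $q_v^{\brac{\check\rho_P,\theta_v}}$ reintroduces precisely the factors $q_v^{na_i}$ in each generating series, turning this expression into $\prod_i \Lambda(q_v^{a_i},\gr^i(\check\fu_P)) / \Lambda(q_v^{-1+a_i},\gr^i(\check\fu_P))$ by \eqref{eqn:Lprod}. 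By \eqref{eqn:SatLnu} this equals $\eS(\nu_{M,v})^{\theta_v}$, proving the local identity and hence the proposition.

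The main technical obstacle will be tracking the Tate twist $(1)[2]$ through the Verdier-duality definition of $\bbD(\check\fu_{P,\Sph^+_M}^-)$ and the universal co-enveloping coalgebra $U^\vee(\check\fu_P^-)_{\Sph^+_M}$: one must verify that the relative Verdier duality on $X^\theta$ produces exactly one factor of $q_v^{-1}$ per $\Sym$-generator (with no unwanted sign or extra twist), so that the Koszul-type expansion \eqref{eqn:tildeUpsilon} matches the classical Gindikin--Karpelevich formula \eqref{eqn:Lprod} on the nose. A secondary point to check is the compatibility of $\star$ with Hecke convolution via \eqref{e:circlebox} when passing between the factorization-graded setting over $X^\theta$ and the single-point Grassmannian $\Gr^{+,\theta_v}_{M,v}$.
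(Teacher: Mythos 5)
Your proposal follows essentially the same route as the paper's proof: reduce to a single closed point $v$ by factorization, pass to the Grothendieck group using \eqref{eqn:tildeUpsilon}, compute the Frobenius traces of $\Upsilon(\check\fu_P)$ and $U^\vee(\check\fu_P^-)$ via \S\ref{sss:Uprodv}, \S\ref{sss:Omegaprodv} and Remark~\ref{rem:trSat}, and match the resulting expression against \eqref{eqn:Lprod} and \eqref{eqn:SatLnu}. You spell out the Tate-twist bookkeeping and the $q_v^{\brac{\check\rho_P,\theta_v}}$ normalization in more detail than the paper does, but the argument is the same.
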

\begin{proof}
Let $x^\fA \in (X^\fA)_\disj(\bbF_q),\, \fA \in \cP_\theta$, denote the image of 
$(\eF_M, \beta_M)$ under $\Gr^+_{M, X^\theta} \to X^\theta$. 
Then the fiber of $x^\fA$ is isomorphic to $\prod_v \Gr^{+,\theta_v}_{M,v}$,
and the point $(\eF_M,\beta_M)$ corresponds to the collection 
$\{ m_v \in \Gr^{+,\theta_v}_{M,v}(\bbF_q) \}$. 
Since $\tilde\Upsilon(\check\fu_P)$ is a factorization algebra, it suffices to 
consider the case when $\beta_M$ is an isomorphism on $X-v$ for a fixed
closed point $v$, i.e., $x^\fA = \theta_v \cdot v$ is supported at a single point $v$. 

The trace of geometric Frobenius on $*$-stalks of a complex
only depends on the image of the complex in the Grothendieck group. Therefore
\eqref{eqn:tildeUpsilon}, the discussion in \S\ref{sss:Uprodv} and \S\ref{sss:Omegaprodv}, and the compatibility of 
geometric Satake with the classical Satake transform (Remark~\ref{rem:trSat}) 
together imply that the trace of geometric Frobenius at the $*$-stalk of 
$(\eF_M, \beta_M)$ equals 
\[ \eS_v^{-1} \left( \left( \sum_{n=0} (-1)^n [\wedge^n \check\fu_P] \right) \ot 
\left(\sum_{n=0}^\infty [\Sym^n \check\fu_P] \cdot q_v^{-n} \right) \right)(m_v). \]
Comparing with \eqref{eqn:Lprod}, we deduce the proposition.
\end{proof}

\subsection{Geometric proof}
We prove Proposition~\ref{prop:geom-nu} above using \eqref{eqn:SatLnu}, which is essentially the classical Gindikin--Karpelevich formula. 
However, the Satake transform does not appear in the statement of Proposition~\ref{prop:geom-nu}. In this subsection we give a more direct proof of Proposition~\ref{prop:geom-nu} using derived algebraic geometry.

\subsubsection{Zastava spaces} \label{sss:Zastava}
Let $\eZ^{P,\theta}$ denote the Zastava space defined in \cite{BFGM} corresponding to the parabolic $P$, and let $\oo\eZ{}^{P,\theta}$ denote the open
Zastava space (called $\eZ^{P,\theta}_\mathrm{max}$ in \emph{loc. cit.}).
We have a map $\pi_\eZ : \eZ^{P,\theta} \to \Gr^+_{M,X^\theta}$. 
Let $\oo \pi_\eZ : \oo\eZ{}^{P,\theta} \to \Gr^+_{M,X^\theta}$ denote the
restriction. 

\subsubsection{} 
Let $\tilde \Omega(\check\fu_P^-)$ denote the Verdier dual of 
$\tilde\Upsilon(\check \fu_P^-)$. Then $\tilde \Omega(\check\fu_P^-)$ is 
a factorization algebra on $\Sph^+_M$ with the defining equation
\[ \iota^{\theta !}(\jmath_! (\IC_{\Bun_P})) \cong \IC_{\Bun_P} \tbt 
\tilde\Omega(\check\fu_P^-)^\theta \]
for $\theta \in \Lambda^\pos_{G,P}$.
Using the local model of \cite[\S 3]{BFGM} and the contraction principle
(\cite[Proposition 5.2]{BFGM}), one sees that there is a canonical
isomorphism
\begin{equation} \label{e:OmegaZ}
    \tilde\Omega(\check\fu_P^-)^\theta \cong (\oo\pi_\eZ)_! ( \IC_{\oo\eZ{}^{P,\theta}} ).
\end{equation}
From this equation, factorization of $\tilde\Omega(\check\fu_P^-)$
follows from the factorization property of $\oo\eZ{}^{P,\theta}$. 

\begin{lem} \label{lem:f_Omega}
Suppose $k=\bbF_q$. The trace of geometric Frobenius on $*$-stalks of 
$\tilde\Omega(\check\fu_P^-)^\theta$ is equal to the function 
\[ (\eF_M, \beta_M) \in \Gr^+_{M,X^\theta}(\bbF_q) \mapsto 
q^{-\brac{\check\rho_P,\theta}} \prod_v \mu_{M,v}(m_v), \]
where $m_v \in \Gr^+_{M,v}(\bbF_q)\subset M(F_v)/M(\fo_v)$ is determined by $\beta_M$,
 and $\mu_{M,v}$ is the $K_{M,v}$-bi-invariant measure on $M(F_v)$ defined 
in \S\ref{sss:mu}.
\end{lem}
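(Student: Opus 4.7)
The plan is to extract the function of $\tilde\Omega(\check\fu_P^-)^\theta$ directly from the identification~\eqref{e:OmegaZ}, which presents it as the $!$-pushforward of $\IC_{\oo\eZ{}^{P,\theta}}$ along $\oo\pi_\eZ$, and then to compute this pushforward by factorization and point counting in fibers.

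First, I would reduce to a single closed point by factorization. Both sides of \eqref{e:OmegaZ} factorize over $X^\theta$ (the right-hand side by the factorization property of $\oo\eZ{}^{P,\theta}$ recalled in \S\ref{sss:Zastava}), so for a point $(\eF_M,\beta_M) \in \Gr^+_{M,X^\theta}(\bbF_q)$ whose image in $X^\theta$ is a colored divisor $\sum_v \theta_v \cdot v$ supported at pairwise distinct closed points, the Frobenius trace on the $*$-stalk factors as a product over the places $v$ of the corresponding traces on $\tilde\Omega(\check\fu_P^-)^{\theta_v}$ at the local points $m_v$. It therefore suffices to prove the lemma when $\theta = \theta_v\cdot v$ is concentrated at a single closed point.

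Second, I would use the fact (from \cite{BFGM}) that $\oo\eZ{}^{P,\theta}$ is smooth of dimension $2\brac{\check\rho_P,\theta}$. With the normalization of IC sheaves fixed in the Conventions, $\IC_{\oo\eZ{}^{P,\theta}}$ has rank-one stalks whose Frobenius trace at every $\bbF_q$-point equals $(-q^{-1/2})^{2\brac{\check\rho_P,\theta}} = q^{-\brac{\check\rho_P,\theta}}$. The Grothendieck--Lefschetz trace formula for $(\oo\pi_\eZ)_!$ then yields
\[
\tr\bigl(\mathrm{Frob},\, \tilde\Omega(\check\fu_P^-)^{\theta_v}|_{m_v}\bigr) \;=\; q^{-\brac{\check\rho_P,\theta_v}}\cdot \#\bigl((\oo\pi_\eZ)^{-1}(m_v)(\bbF_q)\bigr).
\]
Combining with the first step, the global trace is $q^{-\brac{\check\rho_P,\theta}}\prod_v \#((\oo\pi_\eZ)^{-1}(m_v)(\bbF_q))$, so it remains to show that each local factor equals $\mu_{M,v}(m_v)$.

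Third, I would identify the local Zastava fiber $(\oo\pi_\eZ)^{-1}(m_v)$ with the Iwasawa-type set underlying the definition of $\mu_{M,v}$ in \S\ref{sss:mu}: namely, with the set of $U(\fo_v)$-cosets in $\{u \in U(F_v)\mid u \in K\cdot m_v\cdot U^-(F_v)\}$. With our normalization of Haar measures, the cardinality of this set is precisely $\mu_{M,v}(m_v)$, completing the proof. The main obstacle is this third step: writing down a clean local identification of the Zastava fiber with the integration domain defining $\mu_{M,v}$. This requires unwinding the definition of $\oo\eZ{}^{P,\theta}$ as a moduli of generically transverse reductions, passing through the loop-group presentation of the local model in \cite[\S 3]{BFGM}, and matching it with the Iwasawa decomposition $G = K\cdot M\cdot U^-$ used to define $\bar\mu$; conceptually this identification is the geometric shadow of the Gindikin--Karpelevich integration formula.
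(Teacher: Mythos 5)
Your proposal takes the same route as the paper: factorize to a single closed point, use smoothness of $\oo\eZ{}^{P,\theta}$ together with the IC normalization to extract the factor $q^{-\brac{\check\rho_P,\theta}}$, then apply Grothendieck--Lefschetz to reduce to counting $\bbF_q$-points of the Zastava fiber. The step you flag as the main obstacle is exactly where the paper invokes \cite[Proposition 2.6]{BFGM}: the central fiber $\oo\fZ{}^\theta_v$ of the open Zastava space is identified with the intersection of semi-infinite orbits $\Gr_{P,v}^{\theta_v}\cap\Gr_{U^-,v}$ in the affine Grassmannian, in such a way that $\oo\pi_\eZ$ becomes the restriction of the natural projection $\Gr_{P,v}\to\Gr_{M,v}$. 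Once that identification is in hand, the link to $\mu_{M,v}$ is immediate: $\bar\mu$ in \S\ref{sss:mu} is by definition the pushforward of Haar measure on $U(F_v)$ along $U(F_v)\to K_v\backslash G(F_v)/U^-(F_v)$, and under the loop-group dictionary its fibers are exactly these semi-infinite-orbit intersections, with the normalization $\on{mes}(U\cap K_v)=1$ turning measure into point count. So no new idea is needed beyond what you sketched; the gap you correctly identified is closed by one citation rather than by redoing the local-model computation of \cite[\S 3]{BFGM} by hand. (Two small cautions: the set $U(F_v)\cap K_v m_v U^-(F_v)$ need not literally be a union of right $U(\fo_v)$-cosets, so the ``set of $U(\fo_v)$-cosets'' phrasing should be replaced by the measure-theoretic statement; and you should keep track of which of $P$, $P^-$ appears in the semi-infinite orbit -- the identification in \cite[Prop.~2.6]{BFGM} matches the Iwasawa decomposition $G=K\cdot M\cdot U^-$ used to define $\bar\mu$.)
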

\begin{proof}
Let $x^\fA \in (X^\fA)_\disj(\bbF_q),\, \fA \in \cP_\theta$, denote the image of 
$(\eF_M, \beta_M)$ under $\Gr^+_{M, X^\theta} \to X^\theta$. 
Then the fiber of $x^\fA$ is isomorphic to $\prod_v \Gr^{+,\theta_v}_{M,v}$,
and the point $(\eF_M,\beta_M)$ corresponds to the collection 
$\{ m_v \in \Gr^{+,\theta_v}_{M,v}(\bbF_{q}) \}$. 
Since $\tilde\Omega(\check\fu_P)$ is a factorization algebra, it suffices to 
consider the case when $\beta_M$ is an isomorphism on $X-v$ for a fixed
closed point $v$, i.e., $x^\fA = \theta_v \cdot v$.
Therefore we can restrict our attention to the central fiber
\[ \oo \fZ{}^\theta_v := \oo\eZ{}^{P,\theta} \xt_{X^\theta} \spec(k) \]
where $\spec(k) \to X^\theta$ is the point $\theta_v \cdot v$. 
Let $\Gr_{P,v}^{\theta_v}$ denote the preimage under
$\Gr_{P,v} \to \Gr_{M/[M,M],v}$ of the point corresponding to $\theta_v$. 
By \cite[Proposition 2.6]{BFGM},
there is a natural identification $\oo\fZ{}^\theta_v \cong 
\Gr_{P,v}^{\theta_v} \cap \Gr_{U^-,v}$ such that $\oo\pi_\eZ$
corresponds to the map
\[ \Gr_{P,v}^{\theta_v} \cap \Gr_{U^-,v} \into \Gr_{P,v}
\to \Gr_{M,v}. \]
In other words, the central fiber of the open Zastava space is an
intersection of semi-infinite orbits in the affine Grassmannian.
Recall from \S\ref{sss:mu} that 
$\mu_{M,v}$ is defined as the measure of certain semi-infinite orbits. 
By Grothendieck's trace formula, we deduce that the trace
of geometric Frobenius on $*$-stalks of $(\oo\pi_\eZ)_!(\wbar\bbQ_\ell)_{\oo\eZ{}^{P,\theta}}$ equals the function $m_v \mapsto \mu_{M,v}(m_v)$. 
Since $\oo\eZ{}^{P,\theta}$ is a smooth scheme of dimension 
$\brac{2\check\rho_P,\theta}$, we have proved the lemma. 
\end{proof}

By \cite[Proposition 6.2.2]{Whatacts}, we have a Koszul duality
\[ \mathbf 1 \ot_{\tilde\Omega(\check\fu_P^-)} \mathbf 1 \cong 
\tilde\Upsilon(\check\fu_P). \]
At the level of Grothendieck groups, this tells us that 
we have an equality $[\tilde\Omega(\check\fu_P^-)] \star [\tilde\Upsilon(\check\fu_P)] = [\mathbf 1]$.
Therefore the Grothendieck function of $\tilde\Omega(\check\fu_P^-)$
is the convolution inverse of the Grothendieck function
of $\tilde\Upsilon(\check\fu_P)$. 
Since $\nu_{M,v}$ is defined to be the convolution inverse of $\mu_{M,v}$, 
Lemma~\ref{lem:f_Omega} implies Proposition~\ref{prop:geom-nu}.

\section{The Drinfeld--Lafforgue--Vinberg compactification} \label{sect:DLV}
Let $k$ be a perfect base field. In this section we review the
definition and properties of the stack $\VinBun_G$ introduced in \cite{Schieder:gen}. 
In \S\ref{ss:VinBuntrue}, we mention an alternate definition of 
$\VinBun_G$ in the general case when $[G,G]$ is not simply connected.
For $k=\bbF_q$, we use results from \emph{loc.~cit.}~to compute the 
trace of the geometric Frobenius acting on the $*$-stalks of 
the $*$-pushforward of the constant sheaf $\wbar\bbQ_\ell$ under the diagonal
morphism $\Delta$ of $\Bun_G$ (see Theorem~\ref{thm:b}). This is done by
using a certain compactification of $\Delta$, which we construct 
in \S\ref{ss:compactify}.

\subsection{The Deconcini--Procesi--Vinberg semigroup} 
Set $T_\adj := T/Z(G)$, where $Z(G)$ is the center of $G$. 
The simple roots identify $T_\adj$ with $\bbG_m^{\abs{\Gamma_G}}$.  

Let $\wbar{G_\enh}$ denote the Vinberg semigroup of $G$, which admits
a homomorphism 
\[ \bar \pi : \wbar{G_\enh} \to \wbar{T_\adj} \] 
where $\wbar{T_\adj} := (\bbA^1)^{\abs{\Gamma_G}}$.
Any representation of $G_\enh$ decomposes into ones of the form $V \ot k_{\check\lambda}$
where $V \in \Rep(G)$ and $\check\lambda \in \check\Lambda$ is a weight of $T$,
such that for all weights $\check\mu$ of $V$, the difference $\check\lambda - \check\mu$ belongs to
the root lattice. By definition, $V \ot k_{\check\lambda} \in \Rep(\wbar{G_\enh})$
if and only if $\check\lambda-\check\mu \in \check\Lambda^\pos_G$ for all weights $\check\mu$
of $V$.

\subsubsection{} We recall some facts whose proofs can be found in 
\cite[\S 8]{Vinberg} in the characteristic zero case and in \cite{Ritt} in general. 
Let $V(\check\lambda)$ denote the irreducible $G$-module of highest weight $\check\lambda \in \check\Lambda^+_G$. 

Following \cite[Lemma D.4.2, Definition D.4.3]{DG:CT}, we 
let $\oo{\wbar{G_\enh}}$ denote the non-degenerate locus of $\wbar{G_\enh}$. By definition, this is the open subscheme of $\wbar{G_\enh}$ whose 
$\bar k$-points are the elements $g \in \wbar{G_\enh}(\bar k)$ with
nonzero action on $V(\check\lambda) \ot \bar k_{\check\lambda}$ 
for all dominant weights $\check\lambda \in \check\Lambda^+_G$.

It is known that $\overset\circ{\wbar{G_\enh}}$ is smooth over $\wbar{T_\adj}$.
The choice of Cartan subgroup $T \subset G$ defines a section $\fs : T_\adj \to G_\enh$ by $\fs(t) = (t^{-1},t)$, which extends
to a homomorphism of monoids 
\[ \bar\fs : \wbar{T_\adj} \to \wbar{G_\enh} \]
with image contained in $\overset\circ{\wbar{G_\enh}}$ (cf.~\cite[Lemma D.5.2]{DG:CT}). 
The $G \xt G$-action on $\oo{\wbar{G_\enh}}$ gives an equality 
(\cite[Corollary D.5.4]{DG:CT})
\[ \oo{\wbar{G_\enh}} = G \cdot \bar\fs(\wbar{T_\adj}) \cdot G. \]

\subsubsection{}
For a standard parabolic $P$ with Levi subgroup $M$, let  
$\mathbf{c}_P \in \wbar{T_\adj}$ be the point defined by the condition
that $\check\alpha_i(\mathbf{c}_P)=1$ for simple roots $\check\alpha_i$ contained inside $M$,
and $\check\alpha_i(\mathbf{c}_P)=0$ for all other simple roots. 

There is a canonical $T$-stable stratification of $\wbar{T_\adj}$ indexed by 
standard parabolics, with the point $\mathbf c_P$ contained in the stratum $(\wbar{T_\adj})_P$ corresponding to the parabolic $P$. 
In other words, 
\[ (\wbar{T_\adj})_P = \{ t \in \wbar{T_\adj} \mid 
\check\alpha_i(t) \ne 0,\, i\in \Gamma_M \text{ and }\check\alpha_j(t)=0,\,
j \in \Gamma_G - \Gamma_M \}. \] 
The $T$-action on $\mathbf c_P$ induces an isomorphism $T/Z(M) \cong (\wbar{T_\adj})_P$.
Define 
\[ (\wbar{T_\adj})_{\ge P} = \{ t \in \wbar{T_\adj} \mid 
\check\alpha_i(t) \ne 0,\, i\in \Gamma_M \} \]
to be the open locus of $\wbar{T_\adj}$ obtained by removing all 
strata corresponding to parabolic subgroups not containing $P$.

\begin{eg} Let $G = \on{SL}(2)$. Then $G_\enh = \on{GL}(2)$
and $\wbar{G_\enh} = \on{Mat}(2)$, the monoid of $2 \xt 2$ matrices.
In this case $T_\adj = \bbG_m,\, \wbar{T_\adj} = \bbA^1$, and
$\bar\pi : \on{Mat}(2) \to \bbA^1$ is the determinant map.
The non-degenerate locus $\oo{\wbar{G_\enh}} = \on{Mat}(2) - \{0\}$
is the open subset of nonzero matrices. 
Let $B$ equal the Borel of upper triangular matrices and identify 
$T$ with the subgroup of diagonal matrices in $G$. 
The section $\bar\fs$ corresponding to $(B,T)$ 
is the map $\bbA^1 \to \on{Mat}(2)$ sending $x \mapsto \left( \begin{smallmatrix} 1 & 0 \\ 0 & x \end{smallmatrix} \right)$.
The idempotent $\mathbf c_G$ equals $1 \in \bbA^1$, and 
$\mathbf c_B$ equals $0$.
\end{eg}

\subsubsection{}  \label{sss:Tsplit}
Note that the projection $T_\adj = T/Z(G) \to T/Z(M) = \bbG_m^{\abs{\Gamma_M}}$ has a natural splitting
$T/Z(M) \into T/Z(G)$ corresponding to the inclusion $\bbG_m^{\abs{\Gamma_M}} \xt \{1\} \into \bbG_m^{\abs{\Gamma_G}}$.
Set $\wbar{T/Z(M)} := (\bbA^1)^{\abs{\Gamma_M}}$. 
We have a decomposition $T_\adj = (T/Z(M)) \xt (Z(M)/Z(G))$, which 
extends to a decomposition 
\[ \wbar{T_\adj} = \wbar{T/Z(M)} \xt \wbar{Z(M)/Z(G)}, \] 
where $\wbar{Z(M)/Z(G)} \cong (\bbA^1)^{\abs{\Gamma_G} - \abs{\Gamma_M}}$
is the closure of $Z(M)/Z(G) \subset T_\adj$ in $\wbar{T_\adj}$.
Under the above decomposition, the point $(1,0)$ corresponds to $\mathbf c_P$,
the stratum $(\wbar{T_\adj})_P$ corresponds to $T/Z(M) \xt \{0\} = \bbG_m^{\abs{\Gamma_M}} \xt \{0\}$, and
$(\wbar{T_\adj})_{\ge P} = T/Z(M) \xt \wbar{Z(M)/Z(G)} = \bbG_m^{\abs{\Gamma_M}} \xt (\bbA^1)^{\abs{\Gamma_G} - \abs{\Gamma_M}}$. 

\subsection{The stack $\VinBun_G$}
Following \cite{Schieder:gen}, the stack 
\[ \VinBun_G  \subset \Maps(X, G \bs \wbar{G_\enh} / G) \]
is the open substack\footnote{The definition of $\VinBun_G$ is deceptively similar to that of $\tilde\eH_M$ in \S\ref{ss:tH_M}. We note the differences: in the definition of $\VinBun_G$, we consider the quotient stack by $G \xt G$ and not $G_\enh \xt G_\enh$. More importantly, the non-degenerate locus $\oo{\wbar{G_\enh}}$ is larger than the subgroup $G_\enh$.} of maps generically landing in the non-degenerate locus
$\oo{\wbar{G_\enh}}$.
For a test scheme $S$, an $S$-point of $\VinBun_G$ is a datum of
$(\eF_G^1, \eF_G^2, \beta)$, where 
$\eF_G^1, \eF_G^2$ are $G$-bundles on $X \xt S$ and 
\[ \beta : X \xt S \to \wbar{G_\enh} \xt^{G \xt G} (\eF_G^1 \xt_{X \xt S} \eF_G^2) \] 
is a section over $X \xt S$ that generically lands 
in $\oo{\wbar{G_\enh}}$ over every geometric point of $S$.
We call such a section $\beta$ a $\wbar{G_\enh}$-morphism $\eF_G^2 \to \eF_G^1$.

The map $\bar\pi$ induces a map 
\[ \bar \pi_{\Bun} : \VinBun_G \to \Maps(X,\wbar{T_\adj}) = \wbar{T_\adj},\] 
where
the last equality holds because $X$ is proper and geometrically connected.
Let $\VinBun_{G,P}$ (resp.~$\VinBun_{G,\ge P}$) denote the preimage of $(\wbar{T_\adj})_P$
(resp.~$(\wbar{T_\adj})_{\ge P}$) under $\bar\pi_{\Bun}$.
Note that $\VinBun_{G,\ge P}$ contains the open stratum $\VinBun_{G,G}$.

\subsection{The $T_\adj$-action on $\VinBun_G$} \label{sect:Tact}
In what follows, we will define a canonical action of $T_\adj$ on $\VinBun_G$ which is equivariant with respect to $\bar \pi_{\Bun}$ and the identity action on $T_\adj$. 

\subsubsection{}  \label{sss:stackquot}
Suppose we have an exact sequence of algebraic groups 
\[ 1 \to H' \to H \to H'' \to 1 \]
and an action of $H$ on a $k$-scheme $Y$. Then the stack $H' \bs Y$
is an $H''$-torsor over the stack $H \bs Y$: indeed, the morphism 
$H' \bs Y \to H \bs Y$ is obtained by base change from the $H''$-torsor $H' \bs \on{pt} \to H \bs \on{pt}$, where $\on{pt} = \spec(k)$. 

\smallskip

In particular, $H''$ acts on $H'\bs Y$ over $H \bs Y$. One can think of
this action as follows. An $S$-point of $H \bs Y$ is an 
$H$-torsor $\eF_H \to S$ equipped with an $H$-equivariant morphism $\eF_H \to Y$. 
Lifting these data to a morphism $S \to H' \bs Y$ is the same as specifying
an $H'$-structure on $\eF_H$, which is the same as specifying an $H$-equivariant morphism $\eF_H \to H''$. The set of all such morphisms $\eF_H \to H''$
is equipped with an action of $H''(S)$ (by right translations). 

\subsubsection{} 
Applying the discussion above to $Y = \wbar{G_\enh}$ and the exact sequence 
\[ 1 \to G \xt G \to G_\enh \xt G_\enh \to T_\adj \xt T_\adj \to 1 \]
(so $H = G_\enh \xt G_\enh$, $H' = G \xt G$, and $H'' = T_\adj \xt T_\adj$), 
one gets a canonical action of $T_\adj \xt T_\adj$ on $G \bs \wbar{G_\enh} / G$. We will be considering only the action of $T_\adj = T_\adj \xt \{1\} 
\subset T_\adj \xt T_\adj$ (which comes from the action of $G_\enh$ on $\wbar{G_\enh}$ by left translations). 

\subsubsection{}
The $T_\adj$-action on $G \bs \wbar{G_\enh} / G$ preserves 
$G \bs \oo{\wbar{G_\enh}} / G$, so it induces a $T_\adj$-action 
on $\VinBun_G \subset \Maps(X, G \bs \wbar{G_\enh} / G)$.

\medskip

This action can be described explicitly as follows.
A $G$-bundle $\eF_G$ on $X\xt S$ is equivalent to a $G_\enh$-bundle $\eF_{G_\enh}$ on $X \xt S$ together with a trivialization of the induced $T_\adj$-bundle $\pi(\eF_{G_\enh})$. 
The group $T_\adj(S)$ acts on the space of such trivializations, so
$T_\adj$ acts on $\VinBun_G$ by leaving the $G_\enh$-bundle $\eF^1_{G_\enh}$ induced by 
$\eF^1_G$ fixed and changing the trivialization of $\pi(\eF^1_{G_\enh})$.

\subsection{Fiber bundles} \label{sect:fiberbundles}
Fix a standard parabolic subgroup $P$, and consider the open locus
$\VinBun_{G,\ge P}$ lying over 
\[ (\wbar{T_\adj})_{\ge P} = T/Z(M) \xt \wbar{Z(M)/Z(G)}. \]
Let $\VinBun_{G,\ge P,\strict} = \bar\pi_{\Bun}^{-1}(\{1\}\xt\wbar{Z(M)/Z(G)})$.
Since we have the splitting $T/Z(M) \into T_\adj$ (see \S\ref{sss:Tsplit}), 
the $T_\adj$-action on $\VinBun_G$ defined in \S\ref{sect:Tact} restricts to a 
$T/Z(M)$-action on $\VinBun_{G,\ge P}$. This action 
induces an isomorphism
\begin{equation} \label{eqn:>=Pfiber}
    \VinBun_{G,\ge P} \cong \VinBun_{G,\ge P,\strict} \xt  (T/Z(M)),
\end{equation}
i.e., $\VinBun_{G,\ge P}$ is a trivial fiber bundle over the
projection to $T/Z(M)$. 

Note that $\mathbf c_P$ is the zero element in $\wbar{Z(M)/Z(G)}$.
Let $(\wbar{G_\enh})_{\mathbf c_P}$ denote the fiber of $\bar \pi$ over $\mathbf c_P$. 
Then $\VinBun_{G,\mathbf c_P} := \bar\pi_{\Bun}^{-1}(\mathbf c_P)$ 
is equal to the stack
\[ \Maps^\circ(X, G\bs (\wbar{G_\enh})_{\mathbf c_P} / G) \]
where the superscript $^\circ$ denotes the open substack of maps generically landing in the
non-degenerate locus.
Intersecting \eqref{eqn:>=Pfiber} with the $P$-locus gives 
\begin{equation} \label{eqn:Pfiber}
	\VinBun_{G,P} \cong \VinBun_{G,\mathbf c_P} \xt (T/Z(M)). 
\end{equation}
In the case $P=G$, the $G$-locus $\VinBun_{G,G}$ is isomorphic to $\Bun_G \xt T_\adj$.

\subsubsection{}
It is known (cf.~\cite[Appendix C]{DG:CT}) that the $G \xt G$-action on $\bar\fs(\mathbf{c}_P) \in \oo{\wbar{G_\enh}}$ 
induces an isomorphism 
\[ \bbX_P := (G \xt G)/(P \xt_M P^-) \cong (\oo{\wbar{G_\enh}})_{\mathbf c_P}.\] 
We learned of the following lemma from \cite[Lemma 2.1.11]{Schieder:gen}.

\begin{lem}  \label{lem:X_PVin}
	The variety $(\wbar{G_\enh})_{\mathbf c_P}$ is isomorphic to $\wbar\bbX_P$.
\end{lem}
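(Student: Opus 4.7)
The plan is to leverage the identification $\bbX_P \cong (\oo{\wbar{G_\enh}})_{\mathbf c_P}$ recalled in the paragraph preceding the lemma, which yields an open immersion
\[ \iota : \bbX_P \hookrightarrow (\wbar{G_\enh})_{\mathbf c_P}. \]
First, I observe that $(\wbar{G_\enh})_{\mathbf c_P}$ is affine, being a scheme-theoretic fiber of the affine morphism $\bar\pi : \wbar{G_\enh} \to \wbar{T_\adj}$ between affine schemes. Next I would verify that the image of $\iota$ is dense: this reduces to checking that the fiber is irreducible and that the open subset $(\oo{\wbar{G_\enh}})_{\mathbf c_P}$ is non-empty, both of which should follow from the structure of $\wbar{G_\enh}$ recalled above, in particular from the decomposition $\oo{\wbar{G_\enh}} = G \cdot \bar\fs(\wbar{T_\adj}) \cdot G$ together with the fact that $\bar\fs(\mathbf c_P)$ lies in the fiber.

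Density then yields a restriction injection of coordinate rings
\[ \iota^* : k[(\wbar{G_\enh})_{\mathbf c_P}] \hookrightarrow k[\bbX_P] = k[\wbar\bbX_P], \]
and hence a dominant morphism $\varphi : \wbar\bbX_P \to (\wbar{G_\enh})_{\mathbf c_P}$ extending $\iota$. To prove the lemma it remains to show that $\iota^*$ is surjective, equivalently that $\varphi$ is an isomorphism.

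For the surjectivity I would proceed by establishing (a) that $(\wbar{G_\enh})_{\mathbf c_P}$ is normal, and (b) that the complement $(\wbar{G_\enh})_{\mathbf c_P} - \iota(\bbX_P)$ has codimension at least two in $(\wbar{G_\enh})_{\mathbf c_P}$. Together these imply, via Hartogs' extension, that every regular function on $\iota(\bbX_P)$ extends uniquely to $(\wbar{G_\enh})_{\mathbf c_P}$, giving the needed surjectivity of $\iota^*$.

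The main obstacle is verifying (a) and (b). Normality should follow from the smoothness of $\oo{\wbar{G_\enh}}$ over $\wbar{T_\adj}$ (recalled above) combined with Serre's $R_1 + S_2$ criterion, after inspecting codimensions of singular strata in the boundary. The codimension bound (b) requires analysing the intersection of the degenerate locus $\wbar{G_\enh} - \oo{\wbar{G_\enh}}$ with the fiber over $\mathbf c_P$, using the $G \times G$-orbit stratification of $\wbar{G_\enh}$ and the explicit form of $\bar\fs$. As a more direct alternative that bypasses these geometric considerations, one may match the two coordinate rings using representation theory: use the Peter--Weyl-type decomposition of $k[\wbar{G_\enh}]$ as a $G \times G$-module summed over dominant weights of $G_\enh$, compute the quotient by the maximal ideal of $k[\wbar{T_\adj}]$ at $\mathbf c_P$, and compare with the description $k[\wbar\bbX_P] = (k[\wbar{G/U}] \otimes k[\wbar{G/U^-}])^M$ coming from the definition of $\bbX_P$ together with the quasi-affineness of $G/U$ and $G/U^-$.
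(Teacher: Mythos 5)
Your primary plan — pass to the non-degenerate open locus $\bbX_P \cong (\oo{\wbar{G_\enh}})_{\mathbf c_P}$, show it is dense, and then extend regular functions across a codimension-$\ge 2$ complement using normality of the fiber — is exactly the strategy the paper takes. The codimension bound is also handled in the same way, by inspecting the $G \times G$-orbit stratification of $\wbar{G_\enh}$ (the paper cites Rittatore's classification of orbits, which in characteristic $0$ agrees with Vinberg's).

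The one place you diverge is the normality of $(\wbar{G_\enh})_{\mathbf c_P}$, and this is also where your sketch has a genuine gap. The paper simply cites \cite[Theorem 7]{Ritt}, which asserts directly that the scheme-theoretic fibers of $\bar\pi : \wbar{G_\enh} \to \wbar{T_\adj}$ are irreducible and normal; this is a nontrivial fact about the Vinberg semigroup and is not something that falls out of the data you have listed. Your proposal to reprove it via Serre's $R_1 + S_2$ criterion is incomplete: the smoothness of $\oo{\wbar{G_\enh}}$ over $\wbar{T_\adj}$ together with the codimension bound gives $R_1$, but $S_2$ requires an independent input (e.g.\ Cohen–Macaulayness of the fiber, or the flatness of $\bar\pi$ together with $S_2$ of the total space and the base, neither of which is formal). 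Moreover, before Serre's criterion can even be applied, one must know the scheme-theoretic fiber is reduced — also not automatic — and in practice this is supplied by the same Rittatore theorem. You do flag ``(a) and (b)'' as the main obstacle, which is accurate, but as written the $S_2$ and reducedness parts are unsupported. Your representation-theoretic alternative (matching Peter–Weyl decompositions of $k[\wbar{G_\enh}]/\mathfrak{m}_{\mathbf c_P}$ and $k[\wbar\bbX_P]$) is a genuinely different route and could in principle bypass normality, but it too needs reducedness/flatness of the fiber to identify the quotient ring with the coordinate ring of a variety; it would also require a careful comparison of Weyl versus dual Weyl filtrations in positive characteristic that the paper avoids by citation.
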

\begin{proof}
	By \cite[Theorem 7]{Ritt}, the irreducible affine variety $(\wbar{G_\enh})_{\mathbf c_P}$
	is normal. 
	Since $(\wbar{G_\enh})_{\mathbf c_P}$ contains 
	the non-degenerate locus $(\oo{\wbar{G_\enh}})_{\mathbf c_P} \cong \bbX_P$ as a dense open subscheme, 
the extension property of regular functions on normal varieties implies that
it suffices to show
that the degenerate locus in $(\wbar{G_\enh})_{\mathbf c_P}$ has codimension at least $2$.
This can be checked by considering the combinatorial description
of $G\xt G$-orbits in $\wbar{G_\enh}$ from \cite[Theorem 6]{Ritt}. 
In characteristic $0$, this is the same combinatorial description as in \cite{Vinberg}.
\end{proof}

Lemma~\ref{lem:X_PVin} implies that $\VinBun_{G,\mathbf c_P}$ is isomorphic
to $\Maps^\circ( X, G \bs \wbar \bbX_P / G)$, where the superscript $^\circ$ denotes
the open locus of maps generically landing in $G \bs \bbX_P / G$. 

\subsection{Defect stratification} \label{sect:defect}
Define the closed embedding $M \into \bbX_P$ as the composition 
of the closed embeddings $M \into G/U : m \mapsto mU$ and $G/U \into \bbX_P : g \mapsto (g,1)$. 
From \cite[Corollary 4.1.5]{Wa} we know that $M \into \bbX_P$ extends 
to a closed embedding $\wbar M \into \wbar \bbX_P$.
This induces a map 
\[ 
	\Maps^\circ( X, P \bs \wbar M / P^-) \to \Maps^\circ(X, G \bs \wbar \bbX_P / G)  = \VinBun_{G,\mathbf c_P}, 
\]
where $\Maps^\circ(X, P \bs \wbar M / P^-)$ is the stack of 
maps $X \xt S \to P \bs \wbar M / P^-$ that generically land in 
$P \bs M / P^-$ over every geometric point of $S$.

\subsubsection{}
Let $\eH^+_M = \Maps^\circ(X, M \bs \wbar M / M)$ denote the stack introduced in \S\ref{sect:H_M}. 
Recall that there are two maps $\overset\leftarrow h, \overset\rightarrow h : \eH^+_M \to \Bun_M$.
Observe that there is a canonical isomorphism
\[ \Maps^\circ( X, P \bs \wbar M / P^- ) \cong \Bun_P \xt_{\Bun_M, \overset\leftarrow h} \eH_M^+ \xt_{\overset\rightarrow h, \Bun_M} 
\Bun_{P^-}. \]
Thus we have a map of stacks 
\begin{equation} \label{eqn:strat}
 \Bun_P \xt_{\Bun_M} \eH_M^+ \xt_{\Bun_M} \Bun_{P^-} \to \VinBun_{G,\mathbf c_P}.
\end{equation}

\subsubsection{}

Let $\Lambda_{G,P} = \pi_1(M)$ denote the quotient of $\Lambda$ by the 
subgroup generated by the coroots of $M$. 
Recall that there is a bijection $\pi_0(\Bun_M) \cong \Lambda_{G,P}$. 
Let $\Bun_M^\mu,\, \mu \in \Lambda_{G,P}$ denote the corresponding
connected component consisting of $M$-bundles of degree $\mu$.  

Let $\Bun_P^\mu$ (resp.~$\Bun_{P^-}^\lambda$) denote the preimage of $\Bun_M^\mu$ 
(resp.~$\Bun_M^\lambda$) under the projection $\Bun_P \to \Bun_M$ 
(resp.~$\Bun_{P^-} \to \Bun_M$).

\subsubsection{} 
For $\mu,\lambda \in \Lambda_{G,P}$, let 
$\eH_M^{+,\mu,\lambda}$ denote the preimage of $\Bun_M^\mu \xt \Bun_M^\lambda$
under $(\overset\leftarrow h, \overset\rightarrow h)$. 
One can check using Remark~\ref{rem:heckeHN} that $\eH_M^{+,\mu,\lambda}$ is nonempty if and only if $\mu-\lambda$ lies in the image of $\Lambda^{\pos,\bbQ}_{U}\cap \Lambda$ under the projection $\Lambda \to \Lambda_{G,P}$.

\begin{prop}[{\cite[Proposition 3.2.2]{Schieder:gen}}] 
(i) 
For $\mu,\lambda \in \Lambda_{G,P}$, the restriction of \eqref{eqn:strat} to the corresponding substack 
\begin{equation} \label{eqn:defect} 
	\Bun_P^\mu \xt_{\Bun^\mu_M} \eH_M^{+,\mu,\lambda} \xt_{\Bun^\lambda_M} \Bun_{P^-}^\lambda
	\to \VinBun_{G,\mathbf c_P}
\end{equation}
is a locally closed embedding. 
Let $\VinBun_{G,\mathbf c_P}^{\mu,\lambda}$ denote the corresponding locally
closed substack. 

(ii) The locally closed substacks $\VinBun_{G,\mathbf c_P}^{\mu,\lambda}$
form a stratification of $\VinBun_{G,\mathbf c_P}$. In particular,
the map \eqref{eqn:strat} is a bijection at the level of $k$-points. 
\end{prop}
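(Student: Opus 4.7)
The plan is to construct a pointwise inverse to \eqref{eqn:strat}, by producing a canonical ``defect datum'' from any point of $\VinBun_{G,\mathbf c_P}$, and then to verify the geometric properties required for (i) and (ii). Throughout, I will use Lemma~\ref{lem:X_PVin} to identify $\VinBun_{G,\mathbf c_P}$ with $\Maps^\circ(X, G \bs \wbar\bbX_P / G)$.

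First I would construct the defect map on $S$-points. Given $(\eF_G^1, \eF_G^2, \beta) \in \VinBun_{G,\mathbf c_P}(S)$, the section $\beta$ lands generically in $G \bs \bbX_P / G$, which by $\bbX_P = (G \xt G)/(P \xt_M P^-)$ is the classifying stack of $(P \xt_M P^-)$-bundles. Hence over the open locus $V \subset X \xt S$ of non-degeneracy, $\beta$ furnishes a reduction of $\eF_G^1 \xt_S \eF_G^2|_V$ to $P \xt_M P^-$, equivalently generic reductions $\eF_P|_V \subset \eF_G^1|_V$ and $\eF_{P^-}|_V \subset \eF_G^2|_V$ together with a generic identification of their induced $M$-bundles. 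Properness of $G/P$ and $G/P^-$ shows that these generic reductions extend uniquely to global reductions $\eF_P, \eF_{P^-}$ on all of $X \xt S$.

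Next I would recover the global $\wbar M$-morphism. The key structural input is that under the closed embedding $\wbar M \into \wbar\bbX_P$ of \S\ref{sss:GembedX}, the map of stacks
\[ P \bs \wbar M / P^- \to G \bs \wbar\bbX_P / G \]
identifies the source with the closed substack of $G \bs \wbar\bbX_P/G$ carrying the canonical $(P \xt_M P^-)$-reduction extending the trivial reduction of the base point. (This is a consequence of the $G \xt G$-orbit analysis of $\wbar\bbX_P$ carried out in \cite{Wa}, together with the fact that the stabilizer of $(1,1) \in \bbX_P$ is exactly the diagonal copy of $M$ inside $P \xt_M P^-$.) Applied fiberwise, the extended reductions together with $\beta$ produce a section $X \xt S \to \wbar M \xt^{M \xt M}(\eF_M^1 \xt_{X \xt S} \eF_M^2)$ which is generically a genuine $M$-isomorphism; this is precisely an $S$-point of $\Bun_P \xt_{\Bun_M} \eH_M^+ \xt_{\Bun_M} \Bun_{P^-}$. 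Uniqueness of all steps makes the assignment $\beta \mapsto (\eF_P, \eF_{P^-}, \beta_M)$ functorial and inverse to \eqref{eqn:strat} on points.

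Finally I would deduce (i) and (ii). The degrees $\mu = \deg \eF_P$, $\lambda = \deg \eF_{P^-}$ are locally constant in families, so the union of images of \eqref{eqn:defect} for all $(\mu,\lambda) \in \Lambda_{G,P} \xt \Lambda_{G,P}$ is disjoint, and the pointwise inverse exhibits the image as a set-theoretic stratification with each \eqref{eqn:defect} a bijection onto its image. To promote this to a locally closed embedding, I would use that $\Bun_P \to \Bun_M$ and $\Bun_{P^-} \to \Bun_M$ are smooth and $\eH_M^+ \to \Bun_M \xt \Bun_M$ is schematic, quasi-affine, of finite presentation by Proposition~\ref{prop:heckefin}, while the canonical nature of the inverse shows that \eqref{eqn:defect} is radicial; closedness of each stratum inside the union of strata of higher defect (i.e., those indexed by $(\mu',\lambda')$ with $\mu' \le_G^\bbQ \mu$ and $\lambda \le_G^\bbQ \lambda'$) then follows by a degeneration argument along the $T_\adj$-action.

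The main obstacle is the extension step and the identification of $P \bs \wbar M / P^-$ as a stratum-wise model for $G \bs \wbar\bbX_P / G$: this is the geometric heart of the result and requires the full orbit description of $\wbar\bbX_P$ from \cite{Wa}. Once this structural statement is in hand, the rest of the proof is a direct verification.
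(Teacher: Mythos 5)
First, note that the paper does not give a proof of this proposition: it is stated with the attribution \cite[Proposition 3.2.2]{Schieder:gen} and the paper relies on that reference. So there is no ``paper's own proof'' to compare against; I can only assess your proposal on its own merits.

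Your overall strategy — identify $\VinBun_{G,\mathbf c_P}$ with $\Maps^\circ(X,G\bs\wbar\bbX_P/G)$, extract canonical generic $(P\times_M P^-)$-reductions from $\beta$, extend them across $X$ by properness of $G/P$ and $G/P^-$, and then factor $\beta$ through $P\bs\wbar M/P^-$ to read off the Hecke datum $\beta_M$ — is the right skeleton, and it is the kind of argument that does appear in Schieder's treatment. The weak point you flag yourself, namely that once the reductions are fixed, $\beta$ lands in the closed substack $P\bs\wbar M/P^-\subset G\bs\wbar\bbX_P/G$, is indeed the geometric heart; one needs the full $(G\times G)$-orbit structure of $\wbar\bbX_P$ (or equivalently of the Vinberg fiber $(\wbar{G_\enh})_{\mathbf c_P}$, via Lemma~\ref{lem:X_PVin}) and in particular the fact that $\wbar M$ is a transversal to the $(P\times_M P^-)$-reduced orbits. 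As written this is a black box, but you correctly identify it as such.

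There is, however, one concrete error. Your closure/closedness step invokes ``a degeneration argument along the $T_\adj$-action.'' The $T_\adj$-action on $\VinBun_G$ is equivariant for $\bar\pi_{\Bun}:\VinBun_G\to\wbar{T_\adj}$, so it does \emph{not} preserve the fiber $\VinBun_{G,\mathbf c_P}=\bar\pi_{\Bun}^{-1}(\mathbf c_P)$; it moves $\mathbf c_P$ around inside the stratum $(\wbar{T_\adj})_P$ (this is precisely what gives the product decomposition~\eqref{eqn:Pfiber}). It therefore cannot be used to compare defect strata \emph{within} $\VinBun_{G,\mathbf c_P}$. The closure relations among the $\VinBun_{G,\mathbf c_P}^{\mu,\lambda}$ must instead be established by degenerating the $\wbar M$-morphism $\beta_M$ itself — colliding or creating defect points — as in the Zastava/Drinfeld-compactification picture of \cite{BG, BFGM}; the factorization structure of $\eH^+_M$ is the natural tool, not a torus action. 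Relatedly, the indexing of the ``higher defect'' strata you write down ($\mu'\le_G^\bbQ\mu$ and $\lambda\le_G^\bbQ\lambda'$) should be re-derived from that mechanism rather than asserted. Finally, for statement (i) a $k$-point bijection plus the listed formal properties does not automatically yield a locally closed embedding of stacks; one needs to exhibit the strata as the loci where certain Pl\"ucker-type maps have prescribed zero locus (so the strata are cut out by open and closed conditions), which is again a finer argument than the sketch indicates.
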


We call the stratification above the \emph{defect stratification} of 
$\VinBun_{G,\mathbf c_P}$. 
By \eqref{eqn:Pfiber}, we have an identical stratification
of $\VinBun_{G,P}$ with strata 
\[ \VinBun_{G,P}^{\mu,\lambda} \cong 
\VinBun_{G,\mathbf c_P}^{\mu,\lambda} \xt (T/Z(M)).\]

\subsection{Remarks on the case when $[G,G]$ is not simply connected} 
\label{ss:VinBuntrue}
Note that we have not assumed that $[G,G]$ is simply connected in this Appendix (unlike in Appendices \ref{s:appendixHecke}--\ref{s:factorization}). 
Without this assumption, it is possible for the image of $\Lambda^{\pos,\bbQ}_{U} \cap \Lambda$ in $\Lambda_{G,P}$ to be larger than $\Lambda^\pos_{G,P}$. 
Then Lemma~\ref{lem:Gstratclosure} below shows that the $G$-stratum $\VinBun_{G,G}$ is not necessarily dense in $\VinBun_G$ when $[G,G]$ is not simply connected. 
\medskip

We define a slightly different stack $\VinBun^\true_G$.
We suggest that $\VinBun^\true_G$ is the more ``philosophically correct'' definition for $\VinBun_G$ when $[G,G]$ is not simply connected.

\begin{rem}
We will continue using the original stack $\VinBun_G$ in the rest of this article (for arbitrary $G$) as it suffices for our purposes, but it is also possible to work directly with $\VinBun^\true_G$ everywhere. 
\end{rem}

\subsubsection{} The idea is to replace the Vinberg semigroup $\wbar{G_\enh}$, which is an algebraic monoid, by its stacky version, which is an algebraic monoidal stack\footnote{An algebraic monoidal stack $\eY$ is an 
algebraic stack $\eY$ with a coherently associative composition law $\eY \xt \eY \to \eY$ and a morphism $\spec(k) \to \eY$ such that for any scheme $S$, the composition $S \to \spec(k) \to \eY$ is a unit object of $\eY(S)$.}. 

Let $G^\ssc$ denote the universal cover of $[G,G]$ (as algebraic groups).
Then $Z(G^\ssc)$ is a finite group scheme containing $\ker(G^\ssc \to G)$.
Since $G = Z(G) \cdot [G,G]$, we have an isomorphism
\begin{equation} \label{e:Gssc-twist}
    G \cong (G^\ssc \xt Z(G)) / Z(G^\ssc),
\end{equation}
where $Z(G^\ssc)$ is embedded in $G^\ssc \xt Z(G)$ anti-diagonally.
Furthermore, the following is well-known (cf.~\cite{Vinberg, Ritt98}):
\begin{lem}\label{lem:VinGIT}
The isomorphism \eqref{e:Gssc-twist} extends to an isomorphism of the Vinberg semigroup $\wbar{G_\enh}$ with the the GIT quotient of $\wbar{G^\ssc_\enh} \xt Z(G)$ by the anti-diagonal action of $Z(G^\ssc)$. 
\end{lem}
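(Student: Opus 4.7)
The strategy is to construct a canonical morphism $\phi \colon \wbar{G^\ssc_\enh} \xt Z(G) \to \wbar{G_\enh}$ extending the multiplication $G^\ssc \xt Z(G) \to G \subset G_\enh$ at the level of groups of units, verify its $Z(G^\ssc)$-invariance under the anti-diagonal action, and then show that the induced map on the GIT quotient is an isomorphism.

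First I would construct $\phi$ Tannakically. A representation of $\wbar{G_\enh}$ is a pair $(V, \check\lambda)$ with $V \in \Rep(G)$ and $\check\lambda \in \check\Lambda$ satisfying the dominance condition $\check\lambda - \check\mu \in \check\Lambda^\pos_G$ for every weight $\check\mu$ of $V$. Pulling back along the multiplication $G^\ssc \xt Z(G) \to G$ and extending $\check\lambda$ to a character of $G^\ssc_\enh \xt Z(G)$ via the canonical factorization $\check\lambda = \check\lambda|_{T^\ssc} \cdot \check\lambda|_{Z(G)}$ (using the equality $\check\Lambda^\pos_G = \check\Lambda^\pos_{G^\ssc}$) produces a representation of $\wbar{G^\ssc_\enh} \xt Z(G)$ satisfying the dominance condition with respect to the Vinberg semigroup structure of each factor. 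This defines the morphism $\phi$ at the level of coordinate rings, or equivalently of functors of points.

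Second, on the open dense subscheme $G^\ssc \xt Z(G) \subset \wbar{G^\ssc_\enh} \xt Z(G)$, the map $\phi$ agrees with the composition $G^\ssc \xt Z(G) \to G \into G_\enh \into \wbar{G_\enh}$, which is $Z(G^\ssc)$-invariant for the anti-diagonal action precisely because $Z(G^\ssc)$ is the kernel of the multiplication $G^\ssc \xt Z(G) \to G$. Since $\wbar{G_\enh}$ is separated and $G^\ssc \xt Z(G)$ is schematically dense in the irreducible source, the invariance extends to all of $\wbar{G^\ssc_\enh} \xt Z(G)$. Hence $\phi$ factors through the GIT quotient, yielding $\bar\phi \colon (\wbar{G^\ssc_\enh} \xt Z(G))//Z(G^\ssc) \to \wbar{G_\enh}$.

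Finally, I would show $\bar\phi$ is an isomorphism by comparing coordinate rings. Using the Peter--Weyl-type decomposition
\[ k[\wbar{G^\ssc_\enh}] = \bigoplus_{(V(\check\mu), \check\lambda)} V(\check\mu)^* \ot V(\check\mu) \ot k_{\check\lambda}, \]
summed over pairs satisfying the appropriate dominance condition for $G^\ssc$, and the character decomposition $k[Z(G)] = \bigoplus_\chi k_\chi$, one computes the $Z(G^\ssc)$-invariants in the tensor product and checks that the surviving summands are exactly those pairs $(V(\check\mu), \check\lambda + \chi)$ satisfying the dominance condition $\check\lambda + \chi - \check\mu \in \check\Lambda^\pos_G$ cutting out $\Rep(\wbar{G_\enh})$. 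The hard part of the argument is this combinatorial matching: one must verify that the $Z(G^\ssc)$-invariance condition on the tensor factors (equivalently, that the difference of the $G^\ssc_\enh$-weight and the $Z(G)$-character restricts trivially to $Z(G^\ssc)$) is equivalent to the root-lattice/positivity condition defining $\Rep(\wbar{G_\enh})$. Once this matching is established, $\bar\phi$ induces an isomorphism of coordinate rings; alternatively, having checked birationality and normality of both sides, Renner's classification (invoked at the start of \S\ref{sect:tildeM-mor}) together with an identification of Renner cones provides the same conclusion.
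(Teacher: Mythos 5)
The paper itself gives no proof of this lemma and simply cites \cite{Vinberg, Ritt98} as the reference for a well-known fact, so there is no internal argument to compare against. Evaluating your proof on its own merits, the overall architecture (build $\phi$, check invariance, compare coordinate rings or Renner cones) is a reasonable blueprint, but there is a concrete error in your second step.

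You assert that $G^\ssc \times Z(G)$ is an open dense subscheme of $\wbar{G^\ssc_\enh} \times Z(G)$ and use schematic density of this locus to propagate $Z(G^\ssc)$-invariance. This is false: $G^\ssc$ is the fiber of $\bar\pi : G^\ssc_\enh \to T^\ssc_\adj$ over the identity, hence a \emph{closed} subgroup of $G^\ssc_\enh$ of strictly positive codimension (equal to $\dim T^\ssc_\adj = \abs{\Gamma_G}$). Its closure in $\wbar{G^\ssc_\enh}$ is therefore contained in the fiber of $\bar\pi : \wbar{G^\ssc_\enh} \to \wbar{T^\ssc_\adj}$ over the identity, which is again $G^\ssc$ itself. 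So $G^\ssc \times Z(G)$ is a proper closed subset and cannot carry the density argument. The dense open locus is the full unit group $G^\ssc_\enh \times Z(G)$, and the invariance check there requires first lifting \eqref{e:Gssc-twist} to a surjection of enhanced groups $G^\ssc_\enh \times Z(G) \to G_\enh$ (by assembling the isogeny $G^\ssc \to [G,G] \subset G$, $T^\ssc \to T$, and $Z(G) \into G$) and identifying its kernel with the anti-diagonal $Z(G^\ssc)$; that lift is never stated. In fact, once $\phi$ is defined Tannakically via a $Z(G^\ssc)$-equivariant pullback functor, the equivariance of $\phi$ is automatic and the density argument is not needed — but the Tannakian construction itself also implicitly presupposes the enhanced-group homomorphism, so the missing ingredient is the same in either route. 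Beyond that, the combinatorial matching of Peter--Weyl summands (or equivalently of Renner cones, which is the cleanest finish given Renner's classification theorem cited in \S\ref{sect:tildeM-mor}) is described but not carried out; you acknowledge this, and it is the genuine content of the lemma.
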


Lemma~\ref{lem:VinGIT} motivates us to 
define the stacky version of the Vinberg semigroup as the quotient stack
\begin{equation}
    (\wbar{G_\enh})^\true := (\wbar{G^\ssc_\enh} \xt Z(G)) / Z(G^\ssc). 
\end{equation}
This is an algebraic monoidal stack. By Lemma~\ref{lem:VinGIT}, we have a 
canonical map 
\begin{equation}\label{e:truemap}
    (\wbar{G_\enh})^\true \to \wbar{G_\enh}
\end{equation}
from the stack quotient to the GIT quotient, and we see that $\wbar{G_\enh}$ is the coarse moduli space of $(\wbar{G_\enh})^\true$.

By \eqref{e:Gssc-twist}, we see that the homomorphism \eqref{e:truemap} restricts to an isomorphism of groups 
$G^\ssc_\enh \xt^{Z(G^\ssc)} Z(G) \cong G_\enh$, 
so we have an open embedding 
\[ G_\enh \into (\wbar{G_\enh})^\true. \]
Moreover, $Z(G^\ssc_\enh)$ acts freely\footnote{The quotient $\oo{\wbar{G_\enh}}/Z(G_\enh)$ is the \emph{wonderful compactification} of $G/Z(G)$.} 
on the non-degenerate locus $\oo{\wbar{G^\ssc_\enh}}$. Thus the open substack 
\[ (\oo{\wbar{G_\enh}})^\true := (\oo{\wbar{G^\ssc_\enh}} \xt Z(G))/Z(G^\ssc) \]
 is representable by a scheme, and Lemma~\ref{lem:VinGIT} implies that 
\eqref{e:truemap} restricts 
to an isomorphism $(\oo{\wbar{G_\enh}})^\true \cong \oo{\wbar{G_\enh}}$ on non-degenerate loci.

\begin{lem} \label{lem:truess}
The map \eqref{e:truemap} is an isomorphism if and only if $[G,G]$ is simply connected.
\end{lem}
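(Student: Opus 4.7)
I would begin by unwinding the setup via Lemma~\ref{lem:VinGIT}: the target $\wbar{G_\enh}$ is the GIT quotient of $\wbar{G^\ssc_\enh}\xt Z(G)$ by the anti-diagonal action of $Z(G^\ssc)$, whereas $(\wbar{G_\enh})^\true$ is by definition the stack quotient for that same action. The morphism \eqref{e:truemap} is therefore the canonical comparison map from a stack quotient by a finite group scheme to its coarse moduli / GIT quotient, which is an isomorphism if and only if the action on the underlying scheme is (scheme-theoretically) free.

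Next I would compute stabilizers. Writing the action as $z\cdot(x,w) = (zx,\,z^{-1}w)$ for $z\in Z(G^\ssc)$, the condition $z^{-1}w = w$ in $Z(G)$ forces $z$ to lie in $K := \ker(Z(G^\ssc)\to Z(G))$, so
\[
\on{Stab}_{Z(G^\ssc)}(x,w) \;=\; \on{Stab}_{Z(G^\ssc)}(x)\,\cap\, K.
\]
Hence freeness of the anti-diagonal action on $\wbar{G^\ssc_\enh}\xt Z(G)$ is equivalent to freeness of the $K$-action on $\wbar{G^\ssc_\enh}$. Since the kernel of the quotient map $G^\ssc\to[G,G]$ is central in $G^\ssc$, we have $K = \ker(G^\ssc\to G)$; this is trivial precisely when $G^\ssc\to[G,G]$ is an isomorphism, i.e., when $[G,G]$ is simply connected. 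This settles the ``if'' direction.

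The converse -- showing that $K\ne 1$ forces the $K$-action on $\wbar{G^\ssc_\enh}$ to have a fixed point -- is the main geometric step. I would exhibit an explicit fixed point via an augmentation ideal: the coordinate ring $k[\wbar{G^\ssc_\enh}]$ decomposes as a sum of matrix coefficient spaces of $V(\check\lambda)\ot k_{\check\lambda}$ for $\check\lambda\in\check\Lambda_G^+$, with the trivial representation $\check\lambda=0$ contributing only the scalars. The ideal generated by all matrix coefficients with $\check\lambda\ne 0$ is thus a maximal ideal, and it is preserved by the $G^\ssc\xt G^\ssc$-action since that action respects the weight decomposition. The corresponding closed $k$-point $\mathbf 0\in\wbar{G^\ssc_\enh}$ is therefore fixed by the entire left action of $G^\ssc$, in particular by $K\subset Z(G^\ssc)$, so $\on{Stab}_K(\mathbf 0) = K\ne 1$.

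The main obstacle will be making this last step fully rigorous: verifying the $G^\ssc\xt G^\ssc$-invariance of the augmentation ideal and identifying $\mathbf 0$ as an absorbing element of the monoid $\wbar{G^\ssc_\enh}$ requires a careful appeal to the Vinberg construction as in \cite[\S 8]{Vinberg}, \cite{Ritt}, with the caveat that in positive characteristic the relevant representations are Weyl modules rather than irreducibles.
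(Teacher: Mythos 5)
Your proof is correct and follows essentially the same route as the paper's: both reduce freeness of the anti-diagonal $Z(G^\ssc)$-action on $\wbar{G^\ssc_\enh}\xt Z(G)$ to freeness of the $K$-action on $\wbar{G^\ssc_\enh}$ (where $K=\ker(G^\ssc\to[G,G])$), and both exhibit the zero element of the Vinberg monoid as the obstruction when $K\ne 1$. The only difference is cosmetic: you re-derive the existence of the zero via the augmentation ideal, whereas the paper simply recalls it as a known structural fact about $\wbar{G^\ssc_\enh}$.
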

\begin{proof}
Recall that $\wbar{G^\ssc_\enh}$ is an algebraic monoid with zero. 
Thus the action of $\ker(G^\ssc \to [G,G])$ on $\wbar{G^\ssc_\enh}$ is free if and only if the kernel is trivial, i.e., $[G,G]$ is simply connected. 
Therefore if $[G,G]$ is not simply connected, the stack quotient $(\wbar{G_\enh})^\true$ cannot be representable by a scheme.

In the other direction, suppose that $[G,G]$ is simply connected.
Then $Z(G^\ssc) \subset Z(G)$ acts freely on $\wbar{G^\ssc_\enh} \xt Z(G)$, so \eqref{e:truemap} is an isomorphism by Lemma~\ref{lem:VinGIT}.
\end{proof}

\subsubsection{Definition of $\VinBun^\true_G$}
As explained in \S\ref{sss:stackquot}, we have an action of $G \xt G$ on the 
stack $(\wbar{G_\enh})^\true$, where we are using the identification \eqref{e:Gssc-twist}. 
Define 
\begin{equation} \label{e:VinBun-canon}
    \VinBun^\true_G = \Maps^\circ (X, G \bs (\wbar{G_\enh})^\true /G ), 
\end{equation}
where the superscript $^\circ$ denotes the locus of maps that
generically land in $G \bs \oo{\wbar{G_\enh}} / G$ over every geometric point of a test scheme $S$.

\subsubsection{}
The map \eqref{e:truemap} induces a canonical map of stacks 
\begin{equation}\label{e:Vintruemap} 
    \VinBun^\true_G \to \VinBun_G 
\end{equation}
over $\Bun_G \xt \Bun_G$.

The open embedding $G_\enh \into (\wbar{G_\enh})^\true$ induces
an open embedding 
\[ \VinBun_{G,G} \into \VinBun^\true_G \]
over $\VinBun_G$.

\subsubsection{}
The projection $\wbar{G^\ssc_\enh} \xt Z(G) \to Z(G)$ induces a 
homomorphism of monoidal stacks $(\wbar{G_\enh})^\true \to Z(G)/Z(G^\ssc)$. 
Note that $G^\abst := Z(G)/Z(G^\ssc)$ is the stacky abelianization of $G$ defined in \cite{StAb}. 
By \eqref{e:Gssc-twist}, we also get a homomorphism
 of group stacks 
$G \to G^\abst$. The $G \xt G$-action on $(\wbar{G_\enh})^\true$ 
is compatible with these maps to $G^\abst$.

Consider the map $\spec(k) \to G^\abst$ corresponding to $1\in Z(G)$. 
We have Cartesian squares
\[ \xymatrix{ \wbar{G^\ssc_\enh} \ar[r] \ar[d] & (\wbar{G_\enh})^\true \ar[d] \\ \spec(k) \ar[r] & G^\abst } \quad\quad 
\xymatrix{ \ds G \xt_{G^{\abst}} G \ar[r] \ar[d] & G \xt G \ar[d] \\
\spec(k) \ar[r] & G^\abst } \]
where $G \xt G$ maps to $G^\abst$ by $(g_1,g_2) \mapsto g_1 g_2^{-1}$. 
Note that $G \xt_{G^\abst} G$ is isomorphic to a semidirect product of $G$ and $G^\ssc$.
Since $G^{\abst}$ is a group stack, it follows formally that 
\[
 (\wbar{G_\enh})^\true / (G \xt G) \cong \wbar{G^\ssc_\enh} / ( G \xt_{G^\abst} G). 
\]

We can also repeat the above discussion at the coarse level: by Lemma~\ref{lem:VinGIT}, we have $\wbar{G_\enh} = (\wbar{G^\ssc_\enh} \xt Z(G))/\!\!/ Z(G^\ssc)$, where $/\!\!/$ denotes the GIT quotient. 
Thus the projection to the second factor induces a homomorphism of monoids
$\wbar{G_\enh} \to Z(G)/\!\!/ Z(G^\ssc) = Z(G)/Z([G,G]) = G/[G,G] =: G^\ab$ 
such that the $G \xt G$-action on $\wbar{G_\enh}$ lies over $G^\ab$.
Let $(\wbar{G_\enh})_1$ denote the fiber
$\wbar{G_\enh} \xt_{G^\ab} \spec(k)$ over $1\in G^\ab(k)$. 
Then it again follows formally that 
\[ \wbar{G_\enh} / (G \xt G) \cong (\wbar{G_\enh})_1 / G \xt_{G^\ab} G. \]

We have a group homomorphism $G \xt_{G^\abst} G \to G \xt_{G^\ab} G$ whose 
kernel is isomorphic to $\ker(G^\ssc \to [G,G])$. We also have a finite homomorphism of monoids $\wbar{G^\ssc_\enh} \to (\wbar{G_\enh})_1$. 
Thus we deduce that there is a commutative diagram
\[ \xymatrix{ \VinBun^\true_G \ar[d] \ar[rd] \\ 
\Maps^\circ(X, (\wbar{G_\enh})_1 / (G \xt_{G^\abst} G) ) \ar[r] \ar[d] &
\Bun_{G \xt_{G^\abst} G} \ar[d] \\ 
\VinBun_G \ar[r] & \Bun_{G \xt_{G^\ab} G} \ar[r] & \Bun_G \xt \Bun_G } \]
where the square is Cartesian, the superscript $^\circ$ denotes the substack of maps generically landing in $(\oo{\wbar{G_\enh}})_1 / (G \xt_{G^\abst} G)$, 
and the composition of the left vertical maps equals the map \eqref{e:Vintruemap}.  
Here we have factored the map \eqref{e:Vintruemap} into a ``change of space'' map and a ``change of group'' map. 
The following lemma is well-known.

\begin{lem} \label{lem:Buntorsor}
Let $H$ be a connected reductive group, and let $A \subset Z(H)$ be a finite central subgroup. 
Then $\Bun_H \to \Bun_{H/A}$ is a torsor by the group stack $\Bun_A$ 
over an open and closed substack of $\Bun_{H/A}$. 
More specifically, this substack is the union of the connected components
in $\pi_0(\Bun_{H/A}) = \pi_1(H/A)$ corresponding to 
$\pi_1(H) \subset \pi_1(H/A)$. 
\end{lem}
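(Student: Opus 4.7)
The plan is to work with the short exact sequence of algebraic groups
\[ 1 \to A \to H \to H/A \to 1 \]
and deduce the lemma from the corresponding sequence of classifying stacks. Because $A$ is central in $H$, the map of classifying stacks $BH \to B(H/A)$ is a gerbe that is banded by the (abelian) group $A$ and is moreover a torsor under the group stack $BA$: given an $A$-torsor $\eF_A$ and an $H$-torsor $\eF_H$ over a test scheme $S$, the contracted product $\eF_A \xt^A \eF_H$ (formed via the central inclusion $A \into H$) is another $H$-torsor with the same induced $H/A$-torsor. Applying $\Maps(X,-)$ yields a corresponding action of $\Bun_A$ on $\Bun_H$ over $\Bun_{H/A}$.

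Next I would verify the torsor property. Fix an $H/A$-torsor $\eF_{H/A}$ on $X\xt S$, and consider the fppf sheaf $\eL$ on $S$ whose value on $S' \to S$ is the set of lifts of $\eF_{H/A}|_{X\xt S'}$ to an $H$-torsor. By centrality of $A$, the sheaf $\eL$ is either empty or a pseudo-torsor for the sheafified Maps$(X, BA)$, which is nothing other than $\Bun_A \xt S$. The existence of local lifts in the fppf topology on $S$ (obtained from the fact that $H \to H/A$ is an $A$-torsor, hence fppf-locally split when pulled back to smooth curves via Tsen-type arguments) promotes $\eL$ to an actual $\Bun_A$-torsor whenever it is nonempty. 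Hence $\Bun_H \to \Bun_{H/A}$ is a $\Bun_A$-torsor over its essential image. Since $\Bun_A$ is smooth, the image is open in $\Bun_{H/A}$.

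Finally I would identify the image on $\pi_0$. Recall that for any connected reductive $G$ one has a canonical bijection $\pi_0(\Bun_G) \simeq \pi_1(G)$. The map $\Bun_H \to \Bun_{H/A}$ sits over the map $\pi_1(H) \to \pi_1(H/A)$ induced by $H \to H/A$. Using the descriptions $\pi_1(H) = X_*(T)/Q^\vee$ and $\pi_1(H/A) = X_*(T/A)/Q^\vee$ (with common coroot lattice $Q^\vee$), one sees that this map is injective with cokernel a subgroup of the Cartier dual of $A$. Since $\Bun_H \to \Bun_{H/A}$ is a $\Bun_A$-torsor over its open image, each component of the image is hit surjectively by some component of $\Bun_H$. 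Therefore the open image is precisely the union of components indexed by $\pi_1(H) \subset \pi_1(H/A)$; as a union of connected components it is automatically closed as well, completing the proof.

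The main obstacle I expect is the torsor property itself, namely showing that the pseudo-torsor $\eL$ of local lifts is genuinely a torsor, i.e., that local lifts exist fppf-locally on $S$. This amounts to verifying that the obstruction class in $H^2_{\mathrm{fppf}}(X\xt S', A)$ vanishes after passing to an fppf cover $S' \to S$, which is where one must use properties specific to the curve $X$ (or else work in the flat topology throughout and invoke the surjectivity of $H \to H/A$ as an fppf morphism).
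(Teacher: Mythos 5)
Your overall strategy (pseudo-torsor under $\Bun_A$, then close the torsor property and identify the image on $\pi_0$) is sound in outline, and the $\pi_0$ identification via the common coroot lattice is essentially correct. But the crucial step — showing that the pseudo-torsor $\eL$ of lifts is actually a torsor over the open-and-closed substack — is not proven, and your suggested method for closing it would not work as stated.

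Specifically, your invocation of "Tsen-type arguments" to kill the obstruction class in $H^2_{\mathrm{fppf}}(X\xt S',A)$ does not apply: Tsen/Lang vanishing is for $\bbG_m$ coefficients on a curve over a separably closed (or finite) field, whereas $A$ here is a \emph{finite} group scheme. In fact $H^2_{\mathrm{fppf}}(X,A)$ is typically nonzero (by duality it is governed by something like the Cartier dual of $A$), and it does not vanish just by passing to fppf covers of $S$. The obstruction class vanishes \emph{precisely} on the components indexed by $\pi_1(H)\subset\pi_1(H/A)$, but detecting this requires an extra input. There is also a circularity in your openness argument: you infer that the image is open "since $\Bun_A$ is smooth," but this inference needs the morphism $\Bun_H\to\Bun_{H/A}$ to be flat (or smooth), which is exactly what the torsor property you have not yet established would give you.

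The paper sidesteps both issues. Surjectivity onto the claimed components is proved by reducing via a Borel subgroup (using Drinfeld--Simpson to get $\Bun_B\twoheadrightarrow\Bun_H$ and $\Bun_{B/A}\twoheadrightarrow\Bun_{H/A}$) to the torus case, where it is elementary. The torsor property is then obtained by combining the standard fiber-product isomorphism $\Bun_A\times\Bun_H\cong\Bun_H\xt_{\Bun_{H/A}}\Bun_H$ with flatness, which is proved by "miracle flatness": the map is a morphism between smooth stacks of the same dimension with $0$-dimensional fibers (because $\dim\Bun_A=0$ for $A$ finite). Openness of the image is then automatic from flatness, and the identification of which components occur completes the proof. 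If you want to keep your pseudo-torsor framing, the minimal fix is to replace the Tsen step by either this dimension-counting flatness argument or the Drinfeld--Simpson reduction; as written, the obstruction step is a genuine gap, not merely a detail to be filled in.
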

\begin{proof} Let $B$ be a Borel subgroup of $H$. 
Then $\Bun_B \to \Bun_H$ and $\Bun_{B/A} \to \Bun_{H/A}$ are surjective by \cite{DS}. Thus to prove the statement about the image of $\Bun_H$ in $\Bun_{H/A}$, it suffices to consider $\pi_0(\Bun_B) \to \pi_0(\Bun_{B/A})$. This reduces
to an analogous statement in the case where $H$ is a torus, which is straightforward. 

It is a standard fact that the action of
$\Bun_A$ on $\Bun_H$ defines an isomorphism between $\Bun_A\times\Bun_H$ and the
fiber product $\Bun_H \xt_{\Bun_{H/A}} \Bun_H$. 
Therefore to prove that the map from $\Bun_H$ to its image in $\Bun_{H/A}$ is a torsor, we must show that this map is flat. The map is flat because it is a morphism between smooth stacks of the same dimension with $0$-dimensional fibers.
\end{proof}

We now consider the ``change of space'' map 
\begin{equation} \label{e:VinBuncos}
\VinBun^\true_G = \Maps^\circ\left(X, \wbar{G^\ssc_\enh}/(G \xt_{G^\abst} G) \right) 
\to \Maps^\circ\left(X, (\wbar{G_\enh})_1/ (G \xt_{G^\abst} G)\right). 
\end{equation}

\begin{lem} \label{lem:Mapsfin}
Let $\eY_1 \to \eY_2$ be a finite schematic morphism of stacks. 
Then the induced morphism $\Maps(X,\eY_1) \to \Maps(X,\eY_2)$ is also finite schematic.
\end{lem}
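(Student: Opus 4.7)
The plan is to verify finiteness via the functor of points. Given a scheme $T$ and a morphism $T \to \Maps(X, \eY_2)$ corresponding to $\phi : X \xt T \to \eY_2$, the fiber product $T \xt_{\Maps(X,\eY_2)} \Maps(X,\eY_1)$ represents the functor of lifts of $\phi$ to $\eY_1$ after base change. Since $\eY_1 \to \eY_2$ is finite schematic, $\eZ := \eY_1 \xt_{\eY_2,\phi}(X \xt T)$ is a scheme finite over $X \xt T$, and lifts of $\phi|_{X \xt T'}$ for a $T$-scheme $T'$ correspond to sections of $\eZ \xt_T T' \to X \xt T'$. Hence the fiber product is identified with the Weil restriction (i.e., scheme of sections) of $\eZ$ along $X \xt T \to T$, and the task reduces to showing this is a finite $T$-scheme.

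First, representability as an affine $T$-scheme of finite type is standard. Writing $\eZ = \spec_{X\xt T}(\mathcal A)$ for a coherent $\eO_{X\xt T}$-algebra $\mathcal A$, sections correspond to $\eO$-algebra homomorphisms $\mathcal A \to \eO_{X\xt T}$. These are cut out inside the functor of $\eO$-module maps by the polynomial multiplicativity relations. The latter functor is representable by an affine $T$-scheme of finite type, using that $X \xt T \to T$ is proper flat with $\pi_*\eO = \eO_T$ (since $X$ is a proper geometrically connected curve) so that the relevant coherent cohomology is finite-dimensional fiberwise.

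Second, I would verify properness via the valuative criterion. Given a DVR $R$ mapping to $T$ and a section $\sigma_K : X_K \to \eZ_K$ over the generic fiber of $X \xt \spec R \to \spec R$, let $\bar\sigma \subset \eZ_R$ denote the scheme-theoretic closure of $\sigma_K(X_K)$. Then $\bar\sigma$ is integral and $R$-flat, and the induced morphism $\bar\sigma \to X_R$ is finite and generically an isomorphism, hence birational. Since $X$ is smooth over $k$ and $R$ is regular, $X_R = X \xt_k \spec R$ is regular; a finite birational morphism with regular target and integral source is an isomorphism, so $\bar\sigma \to X_R$ is an isomorphism, providing the desired extension of $\sigma_K$. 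Uniqueness of the extension is automatic because a section of the separated morphism $\eZ_R \to X_R$ is a closed embedding, determined by its generic fiber.

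The Weil restriction is thus affine and proper over $T$, hence finite, as required. The main subtlety is the valuative criterion, specifically the identification $\bar\sigma \cong X_R$ via the normality of $X_R$; the remaining ingredients amount to well-known preservation of affineness and properness under Weil restriction along proper flat families of curves.
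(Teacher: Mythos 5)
Your proof is correct and takes essentially the same approach as the paper: both reduce the fiber over a test scheme to the scheme of sections of the finite morphism obtained by base change, note that this Weil restriction is affine over the base, and then verify properness via the valuative criterion by passing to the scheme-theoretic closure of the generic section and using that it is integral, finite and birational over $X_R$, which is regular (hence normal) because $X$ is smooth. The only minor difference is that you sketch the justification of affineness of the section scheme (via representability of the $\eO$-module $\eHom$ functor along the proper flat family), whereas the paper simply cites this as well-known.
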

\begin{proof}
Fix a test scheme $S$ and a map $X_S := X \xt S \to \eY_2$. 
Let $Y$ denote the fiber product $X_S \xt_{\eY_2} \eY_1$, which is representable by a finite scheme over $X_S$. 
Then the corresponding fiber product of $S$ and $\Maps(X,\eY_1)$ over $\Maps(X,\eY_2)$ is representable by the $S$-scheme $\Sect(X_S, Y)$ of sections 
of $Y \to X_S$. 
It is well-known that since $Y \to X_S$ is affine, $\Sect(X_S,Y) \to S$ is also affine. Therefore to show that $\Sect(X_S,Y) \to S$ is finite, it suffices to show that it is proper.
We use the valuative criterion of properness:

Let $R$ be a discrete valuation ring with field of fractions $K$, and suppose that we have 
a map $\spec(R) \to S$ and a section $X_K := X \xt \spec(K) \to Y$ over $X_S$. 
This section and the natural map $\spec(K) \to \spec(R)$ define
a section $X_K \to Y_R := Y \xt_S \spec(R)$. 
Let $Z_K$ denote the image of $X_K \to Y_R$, and let $Z_R$ denote the scheme-theoretic closure of $Z_K$ in $Y_R$. 
Extending the section $X_K \to Y$ to a section $X_R := X \xt \spec(R) \to Y$
is equivalent to showing that the projection $Z_R \to X_R$ is an isomorphism.
Note that $Z_R$ is an integral scheme (because $Z_K$ is) and the
map $Z_R \to X_R$ is birational (because the map $Z_K\to X_K$ is an isomorphism). 
On the other hand, the map $Z_R \to X_R$ is finite since $Y_R \to X_R$ is finite. Lastly, smoothness of $X$ implies that $X_R$ is a regular scheme. Hence $X_R$ is normal, and $Z_R \to X_R$ is an isomorphism.
This checks the condition of the valuative criterion and hence proves the lemma.
\end{proof}

Let $A$ denote the finite abelian group scheme $\ker(G^\ssc \to [G,G])$. 

\begin{cor} \label{cor:VinBuncos}
The map \eqref{e:VinBuncos} is schematic and finite. 
More specifically, it is the composition of an $A$-torsor followed by a closed embedding. 
\end{cor}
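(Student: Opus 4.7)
The plan is to realize \eqref{e:VinBuncos} as an $A$-torsor followed by a closed embedding, by factoring the underlying morphism $\wbar{G^\ssc_\enh} \to (\wbar{G_\enh})_1$ of $(G \xt_{G^\abst} G)$-schemes through a stack-theoretic $A$-quotient.

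First I would identify $(\wbar{G_\enh})_1$ with the GIT quotient $\wbar{G^\ssc_\enh}/\!\!/A$. By Lemma~\ref{lem:VinGIT} one has $\wbar{G_\enh} = (\wbar{G^\ssc_\enh} \xt Z(G))/\!\!/Z(G^\ssc)$ and $G^\ab = Z(G)/\!\!/Z(G^\ssc)$, so the fiber over $1 \in G^\ab$ is $\wbar{G^\ssc_\enh} \xt^{Z(G^\ssc)} Z([G,G]) = \wbar{G^\ssc_\enh}/\!\!/A$, using that $Z(G^\ssc) \twoheadrightarrow Z([G,G])$ has kernel $A$. Moreover, the text's observation that $(\oo{\wbar{G_\enh}})^\true$ is a scheme (canonically isomorphic to $\oo{\wbar{G_\enh}}$) amounts to saying that $A$ acts freely on the non-degenerate locus $\oo{\wbar{G^\ssc_\enh}}$. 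Via the embedding of $A$ as the kernel of $G \xt_{G^\abst} G \twoheadrightarrow G \xt_{G^\ab} G$, so that $A$ is a central subgroup of $G \xt_{G^\abst} G$, the restriction to $A$ of the $(G \xt_{G^\abst} G)$-action on $\wbar{G^\ssc_\enh}$ coincides with the central action whose GIT quotient produces $(\wbar{G_\enh})_1$. This induces an $A$-action on $\VinBun^\true_G$ that is free (by the $\Maps^\circ$ condition combined with freeness on the non-degenerate locus) and becomes trivial after projecting to the target, yielding a factorization
\[
\VinBun^\true_G \xrightarrow{\pi_A} \VinBun^\true_G/A \xrightarrow{\iota} \Maps^\circ(X, (\wbar{G_\enh})_1/(G \xt_{G^\abst} G))
\]
with $\pi_A$ an $A$-torsor by construction.

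Second, I would show that $\iota$ is a closed embedding. Lemma~\ref{lem:Mapsfin} applied to the induced morphism of quotient stacks $\wbar{G^\ssc_\enh}/(G \xt_{G^\abst} G) \to (\wbar{G_\enh})_1/(G \xt_{G^\abst} G)$ (which is finite schematic after pullback along any smooth cover of the target, since $\wbar{G^\ssc_\enh} \to (\wbar{G_\enh})_1$ is) shows that \eqref{e:VinBuncos} is itself finite schematic, and hence so is $\iota$. Monomorphicity of $\iota$ follows from the GIT identification of the previous step: two lifts of an $S$-point of the target to $\VinBun^\true_G$ must differ by an element of the central $A$-action, and therefore become equal in $\VinBun^\true_G/A$. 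A finite schematic monomorphism of stacks is a closed embedding, which completes the plan.

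The main obstacle is the concrete verification that the abstract embedding $A \hookrightarrow G \xt_{G^\abst} G$ produces precisely the central $A$-action on $\wbar{G^\ssc_\enh}$ whose GIT quotient recovers $(\wbar{G_\enh})_1$. This amounts to unpacking the presentations $G = (G^\ssc \xt Z(G))/Z(G^\ssc)$ and $G^\abst = Z(G)/Z(G^\ssc)$, and tracing how the $(G \xt G)$-action on $(\wbar{G_\enh})^\true$ corresponds to the $(G \xt_{G^\abst} G)$-action on $\wbar{G^\ssc_\enh}$ under the identification used in the commutative diagram preceding the statement. Once this bookkeeping is in place, the remaining arguments reduce to Lemma~\ref{lem:Mapsfin}, the GIT description of $(\wbar{G_\enh})_1$, and the fact that finite schematic monomorphisms are closed embeddings.
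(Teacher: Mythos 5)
Your overall strategy is close in spirit to the paper's but is organized differently: the paper applies Lemma~\ref{lem:Mapsfin} to get finiteness, takes the scheme-theoretic image $\eY$ (which is automatically a closed substack), and then directly verifies that $\VinBun^\true_G \to \eY$ is an $A$-torsor; you instead form the stack quotient $\VinBun^\true_G/A$, claim the quotient map is an $A$-torsor and the remaining map $\iota$ is a closed embedding. These two decompositions are equivalent once all the pieces are in place, and your use of Lemma~\ref{lem:Mapsfin} for the finiteness of \eqref{e:VinBuncos} matches the paper. Your extra step of identifying $(\wbar{G_\enh})_1$ with $\wbar{G^\ssc_\enh}/\!\!/A$ is plausible but not needed by the paper and buys you nothing, for the reason below.

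The genuine gap is in the monomorphism step. You assert that ``two lifts of an $S$-point of the target to $\VinBun^\true_G$ must differ by an element of the central $A$-action'' and say this ``follows from the GIT identification.'' It does not: a GIT quotient by $A$ can identify points that do not lie in the same $A$-orbit (the degenerate locus of $\wbar{G^\ssc_\enh}$ contains the zero element, which is $A$-fixed and collapses under the quotient together with a full orbit). So the identification $(\wbar{G_\enh})_1 = \wbar{G^\ssc_\enh}/\!\!/A$ alone cannot produce the claimed orbit-transitivity on lifts. The missing argument is exactly the one the paper runs: restrict $\beta$ to the generic fiber $\spec(F)\xt S$, where the $\Maps^\circ$ condition forces landing in the non-degenerate locus, over which $\oo{\wbar{G^\ssc_\enh}}\to(\oo{\wbar{G_\enh}})_1$ \emph{is} an honest $A$-torsor; then use that $A$ is finite and $X$ is geometrically connected to deduce $A(\spec(F)\xt S)=A(S)$; and finally use separatedness of $\wbar{G^\ssc_\enh}$ to propagate the equality of the two lifts from $\spec(F)\xt S$ to all of $X\xt S$. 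The same chain is also needed to justify your earlier assertion that the $A$-action on $\VinBun^\true_G$ is free (you gesture at ``the $\Maps^\circ$ condition combined with freeness on the non-degenerate locus,'' which is the right idea, but again the equality $A(\spec(F)\xt S)=A(S)$ is a needed ingredient that you do not state). Once these points are supplied, your decomposition goes through.
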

\begin{proof}
The map $\wbar{G^\ssc_\enh} \to (\wbar{G_\enh})_1$ is finite, and 
the preimage of $(\oo{\wbar{G_\enh}})_1$ equals $\oo{\wbar{G^\ssc_\enh}}$.
Then Lemma~\ref{lem:Mapsfin} implies that the map \eqref{e:VinBuncos} is schematic and finite. 
Let \[ \eY\subset \Maps^\circ(X, (\wbar{G_\enh})_1 / (G \xt_{G^\abst} G))  \]
denote the (scheme-theoretic) image, which is a closed substack. 
Take $(\eP,\beta) \in \eY(S)$ for a test scheme $S$, where $\eP$ is a $G \xt_{G^\abst} G$-torsor over $X_S := X \xt S$ and 
$\beta$ is a section $X_S \to ((\wbar{G_\enh})_1)_\eP$ over $X_S$ such that $\beta|_{\spec(F) \xt S}$ lands in $((\oo{\wbar{G_\enh}})_1)_\eP$. 
Since $\oo{\wbar{G^\ssc_\enh}}\to (\oo{\wbar{G_\enh}})_1$ is an $A$-torsor, the set of sections $\tilde\beta|_{\spec(F) \xt S} : \spec(F) \xt S \to ((\oo{\wbar{G_\enh}})_1)_\eP$ lifting $\beta$ has a simply transitive 
action by $A(\spec(F)\xt S)$.
Since $A$ is finite over $k$ and $X$ is geometrically connected, we deduce that
$A(\spec(F)\xt S) = A(S)$. 
Thus the canonical $A(S)$-action on the set of sections $\tilde \beta : X_S \to (\wbar{G^\ssc_\enh})_\eP$ lifting $\beta$ is simply transitive. 
By definition of $\eY$, a lift $\tilde\beta$ exists after restricting along some fppf covering $S' \to S$. 
We conclude that $\VinBun^\true_G \to \eY$ is an $A$-torsor.
\end{proof}

\begin{prop}  \label{prop:trueproper}
The map $\VinBun^\true_G \to \VinBun_G$ is finite schematic. 
\end{prop}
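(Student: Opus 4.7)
The plan is to factor the map $\VinBun^\true_G \to \VinBun_G$ through the middle stack $\Maps^\circ(X, (\wbar{G_\enh})_1 / (G \xt_{G^\abst} G))$ appearing in the commutative diagram preceding the proposition. The first leg of this factorization is finite schematic by Corollary~\ref{cor:VinBuncos}, so the task reduces to showing that the ``change-of-group'' arrow from the middle stack down to $\VinBun_G$ is also finite schematic.

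Since the square in the diagram is Cartesian, this second arrow is a base-change of $\Bun_{G \xt_{G^\abst} G} \to \Bun_{G \xt_{G^\ab} G}$ along $\VinBun_G \to \Bun_{G \xt_{G^\ab} G}$. I would first check that the group-theoretic map $G \xt_{G^\abst} G \to G \xt_{G^\ab} G$ is a homomorphism of connected reductive groups with finite central kernel canonically isomorphic to $A$: unpacking $G^\abst = [Z(G)/Z(G^\ssc)]$ as a $BA$-gerbe over $G^\ab = Z(G)/Z([G,G])$, the diagonal $G^\abst \to G^\abst \xt_{G^\ab} G^\abst$ is an $A$-torsor, and pulling back to $G \xt G$ presents $G \xt_{G^\abst} G \to G \xt_{G^\ab} G$ as an $A$-torsor whose kernel as a group is $\Aut_{G^\abst}(1) = A$. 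Lemma~\ref{lem:Buntorsor} then implies that $\Bun_{G \xt_{G^\abst} G} \to \Bun_{G \xt_{G^\ab} G}$ is a $\Bun_A$-torsor over an open-closed substack. Because $A$ is finite, $\Bun_A$ is a finite algebraic stack, so the torsor has finite fibers and the map is proper.

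The hard part will be the representability (the \emph{schematic} in ``finite schematic''), since $\Bun_A$ carries non-trivial inertia $BA$. To show that this inertia vanishes after base-change along $\VinBun_G$, I would use the non-degeneracy condition on sections: an $A$-automorphism of a $G \xt_{G^\abst} G$-bundle lifting the identity on the underlying $G \xt_{G^\ab} G$-bundle, which also preserves a generically non-degenerate lift of the section $\beta$, must act trivially on a dense open of $X \xt S$, because $A \subset Z(G^\ssc)$ acts freely on the non-degenerate locus $\oo{\wbar{G^\ssc_\enh}}$ (a consequence of $Z(G^\ssc)$ acting freely on $\oo{\wbar{G^\ssc_\enh}}$, itself a feature of the wonderful compactification $\oo{\wbar{G^\ssc_\enh}}/Z(G^\ssc_\enh)$). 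Any such automorphism is therefore trivial, killing the $BA$-inertia arising from $\Bun_A$; combined with the finiteness from the previous paragraph, this yields the desired finite schematic conclusion.
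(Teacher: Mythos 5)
Your overall architecture matches the paper's — factor through $\Maps^\circ(X, (\wbar{G_\enh})_1 / (G \xt_{G^\abst} G))$, use Corollary~\ref{cor:VinBuncos} for the first leg, and use Lemma~\ref{lem:Buntorsor} for properness of the second leg — but the plan to reduce the whole statement to showing that the change-of-group arrow \emph{alone} is finite schematic cannot work, and your inertia argument quietly switches to a different map without your acknowledging it.

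The problem: the change-of-group arrow, being the base change of $\Bun_{G \xt_{G^\abst} G} \to \Bun_{G \xt_{G^\ab} G}$ along $\VinBun_G$, really is \emph{not} schematic. An $S$-point of the middle stack is a $G \xt_{G^\abst} G$-torsor $\eP'$ together with a $G \xt_{G^\abst} G$-equivariant map $\beta' : \eP' \to (\wbar{G_\enh})_1$ generically non-degenerate. The $G \xt_{G^\abst} G$-action on $(\wbar{G_\enh})_1$ factors through $G \xt_{G^\ab} G$, so the kernel $A$ acts \emph{trivially} on $(\wbar{G_\enh})_1$. Consequently every $A$-automorphism of $\eP'$ automatically preserves $\beta'$, and the $A$-inertia of the middle stack over $\VinBun_G$ survives intact. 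There is nothing to kill at this level: the section you would like to leverage lands in $(\wbar{G_\enh})_1$, which is blind to $A$, not in $\wbar{G^\ssc_\enh}$, which is not.

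When you then say the inertia ``vanishes after base-change along $\VinBun_G$,'' your own argument invokes ``a generically non-degenerate lift of the section $\beta$'' to $\wbar{G^\ssc_\enh}$ — but such a lift $\tilde\beta$ is exactly the extra datum present only at the level of $\VinBun^\true_G$, not at the level of the middle stack. So your inertia-killing argument, which \emph{is} sound (an $A$-automorphism of $\tilde\eP$ preserving $\tilde\beta$ must be trivial because $A \subset Z(G^\ssc)$ acts freely on $\oo{\wbar{G^\ssc_\enh}}$), applies to the composite $\VinBun^\true_G \to \VinBun_G$ rather than to the second leg, and the deduction structure of your proof does not match. The paper sidesteps this by working directly with the fiber product $\eY = S \xt_{\VinBun_G} \VinBun^\true_G$ for an affine $S$: it observes that non-degeneracy pins down $(\tilde\eP, \tilde\beta)$ uniquely over the generic open $\oo X \xt S'$, so that the remaining moduli are concentrated on a formal neighborhood of the degenerate divisor, and then realizes $\eY$ as a closed finite-type subscheme of a projective ind-scheme via twisted affine Grassmannians. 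If you prefer to keep your representability argument, you would need to redirect it at the composite and then combine it explicitly with properness and quasi-finiteness of the composite to conclude finiteness, invoking the standard fact that a finite, representable morphism of algebraic stacks is automatically schematic.
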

\begin{proof}
We first show that the map $\VinBun^\true_G \to \VinBun_G$ is proper. 
By Lemma~\ref{lem:Buntorsor}, the map $\Bun_{G \xt_{G^\abst} G} \to \Bun_{G \xt_{G^\ab} G}$ is proper. 
Thus by base change, the map 
\[ \Maps^\circ\left(X, (\wbar{G_\enh})_1 / (G \xt_{G^\abst} G)\right) \to \Maps^\circ\left(
X, (\wbar{G_\enh})_1 / (G \xt_{G^\ab} G) \right) = \VinBun_G \] 
is proper. Composing this map with \eqref{e:VinBuncos}, which is finite by Corollary~\ref{cor:VinBuncos}, we conclude that $\VinBun^\true_G \to \VinBun_G$ is proper.

Next we prove that the map $\VinBun^\true_G \to \VinBun_G$ is schematic. 
Let $S$ be an affine scheme. A map $S \to \VinBun_G$ is the datum of a $G \xt_{G^\ab} G$-torsor $\eP$ on $X \xt S$ and a $G \xt_{G^\ab} G$-equivariant map 
$\eP \to (\wbar{G_\enh})_1$. Moreover, there is an open subset $\oo X \subset X$ 
such that $\eP|_{\oo X \xt S}$ is sent to $(\oo{\wbar{G_\enh}})_1$. 
Recall that we have a surjective homomorphism $G\xt_{G^\abst} G \to G \xt_{G^\ab} G$ 
with kernel $A:=\ker(G^\ssc \to [G,G])$. 
Then an $S'$-point of the fiber product $\eY:= S \xt_{\VinBun_G} \VinBun^\true_G$ 
parametrizes a $G \xt_{G^{\abst}} G$-torsor $\tilde\eP$ on $X \xt S'$ and a $G \xt_{G^\abst} G$-equivariant map $\tilde\beta: \tilde\eP \to \wbar{G^\ssc_\enh}$ such that 
the $G \xt_{G^\ab} G$-torsor induced by $\tilde\eP$ is isomorphic to $\eP|_{X \xt S'}$, and 
the diagram 
\[ \xymatrix{ \tilde \eP \ar[d]\ar[r]^{\tilde\beta} & \wbar{G^\ssc_\enh} \ar[d] \\ 
\eP|_{X \xt S'} \ar[r] & (\wbar{G_\enh})_1 } \]
commutes. This implies that $\tilde\eP|_{\oo X \xt S'}$ lands in $\oo{\wbar{G^\ssc_\enh}}$. 
Since $\oo{\wbar{G^\ssc_\enh}} \to (\oo{\wbar{G_\enh}})_1$ is an $A$-torsor, we 
get an isomorphism 
\[ \tilde \eP |_{\oo X \xt S'} \cong \eP|_{\oo X \xt S'} \xt_{(\oo{\wbar{G_\enh}})_1} \oo{\wbar{G^\ssc_\enh}}, \] 
where the r.h.s.~only depends on the map $S \to \VinBun_G$.
Thus $(\tilde \eP,\tilde\beta)$ are determined by their restrictions to 
the formal completion of $(X-\oo X)\xt S'$ in $X \xt S'$. 
Using twisted versions of the affine Grassmannian, we deduce that the fiber product $\eY$ is a closed subscheme of a projective ind-scheme over $S$. Since $\eY$ is of finite type, we conclude that $\eY$ is a scheme. 

We have shown that $\VinBun^\true_G \to \VinBun_G$ is a proper schematic map. One observes from Lemma~\ref{lem:Buntorsor} and Corollary~\ref{cor:VinBuncos} that $\VinBun^\true_G \to \VinBun_G$ is also quasi-finite. 
Therefore this map is finite schematic.
\end{proof}

\begin{lem} \label{lem:Gstratclosure}
The closure of the open substack $\VinBun_{G,G}$ in $\VinBun_G$ intersects the stratum $\VinBun_{G,P}^{\mu,\lambda}$ only if $\mu-\lambda \in \Lambda^\pos_{G,P}$. 
\end{lem}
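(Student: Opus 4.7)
The plan is to extract the desired constraint from the vanishing on $\wbar{\VinBun_{G,G}}$ of a single locally constant topological invariant. Composing the projection $\VinBun_G \to \Bun_G \xt \Bun_G$ with $\Bun_G \to \pi_0(\Bun_G) = \pi_1(G)$ on each factor produces a locally constant function $d : \VinBun_G \to \pi_1(G)$ given by $d = \deg_G(\eF^1_G) - \deg_G(\eF^2_G)$. On the $G$-stratum, the isomorphism $\VinBun_{G,G} \cong \Bun_G \xt T_\adj$ from \S\ref{sect:fiberbundles}, combined with connectedness of the split torus $T_\adj$, shows that $\eF^1_G$ and $\eF^2_G$ always lie in the same connected component of $\Bun_G$, so $d$ vanishes on $\VinBun_{G,G}$ and hence on the closure $\wbar{\VinBun_{G,G}}$.

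Next I would unwind what $d = 0$ means at a point of $\VinBun_{G,P}^{\mu,\lambda}$. By the defect stratification~\eqref{eqn:defect}, $\eF^1_G$ is induced from some $\eF^1_P \in \Bun_P^\mu$ and $\eF^2_G$ from some $\eF^2_{P^-} \in \Bun_{P^-}^\lambda$, so $\deg_G(\eF^i_G)$ is the image of $\mu$ (respectively $\lambda$) under the natural surjection $\Lambda_{G,P} = \pi_1(M) \twoheadrightarrow \pi_1(G)$. Hence if this stratum meets $\wbar{\VinBun_{G,G}}$, then $\mu - \lambda$ must lie in the kernel of $\pi_1(M) \to \pi_1(G)$, which coincides with the image in $\Lambda_{G,P}$ of the coroot lattice of $G$.

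To finish, I combine this with the a priori constraint from Remark~\ref{rem:heckeHN}: nonemptiness of $\eH^{+,\mu,\lambda}_M$ forces $\mu - \lambda$ to lift to some element of $\Lambda^{\pos,\bbQ}_U \cap \Lambda$. Adjusting such a lift by coroots of $M$, the previous paragraph lets me take the lift $y$ to lie in the intersection of $\Lambda^{\pos,\bbQ}_U$ with the coroot lattice of $G$. A short combinatorial argument---using that each positive coroot in $\Phi^+_G$ expands in the simple coroots with non-negative integer coefficients---shows that for such a $y = \sum_i n_i \alpha^\vee_i$ one must have $n_i \ge 0$ for all $i \notin \Gamma_M$, from which $\mu - \lambda \in \Lambda^\pos_{G,P}$ upon projecting to $\Lambda_{G,P}$.

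The main obstacle is conceptual rather than technical: recognizing that the obstruction to lying in $\wbar{\VinBun_{G,G}}$ is purely topological and captured by the locally constant function $d$. Once this is isolated, the remaining steps amount to routine lattice bookkeeping, the key identity being that the intersection of $\Lambda^{\pos,\bbQ}_U$ with the coroot lattice of $G$ pushes forward to precisely $\Lambda^\pos_{G,P}$ in $\Lambda_{G,P}$.
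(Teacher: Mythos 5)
Your proposal is correct, and it takes a genuinely different route from the paper's proof. The paper proves this lemma by invoking the finite schematic map $\VinBun^\true_G \to \VinBun_G$ constructed in \S\ref{ss:VinBuntrue}: since this map is finite, its image is a closed substack containing $\VinBun_{G,G}$, and hence contains the closure of $\VinBun_{G,G}$; the paper then analyzes the strata in this image by passing to the cover $\tilde G = G^\ssc \xt Z(G)$, where $[\tilde G,\tilde G]$ is simply connected so that $\Lambda^{\pos,\bbQ}_U \cap \Lambda_{\tilde G} = \Lambda^\pos_U$, and pushes this down to $\Lambda^\pos_{G,P}$. Your argument instead extracts the missing ``integral'' constraint from a purely topological invariant: the locally constant function $d : \VinBun_G \to \pi_1(G)$ given by $\deg_G(\eF^1_G) - \deg_G(\eF^2_G)$ vanishes on $\VinBun_{G,G} \cong \Bun_G \xt T_\adj$ (and hence on its closure) because $T_\adj$ is connected, which forces $\mu - \lambda \in \ker(\pi_1(M) \to \pi_1(G)) = \bbZ\Phi_G/\bbZ\Phi_M$; combining a lift $y_1 \in \Lambda^{\pos,\bbQ}_U \cap \Lambda$ (from the nonemptiness criterion for $\eH^{+,\mu,\lambda}_M$) with a lift $y_2 \in \bbZ\Phi_G$ and observing that $y_1 - y_2 \in \bbZ\Phi_M \subset \bbZ\Phi_G$ shows $y_1 \in \Lambda^{\pos,\bbQ}_U \cap \bbZ\Phi_G$, whose simple-coroot coefficients are then non-negative integers, so the image in $\Lambda_{G,P}$ lies in $\Lambda^\pos_{G,P}$. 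Your route is more elementary since it sidesteps the whole $\VinBun^\true_G$ machinery, at the cost of not illuminating why one should expect the $G$-stratum's closure to be so constrained; the paper's construction of $\VinBun^\true_G$ gives that structural explanation (and is reused later in \S\ref{ss:VinBuntrue}). One presentational note: when you say ``adjusting such a lift by coroots of $M$,'' it would be cleaner to note explicitly that the two lifts of $\mu-\lambda$ differ by an element of $\bbZ\Phi_M \subset \bbZ\Phi_G$, so the lift in $\Lambda^{\pos,\bbQ}_U$ already lies in $\bbZ\Phi_G$ with no adjustment needed.
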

\begin{proof}
The image of the finite map $\VinBun^\true_G \to \VinBun_G$ is a closed substack containing the $G$-stratum. 
Let $P$ be a standard parabolic subgroup of $G$ with Levi factor $M$.
Let $\tilde G$ denote $G^\ssc \xt Z(G)$, and 
let $\tilde M, \tilde P$ denote the preimages of $M,P$ under the isogeny $\tilde G \to G$. 
We have the corresponding boundary degeneration $\bbX_{\tilde P} \subset \wbar{\tilde G_\enh}$ and its affine closure $\wbar \bbX_{\tilde P}$.
Define the closed embedding $\tilde M \into \bbX_{\tilde P}$ as in \S\ref{sect:defect}, and let $\wbar{\tilde M}$ denote the closure of $\tilde M$ in $\wbar \bbX_{\tilde P}$. 
For a place $v$ of $X$, Remark~\ref{rem:heckeHN} implies that 
\[ \tilde M(\fo_v) \bs (\wbar{\tilde M}(\fo_v) \cap \tilde M(F_v)) / \tilde M(\fo_v) = \Lambda^{\pos,\bbQ}_{U} \cap \Lambda_{\tilde G},\] 
and $\Lambda^{\pos,\bbQ}_{U} \cap \Lambda_{\tilde G} = \Lambda^\pos_{U}$
since $[\tilde G, \tilde G]$ is simply connected. 
Thus we deduce from the construction of the defect stratification in \S\ref{sect:defect} that the image of $\VinBun^\true_G \to \VinBun_G$ 
intersects $\VinBun_{G,P}^{\mu,\lambda}$ if and only if $\mu-\lambda \in \Lambda^\pos_{G,P}$. This implies the lemma.
\end{proof}

\subsection{The function $b$}
Suppose $k=\bbF_q$. Let 
\[ \Delta : \Bun_G \to \Bun_G \xt \Bun_G \] 
denote the diagonal morphism.
Given $G$-bundles $\eF^1_G, \eF^2_G \in \Bun_G(\bbF_q)$, let 
$b(\eF^1_G, \eF^2_G)$ denote the trace of the geometric Frobenius 
acting on the $*$-stalk of the complex $\Delta_*(\wbar\bbQ_\ell)$ over the point
$(\eF^1_G, \eF^2_G) \in (\Bun_G \xt \Bun_G)(\wbar\bbF_q)$.

\subsubsection{}  \label{sss:asympVin}
Recall from \S\ref{sss:Asymp} that 
$\Asymp_P(\delta_K)$ is a $K \xt K$-invariant function in 
$C^\infty_b(\bbX_P(\bbA))$, where 
$\delta_K$ is the characteristic function of $K$ on $G(\bbA)$. 
Let \[ \beta : X \to (\wbar \bbX_P)_{\eF_G^1, \eF_G^2} \] denote a 
section that generically lands in the non-degenerate locus $(\bbX_P)_{\eF^1_G, \eF^2_G}$. 
Then for any $v \in \abs X$, 
choosing trivializations of $\eF^i_G \xt_X \spec(\fo_v)$ 
defines an isomorphism $(\wbar \bbX_P)_{\eF_G^2, \eF_G^2}(\fo_v) 
\cong \wbar\bbX_P(\fo_v)$. This defines an element $\beta_v \in \bbX_P(F_v) \cap \wbar\bbX_P(\fo_v)$, and the $K_v \xt K_v$-orbit of $\beta_v$ does not depend on the choice of trivializations. 
Non-degeneracy of $\beta$ implies that $(\beta_v) \in \bbX_P(\bbA)$. 
We define $\Asymp_P(\delta_K)(\beta)$ to be the value of $\Asymp_P(\delta_K)$
at this adelic point.

\begin{thm} \label{thm:b} 
Let $E = \wbar\bbQ_\ell$. We have an equality 
\[ b(\eF_G^1, \eF_G^2) = \sum_P (-1)^{\dim Z(M)} \sum_\beta \Asymp_P(\delta_K)(\beta), \]
where $P$ ranges over the standard parabolic subgroups of $G$, 
and $\beta$ ranges over the non-degenerate sections $\beta : X \to (\wbar \bbX_P)_{\eF_G^1, \eF_G^2}$. 
\end{thm}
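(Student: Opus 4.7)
The plan is to deduce Theorem~\ref{thm:b} by analyzing the $*$-stalks of $\Delta_*\wbar\bbQ_\ell$ through a proper compactification of the diagonal inside $\VinBun_G$, combined with the factorization algebra computations of Appendix~\ref{s:factorization}. First I would construct a proper compactification $\wbar\Delta : \wbar\Bun_G \to \Bun_G \xt \Bun_G$ of the diagonal morphism $\Delta$: using the isomorphism $\VinBun_{G,G} \cong \Bun_G \xt T_\adj$, the diagonal identifies with the inclusion $\Bun_G \cong \Bun_G \xt \{1\} \into \VinBun_{G,G}$, and $\wbar\Bun_G$ is then taken to be an appropriate closure of this locus in $\VinBun_G$. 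Letting $j : \Bun_G \into \wbar\Bun_G$ denote the open embedding, we have $\Delta_*\wbar\bbQ_\ell \cong \wbar\Delta_!(j_*\wbar\bbQ_\ell)$, so by proper base change the $*$-stalk at $(\eF_G^1, \eF_G^2)$ computes the compactly supported cohomology of the fiber of $\wbar\Delta$ with coefficients in $j_*\wbar\bbQ_\ell$.

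Next I would exploit the defect stratification of \S\ref{sect:defect}: the fiber of $\wbar\Delta$ over $(\eF_G^1, \eF_G^2)$ inherits a stratification indexed by standard parabolics $P$ together with discrete invariants $\mu,\lambda \in \Lambda_{G,P}$, which the diagonal condition forces to coincide. By Lemma~\ref{lem:X_PVin} and the description of strata \eqref{eqn:defect}, the intersection of the fiber with the $P$-stratum is in canonical bijection with the set of non-degenerate sections $\beta : X \to (\wbar\bbX_P)_{\eF_G^1,\eF_G^2}$. Appealing to the geometric Bernstein asymptotics results of \cite{Schieder:gen}, the $*$-restriction of $j_*\wbar\bbQ_\ell$ to the $P$-stratum inside $\VinBun_{G,\mathbf c_P}$ would then be identified, up to an explicit shift and Tate twist accounting for the dimensions involved, with the factorization algebra $\tilde\Upsilon(\check\fu_P)$ from \S\ref{ss:tildeU}, pulled back to $\Bun_P \xt_{\Bun_M} \eH^+_M \xt_{\Bun_M} \Bun_{P^-}$ along the projection to $\eH^+_M$.

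Applying Grothendieck's trace formula stratum by stratum, the contribution from the $P$-stratum becomes a sum over non-degenerate sections $\beta$ of the trace of Frobenius on $\tilde\Upsilon(\check\fu_P)$ at the image of $\beta$ in $\Gr^+_{M,X^\theta}$. By Proposition~\ref{prop:geom-nu} this trace equals $q^{-\brac{\check\rho_P,\theta}} \prod_v \nu_{M,v}(m_v)$, and by \eqref{eqn:Asymp0} together with the product definition of $\Asymp_P$ in \S\ref{sss:Asymp} (and Remark~\ref{rem:AsympK} to handle the almost-everywhere unramified places), this product coincides with $\Asymp_P(\delta_K)(\beta)$ in the sense of \S\ref{sss:asympVin}. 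The sign $(-1)^{\dim Z(M)}$ is then expected to emerge from the Euler-characteristic contribution of the $T_\adj$-direction via the product decomposition \eqref{eqn:Pfiber}, once the shifts and twists introduced by the construction of $\wbar\Bun_G$ are fully accounted for.

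The main obstacle I anticipate is the rigorous identification in the second step of $j_*\wbar\bbQ_\ell$ on a defect stratum with the factorization algebra $\tilde\Upsilon(\check\fu_P)$; this identification is essentially the content of the geometric Bernstein asymptotics theorem of \cite{Schieder:gen}, and, while the statement is clean, carefully matching shifts, Tate twists, and sign conventions --- in particular to extract precisely $(-1)^{\dim Z(M)}$ as the coefficient of the $P$-term --- requires delicate bookkeeping involving the dimensions of the various torus factors appearing in \eqref{eqn:Pfiber}.
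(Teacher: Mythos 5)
Your proposal follows essentially the same strategy as the paper's proof: compactify $\Delta$ using the Drinfeld--Lafforgue--Vinberg degeneration, apply Schieder's geometric Bernstein asymptotics to compute $\jmath_*\wbar\bbQ_\ell$ on defect strata, identify the resulting Grothendieck function with $\Asymp_P(\delta_K)$ via Proposition~\ref{prop:geom-nu} and \eqref{eqn:Asymp0}, and extract the sign $(-1)^{\dim Z(M)}$ from the Euler characteristic of the torus factors. That is precisely what happens in \S\ref{sss:barBunG}--\ref{ss:i^*j_*}. However, two steps as you wrote them do not survive scrutiny.

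First, the compactification step fails as stated. Taking ``an appropriate closure of $\Bun_G \xt \{1\}$ in $\VinBun_G$'' does not yield a proper morphism to $\Bun_G \xt \Bun_G$: the $G$-stratum $\VinBun_{G,G} \cong \Bun_G \xt T_\adj$ lies over $T_\adj \subset \wbar{T_\adj}$, and since $\{1\}$ is a closed point of $\wbar{T_\adj}$, the closure of $\Bun_G \xt \{1\}$ inside $\VinBun_G$ remains inside the $G$-stratum, i.e.\ it is $\Bun_G \xt \{1\}$ itself. The essential point --- which the paper handles in Lemma~\ref{lem:barBunproper} and \S\ref{sss:barBunG} --- is that one must quotient by the $T$-action (and, in order to recover $\Delta$ itself rather than $\Bun_G \to \Bun_G \xt \bbB Z(G)$, also base change along $\wbar{T/Z_0(G)} \to \wbar{T_\adj}$). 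The resulting $\wbar\Bun_G = (\VinBun_G)_{\wbar{T/Z_0(G)}}/T$ is proper over $\Bun_G \xt \Bun_G$ precisely because the torus direction has been killed; your construction does not achieve this, so the proper base change argument in your first paragraph has no valid input.

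Second, the parenthetical claim that ``the diagonal condition forces $\mu$ and $\lambda$ to coincide'' is incorrect. The invariants $\mu, \lambda$ are the degrees of the $M$-bundles $\eF_M^1, \eF_M^2$ appearing in the defect stratum \eqref{eqn:defect}; these are reductions of $\eF_G^1$ to $P$ and of $\eF_G^2$ to $P^-$ respectively, and there is no reason for their degrees to agree even when $\eF_G^1 = \eF_G^2$. In fact Lemma~\ref{lem:Gstratclosure} shows the constraint is $\mu - \lambda \in \Lambda^\pos_{G,P}$, and all such pairs contribute. Fortunately this claim does not actually feed into your subsequent steps --- you immediately pass to the (correct) bijection between $\bbF_q$-points of the full $P$-stratum over $(\eF_G^1,\eF_G^2)$ and non-degenerate sections $\beta$ --- so this is a local misstatement rather than a load-bearing error. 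But it signals that the bookkeeping you flag as delicate in your last paragraph (including verifying $(-1)^{\dim T}(1-q)^{-\abs{\Gamma_G}} \cdot (1-q)^{\abs{\Gamma_G}-\abs{\Gamma_M}} \cdot \abs{(T/Z_0(M))(\bbF_q)} = (-1)^{\dim Z(M)}$) would need to be carried out more carefully than sketched.
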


The strategy for proving Theorem~\ref{thm:b} was suggested by Drinfeld, and it
consists of compactifying the diagonal morphism of $\Bun_G$. 
The geometry of the compactification then reduces to a theorem of 
\cite{Schieder:gen}, and 
the corresponding Grothendieck functions are computed using the 
facts reviewed in Appendix \ref{s:factorization}. 
The proof of Theorem~\ref{thm:b} is given at the end of \S\ref{ss:i^*j_*}.

\subsection{Compactifications of the diagonal morphism of $\Bun_G$}
\label{ss:compactify}

The diagonal morphism $\Delta$ is in general not proper, and one would like to compactify 
it (e.g., to compute $*$-restrictions of $\Delta_*$). 
We first review the definition of the stack $\wbar\Bun'_G$ (denoted by $\wbar\Bun_G$ in \cite{Schieder:gen}), which is a compactification of 
the morphism $\Bun_G \xt \bbB Z(G) \to \Bun_G \xt \Bun_G$, which $\Delta$ factors through. 
For the purposes of this paper, we define a slightly different stack $\wbar\Bun_G$, which is a compactification of $\Delta$ when $G$ is semisimple. 
When $G$ is not semisimple, $\wbar\Bun_G$ is not quite a compactification of $\Delta$, but it is equally good for our purposes.

\subsubsection{$\wbar\Bun'_G$}  \label{sss:barBun'G}
The action of $Z(G_\enh) = T$ on $\wbar{G_\enh}$ induces a $T$-action on $\VinBun_G$. Define $\wbar \Bun'_G = \VinBun_G / T$. 
There is an open embedding $\VinBun_{G,G} / T = \Bun_G \xt \bbB Z(G)
\into \wbar\Bun'_G$.
Observe that $\Delta$ factors as
\[ \Bun_G \to \Bun_G \xt \bbB Z(G) \into \wbar\Bun'_G \to \Bun_G \xt \Bun_G, \]
where $\bbB Z(G)$ is the classifying stack of $Z(G)$-bundles.
The following lemma is well-known: 

\begin{lem} \label{lem:barBunproper}
The map $\wbar \Bun'_G \to \Bun_G \xt \Bun_G$ is schematic and projective\footnote{The proof shows that there exists a coherent sheaf $\eF$ on the stack $\Bun_G \xt \Bun_G$ and a closed embedding $\wbar\Bun'_G \into \bbP(\eF)$.}.
\end{lem}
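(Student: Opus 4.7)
The plan is to realize $\wbar\Bun'_G = \VinBun_G/T$ as a closed substack of the projectivization of a coherent sheaf $\mathcal{E}$ on $\Bun_G \xt \Bun_G$; this simultaneously yields schematicness, projectivity, and the strengthened statement of the footnote.

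First, I would give an explicit Tannakian description of $S$-points of $\wbar\Bun'_G$. Using the $G \xt G \xt T$-module decomposition
\[ k[\wbar{G_\enh}] = \bigoplus_{\check\lambda \in \check\Lambda^+_G} V(\check\lambda)^* \ot V(\check\lambda) \ot k_{\check\lambda}, \]
such an $S$-point consists of $G$-bundles $\eF^1_G, \eF^2_G$ on $X \xt S$, a $T$-torsor $\eF_T$ on $S$ (recording the $T$-quotient), and, for each $\check\lambda \in \check\Lambda^+_G$, a morphism
\[ \beta^{\check\lambda} : V(\check\lambda)_{\eF^2_G} \to V(\check\lambda)_{\eF^1_G} \ot p_S^* \eF_T^{\check\lambda} \]
on $X \xt S$, subject to Plücker compatibility and to the non-degeneracy condition inherited from $\VinBun_G$.

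Second, since $k[\wbar{G_\enh}]$ is a finitely generated $k$-algebra, I would choose a finite generating set of dominant weights $\check\lambda_1, \dots, \check\lambda_N \in \check\Lambda^+_G$ such that the induced $(G\xt G)$-equivariant map $\wbar{G_\enh} \hookrightarrow \prod_{i=1}^N \End(V(\check\lambda_i))$ is a closed embedding. After replacing each $\check\lambda_i$ by $\check\lambda_i + n \check\lambda_0$ for a fixed strictly dominant $\check\lambda_0$ and a sufficiently large common integer $n$ (a Veronese-type trick), one can further arrange that $T$ acts on every $\beta^{\check\lambda_i}$-summand through one common character. Each $\beta^{\check\lambda_i}$ is then a section of the coherent sheaf $\mathcal{E}_i := (p_{\Bun_G \xt \Bun_G})_*\underline{\Hom}(V(\check\lambda_i)_{\eF^2_G}, V(\check\lambda_i)_{\eF^1_G})$ on $\Bun_G \xt \Bun_G$ (coherent by properness of $X$), and the whole system $(\beta^{\check\lambda_i})_i$ is a section of $\mathcal{E} := \bigoplus_i \mathcal{E}_i$. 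The Plücker relations cut out a closed substack of the total space of $\mathcal{E}$, non-degeneracy defines an open substack, and the resulting $T$-quotient by a single character becomes an honest projectivization, realizing $\wbar\Bun'_G$ as a closed substack of $\bbP(\mathcal{E})$ over $\Bun_G \xt \Bun_G$.

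The main obstacle I anticipate is making the Veronese-type trick work rigorously -- so that the replaced weights still produce a closed embedding of $\wbar{G_\enh}$ while simultaneously yielding a single-character $T$-action -- and verifying that the $T$-torsor $\eF_T$ parametrizing the quotient is rigidified by the non-degenerate $\beta$'s (up to the $Z(G)$ responsible for the $\bbB Z(G)$-gerbe structure on the open stratum), so that the fibers of $\wbar\Bun'_G \to \Bun_G \xt \Bun_G$ do not contain a transversal $\Bun_T$-direction.
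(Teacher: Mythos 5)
Your overall strategy agrees with the paper's: embed $\wbar{G_\enh}$ equivariantly into a product of matrix algebras $\End(\Delta(\check\lambda_i) \ot k_{\check\lambda_i})$ via Weyl modules (finitely many $\check\lambda_i$ generating $\check\Lambda^+_G$), reduce to sections of the associated endomorphism bundles, and observe that the Pl\"ucker relations cut out a closed subscheme of a projective bundle. That much is right, and it is how the paper proceeds. However, there are two genuine gaps in your execution.

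First, the ``Veronese-type trick'' does not work as stated. $T = Z(G_\enh)$ acts on the $\check\lambda_i$-summand $\beta^{\check\lambda_i}$ through the character $\check\lambda_i$, and replacing $\check\lambda_i$ by $\check\lambda_i + n\check\lambda_0$ changes each character additively, so the \emph{differences} $\check\lambda_i - \check\lambda_j$ (and hence the characters themselves) remain distinct. You cannot collapse a rank-$\dim T$ worth of $T$-action to a single $\bbG_m$ this way. The paper instead keeps all the $\check\lambda_i$ separate: it records the $T$-torsor $\eF_T$ on $S$, gets one line bundle $\eL_{\check\lambda_i}$ per character, obtains one projective scheme $\mathbf{Proj}_S \Sym(H^0 R\eHom(\pi_* \eE_i, \eO_S))$ per $\check\lambda_i$, and then cuts out $\wbar\Bun'_G$ as a closed subscheme of the \emph{product} of these (which is again projective via Segre, so the footnote is still achieved). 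The requirement that the $\eL_{\check\lambda_i}$ come from one $T$-torsor is enforced by the Pl\"ucker relations, which answers the ``rigidification'' concern you raise at the end.

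Second, the coherent sheaf $\mathcal{E}_i := (p_{\Bun_G \xt \Bun_G})_* \underline{\Hom}(\cdots)$ is not the right object, and ``coherent by properness of $X$'' glosses over the technical heart of the argument. Since $X$ is a curve, $R\pi_* \eE$ is a two-term perfect complex in degrees $0,1$; the ordinary $R^0\pi_*$ is indeed coherent, but it does \emph{not} commute with base change, so ``a section of $R^0\pi_*\eE$ over $S'$'' is not the same as ``a section of $\eE$ over $X_{S'}$.'' The correct coherent sheaf is obtained by dualizing: $R\eHom(\pi_*\eE,\eO_S)$ is concentrated in degrees $-1,0$, so $\eQ := H^0 R\eHom(\pi_*\eE, \eO_S)$ commutes with base change and represents sections, and the fiberwise-nonzero condition on $\beta_{\check\lambda}$ becomes a \emph{surjection} $\eQ\ot\eO_{S'}\to\eL_{\check\lambda}^{-1}$, i.e.\ a point of $\mathbf{Proj}_S\Sym(\eQ)$. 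This dualization step is essential and is what the paper carries out in detail; your proposal needs to incorporate it before the projectivity claim can be justified.
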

\begin{proof} 
Let $\eF_G^1, \eF_G^2 \in \Bun_G(S)$ for a test scheme $S$. 
Let $\Sect(X_S, (\wbar{G_\enh})_{\eF_G^1, \eF_G^2})$ denote the 
$S$-scheme of sections for the fiber bundle $(\wbar{G_\enh})_{\eF_G^1, \eF_G^2} \to X_S := X \xt S$. 
Then 
\begin{equation}\label{e:Sect} \wbar\Bun'_G \xt_{\Bun_G \xt \Bun_G} S \cong 
\Sect^\circ(X_S, (\wbar{G_\enh})_{\eF_G^1, \eF_G^2}) / T, 
\end{equation}
where the superscript $^\circ$ denotes the open locus of sections generically
landing in $(\oo{\wbar{G_\enh}})_{\eF^1_G,\eF^2_G}$. 
We wish to show \eqref{e:Sect} is a projective scheme over $S$.

Let $\Delta(\check\lambda)$ denote the Weyl $G$-module of highest weight $\check\lambda \in \check\Lambda^+_G$. It is known from the general theory of reductive monoids (cf.~proof of \cite[Proposition 2.3.2]{Wa}) that there exists a finite\footnote{In the particular case of the Vinberg semigroup, one can deduce that this map is a closed embedding using \cite[Exercises 6.1E, 6.2E]{BrK}.} map 
\begin{equation}\label{e:VinPluck}
    \wbar{G_\enh} \to \prod \End(\Delta(\check\lambda) \ot k_{\check\lambda}) \xt \wbar{T_\adj}, 
\end{equation}
where the product ranges over any \emph{finite} set of generators for the monoid $\check\Lambda^+_G$. The image of \eqref{e:VinPluck} satisfies the Pl\"ucker relations (cf.~\cite{BG,BFGM}). Therefore by considering the $G_\enh$-modules $\det(\Delta(\check\lambda) \ot k_{\check\lambda}) = \det(\Delta(\check\lambda)) \ot k_{\dim(\Delta(\check\lambda)) \check\lambda}$, we see that the composition of
\eqref{e:VinPluck} with the projection to $\prod \End(\Delta(\check\lambda) \ot k_{\check\lambda})$ is a finite map.
By Lemma~\ref{lem:Mapsfin}, we can reduce to showing that 
\begin{equation} \label{e:SectV}
    \Sect^\circ(X_S, \prod \End(\Delta(\check\lambda)\ot k_{\check\lambda})_{\eF_G^1,\eF_G^2}) /T
\end{equation}
is representable by a projective scheme over $S$. 
Here the superscript $^\circ$ denotes the locus of maps generically landing in $\prod (\End(\Delta(\check\lambda) \ot k_{\check\lambda})-\{0\})_{\eF^1_G,\eF^2_G}$.

For a test scheme $S'\to S$, an $S'$-point of the stack \eqref{e:SectV} is the data of $((\eF_T)_{S'}, \beta_{\check\lambda})$ where $(\eF_T)_{S'}$ is a $T$-bundle over $S'$ and $\beta_{\check\lambda}$ is an $\eO_{X \xt S'}$-module map 
\[ \Delta(\check\lambda)_{\eF_G^2} \ot_{\eO_S} \eL_{\check\lambda} \to \Delta(\check\lambda)_{\eF_G^1} \ot_{\eO_S} \eO_{S'} \] 
that is generically nonzero over all geometric points of $S'$,
where $\eL_{\check\lambda} = (k_{\check\lambda})_{(\eF_T)_{S'}}$ is the corresponding line bundle on $S'$. 
Observe that $\beta_{\check\lambda}$ is equivalent to an $S'$-fiberwise nonzero map 
\begin{equation} \label{e:Quot}
    \pi'^*(\eL_{\check\lambda}) \to (\Delta(\check\lambda)^*_{\eF^2_G} \ot \Delta(\check\lambda)_{\eF^1_G}) \ot_{\eO_S} \eO_{S'}, 
\end{equation}
where $\pi'$ is the projection $X \xt S' \to S'$. 
Set $\eE = \Delta(\check\lambda)^*_{\eF^2_G} \ot \Delta(\check\lambda)_{\eF^1_G}$, which is a locally free $\eO_{X \xt S}$-module. 
Let $\pi$ denote the projection $X \xt S \to S$, and observe that $\pi_*(\eE)$ 
is a perfect complex that commutes with base change (here and elsewhere, $\pi_*$ denotes the derived direct image functor).  
Then by adjunction, \eqref{e:Quot} is equivalent to a map in the derived category of coherent sheaves on $S'$ 
\[ \eL_{\check\lambda} \to \pi_*(\eE) \ot_{\eO_S} \eO_{S'} \] 
that is nonzero on every fiber of $S'$ (here the tensor product is derived). 
Applying derived $\eHom(?, \eO_{S'})$, this map is equivalent to a fiberwise nonzero map
\begin{equation} \label{e:projspace}
    \eHom(\pi_*(\eE), \eO_S) \ot_{\eO_S} \eO_{S'} \to \eL_{\check\lambda}^{-1}.
\end{equation}
Since $\pi_*(\eE) \ot_{\eO_S} k_s$ lives in cohomological degrees $0,1$ for any point $\spec(k_s) \to S$, we deduce that $\pi_*(\eE)$ is locally quasi-isomorphic to a complex of locally free $\eO_S$-modules living in degrees $0,1$. Therefore $\eHom(\pi_*(\eE),\eO_S)$ lives in cohomological degrees $-1,0$.
We conclude that \eqref{e:projspace} is equivalent to a surjection of $\eO_{S'}$-modules
\[ H^0\eHom(\pi_*(\eE),\eO_S) \ot_{\eO_S} \eO_{S'} \to \eL_{\check\lambda}^{-1}, \]
and $H^0\eHom(\pi_*(\eE),\eO_S)$ commutes with base change. 
We have shown that for fixed $\check\lambda$, the data $(\eL_{\check\lambda}^{-1}, \beta_{\check\lambda})$ defines an $S'$-point of the
projective $S$-scheme $\mathbf{Proj}_S \Sym_{\eO_S}( H^0\eHom(\pi_*\eE, \eO_S))$. 
By the Pl\"ucker relations, we conclude that the stack \eqref{e:SectV} is representable by a closed subscheme of a projective $S$-scheme.
\end{proof}

\subsubsection{}
Let $Z_0(G)$ denote the neutral connected component of the center of $G$. 
Then we have a finite map $T/Z_0(G) \to T/Z(G) = T_\adj$.
The character lattice of $T_\adj$ corresponds to the root lattice in $\check\Lambda$. 
If the Langlands dual group $\check G$ does not have a simply connected derived group, then the root lattice is not 
saturated in $\check\Lambda$, so $T/Z_0(G) \ne T/Z(G)$ in general. 

Recall that $k[\wbar{T_\adj}]$ is the semigroup algebra of 
$\check\Lambda^\pos_G$. 
Define $\wbar{T/Z_0(G)}$ so that $k[\wbar{T/Z_0(G)}]$
is the semigroup algebra of $\check\Lambda^{\pos,\bbQ}_G \cap \check\Lambda$.
There is a natural finite map 
\[ \wbar{T/Z_0(G)} \to \wbar{T_\adj}  \]
extending the map $T/Z_0(G) \to T_\adj$.

\subsubsection{$\wbar\Bun_G$} \label{sss:barBunG}
Consider the base change 
\[ (\VinBun_G)_{\wbar{T/Z_0(G)}}:= \VinBun_G \xt_{\wbar{T_\adj}} \wbar{T/Z_0(G)}\] over $\wbar{T/Z_0(G)}$. 
Then $T$ acts diagonally on $(\VinBun_G)_{\wbar{T/Z_0(G)}}$,
and we define 
\[ \wbar\Bun_G = (\VinBun_G)_{\wbar{T/Z_0(G)}} / T. \]
Since $\bar\pi_{\Bun}^{-1}(T_\adj) = \VinBun_{G,G} = \Bun_G \xt T_\adj$, we see that
there is an open embedding $\Bun_G \xt \bbB Z_0(G) \into \wbar\Bun_G$. 
There is a natural finite map $\wbar\Bun_G \to \wbar\Bun'_G$, and we have the
commutative diagram
\[ \xymatrix{ \Bun_G \ar[r] & \Bun_G \xt \bbB Z_0(G) \ar@{^{(}->}[r] \ar[d] & \wbar\Bun_G \ar[d]  \\ 
& \Bun_G \xt \bbB Z(G) \ar@{^{(}->}[r] & \wbar\Bun'_G \ar[r] & \Bun_G \xt \Bun_G } \]
factoring the diagonal $\Delta$. 
Then the composite map $\wbar\Delta : \wbar\Bun_G \to \Bun_G \xt \Bun_G$ 
is also proper, so $\wbar\Bun_G$ is a ``compactification'' of $\Delta$.
This is the compactification that we will use to prove Theorem~\ref{thm:b}.

\begin{eg}
Let $G = \on{SL}(2)$. Then $Z_0(G)=\{1\}$, and the map $\wbar{T/Z_0(G)} \to \wbar{T_\adj}$ corresponds to the map $\bbA^1 \to \bbA^1 : \epsilon \mapsto \epsilon^2$. 
An $S$-point of $\wbar\Bun_G$ is 
a collection $(\eL_1, \eL_2, l, \beta,\epsilon)$, where 

(a) $\eL_1,\eL_2$ are rank $2$ vector bundles on $X \xt S$ with trivializations of their determinants,

(b) $l$ is a line bundle on $S$,

(c) $\beta \in\Hom (\eL_2, \eL_1) \ot l$ is not equal to $0$ on $X \xt s$ for every geometric point $s \to S$, 

(d) $\epsilon\in l$, and 

(e) the equation $\det \beta=\epsilon^2$ holds. 

\noindent In comparison, an $S$-point of $\wbar \Bun'_G$ is a collection $(\eL_1,\eL_2, l, \beta)$ satisfying (a)--(c) above.
\end{eg}

\subsubsection{}
We have a Cartesian square
\begin{equation} \label{e:VinbarBun}
\xymatrix{ \Bun_G \xt T \ar[r] \ar[d] & \Bun_G \ar[d] \\ 
\Bun_G \xt (T/Z_0(G)) \ar@{^{(}->}[d]_\jmath \ar[r] & \Bun_G \xt \bbB Z_0(G)\ar@{^{(}->}[d] \\ 
(\VinBun_G)_{\wbar{T/Z_0(G)}} \ar[r] & \wbar\Bun_G } 
\end{equation}
where the horizontal maps are $T$-torsors, and the lower vertical maps are open embeddings.
Since $Z_0(G)$ is connected, $T \to T/Z_0(G)$ is a trivial $Z_0(G)$-bundle.
Therefore the pushforward of the constant sheaf $(\wbar\bbQ_\ell)_T$
to $T/Z_0(G)$ equals $(\wbar\bbQ_\ell)_{T/Z_0(G)} \ot H^*(Z_0(G), \wbar\bbQ_\ell)$. 
Thus by smooth base change, to compute the function $b$ it suffices to compute the trace of the geometric Frobenius acting on the $*$-stalks of
$\jmath_* \wbar\bbQ_\ell$.

\subsection{The $*$-extension of the constant sheaf} \label{ss:i^*j_*}
Let 
\[ \jmath : \Bun_G \xt (T/Z_0(G)) \into (\VinBun_G)_{\wbar{T/Z_0(G)}}  \]
denote the open embedding.
We want to compute the $*$-restriction of $\jmath_* \wbar\bbQ_\ell$ to 
the strata $\VinBun_{G,P}^{\mu,\lambda} \xt_{\wbar{T_\adj}} \wbar{T/Z_0(G)}$
for $\mu,\lambda \in \Lambda_{G,P}$.

Recall that the $P$-locus $(\wbar{T_\adj})_P$ is isomorphic to 
$T/Z(M)$. 
\begin{lem}  \label{lem:rad0}
    The reduced part of $(\wbar{T_\adj})_P \xt_{\wbar{T_\adj}} \wbar{T/Z_0(G)}$ 
is isomorphic to $T/Z_0(M)$. 
\end{lem}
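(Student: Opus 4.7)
The plan is to translate the claim into monoid-algebra language, where the finite map $\wbar{T/Z_0(G)} \to \wbar{T_\adj}$ corresponds to the inclusion of monoid algebras $k[\check\Lambda^\pos_G] \hookrightarrow k[\check\Lambda^{\pos,\bbQ}_G \cap \check\Lambda]$ (both having the root lattice $\bbZ\check\Phi_G$ as the group generated, but the saturation in $\check\Lambda$ appearing on the right). Under this dictionary, the stratum inclusion $(\wbar{T_\adj})_P \hookrightarrow \wbar{T_\adj}$ is the composite of the surjection $k[\check\Lambda^\pos_G] \twoheadrightarrow k[\check\Lambda^\pos_M]$ killing $e^{\check\alpha_j}$ for $j \in \Gamma_G - \Gamma_M$, followed by inverting $e^{\check\alpha_i}$ for $i \in \Gamma_M$ to obtain $k[\bbZ\check\Phi_M] = k[T/Z(M)]$.

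Next I would compute the tensor product $k[\wbar{T/Z_0(G)}] \otimes_{k[\wbar{T_\adj}]} k[(\wbar{T_\adj})_P]$ as the quotient of $k[\check\Lambda^{\pos,\bbQ}_G \cap \check\Lambda]$ by the ideal generated by $e^{\check\alpha_j}$, $j \in \Gamma_G - \Gamma_M$, further localized at $\{e^{\check\alpha_i}\}_{i \in \Gamma_M}$. Writing $\mu = \sum_i c_i \check\alpha_i$ with $c_i \in \bbQ_{\geq 0}$ for any $\mu \in \check\Lambda^{\pos,\bbQ}_G \cap \check\Lambda$, the key observation is that $e^{n\mu}$ lies in the ideal for some $n \geq 1$ if and only if $c_j > 0$ for some $j \in \Gamma_G - \Gamma_M$; thus passing to the reduction kills exactly those $e^\mu$.

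The surviving monomials therefore correspond to $\mu \in \check\Lambda^{\pos,\bbQ}_M \cap \check\Lambda$, and after inverting the $e^{\check\alpha_i}$ for $i \in \Gamma_M$ (which already belong to this submonoid) we obtain the group algebra $k[Q_M^{\mathrm{sat}}]$, where $Q_M^{\mathrm{sat}} := \check\Lambda \cap \bbQ\check\Phi_M$ is the saturation of the root lattice of $M$ inside $\check\Lambda$. A standard computation identifies $Q_M^{\mathrm{sat}}$ with the character lattice of $T/Z_0(M)$: indeed, the cocharacter lattice of $Z_0(M)$ is $\Lambda \cap \bbQ\check\Phi_M^{\perp}$, whose annihilator in $\check\Lambda$ is precisely $Q_M^{\mathrm{sat}}$.

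The main potential subtlety is making sure the reduction kills exactly the predicted nilpotents and nothing more; this amounts to checking that $k[\check\Lambda^{\pos,\bbQ}_M \cap \check\Lambda]$ is already reduced, which is immediate since it embeds into the group algebra $k[Q_M^{\mathrm{sat}}]$ of the torsion-free lattice $Q_M^{\mathrm{sat}}$. The remainder is a bookkeeping exercise, and no deeper input is needed.
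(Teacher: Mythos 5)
Your proof is correct and follows essentially the same route as the paper's: translate everything into semigroup algebras, compute the tensor product $k[\check\alpha_i^{\pm 1}]\ot_{k[\check\Lambda^\pos_G]} k[\check\Lambda^{\pos,\bbQ}_G\cap\check\Lambda]$, pass to the reduced quotient, and identify the result as $k[T/Z_0(M)]$. The one place you add value over the paper's terse presentation is in pinpointing exactly which monomials become nilpotent (those $e^\mu$ with $\mu=\sum c_i\check\alpha_i$ having $c_j>0$ for some $j\in\Gamma_G-\Gamma_M$) and in spelling out why nothing further is killed, namely that $k[\check\Lambda^{\pos,\bbQ}_M\cap\check\Lambda]$ is reduced because it embeds in a Laurent polynomial ring. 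The paper asserts the reduced algebra in one line; your version supplies the missing justification. Two small remarks: (i) you implicitly use that localization commutes with passing to the reduction, which is fine but worth stating since you interchange the order of those operations relative to the fiber product; (ii) for the containment of the span of the nilpotent monomials in the nilradical, it is cleanest to observe that each generator $e^\nu$ is nilpotent, hence lies in every prime, hence so does the ideal they generate — this is slightly stronger than just saying each $e^\nu$ is nilpotent.
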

\begin{proof}
The locally closed embedding $(\wbar{T_\adj})_P = \bbG_m^{\abs{\Gamma_M}} \xt \{0\} \into \wbar{T_\adj} = (\bbA^1)^{\abs{\Gamma_G}}$ 
identifies with the spectrum of the algebra map 
\begin{equation} \label{e:pr_M}
k[\check\alpha_j, j \in \Gamma_G] \to k[\check\alpha_i^{\pm 1}, i \in \Gamma_M] 
\end{equation}
sending $\check\alpha_i \mapsto \check\alpha_i$ for $i \in \Gamma_M$ and 
$\check\alpha_j \mapsto 0$ for $j \in \Gamma_G - \Gamma_M$ (here we consider $\check\alpha_i$ as a character and use the multiplicative notation).
Note that $k[\check\alpha_j, j\in \Gamma_G]$ is the semigroup algebra 
of $\check\Lambda^\pos_G$. 

The projection $\wbar{T/Z_0(G)} \to \wbar{T/Z(G)}$ is the spectrum of the inclusion of semigroup algebras
\[ k[\check\Lambda^\pos_G] \into k[\check\Lambda^{\pos,\bbQ}_G \cap \check\Lambda]. \]
Therefore $(\wbar{T_\adj})_P \xt_{\wbar{T_\adj}} \wbar{T/Z_0(G)}$ is the spectrum of the algebra
\begin{equation} \label{e:semialg}
    k[\check\alpha_i^{\pm 1}, i\in \Gamma_M] \ot_{k[\check\Lambda^\pos_G]} 
k[\check\Lambda^{\pos,\bbQ}_G \cap \check\Lambda]. 
\end{equation}
Since the map \eqref{e:pr_M} sends $\check\alpha_j \mapsto 0$ for $j \in \Gamma_G - \Gamma_M$, the reduced algebra of \eqref{e:semialg}
equals 
\begin{equation} \label{e:semialgr}
    k[\check\alpha_i^{\pm 1}, i\in \Gamma_M] \ot_{k[\check\Lambda^\pos_M]} k[\check\Lambda^{\pos,\bbQ}_M \cap \check\Lambda].
\end{equation}
Since the non-negative integral span of $\check\Lambda^{\pos,\bbQ}_M \cap \check\Lambda$ and $-\check\alpha_i,\, i \in \Gamma_M$ is equal to the lattice
$\check\Lambda_{T/Z_0(M)} \subset \check\Lambda$, 
the algebra \eqref{e:semialgr}  equals $k[T/Z_0(M)]$.
\end{proof}

Recall from \eqref{eqn:Pfiber} that we have 
an isomorphism $\VinBun_{G,P} \cong \VinBun_{G,\mathbf c_P} \xt (T/Z(M))$. 
Lemma~\ref{lem:rad0} implies that we have embeddings 
\begin{equation} \label{e:strata0}
    \iota_P : \VinBun_{G,\mathbf c_P} \xt (T/Z_0(M)) \into (\VinBun_G)_{\wbar{T/Z_0(G)}} 
\end{equation}
that form a stratification as $P$ ranges over all standard parabolic subgroups. 

For $\mu,\lambda \in \Lambda_{G,P}$, let 
\[ \iota^{\mu,\lambda}_P : \VinBun_{G,\mathbf c_P}^{\mu,\lambda} \xt (T/Z_0(M)) \into (\VinBun_G)_{\wbar{T/Z_0(G)}} \] 
denote the locally closed embedding defined by \eqref{e:strata0} 
and the defect stratification from \S\ref{sect:defect}. 

The following is proved in \cite[Theorem B]{Schieder:gen} in the case $P=B$. We give a proof using Proposition~\ref{prop:geom-nu} at the end of this Appendix. 
\begin{thm} \label{thm:i^*j_*}
Suppose $k=\bbF_q$. 
    The trace of geometric Frobenius on $*$-stalks of $\iota^*_P (\jmath_* (\wbar\bbQ_\ell))$ sends
\begin{equation} \label{eqn:Asympf}
	(\eF_G^1, \eF_G^2, \beta, t) \in (\VinBun_{G,\mathbf c_P} \xt T/Z_0(M))(\bbF_q) \mapsto 
    (1-q)^{\abs{\Gamma_G}-\abs{\Gamma_M}} \Asymp_P(\delta_K)(\beta). 
\end{equation}
\end{thm}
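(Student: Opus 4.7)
The plan is to separate the computation of the Frobenius trace on $\iota_P^*(\jmath_*\wbar\bbQ_\ell)$ into two independent contributions: a purely toric contribution from the base direction $\wbar{T/Z_0(G)}$ near $\mathbf c_P$, which will produce the factor $(1-q)^{|\Gamma_G|-|\Gamma_M|}$, and a ``spherical'' contribution from the Vinberg degeneration $\VinBun_{G,\mathbf c_P}$, which by Proposition~\ref{prop:geom-nu} will produce $\Asymp_P(\delta_K)(\beta)$.

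\textbf{Step 1 (factoring out the toric direction).} First I would use the splitting $T/Z_0(M)\hookrightarrow T/Z_0(G)$ from \S\ref{sss:Tsplit} together with the $T$-equivariant trivialization \eqref{eqn:>=Pfiber} to show that an \'etale neighborhood of $\iota_P\bigl(\VinBun_{G,\mathbf c_P}\xt(T/Z_0(M))\bigr)$ in $(\VinBun_G)_{\wbar{T/Z_0(G)}}$ is isomorphic to a product
\[
\VinBun_{G,\mathbf c_P}\xt(T/Z_0(M))\xt\wbar{Z(M)/Z_0(G)},
\]
where $\wbar{Z(M)/Z_0(G)}$ is an affine toric variety whose closed stratum is a single point of codimension $|\Gamma_G|-|\Gamma_M|$. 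Under this factorization, the open inclusion $\jmath$ factors as a product, so smooth base change and the K\"unneth formula identify the Frobenius trace on $\iota_P^*(\jmath_*\wbar\bbQ_\ell)$ with the product of the Frobenius trace of the analogous $*$-pushforward on $\VinBun_{G,\mathbf c_P}$ and the Frobenius trace at $0$ of the $*$-extension of the constant sheaf on $Z(M)/Z_0(G)\subset\wbar{Z(M)/Z_0(G)}$. The latter is a standard toric computation: stratifying by coordinate faces and using additivity of traces in excision triangles yields $(1-q)^{|\Gamma_G|-|\Gamma_M|}$.

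\textbf{Step 2 (identifying the spherical contribution with $\tilde\Upsilon(\check\fu_P)$).} Next I would identify the remaining stalk on $\VinBun_{G,\mathbf c_P}$ with the Drinfeld compactification picture. By Lemma~\ref{lem:X_PVin}, $\VinBun_{G,\mathbf c_P}\cong\Maps^\circ(X,G\bs\wbar\bbX_P/G)$, and the defect stratification from \S\ref{sect:defect} expresses the $(\mu,\lambda)$-stratum as $\Bun_P^\mu\xt_{\Bun_M^\mu}\eH_M^{+,\mu,\lambda}\xt_{\Bun_M^\lambda}\Bun_{P^-}^\lambda$ with $\theta:=\mu-\lambda\in\Lambda^\pos_{G,P}$. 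Using the Zastava local model from \S\ref{sss:Zastava}, applied to both $P$ and $P^-$ via the factorization of $\eH_M^+$, an \'etale neighborhood of a stratum point will be identified with an \'etale neighborhood of the corresponding point of $\wtilde\Bun_P\xt_{\Bun_M}\wtilde\Bun_{P^-}$; this reduces the Frobenius-trace computation, via smooth base change, to the trace on the $*$-fiber of $\jmath_*(\IC_{\Bun_P})$ on the $(\mu,\lambda)$-stratum of $\wtilde\Bun_P$. Gaitsgory's theorem recalled in \S\ref{ss:tildeU} identifies this $*$-fiber with $\IC_{\Bun_P}\tbt\tilde\Upsilon(\check\fu_P)^\theta$, so the desired Frobenius trace becomes the trace on $\tilde\Upsilon(\check\fu_P)^\theta$ at the point of $\Gr^+_{M,X^\theta}(\bbF_q)$ determined by $\beta$.

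\textbf{Step 3 (matching the Grothendieck function).} By Proposition~\ref{prop:geom-nu}, this trace equals $q^{-\brac{\check\rho_P,\theta}}\prod_v\nu_{M,v}(m_v)$, and by \eqref{eqn:Asymp0} the product $\prod_v\nu_{M,v}(m_v)$ equals $\prod_v\Asymp_{P,v}(\delta_{K_v})(\beta_v)=\Asymp_P(\delta_K)(\beta)$. The remaining factor $q^{-\brac{\check\rho_P,\theta}}$ will be absorbed by the shifts and Tate twists relating the normalized $\IC_{\Bun_P}$ to the constant sheaf on the $G$-stratum of $\VinBun_G$, once the appropriate dimension of the Zastava local model (which is $\brac{2\check\rho_P,\theta}$) is taken into account. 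Combining with the toric factor from Step~1 then yields the formula \eqref{eqn:Asympf}. The main obstacle is Step~2: establishing the precise \'etale-local comparison between $(\VinBun_G)_{\wbar{T/Z_0(G)}}$ and $\wtilde\Bun_P\xt_{\Bun_M}\wtilde\Bun_{P^-}$ near the $(\mu,\lambda)$-stratum -- for $P=B$ this is the content of the geometric Bernstein asymptotics of \cite{Schieder:gen}, and its extension to arbitrary $P$ via the factorization of $\eH^+_M$ together with the careful bookkeeping of dimensions and twists constitutes the technical heart of the argument.
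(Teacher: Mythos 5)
Your plan has a correct skeleton and collects the right ingredients (the $(1-q)$-factor counted by $\abs{\Gamma_G}-\abs{\Gamma_M}$, Proposition~\ref{prop:geom-nu}, and the identification $\Asymp_{P,v}(\delta_{K_v})(\beta_v)=\nu_{M,v}(m_v)$), and you are right that Step~2 is where the technical content lies. But Step~1 is a genuine gap, not just an omitted detail.

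The claim that an \'etale neighborhood of $\iota_P(\VinBun_{G,\mathbf c_P}\xt(T/Z_0(M)))$ in $(\VinBun_G)_{\wbar{T/Z_0(G)}}$ is a product $\VinBun_{G,\mathbf c_P}\xt(T/Z_0(M))\xt\wbar{Z(M)/Z_0(G)}$ is false. The $T_\adj$-equivariance in \eqref{eqn:>=Pfiber} only trivializes the fibration along the $T/Z(M)$-direction; the direction transverse to the $P$-locus, i.e.\ the $\wbar{Z(M)/Z(G)}$-direction, is precisely the direction in which the Vinberg degeneration is a non-trivial degeneration. The generic fiber of $\bar\pi_{\Bun}$ over $(\wbar{T_\adj})_{\ge P}$ is $\Bun_G$, while the fiber over $\mathbf c_P$ is $\VinBun_{G,\mathbf c_P}=\Maps^\circ(X, G\bs\wbar\bbX_P/G)$, whose nondegenerate part is $\Bun_P\xt_{\Bun_M}\Bun_{P^-}$; these two stacks are not isomorphic (their sets of connected components already differ), so the open embedding $\jmath$ cannot be a product map. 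The K\"unneth reasoning you invoke therefore has no space to run on.

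What makes the final answer \emph{look} K\"unneth is not a product decomposition of a neighborhood but the contraction principle: in the paper, one passes to Schieder's one-sided local model $\eY^{P,\theta}$ (and its base change $\tilde\eY^{P,\theta}$), constructs a contracting $\bbG_m$-action onto the section $\tilde\sigma_\eY$, and uses the contraction principle to replace the $*$-stalk $\tilde\sigma_\eY^*(\jmath_{G*}\wbar\bbQ_\ell)$ by the $*$-pushforward $\tilde\pi_{\eY*}(\jmath_{G*}\wbar\bbQ_\ell)$. Only \emph{after} this step is the computation a product, because $\tilde\pi_\eY\circ\jmath_G$ factors at the level of reduced schemes as $\oo\pi_\eZ\xt(T/Z_0(G)\to T/Z_0(M))$; the Zastava pushforward produces $\tilde\Upsilon(\check\fu_P)^\theta$ (up to shift and twist), and the toric projection, which is a trivial $Z_0(M)/Z_0(G)$-torsor, produces the factor $H^*(Z_0(M)/Z_0(G),\wbar\bbQ_\ell)$ whose Frobenius trace is $(1-q)^{\abs{\Gamma_G}-\abs{\Gamma_M}}$. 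Note that in the paper this factor arises as the cohomology of the torus $Z_0(M)/Z_0(G)$ (fiber of a trivial torus bundle), not as the $*$-stalk at $0$ of the toric pushforward of the constant sheaf; these happen to give the same number, but the mechanism is different. Your Step~2 plan to compare with $\wtilde\Bun_P\xt_{\Bun_M}\wtilde\Bun_{P^-}$ is a reasonable-sounding variant of Schieder's geometric Bernstein asymptotics (and is morally what Theorem~\ref{thm:Schieder} encodes), but it is not obviously equivalent to the one-sided $\eY^{P,\theta}$ model the paper uses; you would need to establish the \'etale comparison rather than quote it. In summary: replace the nonexistent product decomposition by the contraction principle applied to the local model, and the rest of your outline aligns with the paper's route through Theorem~\ref{thm:Schieder}.
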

\noindent Here we use Lemma~\ref{lem:X_PVin} to identify $\beta$ with a 
section $X \to (\wbar\bbX_P)_{\eF_G^1, \eF_G^2}$, and $\Asymp_P(\delta_K)(\beta)$ is defined in \S\ref{sss:asympVin}. 

\medskip

Assuming Theorem~\ref{thm:i^*j_*}, we prove Theorem~\ref{thm:b}.

\begin{proof}[Proof of Theorem~\ref{thm:b}]
Let $\wbar\Bun_G$ denote the compactification of 
$\Delta$ defined in \S\ref{sss:barBunG}. 
Factor $\Delta$ into $\bar\jmath: \Bun_G \to \wbar\Bun_G$ 
and the proper map $\wbar\Delta : \wbar\Bun_G \to \Bun_G \xt \Bun_G$. 
For a sheaf $\eF$, we will use $f_\eF$ to denote 
its Grothendieck function, i.e., the trace of the geometric Frobenius
acting on the $*$-stalks over the $\bbF_q$-points. 
By the Grothendieck-Lefschetz trace formula, 
$b(\eF^1_G, \eF^2_G)$ equals the sum of the values of 
$f_{\bar\jmath_* \wbar\bbQ_\ell}$ at the points of $\wbar\Bun_G(\bbF_q)$ lying over $(\eF^1_G,\eF^2_G)\in (\Bun_G \xt \Bun_G)(\bbF_q)$.

We have the $T$-torsor $(\VinBun_G)_{\wbar{T/Z_0(G)}} \to \wbar\Bun_G$. 
Let 
\[ \jmath : \Bun_G \xt T/Z_0(G) \into (\VinBun_G)_{\wbar{T/Z_0(G)}} \] denote
the open embedding. Recall that $T \to T/Z_0(G)$ is a trivial $Z_0(G)$-bundle, so the pushforward of $(\wbar\bbQ_\ell)_T$ to 
$T/Z_0(G)$ equals $(\wbar\bbQ_\ell)_{T/Z_0(G)} \ot H^*(Z_0(G), \wbar\bbQ_\ell)$. 
The trace of the geometric Frobenius acting on $H^*(Z_0(G),\wbar\bbQ_\ell)$
equals $(1-q)^{\dim(Z_0(G))}$. 
Since any $T$-torsor over $\bbF_q$ is trivial, we deduce
from the Cartesian square \eqref{e:VinbarBun} and smooth base change that 
\[ b(\eF^1_G, \eF^2_G) = (-1)^{\dim T} (1-q)^{-\abs{\Gamma_G}} 
    \sum_{\tilde\beta} f_{\jmath_* \wbar\bbQ_\ell}(\tilde\beta), 
\]
where the sum is over $\tilde\beta \in (\VinBun_G)_{\wbar{T/Z_0(G)}}(\bbF_q)$ 
mapping to $(\eF_G^1, \eF_G^2)$.
From \eqref{e:strata0}, we have a stratification of 
$(\VinBun_G)_{\wbar{T/Z_0(G)}}$ by 
$\VinBun_{G,\mathbf c_P} \xt (T/Z_0(M))$, 
where $P$ ranges over all standard parabolic subgroups. 
Theorem~\ref{thm:i^*j_*} implies that
\[ f_{\iota_P^* \jmath_* \wbar\bbQ_\ell} (\eF^1_G, \eF^2_G, \beta, t) = 
(1-q)^{\abs{\Gamma_G} - \abs{\Gamma_M}} \Asymp_P(\delta_K)(\beta) \]
for $\beta : X \to (\wbar\bbX_P)_{\eF^1_G, \eF^2_G}$ a non-degenerate section
and $t \in (T/Z_0(M))(\bbF_q)$. 
Putting it all together, we prove the theorem.
\end{proof}

The remainder of this Appendix works towards setting up the proof of Theorem~\ref{thm:i^*j_*}, which is given at the end.

\subsection{Reduction to the Hecke stack} 
The isomorphism 
\[ \Bun_P^\mu \xt_{\Bun_M^\mu} \eH_M^{+,\mu,\lambda} \xt_{\Bun_M^\lambda} \Bun_{P^-}^\lambda \cong \VinBun_{G,\mathbf c_P}^{\mu,\lambda} \]
induced by \eqref{eqn:defect} allows us to define the projection map
\[ \pr^{\mu,\lambda}_M : \VinBun_{G,\mathbf c_P}^{\mu,\lambda} \to \eH_M^{+,\mu,\lambda}, \]
which is smooth with equidimensional fibers.

For $(\eF_G^1, \eF_G^2, \beta) \in \VinBun_{G,\mathbf c_P}^{\mu,\lambda}(\bbF_q)$, 
let $(\eF_M^1, \eF_M^2, \beta_M) := \pr^{\mu,\lambda}_M(\beta) \in \eH_M^{+,\mu,\lambda}(\bbF_q)$. 
Choosing trivializations $\eF_M^1,\eF_M^2$ over $\spec(\fo_v)$, the $\wbar M$-morphism 
$\beta_M$ defines an element $(m_v)$ in the restricted product
$\prod_v (\wbar M(\fo_v) \cap M(F_v))$ with respect to the open subgroups $M(\fo_v) \subset M(F_v)$. 
The $M(\fo_v)\xt M(\fo_v)$-orbit of $m_v$ does not depend on the choice of trivializations. 
One deduces from \eqref{eqn:Asymp0} that 
\begin{equation} \label{e:Asympnu}
    \Asymp_{P,v}(\delta_{K_v})(\beta_v) = \nu_{M,v}(m_v),
\end{equation}
where $\nu_{M,v}$ is the $K_{M,v}$-bi-invariant measure on 
$M(F_v)$ defined in \S\ref{def:nu}. 
Thus the function \eqref{eqn:Asympf} reduces to a function on $\eH^{+,\mu,\lambda}_M(\bbF_q)$.

On the other hand, we have a similar reduction 
of $\iota^{\mu,\lambda*}_P (\jmath_* (\wbar\bbQ_\ell))$ 
by a modified version of \cite[Theorem 4.3.1]{Schieder:gen}:

Recall from Lemma~\ref{lem:Gstratclosure} that 
$\iota^{\mu,\lambda*}_P(\jmath_* (\wbar\bbQ_\ell)) = 0$ unless 
$\mu-\lambda \in \Lambda^\pos_{G,P}$. 
Assuming that $\mu-\lambda \in \Lambda^\pos_{G,P}$, the relevant definitions and results from \S\ref{ss:tildeU} still hold, without the assumption that $[G,G]$ is simply connected.

\begin{thm} \label{thm:Schieder}
Let $\mu,\lambda \in \Lambda_{G,P}$ with $\mu-\lambda \in \Lambda^\pos_{G,P}$.
The $*$-restriction $\iota^{\mu,\lambda *}_P (\jmath_* (\wbar\bbQ_\ell))$ to the
stratum $\VinBun_{G,\mathbf c_P}^{\mu,\lambda} \xt (T/Z_0(M))$ is equal to
\[ \pr^{\mu,\lambda *}_M \left( \wbar\bbQ_\ell \tbt \tilde\Upsilon(\check\fu_P)^{\mu-\lambda} \right) \bt (\wbar\bbQ_\ell({\ts \frac 1 2})[1])^{-\brac{2\check\rho_P,\mu-\lambda}}
\ot H^*(Z_0(M)/Z_0(G),\wbar\bbQ_\ell). 
\]
\end{thm}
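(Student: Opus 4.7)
The plan is to adapt Schieder's proof of \cite[Theorem 4.3.1]{Schieder:gen} (which handles the case $P = B$) to an arbitrary standard parabolic subgroup $P$, replacing the role of Drinfeld's compactification $\wtilde\Bun_B$ by its parabolic analogues $\wtilde\Bun_P$ and $\wtilde\Bun_{P^-}$, joined through the Hecke stack $\eH^+_M$ which encodes the parabolic defect.

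First I would reduce to a problem on the Hecke stack. Using the trivial fiber bundle structure \eqref{eqn:Pfiber} together with Lemma~\ref{lem:rad0} (which identifies the reduced $T/Z_0(M)$-factor) and the smoothness of $\pr_M^{\mu,\lambda}$, one isolates the $(T/Z_0(M))$-factor in the statement. The cohomology $H^*(Z_0(M)/Z_0(G),\wbar\bbQ_\ell)$ then arises via smooth base change along the $Z_0(M)/Z_0(G)$-gerbe relating the open stratum $\Bun_G \xt (T/Z_0(G))$ to the target factor $T/Z_0(M)$ appearing in $\iota_P$, analogously to how the $H^*(Z_0(G),\wbar\bbQ_\ell)$ factor appeared in \S\ref{ss:compactify} when comparing the two normalizations of $\wbar\Bun_G$.

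Second, I would construct a local model of $(\VinBun_G)_{\wbar{T/Z_0(G)}}$ near the $P$-stratum. By Lemma~\ref{lem:X_PVin}, the fiber of $\wbar{G_\enh}$ over $\mathbf c_P$ is the affine closure $\wbar\bbX_P$ of the boundary degeneration, and $\VinBun_{G,\mathbf c_P} \cong \Maps^\circ(X, G \bs \wbar \bbX_P / G)$. Combining the $T$-action on the Vinberg semigroup (which degenerates $\oo{\wbar{G_\enh}}$ onto $\bbX_P$ as one approaches $\mathbf c_P$) with the projection $\wbar\bbX_P \to \wbar M$ induced by the closed embedding $\wbar M \into \wbar \bbX_P$ used in \S\ref{sect:defect}, one obtains an open neighborhood of the stratum that admits a $Z(M)$-action contracting onto a twisted product whose factors are $\wtilde\Bun_P$, $\eH^+_M$, and $\wtilde\Bun_{P^-}$. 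Applying the contraction principle in the relative setting of \cite[\S 5]{BFGM} then expresses $\iota^{\mu,\lambda *}_P (\jmath_* \wbar \bbQ_\ell)$ in terms of the $*$-restriction of appropriate sheaves on Drinfeld's compactifications to the defect strata.

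Third, I would apply the defining property of $\tilde\Upsilon(\check\fu_P)$ reviewed in \S\ref{ss:tildeU}: the $*$-restriction $\iota^{\theta *}(\jmath_*(\IC_{\Bun_P}))$ is $\IC_{\Bun_P} \tbt \tilde\Upsilon(\check\fu_P)^\theta$. After the contraction, the two defects contributed by $\wtilde\Bun_P$ and $\wtilde\Bun_{P^-}$ collapse into the single defect $\mu - \lambda \in \Lambda^\pos_{G,P}$, producing a single copy of $\tilde\Upsilon(\check\fu_P)^{\mu-\lambda}$ (the factorization-algebra property of $\tilde\Upsilon(\check\fu_P)$ ensures that when the two defects occur at disjoint points they simply multiply). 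The Tate twist and shift $(\wbar\bbQ_\ell(\tfrac 1 2)[1])^{-\brac{2\check\rho_P,\mu-\lambda}}$ then appears as the discrepancy between the IC-sheaf normalization built into $\tilde\Upsilon(\check\fu_P)$ and the constant sheaf $\wbar\bbQ_\ell$ on $\Bun_G \xt (T/Z_0(G))$ used in the present statement, recalling that the codimension of $\iota^\theta(\Bun_P \xt_{\Bun_M} \eH^+_{M,X^\theta})$ inside $\wtilde\Bun_P$ is $\brac{2\check\rho_P,\theta}$.

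The main obstacle is the second step: constructing the local model and verifying the contraction in sufficient generality. For $P = B$, Schieder carries this out via explicit local charts of $\VinBun_G$ together with the geometric analysis of his Drinfeld--Lafforgue--Vinberg degeneration; for general $P$, one has the additional complication that the ``residual'' Levi $M$ is non-trivial, so the contraction only retracts the $U \xt U^-$-directions (not all of $G \xt G$), and one must keep track of the Hecke-stack factor $\eH^{+,\mu,\lambda}_M$ which parametrizes the residual defect of the $M$-bundles. Once this local picture is secured, the remaining calculation is formal: it becomes an application of proper base change together with the defining property of $\tilde\Upsilon(\check\fu_P)$ and a routine bookkeeping of Tate twists.
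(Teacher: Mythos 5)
Your overall strategy---local model, contraction principle, and a known property of $\tilde\Upsilon(\check\fu_P)$---tracks the shape of the paper's argument, but the target of the contraction in your step 2 is not what you claim, and as a consequence step 3 invokes the wrong avatar of $\tilde\Upsilon(\check\fu_P)$.

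The local model actually used is $\eY^P = \Maps^\circ(X,\, U^-\bs (\wbar{G_\enh})_{\ge P}/P)$; its fiber over the $G$-locus $T_\adj \subset \wbar{T_\adj}$ is, up to a torus factor, the open Zastava space $\oo\eZ{}^{P,\theta}$, and the $\bbG_m$-action through a cocharacter $\nu_M$ of $Z_0(M)$ that contracts $U^-$ retracts $\eY^P$ onto a section $\sigma_\eY$ landing in $\Gr^+_{M,X^\theta}\xt (T/Z(M))$---\emph{not} onto a twisted product of $\wtilde\Bun_P$, $\eH^+_M$, and $\wtilde\Bun_{P^-}$ as you suggest. (A small sign confusion: you write of a ``projection $\wbar\bbX_P\to\wbar M$ induced by the closed embedding $\wbar M\into\wbar\bbX_P$'', but an embedding gives a map in the opposite direction; the retraction you want is $x\mapsto e_P\cdot x\cdot e_P$ with $e_P=\bar\fs(\mathbf c_P)$, which is exactly what underlies $\pi_\eY$.) After the contraction, the computation reduces to $(\oo\pi_\eZ)_*(\wbar\bbQ_\ell)$ along the Zastava projection to the Grassmannian, and the paper concludes using the identity $\tilde\Upsilon(\check\fu_P)^\theta\cong(\oo\pi_\eZ)_*(\IC_{\oo\eZ{}^{P,\theta}})$ from \eqref{e:OmegaZ}. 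Your step 3 instead invokes the defining property $\iota^{\theta*}\jmath_*(\IC_{\Bun_P})\cong\IC_{\Bun_P}\tbt\tilde\Upsilon(\check\fu_P)^\theta$, i.e.\ the defect-stratum restriction inside $\wtilde\Bun_P$. That is a related but distinct characterization of $\tilde\Upsilon$, and to deploy it in the $\VinBun_G$ setting you would need a precise compatibility between the defect strata of $\wtilde\Bun_P$, $\wtilde\Bun_{P^-}$, and $\VinBun_G$---which is essentially the gap you yourself flag as ``the main obstacle.'' The heuristic that the two defects ``collapse by factorization'' does not fill it: factorization governs what happens at disjoint points of $X$, whereas the issue here is the merging of the $P$- and $P^-$-directions at a single point, which is resolved geometrically by contracting the local model to the Grassmannian, not by a factorization identity.
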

\noindent 
Here $\tilde\Upsilon(\check\fu_P)^{\mu-\lambda}$ is the factorization algebra 
on $\Gr^+_{M,X^{\mu-\lambda}}$ defined in \S\ref{ss:tildeU}, and we 
can form the sheaf $(\wbar\bbQ_\ell)_{\Bun_M} \tbt \tilde\Upsilon(\check\fu_P)^{\mu-\lambda} \in D(\eH^+_{M,X^{\mu-\lambda}})$ using \eqref{e:HeGr+}. 

The proof of Theorem~\ref{thm:Schieder} follows the same reasoning 
as the proof of \cite[Theorem 4.3.1]{Schieder:gen}, using the local models
defined in \emph{loc.~cit}, which we now review. 

\subsection{Local models} 
Recall that $\bar\pi : \wbar{G_\enh} \to \wbar{T_\adj}$ denotes the projection
and $\bar \fs: \wbar{T_\adj} \to \wbar{G_\enh}$ is a section. 
Let $(\wbar{G_\enh})_{\ge P} := \bar\pi^{-1}( (\wbar{T_\adj})_{\ge P})$ 
denote the open submonoid. 

Define $\eY^P$ to be the scheme representing the substack 
\[ \Maps^\circ(X, U^- \bs (\wbar{G_\enh})_{\ge P} / P)
\subset \Bun_{U^-} \xt_{\Bun_G} \VinBun_{G,\ge P} \xt_{\Bun_G} \Bun_P\] 
of maps generically landing
in $U^- \cdot \bar\fs( (\wbar{T_\adj})_{\ge P} ) \cdot P$. 
Then $\bar \pi$ induces a map 
\[ \eY^P \to (\wbar{T_\adj})_{\ge P}.\] 

For $\theta \in \Lambda_{G,P}$, let $\eY^{P,\theta}$ denote the
preimage of $\Bun_M^{-\theta}$ under the projection $\Bun_P \to \Bun_M$.

\subsubsection{}
One can define both left and right actions of $T_\adj$ on 
$\eY^P$ in a similar way as in \S\ref{sect:Tact}. 
Then the action of $T/Z(M) \into T_\adj$ defines an isomorphism 
\[ \eY^P \cong \eY^P_\strict \xt (T/Z(M)),\] 
where $\eY^P_\strict := \eY^P \xt_{\wbar{T_\adj}} \wbar{Z(M)/Z(G)}$ is 
the local model for $\VinBun_{G,\ge P,\strict}$ considered in \cite[\S 6.1]{Schieder:gen}. 

\subsubsection{}
Let $e_P = \bar\fs(\mathbf c_P)$ and  
$(\wbar{G_\enh})_{\ge P,\strict} = \bar\pi^{-1}(\{1\}\xt\wbar{Z(M)/Z(G)})$. 
By \cite[Theorem 4.2.10]{Wa}, we have 
\[ e_P \cdot (\wbar{G_\enh})_{\ge P,\strict} \cdot e_P = 
e_P \cdot (\wbar{G_\enh})_{\mathbf c_P} \cdot e_P = \wbar M.\] 
The map $(\wbar{G_\enh})_{\ge P,\strict} \to \wbar M$ factors through
$U^- \bs (\wbar{G_\enh})_{\ge P,\strict} / U$. 
Therefore we have a map 
\[ \pi_\eY : \eY^{P,\theta} \to \Gr^+_{M,X^\theta} \xt (T/Z(M)). \]
The embedding $\wbar M = e_P \cdot (\wbar{G_\enh})_{\mathbf c_P} \cdot e_P 
\into (\wbar{G_\enh})_{\mathbf c_P}$ induces a section 
\[ \sigma_\eY : \Gr^+_{M,X^\theta} \xt (T/Z(M)) \to \eY^{P,\theta} \]
of $\pi_\eY$. Both $\pi_\eY$ and $\sigma_\eY$ are 
compatible with the projection \[ \eY^P \to (\wbar{T_\adj})_{\ge P} 
= \bbG_m^{\abs{\Gamma_M}} \xt (\bbA^1)^{\abs{\Gamma_G}-\abs{\Gamma_M}} \to 
\bbG_m^{\abs{\Gamma_M}} = T/Z(M).\]

\subsubsection{} The scheme $\eY^{P,\theta}$ is a local model 
for $\VinBun_{G,\ge P}$. We will need to consider
\[ \tilde\eY^{P,\theta} := \eY^{P,\theta} \xt_{\wbar{T_\adj}} \wbar{T/Z_0(G)}, \] 
which is a local model for $\VinBun_{G, \ge P} \xt_{\wbar{T_\adj}} \wbar{T/Z_0(G)}$. Let $(\wbar{T/Z_0(G)})_{\ge P}$ and $(\wbar{T/Z_0(G)})_P$ 
denote the base changes of the corresponding loci in $\wbar{T_\adj}$,
so that $\tilde\eY^{P,\theta}$ lies over $(\wbar{T/Z_0(G)})_{\ge P}$.
By base change, we get maps 
\[ \Gr^+_{M,X^\theta} \xt (\wbar{T/Z_0(G)})_P \overset{\tilde\sigma_\eY}\longrightarrow  \tilde\eY^{P,\theta} \overset{\tilde\pi_\eY}\longrightarrow \Gr^+_{M,X^\theta} \xt (\wbar{T/Z_0(G)})_P. \]
At the level of reduced schemes, Lemma~\ref{lem:rad0} implies that we 
have maps 
\[ (\Gr^+_{M,X^\theta})_\red \xt (T/Z_0(M)) \overset{\tilde\sigma_\eY}\longrightarrow \tilde\eY^{P,\theta}_\red \overset{\tilde\pi_\eY}\longrightarrow (\Gr^+_{M,X^\theta})_\red \xt (T/Z_0(M)). \]

\subsection{Contracting action on $\tilde\eY^{P,\theta}_\red$.}
\label{sss:contract}
Fix a cocharacter $\nu_M : \bbG_M \to Z_0(M) \subset T$ which contracts
$U^-$ to the element $1 \in U^-$. Then $\nu_M$
defines a $\bbG_m$-action on $\eY^{P,\theta}$ that contracts $\eY^{P,\theta}$
onto the section $\sigma_\eY$ by \cite[Lemma 6.5.6]{Schieder:gen}. 

By definition, the induced 
$\bbG_m$-action on $(\wbar{T_\adj})_{\ge P}$ is via the composition
$\bbG_m \overset{-2\nu_M}\longrightarrow T$ and the usual $T$-action on $\wbar{T_\adj}$. 
Therefore if we consider the $\bbG_m$-action on $\wbar{T/Z_0(G)}$ 
via the composition $\bbG_m \overset{-2\nu_M}\longrightarrow T$ and 
the usual $T$-action on $\wbar{T/Z_0(G)}$, we 
get a $\bbG_m$-action on $\tilde\eY^{P,\theta}$, and hence on $\tilde\eY^{P,\theta}_\red$.
Moreover from Lemma~\ref{lem:rad0} we deduce that
this $\bbG_m$-action contracts $\tilde\eY^{P,\theta}_\red$ onto 
the section $(\Gr^+_{M,X^\theta})_\red \xt (T/Z_0(M))$.

\subsubsection{} We are now ready to prove Theorem~\ref{thm:Schieder}.

\begin{proof}[Proof of Theorem~\ref{thm:Schieder}] 
Since the open locus $\VinBun_{G,\ge P}$ contains the $P$- and $G$-loci, 
to compute $\iota^{\mu,\lambda*}_P\jmath_* \wbar\bbQ_\ell$, we may restrict to 
$\VinBun_{G,\ge P} \xt_{\wbar{T_\adj}} \wbar{T/Z_0(G)}$. 

Let $\eY^{P,\theta}_G \subset \eY^{P,\theta}$ denote
the $G$-locus: the preimage of $T_\adj$ under the projection to $\wbar{T_\adj}$.
 Set $\tilde\eY^{P,\theta}_G = \eY^{P,\theta}_G \xt_{T_\adj} (T/Z_0(G))$. 
The $T_\adj$-action on $\eY^{P,\theta}$ induces an isomorphism 
\[ \eY^{P,\theta}_G \cong \oo\eZ{}^{P,\theta} \xt T_\adj, \]
where $\oo\eZ{}^{P,\theta}$ is the open Zastava space (see \S\ref{sss:Zastava}).
Hence there is an isomorphism 
\[ \tilde \eY^{P,\theta}_G \cong \oo\eZ{}^{P,\theta} \xt (T/Z_0(G)).\]

Let $\jmath_G : \tilde\eY^{P,\theta}_G \into \tilde\eY^{P,\theta}$ denote
the open embedding. Then the assertion of the theorem
reduces, as explained in \cite[\S 3, \S 8]{BFGM}, to proving that 
\begin{equation} \label{e:locali*j*}
    \tilde\sigma_\eY^* (\jmath_{G*}( \wbar\bbQ_\ell)) \cong 
    \tilde\Upsilon(\check\fu_P)^\theta \bt 
    (\wbar\bbQ_\ell({\ts\frac 1 2})[1])^{-\brac{2\check\rho_P, \theta}} \ot 
H^*(Z_0(M)/Z_0(G), \wbar\bbQ_\ell). 
\end{equation}
Since we are working with \'etale sheaves, we can work 
at the level of reduced schemes. Then we can apply the
contraction principle (cf.~\cite[\S 5]{BFGM}, \cite[Lemma 7.2.1]{Schieder:gen}) to the $\bbG_m$-action on $\tilde\eY^{P,\theta}_\red$ 
defined in \S\ref{sss:contract}. 
This gives an isomorphism $\tilde\sigma_\eY^*(\jmath_{G*}(\wbar\bbQ_\ell)) \cong 
\tilde\pi_{\eY *}(\jmath_{G*}(\wbar\bbQ_\ell))$.
At the level of reduced schemes, 
\[ \tilde\pi_\eY \circ \jmath_G : \eY^{P,\theta}_{G,\red} \cong 
    \oo\eZ{}^{P,\theta} \xt (T/Z_0(G)) \to (\Gr^+_{M,X^\theta})_\red \xt (T/Z_0(M)) \]
is the product of $\oo \pi_\eZ$ and the natural projection 
$T/Z_0(G) \to T/Z_0(M)$. 
Recall from \eqref{e:OmegaZ} that there is a canonical isomorphism
$\tilde\Upsilon(\check\fu_P)^\theta \cong (\oo \pi_\eZ)_*(\IC_{\oo \eZ{}^{P,\theta}})$. Noting that $\oo\eZ{}^{P,\theta}$ is smooth of dimension $\brac{2\check\rho_P,\theta}$, we get the identification 
\[ (\oo \pi_\eZ)_*(\wbar\bbQ_\ell) \cong \tilde\Upsilon(\check\fu_P)^\theta \ot (\wbar\bbQ_\ell({\ts\frac 1 2})[1])^{-\brac{2\check\rho_P,\theta}}.\] 
Since $Z_0(M)/Z_0(G)$ is a torus, we observe that 
$T/Z_0(G) \to T/Z_0(M)$ is a trivial $Z_0(M)/Z_0(G)$-bundle. 
Equation \eqref{e:locali*j*}, and hence the theorem, now follows.
\end{proof}

\begin{proof}[Proof of Theorem~\ref{thm:i^*j_*}] 
The trace of the geometric Frobenius on $H^*(\bbG_m, \wbar\bbQ_\ell)$ equals $1-q$,
and $Z_0(M)/Z_0(G)$ is a product of $\abs{\Gamma_G} - \abs{\Gamma_M}$ copies of $\bbG_m$. 
Therefore the trace of geometric Frobenius on $H^*(Z_0(M)/Z_0(G),\wbar\bbQ_\ell)$ equals
$(1-q)^{\abs{\Gamma_G}-\abs{\Gamma_M}}$. Theorem~\ref{thm:i^*j_*} now follows 
from 
Theorem~\ref{thm:Schieder}, Proposition~\ref{prop:geom-nu}, and 
\eqref{e:Asympnu}.
\end{proof}

\bibliographystyle{amsplain}
\bibliography{asymptotics}

\end{document}